\newif\ifdraft\draftfalse
\ifdraft\usepackage[notcite, notref]{showkeys}\fi
\newcommand\N[1]{N(#1)}
\newtheorem{proposition}[equation]{Proposition}
\newtheorem{theorem}[equation]{Theorem}
\newtheorem{lemma}[equation]{Lemma}
\newtheorem{corollary}[equation]{Corollary}
\newtheorem{warning}[equation]{Warning}
\theoremstyle{definition}
\newtheorem{defn}[equation]{Definition}
\theoremstyle{remark}
\newtheorem{remark}[equation]{Remark}
\newtheorem{ntt}[equation]{}
\newcommand{\nocontentsline}[3]{}
\newcommand{\tocless}[2]{\bgroup\let\addcontentsline=\nocontentsline#1{#2}\egroup}
\numberwithin{equation}{section}
\newcommand{\Rep}{\mathop{\mathrm{Rep}}\nolimits}
\newcommand{\Nil}{\mathop{\mathrm{Nilp}}\nolimits}
\newcommand{\Vect}{\mathop{\mathrm{Vect}}\nolimits}
\newcommand{\Spec}{\mathop{\mathrm{Spec}}\nolimits}
\newcommand{\GL}{\mathop{\mathrm{GL}}\nolimits}
\newcommand{\gl}{\mathop{\mathfrak{gl}}\nolimits}
\newcommand{\Split}{\mathop{\mathrm{Split}}\nolimits}
\newcommand{\Sym}{\mathop{\mathrm{Sym}}\nolimits}
\newcommand{\ClZar}{\mathop{\mathrm{ClZar}}\nolimits}
\newcommand{\nilp}{\mathop{\mathrm{nilp}}\nolimits}
\newcommand\der{\mathrm{der}}
\newcommand{\Gal}{\mathop{ \mathrm{Gal}}\nolimits}
\DeclareMathOperator{\id}{id}
\DeclareMathOperator{\spec}{sp}
\DeclareMathOperator{\Cent}{Cent}
\DeclareMathOperator{\Const}{Const}
\DeclareMathOperator{\const}{const}
\DeclareMathOperator{\diag}{diag}
\DeclareMathOperator{\spl}{split}
\DeclareMathOperator{\Standard}{\mathbf{Std}}
\def\define{\def}
\define\M{{\mathcal{M}}}
\define\V{{\mathcal{V}}}
\define\H{{\mathcal{H}}}
\define\ZZ{{\mathcal{Z}}}
\define\C{\mathbb{C}}
\define\R{\mathbb{R}}
\define\Z{\mathbb{Z}}
\define\gg{\mathfrak g}      
\define\half{\frac{1}{2}}
\define\hph{\hphantom}
\define\Res{\operatornamewithlimits{Res}}
\define\Im{\operatorname{Im}}
\define\Ext{\operatorname{Ext}}
\define\Hom{\operatorname{Hom}}
\define\reg{\operatorname{reg}}
\def\ad{\mathrm{ad\,}}
\newcommand\HS{\mathrm{HS}}
\newcommand\MHS{\mathrm{MHS}}
\newcommand\VMHS{\mathrm{VMHS}}
\newcommand\NF{\mathrm{NF}}
\define\a{\alpha}
\define\b{\beta}
\define\l{\lambda}
\newcommand\thh{\theta}
\define\beq{\begin{equation}}
\define\eeq{\end{equation}}
\newcommand\Ad{\mathop{\mathrm{Ad}}\nolimits}
\newcommand\Gr{\mathop{\mathrm{Gr}}\nolimits}
\newcommand\SL{\mathop{\mathrm{SL}}\nolimits}
\newcommand\ssl{\mathop{\mathrm{sl}}\nolimits}
\newcommand\End{\mathop{\mathrm{End}}\nolimits}
\newcommand\Aut{\mathop{\mathrm{Aut}}\nolimits}
\newcommand\CC{\mathbb C}
\newcommand\RR{\mathbb R}
\begin{document}

\title[Zero locus]{On the algebraicity of the zero locus of an
admissible normal function}

\author{Patrick Brosnan}
\address{Department of Mathematics\\
 University of Maryland\\
 College Park, MD 20742, USA}
\email{pbrosnan@umd.edu}
\author{Gregory Pearlstein}
\address{Department of Mathematics\\
 Michigan State University\\
 East Lansing, MI 48824, USA}
\email{gpearl@math.msu.edu}
\thanks{P.B. and G.~P. were respectively supported by NSF grants
DMS-1103269 and  DMS-1002625.}

\keywords{Normal Functions, Hodge theory}


\begin{abstract} We show that the zero locus of an admissible normal function
on a smooth complex algebraic variety is algebraic.  

In Part 2 of the paper, which is an appendix, we compute the Tannakian
Galois group of the category of one-variable admissible real nilpotent 
orbits with split limit.    We then use the answer to recover an unpublished
theorem of Deligne, which characterizes the $\ssl_2$-splitting of a real mixed
Hodge structure.
\end{abstract}

\maketitle

\tocless\section{Introduction}\label{s.Intro}

Let $\bar S$ be a complex manifold and $\mathcal H$ be an integral
variation of pure Hodge structure of weight $w<0$ which is defined on a
Zariski open subset $S$ of $\bar S$.  M.~Saito~\cite{SaitoANF}
defines an {\it admissible normal function on $S$ with respect to $\bar{S}$} to be an extension class 
 \begin{equation}
    \label{e.Variation}
    0\to \H\to \V\to \mathbb{Z}(0)\to 0
  \end{equation}
in the category $\VMHS(S)^{\ad}_{\bar{S}}$ of variations of mixed
Hodge structure on $S$ which are admissible relative to $\bar{S}$ (see
\emph{loc.\ cit.}).  Via Carlson's formula~\cite{Carlson}, an admissible 
normal function corresponds to a holomorphic section $\nu:S\to J(\mathcal H)$ 
of the associated family of generalized intermediate Jacobians 
$J(\mathcal H)\to S$ which satisfies a version of Griffiths horizontality 
and has controlled asymptotic behavior near the boundary of $S$ in $\bar S$.  
The purpose of this paper is to prove the following.

\begin{theorem}\label{theorem:main}
  With the above conventions, let $\nu:S\to J(\mathcal H)$ be an
  admissible normal function with respect to $\bar S$.  Let
  $\ZZ=\ZZ(\nu)=\{s\in S: \nu(s)=0\}$ denote the zero locus of $\nu$.
  Then the topological closure of $\ZZ$ in $\bar {S}$ is a closed
  analytic complex subspace of $\bar{S}$.
\end{theorem}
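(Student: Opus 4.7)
The plan is to reduce to a local question on $\bar S$ and then analyze the degeneration of $\nu$ at the boundary via admissibility. Since analyticity is local on $\bar S$, after passing to a neighborhood of a point $p\in\bar S$ and applying resolution of singularities to $\bar S\setminus S$, I can assume $\bar S=\Delta^n$, $S=(\Delta^*)^k\times\Delta^{n-k}$, $p=0$. On $S$ itself, the zero locus $\ZZ=\nu^{-1}(0\text{-section})$ is clearly analytic because $\nu$ is a holomorphic section of the complex manifold $J(\H)$; the entire content lies in controlling $\bar\ZZ$ at a boundary point.

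The next step is to use Deligne's canonical extensions of $\H$ and $\V$ to vector bundles on $\bar S$ with extended Hodge and relative weight filtrations coming from admissibility. Locally near $p$ I would split $0\to\H\to\V\to\ZZ(0)\to 0$ as $\mathcal O_{\bar S}$-modules compatibly with $F^0$, obtaining a holomorphic lift $\tilde\nu$ of $\nu$ to a section of $F^0\bar\V$ modulo $F^0\bar\H$. The vanishing condition $\nu(s)=0$ then becomes the condition that $\tilde\nu(s)$ differs from an element of the local system $\H_\Z$ by a section of $F^0\bar\H$.

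Third, I would associate to $p$ a singularity invariant $\sigma_p(\nu)$ built out of the commuting nilpotent logarithms $N_1,\dots,N_k$ and the image of $\tilde\nu$ in the limit mixed Hodge structure. If $\sigma_p(\nu)\ne 0$, the admissibility estimates force $\nu$ to stay uniformly away from integral sections as $s\to p$, so $p\notin\bar\ZZ$. If $\sigma_p(\nu)=0$, the normal function extends to a nilpotent orbit of extensions at $p$, and the local question becomes whether the lift $\tilde\nu$ can be corrected by an integral (possibly multi-valued) section to land in $F^0$ on a full analytic neighborhood of $p$; the several-variable SL(2)-orbit theorem of Cattani-Kaplan-Schmid, together with the norm estimates for admissible VMHS, should let one write this as a finite system of holomorphic equations in a neighborhood of $p$.

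The main obstacle is this last step: a priori infinitely many integral (multi)sections could coincide with $\tilde\nu$ along some stratum approaching $p$, and one must show that only finitely many can contribute and that the resulting condition is genuinely analytic rather than merely real-analytic or pluriharmonic. Concretely, the heart of the argument is a rigidity result stating that if an admissible normal function vanishes on a sequence converging to $p$, then its canonical lift lies in an analytically-cut-out subvariety of a neighborhood of $p$. I would expect to handle the one-variable case $k=1$ directly via Schmid's nilpotent orbit theorem and a study of the local monodromy action on $\tilde\nu_\infty$, and then attack the multi-variable case by an induction on $k$, restricting along curves transverse to the boundary strata to reduce to the already-known one-variable situation.
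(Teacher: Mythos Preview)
Your reduction to the local normal-crossing situation and your identification of the finiteness obstacle are both correct and match the paper's strategy. However, the proposal stops short of the key technical idea, and the suggested method for closing the gap is not the one that works.

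The paper does \emph{not} argue via a singularity invariant $\sigma_p(\nu)$ and the dichotomy you describe. Instead, it reformulates the vanishing condition $\nu(s)=0$ as the condition that the Deligne grading $Y_{(F(z),W)}$ of the fiber of $\V$ lies in $w\End(V_{\Z})$ (Lemma~\ref{l.SplitLemmaWeight}). The finiteness problem you flag is then solved by proving that the \emph{$\ssl_2$-splitting grading} $\hat Y_{(F(z),W)}$ is bounded on vertical strips in $U^r$ (Theorem~\ref{theorem:conj-1} and Corollary~\ref{c.Boundedness}); since $Y=\hat Y$ whenever $Y$ is real, the set of integral gradings that can occur is finite. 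Analyticity then follows from a rigidity argument (Theorem~\ref{t.two}): if one integral value $Y_{\Z}$ is hit on a Zariski-dense set, a computation with the $(-1,-1)$-components of $e^{\Gamma(s)}.Y_\infty$ forces $[N_i,Y_\infty]=0$ for all $i$ and hence $C(Y_{\Z})=S$. Nothing like this comes out of your lift $\tilde\nu$ without the grading reformulation.

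Two further points. First, the $\SL_2$-orbit theorem of Cattani--Kaplan--Schmid is for \emph{pure} variations; since $\V$ is genuinely mixed, the paper relies on the mixed $\SL_2$-orbit theorem of Kato--Nakayama--Usui, and in fact proves a refinement of it (the limit of $\hat Y_{(F(z),W)}$ along arbitrary $\ssl_2$-sequences). Second, your proposed induction on $k$ by restricting to curves transverse to the boundary does not obviously close: knowing the one-variable theorem along each curve through $p$ gives no uniformity in the choice of curve, so it does not by itself produce a single analytic germ at $p$. The paper's induction on $r$ is instead carried out inside the proof of the boundedness theorem~\ref{theorem:conj-1}, via partial degenerations $F_\iota(z)$ and a careful comparison $\hat Y_{(F(z),W)}-\hat Y_{(F_\iota(z),W)}\to 0$ along sequences with non-polynomial growth.
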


In this paper we use the notation $\NF(S,\H)^{\ad}_{\bar{S}}$ to
denote the group of normal functions $\nu:S\to J(\mathcal
H)$ which are admissible with respect to $\bar S$ as above.  If $S$ has
has an algebraic structure, then, by Nagata and Hironaka, there exists
some smooth algebraic compactification $\bar{S}$.  However, for $S$
algebraic the notion of admissibility is independent of the choice of
smooth compactification $\bar S$ of $S$~\cite[Remark 1.6
(i)]{SaitoANF}.  Therefore we follow~\cite{SaitoANF} and write
$\NF(S,\H)^{\ad}$ instead of $\NF(S,\H)^{\ad}_{\bar{S}}$ for the group
of admissible normal functions on $S$.  The following corollary is
then immediate from GAGA.

\begin{corollary}\label{corollary:gg} If $S$ is algebraic then
the zero locus of an admissible normal function $\nu:S\to J(\mathcal H)$
is an algebraic subvariety of $S$.
\end{corollary}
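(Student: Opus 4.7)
The plan is to deduce Corollary~\ref{corollary:gg} directly from Theorem~\ref{theorem:main} together with Serre's GAGA, exploiting the flexibility in the choice of compactification. Since $S$ is algebraic, I first invoke Nagata's compactification theorem together with Hironaka's resolution of singularities to obtain a smooth proper algebraic variety $\bar{S}$ in which $S$ sits as a Zariski-open subset. By the independence of admissibility from the choice of smooth compactification (Remark~1.6(i) of~\cite{SaitoANF}, recalled in the text preceding the corollary), the normal function $\nu$ is admissible with respect to this particular $\bar{S}$.

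Applying Theorem~\ref{theorem:main} to the pair $(S,\bar{S})$, the topological closure $\overline{\ZZ}$ of $\ZZ$ in $\bar{S}$ is the underlying set of a closed analytic subspace $T \subset \bar{S}^{\mathrm{an}}$. Because $\bar{S}$ is smooth and proper over $\C$, Serre's GAGA identifies closed analytic subspaces of $\bar{S}^{\mathrm{an}}$ with closed algebraic subschemes of $\bar{S}$, so $T$ corresponds to a closed algebraic subvariety $\ZZ' \subset \bar{S}$.

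Finally, since $\nu$ is a holomorphic section of $J(\H)\to S$, the set $\ZZ$ is closed in $S$, so $\overline{\ZZ}\cap S = \ZZ$. Therefore $\ZZ'\cap S$ is a closed algebraic subvariety of $S$ whose underlying point set equals $\ZZ$, proving the corollary. All of the substantive work is absorbed into Theorem~\ref{theorem:main}; the only points in the plan that require attention are the bookkeeping assertions that $\bar{S}$ may be taken smooth proper algebraic (so that GAGA applies in its standard form) and that admissibility is independent of this choice. Both of these have already been cited, so no real obstacle remains.
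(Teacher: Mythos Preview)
Your proof is correct and follows exactly the approach the paper takes: the text preceding the corollary already invokes Nagata and Hironaka to produce a smooth algebraic compactification and cites \cite[Remark 1.6(i)]{SaitoANF} for independence of admissibility, and then declares the corollary ``immediate from GAGA.'' Your write-up simply spells out these steps in detail.
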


For $w=-1$ the assertion of Corollary \eqref{corollary:gg} was a
conjecture of Phillip Griffiths and Mark Green.  At least in the case
that $w=-1$, Theorem~\eqref{theorem:main} has also been proved by
Christian Schnell~\cite{SchnellNeron}.  In Schnell's work it is a
consequence of the existence of a ``N\'{e}ron model'' extending the
family $J(\H)\to S$ over $\bar{S}$.  The full theorem is used in our
joint work~\cite{BPS} with Christian Schnell where we prove the
generalization of the theorem of Cattani, Deligne and Kaplan to
admissible variations of mixed Hodge structure.  The
paper~\cite{KNUModLog} by K.~Kato, C.~Nakayama and S.~Usui
indicates a proof of the theorem using the log classifying spaces
developed by those authors.

To prove Theorem~\ref{theorem:main}, it suffices
(Theorem~\ref{t.Reduction}) to consider the case that $D = \bar S
\setminus S$ is a normal crossing divisor.  Working locally on $S$, we
are then reduced to proving the following theorem.

\begin{theorem}\label{theorem:zero-locus-closure} 
   In the context of Theorem~\ref{theorem:main}, suppose
   $D:=\bar{S}\setminus S$ is a normal crossing divisor and that 
   $p\in D$ is an accumulation point of $\ZZ$.  Then there exists an open
  polydisk $P\subset\bar S$ containing $p$ and an analytic subvariety
  $A$ of $P$ such that $A\cap S = \ZZ\cap P$.
\end{theorem}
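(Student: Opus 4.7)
The plan is to work locally on a polydisk and use the asymptotic theory of admissible VMHS along a normal crossing divisor. Shrink $\bar S$ to a polydisk $P = \Delta^n$ about $p$ with coordinates $(z_1,\dots,z_n)$ so that $D\cap P = \{z_1\cdots z_k = 0\}$, and set $P^* = P\setminus D$. After a finite base change (harmless for the conclusion) assume that the monodromies $T_1,\dots,T_k$ of $\V$ around the boundary divisors are unipotent with logarithms $N_1,\dots,N_k$. Let $\HH^k \times \Delta^{n-k} \to P^*$ be the universal cover, with coordinates $(w_1,\dots,w_k,z_{k+1},\dots,z_n)$ and $z_j = e^{2\pi i w_j}$. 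The works of Schmid, Steenbrink--Zucker, Kashiwara, and M.~Saito provide Deligne's canonical extension $\tilde\V$ of $\V$ to $P$, the extended Hodge filtration $F^\bullet$, and a limit mixed Hodge structure $(\V_\infty, W(N), F_\infty)$ for $N = \sum_j a_j N_j$ with $a_j > 0$.

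Admissibility ensures that the flat multi-valued lift $\tilde\nu$ of $\nu$ to the cover satisfies: the twisted section
\[
  \nu_{\mathrm{un}} \;:=\; \exp\!\bigl(-\textstyle\sum_j w_j N_j\bigr)\,\tilde\nu
\]
descends to a holomorphic section of $\tilde\V$ on all of $P$. The equation $\nu(s)=0$ for $s\in P^*$ then translates to: there exists an integer vector $\gamma\in\V_{\Z,s}$ and a branch of the logarithms such that
\[
  \nu_{\mathrm{un}}(s) \;\equiv\; \exp\!\bigl(-\textstyle\sum_j w_j(s)N_j\bigr)\gamma \pmod{F^0\tilde\V_s}.
\]
On the cover this exhibits the zero locus as a union of analytic loci indexed by $\gamma\in\V_\Z$, and on $P^*$ as a union indexed by $T$-orbits $[\gamma]$, where $T = \langle T_1,\dots,T_k\rangle$. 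I would then analyse the possible $[\gamma]$ for zeros accumulating at $p$: decomposing $\gamma$ with respect to $W(N)$, components outside $\bigcap_j \ker N_j$ produce summands of $\exp(-\sum_j w_j N_j)\gamma$ that grow polynomially in $w$, while $\nu_{\mathrm{un}}$ is bounded; the discreteness of $\V_\Z$ together with bounds from the nilpotent orbit theorem strongly restrict which $[\gamma]$ can occur.

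The crux, and main obstacle, is a \emph{finiteness} statement: only finitely many $T$-orbits $[\gamma]$ can arise along any sequence of zeros tending to $p$. This is where the several-variable $\SL_2$-orbit theorem of Cattani--Kaplan--Schmid and its extension to admissible VMHS should be indispensable --- they give the precise asymptotic control of $F^\bullet$ along $w_j \to i\infty$ needed to rule out infinitely many integer vectors being boundedly congruent to $\nu_{\mathrm{un}}$ modulo the Deligne twist. Granting the finiteness, for each admissible orbit $[\gamma]$ choose a representative; the condition $\tilde\nu - \gamma \in F^0\V$ on the universal cover descends to a single-valued condition on $P^*$ which, by admissibility, is the restriction of the holomorphic condition $\nu_{\mathrm{un}}(s) - \exp(-\sum_j w_j N_j)\gamma \in F^0\tilde\V_s$ on $P$ (interpreted via a fundamental domain for the $T$-action, with the finitely many branch choices accounted for by varying the representative inside the orbit). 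The union of these finitely many closed analytic subvarieties of $P$ is the required $A$, and $A\cap S = \ZZ\cap P$ by construction.
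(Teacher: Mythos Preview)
Your overall strategy --- local normal form on the cover, a finiteness statement for the integral data indexing the zero locus, and then writing $\ZZ\cap P$ as a finite union of analytic loci --- is in the right spirit, but there is a genuine gap in the final step, and the paper takes a rather different route to close it.

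The problematic step is the claim that the condition
\[
  \nu_{\mathrm{un}}(s) \;-\; \exp\!\Bigl(-\sum_j w_j N_j\Bigr)\gamma \;\in\; F^0\tilde\V_s
\]
is a \emph{holomorphic} condition on all of $P$. It is not, in general: $\exp(-\sum_j w_j N_j)\gamma$ is a polynomial in the $w_j=\tfrac{1}{2\pi i}\log z_j$, hence has logarithmic singularities along $D$ unless $N_j\gamma=0$ for every $j$. Your parenthetical about fundamental domains and branch choices addresses descent from the cover to $P^*$, but not extension across $D$. So even granting finiteness of the relevant $T$-orbits, you have not produced analytic equations on $P$; you have produced equations on $P^*$ that are a priori transcendental in $z_1,\dots,z_k$.

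The paper handles exactly this point, but by a different mechanism. First, it replaces the Carlson-type description by the criterion $\nu(s)=0 \Leftrightarrow Y_{(F(z),W)}\in w\,\End(V_{\Z})$ (Lemma~3.5), so the integral data indexing $\ZZ$ are \emph{integral Deligne gradings} $Y_{\Z}$ rather than integral vectors $\gamma$. The finiteness of the relevant $Y_{\Z}$ on a vertical strip then follows from the boundedness of $\hat Y_{(F(z),W)}$ (Corollary~2.13), which in turn rests on the mixed several-variable $\SL_2$-orbit asymptotics (Theorem~2.8). Crucially, the paper does \emph{not} then try to write down holomorphic equations on $P$ directly. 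Instead it uses an inductive reduction (Theorem~3.1): it suffices to show that if the piece $C(Y_{\Z})$ of $\ZZ$ corresponding to one integral grading is Zariski dense at $0$, then $C(Y_{\Z})=S$. The heart of the argument (Theorem~4.1) is the deduction that $[N_i,Y_{\Z}]=0$ for all $i$ --- the exact analogue of the $N_j\gamma=0$ you would need --- obtained by extracting the $(-1,-1)$-component of the defining equation, exponentiating to get an identity of the form $e^{2\pi i b\gamma_j(s)}=\prod_i s_i^{b\beta_{ij}}$ between a nonvanishing holomorphic function and a monomial, and concluding that all exponents vanish. Once $[N_i,Y_{\Z}]=0$, the defining equation for $C(Y_{\Z})$ becomes $e^{\Gamma(s)}.Y_{\infty}=Y_{\Z}$, which is genuinely holomorphic on $P$.

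In short: your growth heuristic is suggestive but does not by itself force $N_j\gamma=0$ (it only bounds things), and without that your ``holomorphic condition on $P$'' is not holomorphic. The paper's Zariski-density-plus-exponentiation argument is precisely what fills this gap, and it is organized around the Deligne grading rather than the integral lift $\gamma$.
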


\tableofcontents

\subsection*{Overview}

The first part of this paper is devoted to the proof of 
Theorem~\ref{theorem:zero-locus-closure}. 
In section~\ref{s.Polydisks} we review and extend the theory of variations 
of real mixed Hodge structure in $\VMHS(\Delta^{*r})^{\ad}_{\Delta^r}$.  These
have a very concrete local normal form which can be expressed in terms of 
matrix-valued holomorphic functions.  We state several results concerning the 
asymptotics of these variations in section~\ref{s.Polydisks}, in particular, 
Theorem~\ref{theorem:conj-1} which is a strong boundedness result.  Some of
the proofs, in particular the proof of our main boundedness result 
Theorem~\ref{theorem:conj-1}, are deferred to Section~\ref{s.proof}.

In section~\ref{s.Reduct}, we reduce the proof of 
Theorem~\ref{theorem:zero-locus-closure} to the analogous statement on a 
polydisk.  In section~\ref{s.Analyticity} we use the results stated 
in section~\ref{s.Polydisks} to give an explicit system of equations for $\ZZ$ 
on a punctured polydisk $\Delta^{*r}$. 

The remainder of the first part of the paper is devoted to the proofs
of the results concerning variations of real mixed Hodge structure
stated in section~\ref{s.Polydisks}.

The second part of the paper is an appendix, which proves results of Deligne on the $\ssl_2$-splitting
stated in an unpublished letter to Cattani and Kaplan~\cite{D}.  We
interpret Deligne's results as the computation of the Tannakian Galois
group of the category $\Split_1$ consisting of admissible nilpotent
orbits (in one variable) whose limits are split over $\mathbb{R}$.
The group in question is very similar to Deligne's group $\mathcal{M}$
whose category of representations is the category of real mixed Hodge
structures~\cite{DeligneReal}.  Part of this work consists in
explaining what a one variable $\SL_2$-orbit is in terms of the
representation a real reductive group of rank $3$, which we call the
\emph{Schmid group}.  This is probably well-known to the experts and
implicit in Schmid's~\cite{Schmid}, but to the best of our knowledge
it has not yet been written down explicitly.

\subsection*{Notation:}  
\begin{itemize}
\item[{---}] $\Delta^r$ is a polydisk with holomorphic coordinates
 $s=(s_1,\dots,s_r)$;
\item[{---}] $\Delta^{*r}\subset\Delta^r$ is the set of points where
$s_1\cdots s_r\neq 0$;
\item[{---}] $U^r\subset\mathbb C^r$ is the product of upper half-planes;
\item[{---}] Points $z=(z_1,\dots,z_r)\in \mathbb C^r$ are written 
$z = x+iy$ where $x=(x_1,\dots,x_r)$, and $y=(y_1,\dots,y_r)\in\mathbb R^r$;
\item[{---}] $\pi:U^r\to \Delta^{*r}$ is the covering map given by
$s_j = e^{2\pi iz_j}$ for $j=1,\dots,r$.
\item[{---}] The underlying vector space of a filtration or mixed Hodge
structure is denoted $V$.  If $V$ is defined over a subring $R$ of $\mathbb C$
the associated $R$-module is denoted $V_R$;
\item[{---}] Elements of $\GL(V)$ act linearly on filtrations of $V$,
e.g.~ $(g.F)^p = g(F^p)$.  Elements of $\GL(V)$ act on endomorphisms
of $V$ via the adjoint action, i.e.~$g.Y = gYg^{-1}$.
\end{itemize}

\subsection*{Acknowledgments} 
The authors would like to thank Phillip Griffiths who generously
shared his ideas on normal functions with the authors during their
stay at the Institute for Advanced Study in 2004--2005, and Pierre
Deligne for sharing his theory of the $\ssl_2$-splitting.  We would
also like to thank Christian Schnell for helpful discussions and
reviewing our proof of the two variable version of
Theorem~\eqref{theorem:main} in June 2008 and for several other
helpful comments.  (In particular, for telling us about Hironaka's
paper~\cite{Hironaka}.)  We would like to thank Claire Voisin for
correcting us on a point related to Lemma~\ref{l.SplitLemmaWeight} and
Kazuya Kato, Chikara Nakayama and Sampei Usui for their encouragement
and helpful discussions about the several variable $\SL_2$-orbit
theorem.  Finally, we would like to express our gratitude to the referee, who 
read the paper carefully and patiently and helped us considerably to improve it.

\part{Zero Loci of Admissible Normal Functions}\label{p.1}

\section{Admissible variations on the punctured polydisk}\label{s.Polydisks}

Here we collect results about the structure of variations of mixed
Hodge structure on the punctured polydisk $\Delta^{*r}$ which are admissible
relative to the polydisk $\Delta^r$.  In this section, all variations of mixed 
Hodge structure will be variations with real coefficients and the emphasis
will be on asymptotics.

\subsection*{Deligne Gradings} Given an increasing filtration 
$W$ of a finite dimensional vector space $V$ over a field of
characteristic zero, a grading of $V$ is a semisimple endomorphism $Y$
of $V$ such that, for each index $k\in\mathbb{Z}$, $W_k$ is the direct sum of
$W_{k-1}$ and the $k$-eigenspace $E_k(Y)$ for each index $k$.  By a
theorem of Deligne \cite{HodgeII}, a mixed Hodge structure $(F,W)$
induces a unique, functorial decomposition 
\beq V_{\C} = 
\bigoplus_{r,s}\, I^{r,s}_{(F,W)}\text{ or simply } \bigoplus_{r,s} I^{r,s}\label{eq:Deligne-bigrading} 
\eeq of the
underlying complex vector space $V_{\C}$ such that
\begin{itemize}
\item[{(a)}] $F^p = \oplus_{r\geq p}\, I^{r,s}$; 
\item[{(b)}] $W_k = \oplus_{r+s\leq k}\, I^{r,s}$;
\item[{(c)}] $\bar I^{p,q} = I^{q,p} \mod \oplus_{r<q,s<p}\, I^{r,s}$.
\end{itemize}
Here $F$ is the Hodge filtration and $W$ is the weight filtration.

In particular, a mixed Hodge structure $(F,W)$ induces a grading 
$Y_{(F,W)}$ of $V_{\C}$ by the requirement that $Y_{(F,W)}$ acts
as multiplication by $p+q$ on $I^{p,q}$.  
We sometimes write $V^{r,s}$ instead of $I^{r,s}$ when this is the only 
bigrading of $V$ in sight.

\subsection*{Local normal form.} Let $\V$ be an 
admissible variation of graded-polarized mixed Hodge structure over a 
punctured polydisk $\Delta^{*r}$ with unipotent monodromy $T_j=e^{N_j}$ about 
$s_j=0$, with weight filtration $W$.  Let $V$ be any fiber of $\V$ and define 
$\mathfrak g$ to be the Lie subalgebra of $\gl(V)$ consisting of all 
elements which preserve $W$ and act by infinitesimal isometries on
$\Gr^W V$.   
Then, the limit mixed Hodge structure $(F_{\infty},M)$ of 
$\V$ induce a mixed Hodge structure on $\mathfrak g$.  We write $\gg^{r,s}$ the corresponding
decomposition of $\mathfrak g_{\CC}$ by the $I^{p,q}$'s.  There is then a
a distinguished vector space decomposition
\beq
      {\mathfrak g}_{\C} = \mathfrak q\oplus{\mathfrak g}_{\C}^{F_{\infty}},
      \qquad  \mathfrak q = \bigoplus_{r<0,s}\, \gg^{r,s}_{(F_{\infty},M)}. 
      \label{eq:decomp}
\eeq
where ${\mathfrak g}_{\C}^{F_{\infty}}$ is the stabilizer of the limit
Hodge filtration.  Relative to this decomposition, we can then write 
(cf. $(6.11)$ \cite{P2}) the period map
$$  
       F:U^r\to\M
$$
of the pullback $\V$ to the universal cover $\pi:U^r\to\Delta^{*r}$ as
\beq
        F(z_1,\dots,z_r) 
        = e^{\sum_j\, z_j N_j}e^{\Gamma(s_1,\dots,s_r)}.F_{\infty}
        \label{eq:lnf}  
\eeq
where $\Gamma(s)$ is a $\mathfrak q$-valued holomorphic function which 
vanishes at the origin.  This is called the \emph{local normal form 
of the variation $\V$}.   


\subsection*{Nilpotent orbits} If $\V\to\Delta^{*r}$ is an admissible 
variation of mixed Hodge structure over the punctured polydisk $\Delta^{*r}$ 
with local normal form \eqref{eq:lnf} then the associated map
$$
      \theta(z_1,\dots,z_r) = e^{\sum_j\, z_j N_j}.F_{\infty}
$$
from $\C^r$ into the \lq\lq compact dual\rq\rq{} of $\mathcal M$ is 
called the nilpotent orbit of $\V$.  (See~\cite{P2} for the notion of compact 
dual.)

In general, given a classifying space $\mathcal M$ with 
\lq\lq compact dual\rq\rq{} $\check{\mathcal M}$, a nilpotent
orbit with values in $\M$ is a holomorphic, horizontal map
\begin{equation}\label{e.Nilp}
      F_{\nilp}(z_1,\dots,z_r) = e^{\sum_j\, z_j N_j}.F:
      \C^r\to\check{\mathcal M}
\end{equation}
such that 
\begin{itemize}
\item[{(a)}] $N_1,\dots,N_r$ are nilpotent, mutually commuting elements of
the Lie algebra $\mathfrak{g}_{\R} := \mathfrak{g}_{\C}\cap \gl(V_{\R})$;
\item[{(b)}] There exists a constant $K>0$ such that
$$
     F_{\nilp}(z_1,\dots,z_r)\in\mathcal M
$$
for all $z\in\C^r$ with $\Im(z_j)>K$.
\end{itemize}
In particular, $F_{\nilp}$ defines a variation of mixed Hodge structure 
$\V_{\nilp}$ on 
the set of points in $\Delta^{*r}$ where $|s_j|<e^{-2\pi K}$.  Accordingly, we 
say that $F_{\nilp}$ is admissible if $\V_{\nilp}$ is admissible.   

Via the formula~\eqref{e.Nilp}, a real nilpotent orbit is completely
determined by the data $(N_1,\ldots, N_r;F,W)$ 
consisting of nilpotent operators $N_1,\ldots, N_r$ on a real vector space $V$ 
together with a an decreasing filtration $F$ of $V_{\mathbb{C}}$ and a
decreasing filtration
$W$ of $V$.

\begin{remark} For the remainder of this paper, we will suppress the
data of the graded-polarization when discussing nilpotent orbits.
\end{remark}

Given an admissible nilpotent orbit $(N_1,\ldots,N_r;F,W)$, define
$$
       N(z) = \sum_j\, z_j N_j.
$$

In~\cite{Kashiwara} Kashiwara proved the following results concerning
relative weight filtrations associated to admissible nilpotent orbits.  
(We refer to \emph{loc.\ cit.} for the notion of a relative weight filtration.)

\begin{theorem}
Suppose $(N_1,\ldots, N_r;F,W)$ is an admissible nilpotent orbit.  Then 
\begin{itemize}
\item[{(a)}] The relative weight filtration $M(N(v),W)$ exists for every
vector $v\in\mathbb\R^r_{\geq 0}$;
\item[{(b)}] For each subset $I\subset\{1,\ldots, r\}$,  let $C(I)$ denote the 
monodromy cone $\sum_{i\in I}\,\RR_{>0}N_i$ in $\mathfrak g_{\RR}$.  Then 
the filtration $M(C(I),W)=M(N,W)$ is constant for $N\in C(I)$;
\item[{(c)}] Let $I=\{1,\dots,r\}$ and $M = M(C(I),W)$.  Then, $(F,M)$ is a
mixed Hodge structure with respect to which each $N_i$ is a 
$(-1,-1)$-morphism.  More generally, if $I$ is a subset of 
$\{1,\dots,r\}$ with complement $I'$ then 
\beq
        \theta_{I} = (\exp(\sum_{j\in I'}\, z_j N_j).F,M(C(I),W))
                                              \label{eq:partial-nilp}
\eeq
is an admissible nilpotent orbit, and each $N_i\in I$ is a $(-1,-1)$
morphism of the mixed Hodge structure on the right hand side
of \eqref{eq:partial-nilp};
\item[{(d)}] If $I$ and $J$ are subsets of $\{1,\dots,r\}$ then 
$$
      M(C(I),M(C(J),W)) = M(C(I\cup J),W)
$$
\end{itemize}
\end{theorem}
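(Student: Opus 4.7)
The plan is to induct on $r$, the number of commuting nilpotent operators, leveraging the admissibility hypothesis together with uniqueness of relative weight filtrations.  The base case $r=1$ is essentially the definition of admissibility in one variable (Steenbrink-Zucker/Kashiwara): the data $(N_1;F,W)$ is admissible precisely when $M(N_1,W)$ exists, $(F,M(N_1,W))$ is a mixed Hodge structure, and $N_1$ is a $(-1,-1)$-morphism.  Parts (a)--(c) follow immediately in this case, and (d) is vacuous.

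For the inductive step, I would handle (b) first as a formal consequence of uniqueness: the relative weight filtration $M(N,W)$ is determined by the Jordan pattern of $N$ on each $\Gr^W$, and this pattern is constant on the open cone $C(I)$; hence if $M(N,W)$ exists for one $N\in C(I)$ it is independent of the choice.  Both the existence claim and part (c) are then established together via the several-variable $\mathrm{SL}_2$-orbit theorem of Cattani-Kaplan-Schmid.  Given $I\subset\{1,\ldots,r\}$ with complement $I'$, I would let $\Im(z_j)\to\infty$ for $j\in I$ while treating the $z_j$ with $j\in I'$ as parameters; the $\mathrm{SL}_2$-orbit theorem yields an $\ssl_2^{|I|}$-representation and a limiting filtration making $(e^{\sum_{j\in I'}z_jN_j}.F,M(C(I),W))$ into a mixed Hodge structure, with each $N_i$ ($i\in I$) a $(-1,-1)$-morphism by Deligne's bigrading.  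The induction hypothesis, applied to $\theta_I$ viewed as a nilpotent orbit in the $|I'|<r$ variables indexed by $I'$, then upgrades this to full admissibility of $\theta_I$.  Part (a) is now a corollary: for $v\in\R^r_{\geq 0}$, setting $I=\{j : v_j > 0\}$ one has $N(v)\in C(I)$, so $M(N(v),W)=M(C(I),W)$ exists by (b) and (c).

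Finally, for (d) I would exploit the fact that iterating the limiting construction of (c) twice---first degenerating in the $J$-directions, then in the $I$-directions---produces the same admissible $\ssl_2^{|I\cup J|}$-structure that a single degeneration in the $(I\cup J)$-directions produces.  Consequently both $M(C(I),M(C(J),W))$ and $M(C(I\cup J),W)$ satisfy the defining uniqueness characterization of the relative weight filtration of $N(v)$ for any $v$ strictly positive on $I\cup J$, and uniqueness of the relative weight filtration forces them to coincide.  The principal obstacle throughout this outline is step (c): extracting a limit mixed Hodge structure at infinity in the $I$-directions while simultaneously keeping holomorphic control of the dependence on the $I'$-variables is precisely the delicate asymptotic content of the several-variable $\mathrm{SL}_2$-orbit theorem and constitutes the technical heart of Kashiwara's original argument.
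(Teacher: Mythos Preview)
The paper does not supply its own proof of this theorem; it attributes the result entirely to Kashiwara \cite{Kashiwara} and refers the reader there.  So there is no in-paper argument to compare against beyond the citation.

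Judging your outline on its merits, the principal gap is in part (c).  You invoke the several-variable $\mathrm{SL}_2$-orbit theorem of Cattani--Kaplan--Schmid, but that theorem is for \emph{pure} variations: it gives the limiting polarized Hodge structure and the $\ssl_2^{|I|}$-representation on each $\Gr^W_k$, not on $V$ itself.  Concluding that $(e^{\sum_{j\in I'}z_jN_j}.F,\,M(C(I),W))$ is a mixed Hodge structure on $V$---in particular, that $F$ and $M$ interact correctly across the layers of $W$---is exactly the new content Kashiwara must establish, and it does not drop out of the pure $\mathrm{SL}_2$-orbit theorem.  The \emph{mixed} $\mathrm{SL}_2$-orbit theorem of Kato--Nakayama--Usui does give what you need, but it takes Kashiwara's theorem as input, so appealing to it here is circular.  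Kashiwara's actual argument is more directly filtration-theoretic: he works with the defining axioms of the relative weight filtration, uses the pure theory only on $\Gr^W$, and carefully controls how the various filtrations on $V$ restrict to $W_k$ and pass to quotients.

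Your treatment of the remaining parts is closer to the mark.  For (b), note that the uniqueness argument you sketch also requires knowing $N_i(M_k)\subset M_{k-2}$ for each $i$ separately (not just for the particular $N$ you started with); this is not purely a statement about Jordan type on $\Gr^W$ and needs its own justification.  Once (b) and (c) are in hand, your derivations of (a) and (d) via uniqueness are sound in outline.
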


\subsection*{Deligne Systems} 
Let $(N_1,\dots,N_r;F,W)$ define an admissible nilpotent orbit
and let $W^0,\dots,W^r$ be the sequence of increasing filtrations defined
by the requirement that $W^0 = W$ and $W^j = M(N_j,W^{j-1})$.  Then,
by a theorem of Deligne \cite{D, Schwarz, BP1}, the data 
$(N_1,\dots,N_r,Y_{(F,W^r)})$ defines a sequence of mutually commuting
gradings (in the notation of equation $(3.3)$ of \cite{BP1})
\beq
       Y^{r} = Y_{(F,W^r)},\quad  Y^{r-1} = Y(N_r,Y^r),\quad\dots\  .
       \label{eq:Deligne-system}
\eeq
such that $Y^k$ grades $W^k$. Furthermore, if $(F,W^r)$ is split
over $\R$ this construction gives the corresponding gradings
of the $\SL_2$-orbit theorem~\cite{CKS, KNU}.  More precisely, 
let $(\hat F,W^r)$ denote the $\ssl_2$-splitting of $(F,W^r)$, and 
$\{\hat Y^j\}$ be the corresponding system of gradings.   Let 
$\hat H_j = \hat Y^j - \hat Y^{j-1}$ and $\hat N_j$ denote the component of 
$N_j$ with eigenvalue zero with respect to $\ad\hat Y^{j-1}$ for $j=1,\dots,r$.
Then, each pair $(\hat N_j,\hat H_j)$ is an $\ssl_2$-pair which commutes
with $(\hat N_k,\hat H_k)$.

Our main interest in Deligne systems will be in the grading 
\beq
Y^0=Y(N_1, Y(N_2, \ldots, Y_{(F,W^r)}))   \label{eq:Deligne-system-iterate}
\eeq
obtained by applying the construction in \eqref{eq:Deligne-system} 
recursively.

Note that the proof of Deligne's theorem is pure linear algebra.  In
particular, it applies to situations that do not necessarily arise
from Hodge theory.  In the terminology of~\cite[Definition
2]{Schwarz}, a finite dimensional vector space $V$ equipped with a
finite increasing filtration $W^0$ and $r$ commuting nilpotent
operators $N_1,\ldots, N_r$ and an operator $Y^r$ preserving $W^0$ is
called a \emph{Deligne system} if 
\begin{enumerate}
\item $W^{j+1} = M(N_{j+1}, W^j)$ exists for $j\geq 0$;
\item $W^{j+1}|_{W^i_{\ell}}= M(N_{j+1}, W^j|_{W^i_{\ell}})$, for each $j$ and
  $\ell$ and each $i<j$;
\item $N_i\in W^j_{-2}\End(V)$ for $i\leq j$ and $N_i\in W^j_0\End(V)$
for $i\geq j$;
\item $Y^r$ splits $W^r$ and preserves each $W^i$.  Moreover
  $[Y^r,N_i]=-2N_i$ for all $i$.  
\end{enumerate}
Deligne's theorem~\cite[Theorem 2]{Schwarz}, shows that the data
$(N_1\ldots, N_r; W^0)$ of a Deligne system gives rise to a system of
splittings $Y^k$ of the $W^k$ as above.

In particular, if we start with an admissible orbit $(N_1,\ldots, N_r,
F,W)$ and choose vectors $v_1,\ldots, v_d\in\mathbb{R}^r$ such that the 
$\N{v_i}$ all lie in the closure of the monodromy cone $C(\{1,\dots,r\})$ and 
$\sum_{i=1}^r v_i\in\RR_{>0}^r$, then the data 
\beq
      (\N{v_1},\ldots, \N{v_d};F,W) \label{eq:new-nilp}
\eeq 
again defines an admissible nilpotent orbit.  
Thus we obtain a system of gradings as above and, in particular, a grading 
$Y^0=Y(\N{v_1},Y(\N{v_d}, \ldots, Y_{(F,M)}))$ of $W$.

\begin{remark} If $(F,W)$ is a mixed Hodge structure then the data
$(N=0,F,W)$ determines an admissible nilpotent orbit for which the
associated grading \eqref{eq:Deligne-system-iterate} is just $Y_{(F,W)}$.
\end{remark}

\subsection*{Splittings} Let $(F,W)$ be an $\R$-mixed Hodge structure
with underlying vector space $V$, and 
$\gl(V) = \bigoplus_{a,b}\, \gl(V)^{a,b}$ be the bigrading 
\eqref{eq:Deligne-bigrading} for the mixed Hodge structure
induced by $(F,W)$ on the Lie algebra $\gl(V)$.  Define 
\beq
   \Lambda^{-1,-1}_{(F,W)} = \bigoplus_{a,b<0}\, \gl(V)^{a,b}
\eeq
Then, on account of the defining properties $(a)$--$(c)$ of the bigrading
\eqref{eq:Deligne-bigrading} it follows that
$$
          I^{p,q}_{(g.F,W)} = g.I^{p,q}_{(F,W)}
$$
for all $g\in\exp(\Lambda^{-1,-1}_{(F,W)})$.

\begin{theorem}[Deligne, Prop. $(2.20)$ \cite{CKS}] Given an 
$\R$-mixed Hodge structure $(F,W)$ with underlying vector space
$V$, there exists a unique, functorial element 
$$
        \delta\in \gl(V_{\R})\cap\Lambda^{-1,-1}_{(F,W)}
$$ 
such that $(e^{-i\delta}.F,W)$ is split over $\R$.  Every morphism of 
$(F,W)$ commutes with $\delta$; thus the morphisms of $(F,W)$ are
exactly the morphisms of $(e^{-i\delta}.F,W)$ which commute with this
element.
\end{theorem}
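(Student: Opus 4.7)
My plan is to reformulate the $\R$-splitness condition as an identity on the Deligne bigrading and then solve for $\delta$ by an induction on bidegree. A mixed Hodge structure $(F,W)$ is split over $\R$ precisely when $\overline{I^{p,q}_{(F,W)}} = I^{q,p}_{(F,W)}$ as subspaces of $V_{\C}$, rather than only modulo lower-order terms as in property (c). The failure of this equality, for each pair $(p,q)$, is a linear map from $I^{p,q}$ into $\bigoplus_{r<q,s<p} I^{r,s}$, and together these maps constitute the obstruction to $\R$-splitness that we wish to kill by conjugation with an appropriate $e^{-i\delta}$.

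To solve, I would exploit the observation recorded just above the theorem that for $g \in \exp(\Lambda^{-1,-1}_{(F,W)})$ one has $I^{p,q}_{(g.F,W)} = g(I^{p,q}_{(F,W)})$. Setting $g = e^{-i\delta}$ with $\delta \in \gl(V_{\R})$, so that $\overline{g} = e^{i\delta}$, the condition that $(g.F,W)$ be $\R$-split becomes the identity $e^{2i\delta}(\overline{I^{p,q}}) = I^{q,p}$ for all $p,q$. Next, I would decompose $\delta = \sum_{a,b \le -1} \delta^{a,b}$ according to the Deligne bigrading of $\gl(V)$ induced by $(F,W)$. The reality constraint $\overline{\delta} = \delta$, combined with property (c) applied to $\gl(V)$, identifies $\overline{\delta^{a,b}}$ with $\delta^{b,a}$ modulo strictly lower bidegree. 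Comparing the equation $e^{2i\delta}(\overline{I^{p,q}}) = I^{q,p}$ bidegree by bidegree then yields a triangular system: at each total weight $k = -(a+b)$, the components $\delta^{a,b}$ are determined uniquely from the obstruction at that weight together with the $\delta^{a',b'}$ of strictly lower weight that were previously solved for. Induction on $k$ gives both existence and uniqueness of $\delta$.

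Functoriality follows directly from uniqueness: any morphism $\phi$ of $(F,W)$ preserves $F$, $W$, the Deligne bigrading, and therefore the obstruction, so $\phi\delta\phi^{-1}$ is another valid solution and must coincide with $\delta$. The main technical obstacle is verifying that the triangular system admits a solution at each step in the presence of the antilinear conjugation, i.e., that the constraint $\overline{\delta} = \delta$ is compatible with the constraint of killing the obstruction at every bidegree. This compatibility ultimately rests on the nondegeneracy of the conjugation-induced pairing between $\gl(V)^{a,b}$ and $\gl(V)^{b,a}$ modulo lower terms, which is exactly property (c) applied at the Lie algebra level.
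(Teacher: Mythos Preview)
The paper does not prove this theorem at all: it is quoted from \cite{CKS} (Proposition~2.20 there) and used as a black box. So there is no ``paper's own proof'' to compare against, and your task was really to reconstruct the argument from \cite{CKS}.

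Your setup is correct. The key identity $e^{2i\delta}(\overline{I^{p,q}}) = I^{q,p}$ is exactly the right reformulation, and the observation that $\Lambda^{-1,-1}_{(F,W)}$ is stable under complex conjugation (which you implicitly use) follows from property~(c) applied to $\gl(V)$. The outline would lead to a proof, but there are two places where the sketch is thinner than it should be.

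First, the reality constraint. You correctly flag that imposing $\bar\delta=\delta$ while solving bidegree-by-bidegree is the crux, but you do not actually carry it out; you only assert compatibility. A cleaner route that avoids tracking reality through the induction is: first solve the purely complex problem of finding the unique $T\in 1+\Lambda^{-1,-1}$ with $T(\overline{I^{p,q}})=I^{q,p}$ for all $p,q$ (this is immediate, since $\overline{I^{p,q}}$ is the graph of a map $I^{q,p}\to\bigoplus_{r<q,s<p}I^{r,s}$ and one simply inverts the resulting unipotent operator). Then observe that $T\bar T$ preserves every $I^{p,q}$ and lies in $1+\Lambda^{-1,-1}$, hence $T\bar T=1$. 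Taking the nilpotent logarithm gives $\bar A=-A$ for $A=\log T$, so $A=2i\delta$ with $\delta$ real. This separates the triangular solve from the reality check and makes both steps transparent.

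Second, your functoriality argument writes $\phi\delta\phi^{-1}$, but a morphism of $(F,W)$ need not be invertible. The standard fix is either to embed $\phi$ into an automorphism of the mixed Hodge structure on $V\oplus V$ via $\left(\begin{smallmatrix}1&0\\\phi&1\end{smallmatrix}\right)$ and invoke uniqueness there, or to note directly that the construction of $T$ above is manifestly natural in $(F,W)$ (it is built from the bigrading and complex conjugation alone), whence so is $\delta=\frac{1}{2i}\log T$.
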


\par The proof of Lemma $(6.60)$ in~\cite{CKS} contains the implicit
construction of another functorial splitting operation (cf.~equation
$(3.30)$ in~\cite{CKS})
\beq
          (F,W)\mapsto (e^{-\xi}.F,W)          \label{eq:sl2-splitting-op}
\eeq
on the category of $\R$-mixed Hodge structures which is optimal for the 
study of nilpotent orbits.  More precisely, if for any mixed Hodge
structure $(F,W)$ we define
\beq
          \hat Y_{(F,W)} = Y_{(e^{-\xi}.F,W)}   \label{eq:sl2-grading}
\eeq
then one of the major components of the $\SL_2$-orbit theorem 
of~\cite{KNU} can be stated as follows:

\begin{theorem}\label{theorem:knu-main}\cite{KNU} Let $(N_1,\dots,N_r;F,W)$ 
generate an admissible nilpotent orbit and let $y(m)\in\R^r$ be a sequence of 
positive real numbers such that the ratios 
$y_{j+1}(m)/y_j(m)$ tend to $0$ for $j=1,\dots,r$ upon formally setting
$y_{r+1}(m)=1$. Then,
\beq
       \lim_{m\to\infty}\, \hat Y_{(e^{iN(y(m))}.F,W)}
        = Y(N_1,Y(N_2,\dots,\hat Y_{(F,W^r)})).       \label{eq:knu-main}
\eeq
\end{theorem}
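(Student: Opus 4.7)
The plan is to prove this as an asymptotic statement about the several-variable $SL_2$-orbit theorem, proceeding by reduction to the $\mathbb{R}$-split situation and then by induction on $r$. First, I would reduce to the case in which $(F, W^r)$ is already split over $\mathbb{R}$. Using Deligne's $\delta$-splitting, write $F = e^{i\delta}.F_{\mathrm{sp}}$ with $(F_{\mathrm{sp}}, W^r)$ split over $\mathbb{R}$, and note that each $N_j$, being a $(-1,-1)$-morphism of $(F, W^r)$, commutes with $\delta$. Consequently $e^{iN(y)}.F = e^{i\delta}.(e^{iN(y)}.F_{\mathrm{sp}})$. Since the map $(F,W) \mapsto \xi_{(F,W)}$ is functorial and the $\hat Y$-grading is defined via the $\xi$-splitting, one reduces the convergence statement to the analogous one for $F_{\mathrm{sp}}$ after controlling the perturbation caused by $e^{i\delta}$; this perturbation is bounded and one checks that it does not affect the limit.

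In the split case, the Deligne system associated to $(N_1,\dots,N_r; F, W)$ produces commuting $\mathfrak{sl}_2$-pairs $(\hat N_j,\hat H_j)$ together with the gradings $\hat Y^j = \hat Y^{j-1} + \hat H_j$ discussed in the text before \eqref{eq:Deligne-system}. The hierarchical scaling $y_{j+1}(m)/y_j(m) \to 0$ allows one to process the variables $y_1,\dots,y_r$ in order of decreasing magnitude. In particular, one can write
\[
e^{iN(y)}.F_{\mathrm{sp}} = e^{iy_1 N_1}\bigl(e^{iy_2 N_2}\cdots e^{iy_r N_r}.F_{\mathrm{sp}}\bigr),
\]
and by Kashiwara's result the inner factor defines an admissible nilpotent orbit in $r-1$ variables with limit filtration $(F_{\mathrm{sp}}, W^{r-1})$ and adjusted relative weight filtrations, so the inductive hypothesis can be applied to it.

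For the base case $r=1$, \eqref{eq:knu-main} asserts that $\hat Y_{(e^{iyN}.F, W)} \to Y(N, \hat Y_{(F, W^1)})$ as $y \to \infty$. This is the one-variable $SL_2$-orbit theorem of Schmid extended to admissible variations of mixed Hodge structure, e.g.\ via the approach of \cite{P2}: in the split case the $\mathfrak{sl}_2$-pair $(\hat N, \hat H)$ exists, $e^{iyN}.F$ is computed by explicit $\mathrm{SL}_2(\mathbb{R})$ action, and the $\hat Y$-grading of this image is read off from $\hat H$ together with the grading $\hat Y_{(F, W^1)}$ on the kernel of $\mathrm{ad}\,\hat H$. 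For the inductive step from $r-1$ to $r$, one combines the inner admissible orbit (handled inductively) with the outer single-variable action of $e^{iy_1 N_1}$, using that $N_1$ is a $(-1,-1)$-morphism for the mixed Hodge structure $(\exp(\sum_{j\geq 2} z_j N_j).F, M(C(\{2,\dots,r\}), W))$ obtained from \eqref{eq:partial-nilp}, so that the outer step is governed again by the one-variable case. The iterated grading on the right of \eqref{eq:knu-main} is precisely what the construction \eqref{eq:Deligne-system-iterate} produces.

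The main obstacle is controlling the $\xi$-splitting — equivalently the $\hat Y$-grading — as a function of the moving filtration $e^{iN(y)}.F$. The operator $\xi$ does not transform simply under the action of $\exp\mathfrak{g}_{\mathbb{R}}$, so one cannot directly conjugate the statement from $F$ to $e^{iN(y)}.F$. The technical core of the proof is an estimate showing that in the hierarchical regime $y_{j+1}/y_j \to 0$ the image point $e^{iN(y)}.F$ admits an approximation by a product of single-variable $SL_2$-orbits with an error that tends to zero faster than any polynomial correction relevant to the $\hat Y$ grading. This is where the estimates of the several-variable $SL_2$-orbit theorem (as in \cite{CKS, KNU}) enter, and they are what ultimately allow the one-variable base case to be applied inductively while keeping track of the nested Deligne gradings.
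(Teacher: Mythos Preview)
The paper does not give a self-contained proof of this theorem. It is stated as a citation from \cite{KNU}, with the surrounding text explaining that Lemma~\ref{lemma:ds-2} (attributed to Deligne) is needed only to translate KNU's formulation into the Deligne-system language used here. So there is no substantive ``paper's proof'' to compare against beyond this translation step.

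Your proposal attempts a direct inductive argument, and its broad architecture --- reduce to the $\mathbb{R}$-split limit and then peel off the fastest-growing variable $y_1$ --- is indeed the shape of the argument in \cite{CKS} and \cite{KNU}. But the sketch has a genuine gap at the reduction step. You write $F = e^{i\delta}.F_{\mathrm{sp}}$ and assert that the perturbation by $e^{i\delta}$ ``is bounded and one checks that it does not affect the limit.'' Boundedness is not enough: the assignment $(F,W)\mapsto \hat Y_{(F,W)}$ is \emph{not} equivariant for $\exp(\mathfrak g_{\mathbb R})$, a point you yourself concede later when you say $\xi$ ``does not transform simply under the action of $\exp\mathfrak g_{\mathbb R}$.'' What is needed is control over how the $\ssl_2$-splitting of the \emph{moving} mixed Hodge structure $(e^{iN(y)}.F,W)$ compares to that of $(e^{iN(y)}.F_{\mathrm{sp}},W)$ as $y$ escapes along the prescribed regime, and that control is exactly what the several-variable $\SL_2$-orbit theorem provides.

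Your final paragraph acknowledges this by appealing to ``the estimates of the several-variable $\SL_2$-orbit theorem (as in \cite{CKS, KNU}).'' But Theorem~\ref{theorem:knu-main} \emph{is} a reformulation of part of that theorem, so invoking those estimates here is circular. In summary: you have correctly identified the structure of the argument and its main obstacle, but the proposal defers the obstacle to the very result being proved rather than overcoming it.
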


\par In this paper, we call the splitting operation
\eqref{eq:sl2-splitting-op} the {\it
  $\ssl_2$-splitting}. In~\cite{KNU}, \eqref{eq:sl2-splitting-op} is
called the {\it canonical splitting} operation and $\xi$ is denoted as
$\epsilon(F,W)$.  The proof of Lemma $(6.60)$ in~\cite{CKS} gives a
recursive formula for $\xi$ in terms of the Hodge components of
Deligne $\delta$-splitting for $(F,W)$.  In
Corollary~\ref{c.SL2_Splitting}, we prove the following result due
essentially to Deligne.

\begin{theorem}\cite{D}\label{theorem:SL2_Splitting} The $\ssl_2$-splitting is the
unique, functorial splitting of $\R$-MHS which is given by universal
Lie polynomials in the Hodge components of Deligne's $\delta$-splitting
such that if $(e^{zN}.F,W)$ is a admissible nilpotent orbit with limit
mixed Hodge structure $(F,M)$ which is split over $\R$ then the Deligne
grading of the splitting of $(e^{iN}.F,W)$ is a morphism of type $(0,0)$
for $(F,M)$.
\end{theorem}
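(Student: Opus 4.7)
My plan is to prove existence and uniqueness separately. For existence, I will unpack the recursive construction of $\xi$ from the proof of Lemma~$6.60$ in~\cite{CKS} and verify it satisfies both the universal-polynomial form and the $(0,0)$-morphism condition. For uniqueness, I will reduce to evaluation on admissible nilpotent orbits whose limit mixed Hodge structure is split over $\R$, then combine this with a density argument on Hodge components.

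For existence, the recursion in \cite[Lemma~$6.60$]{CKS} produces $\xi$ by successive Lie bracket operations starting from the Hodge components $\delta^{p,q}$ of Deligne's $\delta$-splitting, as recalled in the discussion surrounding~\eqref{eq:sl2-splitting-op}. Unwinding the recursion manifestly displays $\xi$ as a universal Lie polynomial in the $\delta^{p,q}$, and functoriality is automatic since any morphism of $\R$-MHS commutes with the Deligne bigrading and hence with any Lie polynomial in its components. For the $(0,0)$-morphism property, I would analyze the admissible orbit $(e^{zN}.F, W)$ with $\R$-split limit $(F, M)$ directly: using the structure of the $\ssl_2$-pair $(\hat N, \hat H)$ described before~\eqref{eq:Deligne-system-iterate}, together with Kashiwara's statement that $N$ is a $(-1,-1)$-morphism for $(F, M)$ and the one-variable specialization of Theorem~\ref{theorem:knu-main}, one can match the Deligne grading of $(e^{-\xi}.e^{iN}.F, W)$ with a grading built from $Y_{(F, M)}$ and $N$ that manifestly commutes with the Hodge bigrading of $(F, M)$.

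For uniqueness, suppose $\xi_1$ and $\xi_2$ are two universal Lie polynomials in the $\delta^{p,q}$ each producing a functorial $\R$-splitting satisfying the $(0,0)$-morphism property. On an admissible orbit $(e^{zN}.F, W)$ with $\R$-split limit $(F, M)$, both $e^{-\xi_j}.e^{iN}.F$ are $\R$-split and their Deligne gradings are $(0,0)$-morphisms for $(F, M)$; a rigidity argument parallel to Deligne's proof of the uniqueness of the $\delta$-splitting will show $\xi_1(e^{iN}.F, W) = \xi_2(e^{iN}.F, W)$. Setting $\eta = \xi_1 - \xi_2$ and letting the data $(N, F, M)$ range over all such admissible orbits, I would then argue that the corresponding tuples $\bigl(\delta^{p,q}_{(e^{iN}.F, W)}\bigr)$ sweep a Zariski-dense subset of possible Hodge component configurations, forcing $\eta$ to vanish as a formal polynomial.

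The main obstacle is this density claim. My approach is an infinitesimal analysis of the family $y \mapsto (e^{iyN}.F, W)$: the asymptotic formulas underlying Theorem~\ref{theorem:knu-main} let one compute how $\delta^{p,q}$ evolves with $y$, and by varying $N$ within the admissible cone and the $\R$-split data $(F, M)$ one expects the derivatives to span the tangent space of the parameter space of Hodge components at a generic point. A secondary subtlety is making precise the framework of universal Lie polynomials --- for instance as elements of the free Lie algebra on formal generators $\delta^{a,b}$ graded by bidegree --- so that vanishing on a dense set of actual inputs translates to formal vanishing of the polynomial.
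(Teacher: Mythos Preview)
The paper does not prove this theorem. Immediately before the statement it says ``In \cite{BPSplit}, we prove the following result due essentially to Deligne,'' and the theorem is attributed to \cite{D}. No argument is given here; the result is simply imported and then \emph{used} (for instance, the sentence after Lemma~\ref{lemma:ds-2} deduces that $Y(N,Y_{(F,M)})$ is a $(0,0)$-morphism \emph{from} Theorem~\ref{theorem:SL2_Splitting}, not the other way around). So there is no proof in the paper to compare your proposal against.

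As for the proposal itself: the existence portion is plausible in outline --- the CKS recursion does exhibit $\xi$ as a universal Lie polynomial in the $\delta^{p,q}$, and functoriality is automatic --- but your verification of the $(0,0)$-morphism property is only gestured at. You invoke ``the one-variable specialization of Theorem~\ref{theorem:knu-main}'' and a matching with a grading built from $Y_{(F,M)}$ and $N$, but this is essentially the content of Lemma~\ref{lemma:ds-2}, whose proof the paper also defers to \cite{BPSplit}; you have not supplied the mechanism by which the CKS $\xi$ actually produces $Y(N,Y_{(F,M)})$ in the split-limit case. More seriously, your uniqueness argument rests entirely on the density claim, which you correctly flag as the main obstacle but do not resolve. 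The infinitesimal analysis you sketch would need to show that the map from admissible split-limit data $(N,F,M)$ to the tuple of Hodge components $\delta^{p,q}_{(e^{iN}.F,W)}$ has Zariski-dense image in the relevant graded parameter space; this is not obvious, and varying $N$ in a cone and $(F,M)$ over split structures gives you a rather constrained family. Without this, two distinct universal polynomials could in principle agree on all such inputs. You also need to make precise in what category the ``universal Lie polynomial'' lives and why vanishing on a Zariski-dense set of substitutions forces formal vanishing --- this is routine once set up correctly, but as written it is a loose end.
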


\begin{remark}
  It follows from \cite[Lemma 3.12]{CKS} that $(e^{iN}.F,W)$ is a
  mixed Hodge structure whenever $(e^{zN}.F,W)$ is an admissible
  nilpotent orbit with limit split over $\R$, because, in that case,
  the graded quotients $\Gr^W$ are $\SL_2$-orbits.
\end{remark}

The notion of Deligne system does not appear in~\cite{KNU}. To
extract  \eqref{eq:knu-main} from \cite{KNU}, we need 
the following lemma from an unpublished letter of Deligne to 
Cattani and Kaplan.   The reader can find a proof in Remark~\ref{r.ds-2}
of Part~\ref{p.2}.

\begin{lemma}\label{lemma:ds-2}\cite{D,BP1} 
Let $(N;F,W)$ be an admissible nilpotent orbit (in one variable) with
limit mixed Hodge structure $(F,M)$ split over $\R$.   Then 
\beq
\hat Y_{(e^{iN}.F,W)} = Y(N,Y_{(F,M)})   \label{eq:filt-preserve}
\eeq
\end{lemma}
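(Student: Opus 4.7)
The plan is to invoke the uniqueness of the $\ssl_2$-splitting in Theorem~\ref{theorem:SL2_Splitting} by exhibiting an explicit functorial splitting of the mixed Hodge structure $(e^{iN}.F, W)$ whose Deligne grading is $Y' := Y(N, Y_{(F,M)})$ and which has the characterizing $(0,0)$-morphism property. By the remark after Theorem~\ref{theorem:SL2_Splitting}, $(e^{iN}.F, W)$ is indeed a mixed Hodge structure, so $\hat Y_{(e^{iN}.F, W)}$ is well-defined.

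First I would verify that $Y'$ is a morphism of type $(0,0)$ for the limit mixed Hodge structure $(F, M)$. In the $\R$-split setting, Deligne's iterative construction attaches to the data $(N, Y_{(F,M)})$ a real $\ssl_2$-pair $(N, H)$ with $H = Y_{(F,M)} - Y'$; here $H$ is a $(0,0)$-morphism of $(F, M)$ by the standard $\ssl_2$-calculus in the split case: the underlying real $\ssl_2$-representation commutes with $Y_{(F,M)}$, and $N$ being of Hodge type $(-1,-1)$ for $(F,M)$ forces $H$ to be of type $(0,0)$. Since $Y_{(F,M)}$ is tautologically a $(0,0)$-morphism of $(F, M)$, so is $Y' = Y_{(F,M)} - H$; reality of $Y'$ follows from the reality of $Y_{(F,M)}$ and $N$.

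The main step is to produce a real $\zeta \in \gl(V_\R) \cap \Lambda^{-1,-1}_{(e^{iN}.F, W)}$ such that $Y_{(e^{-\zeta}.e^{iN}.F, W)} = Y'$ and the mixed Hodge structure $(e^{-\zeta}.e^{iN}.F, W)$ is split over $\R$. I would carry this out by computing the Deligne $\delta$-splitting of $(e^{iN}.F, W)$ in terms of the $(F, M)$-bigrading --- exploiting that $\delta_{(F,M)} = 0$ since $(F,M)$ is $\R$-split and that $N$ is of Hodge type $(-1,-1)$ for $(F, M)$ --- and then substituting into the recursive formula for the $\ssl_2$-splitting operator obtained in the proof of Lemma 6.60 of~\cite{CKS}, verifying that the resulting $\zeta$ is real and transports the Deligne grading of $(e^{iN}.F, W)$ to $Y'$. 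Because the recursion is universal, this simultaneously defines a functorial splitting on every $\R$-MHS as a universal Lie polynomial in the Hodge components of $\delta$.

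Combining these two steps, by uniqueness in Theorem~\ref{theorem:SL2_Splitting} the splitting just constructed coincides with the $\ssl_2$-splitting, so $\zeta = \xi = \epsilon(e^{iN}.F, W)$ and
\[
\hat Y_{(e^{iN}.F, W)} \;=\; Y_{(e^{-\xi}.e^{iN}.F, W)} \;=\; Y',
\]
which is the desired identity~\eqref{eq:filt-preserve}. The main obstacle is the explicit computation in the previous paragraph: unwinding the CKS recursion in the split case to confirm both the reality of $\zeta$ and that its associated Deligne grading is precisely $Y'$, which requires careful tracking of the $\ssl_2$-structure through the iteration.
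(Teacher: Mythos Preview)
The paper does not give its own proof of this lemma; it is quoted from Deligne's unpublished letter~\cite{D} with the remark that ``the reader can find a proof in~\cite{BPSplit}.'' There is therefore no in-paper argument to compare against.

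Your proposal has a structural problem: the uniqueness clause of Theorem~\ref{theorem:SL2_Splitting} cannot do the work you assign to it. That theorem characterizes the $\ssl_2$-splitting \emph{operation} among all functorial splittings of $\R$-MHS given by universal Lie polynomials in the Hodge components of $\delta$; it does not pin down a particular grading of $W$ for a single fixed mixed Hodge structure. Your candidate splitting is built by ``substituting into the recursive formula for the $\ssl_2$-splitting operator'' from~\cite{CKS}, but that recursion \emph{is} the definition of the $\ssl_2$-splitting, so $\zeta=\xi$ holds tautologically and the appeal to uniqueness is empty. Once this framing is removed, what remains is a direct computation: evaluate $\xi=\epsilon(e^{iN}.F,W)$ via the CKS recursion and verify that the resulting Deligne grading equals $Y(N,Y_{(F,M)})$. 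That is a legitimate strategy, but it is precisely the step you label ``the main obstacle'' and do not actually perform --- so the proposal is an outline rather than a proof. Note also that the paper \emph{derives} the fact that $Y(N,Y_{(F,M)})$ is a $(0,0)$-morphism of $(F,M)$ from Lemma~\ref{lemma:ds-2} combined with Theorem~\ref{theorem:SL2_Splitting} (see the sentence immediately following the lemma), so your first paragraph would need an argument genuinely independent of the statement being proved; the sketch you offer there is plausible but not yet a proof.
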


In particular 
(cf.~\cite{D, KP, BP}), it follows from equation 
\eqref{eq:filt-preserve} and Theorem \eqref{theorem:SL2_Splitting} 
that $Y(N,Y_{(F,M)})$ is a morphism of type $(0,0)$ for $(F,M)$.  
We will use the following extension of this result.

\begin{lemma}\label{l.FiltPreserveZero} Let $(N_1,\dots,N_r;F,W)$ 
define an admissible nilpotent orbit.  Then,
$$
      Y=Y(N_1,Y(N_2,\dots,Y(N_r,Y_{(F,W^r)})))
$$
preserves the Deligne $I^{p,q}$'s of $(F,W^r)$.  Furthermore, if 
$(\hat F,W^r) = (e^{-\xi}.F,W^r)$ is the $\ssl_2$-splitting of 
$(F,W^r)$ then 
$$
       Y(N_1,Y(N_2,\dots,Y(N_r,Y_{(\hat F,M)})))
       = e^{-\xi}.Y(N_1,Y(N_2,\dots,Y(N_r,Y_{(F,M)})))
$$
\end{lemma}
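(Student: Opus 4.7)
The plan is to prove the two assertions of the lemma simultaneously by induction on $r$. The base case $r=1$ is essentially the Remark following Lemma \ref{lemma:ds-2}: that remark asserts precisely that $Y(N,Y_{(F,M)})$ is a $(0,0)$-morphism of $(F,M)$, which is the first assertion, while the second assertion in one variable follows from the functoriality of Deligne's construction $Y(N,\cdot)$ under conjugation by $e^{-\xi}$, together with the identity $Y_{(\hat F,M)}=e^{-\xi}.Y_{(F,M)}$ that holds because $\xi\in\Lambda^{-1,-1}_{(F,M)}$. In the inductive step for $r\geq 2$, I apply the inductive hypothesis to the admissible $(r-1)$-variable orbit $(N_2,\ldots,N_r;F,W^1)$, whose relative weight filtrations are $W^1,\ldots,W^r$ and whose limit MHS is again $(F,W^r)$ with the same $\ssl_2$-splitting element $\xi$. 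This provides an intermediate grading $Y^1:=Y(N_2,Y(\ldots Y(N_r,Y_{(F,W^r)})))$ which preserves the Deligne $I^{p,q}$'s of $(F,W^r)$ and satisfies $Y(N_2,Y(\ldots Y(N_r,Y_{(\hat F,W^r)})))=e^{-\xi}.Y^1$.

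For the second assertion at step $r$, I apply the functoriality of $Y(N_1,\cdot)$ to conjugation by $e^{-\xi}$. Two ingredients are needed. First, $N_1$ commutes with $\xi$: since $N_1\in\gl(V)^{-1,-1}_{(F,W^r)}$, Deligne's theorem implies that $N_1$ commutes with the $\delta$-splitting of $(F,W^r)$, and the Hodge bigrading on $\gl(V)$ forces commutativity with each component $\delta^{a,b}$ separately, hence with the universal Lie polynomial $\xi$ in these components (Theorem \ref{theorem:SL2_Splitting}). Second, $e^{-\xi}$ must preserve the filtrations $W$ and $W^1$, which follows from the standard compatibility of the $\delta$-splitting with the weight filtrations arising from an admissible nilpotent orbit. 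Combining functoriality with the inductively established second assertion for $Y^1$ gives $Y(N_1,e^{-\xi}.Y^1)=e^{-\xi}.Y(N_1,Y^1)$, which is the desired identity at step $r$.

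For the first assertion at step $r$, observe that $\xi\in\Lambda^{-1,-1}_{(F,W^r)}$ (being a Lie polynomial in elements of that space), which together with $\hat F=e^{-\xi}.F$ and the general formula $I^{p,q}_{(g.F,W)}=g.I^{p,q}_{(F,W)}$ for $g\in\exp(\Lambda^{-1,-1}_{(F,W)})$ yields $I^{p,q}_{(\hat F,W^r)}=e^{-\xi}.I^{p,q}_{(F,W^r)}$. Combined with the second assertion (now established at step $r$), this shows that the iterated grading $Y^0=Y(N_1,Y^1)$ preserves the $I^{p,q}$'s of $(F,W^r)$ if and only if the analogous iterated grading for $(N_1,\ldots,N_r;\hat F,W)$ preserves the $I^{p,q}$'s of $(\hat F,W^r)$. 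We may therefore reduce to the case in which $(F,W^r)$ is split over $\R$.

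In this split case, $\hat Y_{(F,W^r)}=Y_{(F,W^r)}$, so Theorem \ref{theorem:knu-main} identifies $Y^0$ with the limit $\lim_m\hat Y_{(e^{iN(y(m))}.F,W)}$ for any sequence $y(m)\in\R^r_{>0}$ with $y_1(m)\gg y_2(m)\gg\cdots\gg y_r(m)$. For each fixed $m$, Kashiwara's theorem (b) shows that $(N(y(m));F,W)$ is a one-variable admissible nilpotent orbit whose relative weight filtration is $W^r$, and because $(F,W^r)$ is split by assumption, Theorem \ref{theorem:SL2_Splitting} applied to this one-variable orbit shows that $\hat Y_{(e^{iN(y(m))}.F,W)}$ is a $(0,0)$-morphism of $(F,W^r)$. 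Since the space of $(0,0)$-morphisms of $(F,W^r)$ is closed, the limit $Y^0$ inherits this property. The main obstacle in executing this plan is the verification that the $\ssl_2$-splitting element $\xi$ preserves every intermediate filtration $W^k$, which underlies the functoriality argument for the second assertion and rests on the finer structure theory of admissible nilpotent orbits rather than being evident from the definition of $\xi$ alone.
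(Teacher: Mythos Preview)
Your proof is correct, and for the second assertion it coincides with the paper's argument: both iterate the functoriality of Deligne's construction $Y(N_j,\cdot)$ under conjugation by $e^{-\xi}$, using that $\xi$ commutes with each $N_j$ and preserves each $W^k$. Your inductive packaging is just a formalization of the paper's phrase ``Iterating this process'' in Lemma~\ref{l.xiy}.

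The genuine difference is in how you handle the first assertion in the split case. The paper argues via the explicit commuting $\ssl_2$-pairs $(\hat N_j,\hat H_j)$ attached to the Deligne system: it builds the sequence $\hat F_j = e^{i\hat N_{j+1}}.\hat F_{j+1}$, applies the one-variable Lemma~\ref{lemma:ds-2} to $(N_1;\hat F_1,W)$ to get $\hat Y^0$ preserving $\hat F_1$, and then uses $[\hat Y^0,\hat N_k]=0$ together with $\hat F_1 = e^{i\sum_{k>1}\hat N_k}.\hat F$ to conclude. Your argument instead treats Theorem~\ref{theorem:knu-main} as a black box: in the split case it identifies $Y^0$ directly as the limit $\lim_m \hat Y_{(e^{iN(y(m))}.F,W)}$, and then Theorem~\ref{theorem:SL2_Splitting} applied to each one-variable orbit $(N(y(m));F,W)$ (whose limit $(F,W^r)$ is split by Kashiwara's constancy of the relative weight filtration on the open cone) shows every term of the sequence lies in the closed subspace $\gl(V)^{0,0}_{(F,W^r)}$. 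Your route is shorter and avoids the bookkeeping with the $\hat N_k$, at the cost of invoking the KNU $\SL_2$-orbit theorem, which is substantially heavier machinery than the elementary commutator calculations the paper uses.

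On the ``obstacle'' you flag: the paper resolves it in one line (Lemma~\ref{lemma:weight-filt-preserve}). The point is that each $W^j$ is a filtration of the limit mixed Hodge structure $(F,W^r)$ by sub-mixed-Hodge-structures, so by functoriality of the $\ssl_2$-splitting $\xi$ must preserve each $W^j$. This is not a delicate structural fact; it follows immediately once one observes that the $W^j$ are subobjects in the category of mixed Hodge structures.
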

\begin{proof}
  See Lemma~\ref{l.xiy}
\end{proof}

\begin{remark} Implicit in the second part of Lemma~\eqref{l.FiltPreserveZero}
is the statement that $\xi$ preserves each weight filtration $W^j$.
This is explained in the proof of Lemma~\ref{l.xiy}. 
\end{remark}

\par For future use, we record the following:

\begin{lemma}\label{lemma:morphisms-and-splittings} Let $\a$ be a 
morphism of type $(-1,-1)$ for the mixed Hodge structure $(F,W)$.
Then, $(\widehat{e^{i\alpha}.F},W) = (\hat F,W)$ for both the Deligne 
and $\ssl_2$-splitting operation.
\end{lemma}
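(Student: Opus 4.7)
The plan is to first establish a key commutativity: $\alpha$, being a morphism of mixed Hodge structures of type $(-1,-1)$ in the sense that $\alpha\colon(V,F,W)\to(V,F,W)(-1)$, should commute with Deligne's $\delta$-splitting of $(F,W)$. I would derive this from the functoriality of $\delta$ combined with its invariance under Tate twist (the characterizing conditions of $\delta$ depend only on the induced bigrading on $\gl(V)$, which is unchanged by twisting $V$). Since $\alpha \in \gl(V)^{-1,-1}_{(F,W)}$ and each Hodge component $\delta^{a,b}$ of $\delta$ lies in a distinct graded piece of $\gl(V)$, the identity $[\alpha,\delta]=0$ forces $[\alpha,\delta^{a,b}]=0$ for each $(a,b)$; hence $\alpha$ commutes with every Lie polynomial in the components $\delta^{a,b}$.

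For the Deligne $\delta$-splitting, I would verify directly that $\delta+\alpha$ satisfies the defining properties of the Deligne splitting of $(F',W):=(e^{i\alpha}.F,W)$: it is real; it lies in $\Lambda^{-1,-1}_{(F',W)}=\Lambda^{-1,-1}_{(F,W)}$ (the latter equality using $\Ad(e^{i\alpha})$-invariance of this subspace); and, crucially, the commutativity gives $e^{-i(\delta+\alpha)}e^{i\alpha}=e^{-i\delta}$, so
\[
(e^{-i(\delta+\alpha)}.F',\,W) \;=\; (e^{-i\delta}.F,\,W) \;=\; (\hat F, W)
\]
is $\R$-split. By the uniqueness part of Deligne's theorem, this yields $\widehat{(F',W)}=(\hat F,W)$ for the Deligne splitting.

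For the $\ssl_2$-splitting, I would invoke Theorem~\ref{theorem:SL2_Splitting} to write $\xi=P(\delta^{a,b})$ for a universal Lie polynomial $P$. The commutativity of $\alpha$ with each $\delta^{a,b}$ lets me compute the Hodge components of $\delta+\alpha$ in the $(F',W)$-bigrading: they coincide with $\delta^{a,b}$ for $(a,b)\neq(-1,-1)$, and the $(-1,-1)$ component becomes $\delta^{-1,-1}+\alpha$. Substituting these into $P$ and observing that every nested commutator involving $\alpha$ vanishes (since $[\alpha,\delta^{a,b}]=0$ and $[\alpha,\alpha]=0$), the only surviving new contribution comes from the linear part of $P$ in $\delta^{-1,-1}$, giving $\xi(F',W)=\xi(F,W)+c_1\alpha$ with $c_1$ the linear coefficient of $\delta^{-1,-1}$ in $P$. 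A separate computation, using the explicit recursive construction of $\xi$ in CKS Lemma~6.60 (where $\xi\equiv i\delta^{-1,-1}$ modulo deeper terms), identifies $c_1=i$. Since $[\alpha,\xi]=0$, one then gets
\[
e^{-\xi(F',W)}.F' \;=\; e^{-\xi-i\alpha}e^{i\alpha}.F \;=\; e^{-\xi}.F \;=\; \hat F,
\]
finishing the $\ssl_2$ case. The main obstacle will be pinning down $c_1=i$, which hinges on the specific recursive form of the Lie polynomial defining $\xi$; everything else flows cleanly from the commutativity $[\alpha,\delta^{a,b}]=0$ provided by functoriality.
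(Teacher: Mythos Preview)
The paper does not supply a proof of this lemma; it is simply ``recorded for future reference.'' So your proposal is not being compared against anything, and the question is only whether it stands on its own.

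Your argument for the Deligne splitting is complete and correct: $[\alpha,\delta]=0$ follows from the functoriality clause in the paper's statement of Deligne's theorem (a $(-1,-1)$-morphism is a morphism $(F,W)\to(F,W)(1)$, and $\delta$ is Tate-twist invariant), and then $\delta'=\delta+\alpha$ satisfies all three defining conditions by the computation you give.

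For the $\ssl_2$-splitting your reduction is also correct: from $[\alpha,\delta^{a,b}]=0$ one gets $(\delta')^{a,b}=\delta^{a,b}$ for $(a,b)\neq(-1,-1)$ and $(\delta')^{-1,-1}=\delta^{-1,-1}+\alpha$ in the $(F',W)$-bigrading, and then every iterated bracket in the universal Lie polynomial is unchanged, leaving $\xi'=\xi+c_{-1,-1}\alpha$ where $c_{-1,-1}$ is the coefficient of $\delta^{-1,-1}$ in the linear part of $P$. The lemma is then equivalent to $c_{-1,-1}=i$.

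You are right that this coefficient is the crux, but you do not need to unwind the CKS recursion to get it. A cleaner route, entirely internal to the paper, is to specialize to the case where $(F,W)$ is already split over $\R$. Then $\delta=0$, $\xi=0$, and for $(F',W)=(e^{i\alpha}.F,W)$ one has $\delta'=\alpha$ purely of type $(-1,-1)$, so $\xi'=c_{-1,-1}\alpha$ and $\widehat{F'}=e^{(i-c_{-1,-1})\alpha}.F$. On the other hand, $(e^{z\alpha}.F,W)$ is an admissible nilpotent orbit with $M(\alpha,W)=W$ (since $\alpha\in W_{-2}\gl(V)$) and split limit $(F,W)$; Lemma~\ref{lemma:ds-2} gives $\hat Y_{(e^{i\alpha}.F,W)}=Y(\alpha,Y_{(F,W)})=Y_{(F,W)}$, the last equality because the relative weight filtration is $W$ itself. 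Since $\widehat{F'}$ is split with the same $W$, the same grading $Y_{(F,W)}$, and the same Hodge structures on $\Gr^W$ as $(F,W)$, it must equal $F$. Hence $(i-c_{-1,-1})\alpha$ stabilizes $F$, which forces $c_{-1,-1}=i$ because $\alpha\in\Lambda^{-1,-1}_{(F,W)}$ meets the stabilizer of $F$ trivially.

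So your plan works; the only soft spot is the justification of $c_{-1,-1}=i$, and the paper's own Lemma~\ref{lemma:ds-2} supplies it without appeal to the explicit CKS recursion.
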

\begin{proof} By Proposition $(2.20)$ of~\cite{CKS}, 
$$
        \bar Y_{(F',W)} = e^{-2i\delta_{(F',W)}}.Y_{(F',W)}
$$
for any mixed Hodge structure $(F',W)$.  Using this formula and the
fact that Deligne's splitting commutes with morphisms of mixed Hodge
structure it easily follows that 
$$
        \delta_{(e^{i\alpha}.F,W)} = \delta_{(F,W)} + i\alpha
$$
since $\alpha$ is a $(-1,-1)$-morphism of both $(e^{i\alpha}.F,W)$
and $(F,W)$.  To obtain the analogous assertion for the $\ssl_2$-splitting
one uses the fact that $\epsilon$ is given by universal Lie polynomials
in the Hodge components of $\delta$ and the fact that 
$[\delta^{p,q},\alpha]=0$ since $[\delta,\alpha] = 0$ and $\alpha$ is 
of type $(-1,-1)$.
\end{proof}

\subsection*{Limiting Gradings}
The (standard) vertical strip in $U^r$ is the set of points 
\beq
       I = \{\, z = x + iy \mid x_j\in [0,1],\hph{a} y_j\in
       [1,\infty)\hph{a}\forall\, j\,\}     \label{eq:vertical-strip}
\eeq
For a point $z=x+iy\in U^r$ we define $t_j = y_{j+1}/y_j$, where
we formally set $y_{r+1}=1$.  Let $S_r$ denote the group of
permutations of $\{1,\dots,r\}$ and let $\sigma\in S^r$ act on $\C^r$
by permuting coordinates.  Then,
\beq
       I = \cup_{\sigma\in S_r}\, \sigma(I')  \label{eq:finite-translates}
\eeq
where $I' = \{\, z\in I \mid t_j\in (0,1]\hph{a}\forall j\,\}$ is the set of points where $y_1\geq y_2\geq \cdots \geq y_r \geq y_{r+1}=1$.  

\begin{defn}\label{defn:sl2-sequence} A sequence of points 
$z(m) = x(m) + iy(m)$ in $I'$ is said to be \emph{tame} if 
$\lim_{m\to\infty}\, x_j(m)$ and $\lim_{m\to\infty}\, t_j(m)$
exist for each index $j$.  A tame sequence is said to be 
an \emph{$\ssl_2$-sequence} if there exists 
\begin{itemize}
\item[{(a)}] A linear transformation $T:\R^d\to\R^r$;
\item[{(b)}] A sequence $v(m)\in \R_{>0}^d$;
\item[{(c)}] A convergent sequence $b(m)\in\R^r$;
\end{itemize}
such that
$$
            y(m) = T(v(m)) + b(m)
$$
and $\lim_{m\to\infty}\, v_{j+1}(m)/v_j(m) = 0$ for $j=1,\dots,d$ (with
$v_{d+1}(m)=1$ as usual).  An $\ssl_2$-sequence is said to be 
strict if $d=r$, $b(m)=0$ and $T$ is the identity. 
\end{defn}


\par In particular, since \eqref{eq:finite-translates} is a finite
union, we have:

\begin{lemma}\label{lemma:finite-translates} Let $z(m) = x(m) + iy(m)\in I$ 
be a sequence of points.  Then, there exists an element $\sigma\in S_r$
and a subsequence $z(m')$ of $z(m)$ such that $\sigma(z(m'))$ is an
$\ssl_2$-sequence.  
\end{lemma}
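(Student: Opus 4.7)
The plan is to proceed in three stages: a permutation reduction placing the sequence in $I'$, a tameness extraction by compactness, and an iterative peeling of scales that produces the $\ssl_2$-data. By the finite decomposition \eqref{eq:finite-translates}, $I=\bigcup_{\sigma\in S_r}\sigma(I')$, so the pigeonhole principle gives a fixed $\sigma\in S_r$ and a subsequence with $\sigma(z(m))\in I'$ for every term; after renaming I may assume $y_1(m)\ge y_2(m)\ge\cdots\ge y_r(m)\ge 1$. Next, since each $x_j(m)\in[0,1]$ and each $t_j(m)=y_{j+1}(m)/y_j(m)\in(0,1]$ (with $y_{r+1}=1$) takes values in a compact set, a diagonal subsequence makes $x_j(m)\to\alpha_j$ and $t_j(m)\to\tau_j$ for every $j$, producing a tame sequence in the sense of Definition~\ref{defn:sl2-sequence}.

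To construct the $\ssl_2$-data I proceed inductively on residuals. Set $y^{(0)}(m):=y(m)$. At stage $k$, if $y^{(k-1)}(m)$ is bounded as $m\to\infty$, stop and put $d:=k-1$; pass to a further subsequence so that $b(m):=y^{(d)}(m)$ converges. Otherwise, define $v_k(m):=\|y^{(k-1)}(m)\|_\infty$ and let $j_k(m)$ be an index achieving this maximum. Passing to a subsequence, I arrange that $v_k(m)\to\infty$, that $j_k(m)=j_k$ is constant, and that the unit-norm vector $y^{(k-1)}(m)/v_k(m)$ converges to some $T_k\in\R^r$ with $|T_{k,j_k}|=1$. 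Set $y^{(k)}(m):=y^{(k-1)}(m)-v_k(m)T_k$; then $y^{(k)}(m)=o(v_k(m))$ by construction, which forces the required growth relation $v_{k+1}(m)/v_k(m)\to 0$ at the next stage.

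The main obstacle is showing this peeling terminates. The key observation is that once the coordinate $j_k$ is zeroed out in $y^{(k)}$, it remains zero in every subsequent residual: if $y^{(\ell-1)}_{j_k}\equiv 0$ for some $\ell>k$, then $T_{\ell,j_k}=\lim y^{(\ell-1)}_{j_k}(m)/v_\ell(m)=0$, and hence $y^{(\ell)}_{j_k}\equiv 0$ as well. By induction the maximizing indices $j_1,j_2,\ldots$ are pairwise distinct, which forces $d\le r$ and terminates the loop. Collecting $T_1,\ldots,T_d$ as the columns of a linear map $T:\R^d\to\R^r$ yields the decomposition $y(m)=T(v(m))+b(m)$ with the ratio condition of Definition~\ref{defn:sl2-sequence}, completing the proof.
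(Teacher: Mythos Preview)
Your argument is correct. The paper does not actually give a proof of this lemma: it simply records that $I=\bigcup_{\sigma\in S_r}\sigma(I')$ is a finite union and states the result as an immediate consequence, leaving the extraction of the $\ssl_2$-data $(T,v,b)$ entirely to the reader. Your scale-peeling procedure supplies precisely this missing construction, and the termination argument via the pairwise distinctness of the maximizing indices $j_k$ is the natural one.

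Two minor remarks. First, for your persistence step you want $y^{(k)}_{j_k}(m)=0$ for \emph{every} $m$, not just eventually; after your subsequence the sequence $y^{(k-1)}_{j_k}(m)/v_k(m)$ lies in $\{\pm 1\}$ and converges, hence is eventually constant, so pass once more to the tail. Second, the ratio condition in Definition~\ref{defn:sl2-sequence} is written in the paper as $v_j/v_{j+1}\to 0$, but comparison with the strict case ($T=\mathrm{id}$, $v=y$, points in $I'$) and with the usage of $\tau_j=v_{j+1}/v_j$ in the later sections shows that the intended condition is $v_{j+1}/v_j\to 0$ with $v_{d+1}=1$; this is exactly what your construction produces.
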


\par Given an $\ssl_2$-sequence with associated linear
transformation $T$ as above, let $\{e_1,\ldots, e_d\}$ denote the standard 
basis of $\RR^d$ and define $\thh^i = T(e_i)$.  Then, while neither $d$ nor 
the transformation $T:\RR^d\to \RR^r$ is uniquely determined by 
the $\ssl_2$-sequence $y(m)$, the associated flag defined by
the increasing sequence of subspaces
\begin{equation}
\label{e.Flag}
       \Theta^j = \sum_{i\leq j}\, \RR\theta^i
\end{equation}
depends only on the sequence $y(m)$.   Moreover, since $y(m)\in U^r$, it 
can be arranged that $T$ is injective and 
$\thh^i\in \RR^r_{\geq 0}$, $i=1,\ldots, d$.

\par Accordingly, we henceforth assume 
$\theta^1,\dots,\theta^d\in\mathbb R^r_{\geq 0}$. 
With this assumption in place, it then follows from~\eqref{eq:new-nilp} that
$$
       (N(\theta^1),\dots,N(\theta^d);F,W)            
$$   
is also an admissible nilpotent orbit. 

%

\subsection*{} We are now ready to state our main theorem concerning the 
asymptotic behavior of variations of mixed Hodge structure.  

\begin{theorem}\label{theorem:conj-1} 
  Let $\V$ be an admissible variation of mixed Hodge structure on
  a poly-punctured disk $\Delta^{*r}$ with unipotent monodromy and
  associated local normal form \eqref{eq:lnf}.
  Let $z(m)=x(m)+iy(m)$ be an
  $\ssl_2$-sequence with corresponding flag \eqref{e.Flag}.
  Then,
\beq
       \lim_{m\to\infty}\, e^{-\N{x(m)}}.\hat{Y}_{(F(z(m)),W)} =
               Y(\N{\thh^1},Y(\N{\thh^2},\cdots,Y_{(\hat F_{\infty},W^r)})).
               \label{eq:main-limit}
\eeq
\end{theorem}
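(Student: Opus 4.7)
The plan is to reduce Theorem~\ref{theorem:conj-1} to Theorem~\ref{theorem:knu-main} applied to an auxiliary admissible nilpotent orbit determined by the flag $\Theta^{\bullet}$, and then to absorb the perturbations coming from the local-normal-form twist $e^{\Gamma(s)}$ and from the bounded part $b(m)$ of the $\ssl_2$-sequence. First I would use the local normal form \eqref{eq:lnf} to write
\[
F(z(m)) = e^{N(x(m))}\, e^{iN(y(m))}\, e^{\Gamma(s(m))}.F_{\infty}.
\]
Since $e^{-N(x(m))}\in\exp(\mathfrak g_{\R})$ preserves $W$, it is a morphism of $\R$-mixed Hodge structures from $(F(z(m)),W)$ to $(e^{iN(y(m))}e^{\Gamma(s(m))}.F_{\infty},W)$, and functoriality of the $\ssl_2$-splitting converts the left-hand side of \eqref{eq:main-limit} into $\hat Y_{(e^{iN(y(m))}\, e^{\Gamma(s(m))}.F_{\infty},\, W)}$.

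Next I would exploit the $\ssl_2$-sequence decomposition $y(m) = \sum_{i=1}^{d} v_i(m)\theta^i + b(m)$ together with the commutativity of the $N_j$ to factor $e^{iN(y(m))} = e^{i\sum_i v_i(m)\tilde N_i} \cdot e^{iN(b(m))}$, where $\tilde N_i := N(\theta^i)$. Choosing representatives $\theta^i \in \R_{\geq 0}^r$, which is permissible because only the flag \eqref{e.Flag} is intrinsic, Kashiwara's theorem ensures that $(\tilde N_1,\dots,\tilde N_d; F_{\infty}, W)$ is an admissible nilpotent orbit; by parts (b) and (d) of that theorem its iterated weight filtration in position $d$ satisfies $M(\sum_i\tilde N_i, W) = M(C(\{1,\dots,r\}), W) = W^r$ because $\sum_i\theta^i \in \R^r_{>0}$. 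Since $v(m)$ is a strict $\ssl_2$-sequence in $d$ variables for this auxiliary orbit, Theorem~\ref{theorem:knu-main} applied to it gives
\[
\lim_{m\to\infty}\hat Y_{(e^{i\sum v_i(m)\tilde N_i}.F_{\infty},\,W)}
   = Y(\tilde N_1,\, Y(\tilde N_2,\,\dots,\,\hat Y_{(F_{\infty},W^r)})),
\]
which is exactly the right-hand side of \eqref{eq:main-limit}. Because $b(m)$ converges to some $b_\infty$ and $N(b_\infty)$ is a $(-1,-1)$-morphism of $(F_\infty, W^r)$ by Kashiwara part (c), Lemma~\ref{lemma:morphisms-and-splittings} shows that replacing $F_\infty$ by $e^{iN(b_\infty)}.F_\infty$ on the right-hand side leaves the limit grading unchanged.

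The hard part will be to show that inserting the full perturbing factor $e^{iN(b(m))}\, e^{\Gamma(s(m))}$, rather than its limit $e^{iN(b_\infty)}$, does not change the limit of $\hat Y$ after being amplified by the divergent operator $e^{i\sum v_i(m)\tilde N_i}$. My plan here is to induct on $d$: peeling off the fastest-growing coordinate $v_d(m)$, one step of the Deligne construction should reduce the assertion to an analogous statement in $d-1$ variables for a bounded perturbation of $F_\infty$. The recursive formula for the canonical splitting $\xi$ from the proof of \cite[Lemma 6.60]{CKS}, combined with Lemma~\ref{l.FiltPreserveZero}, should supply the quantitative continuity of $\hat Y$ that is required; together with a distance-estimate refinement of Theorem~\ref{theorem:knu-main} in the spirit of Cattani--Kaplan--Schmid, this is where I expect the bulk of the technical work to concentrate.
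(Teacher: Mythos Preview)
Your reduction of the nilpotent orbit case is correct and matches the paper's Lemma~\ref{lemma:main-limit} essentially verbatim: factor out $e^{N(x(m))}$ by functoriality, rewrite $N(y(m))$ via the $\ssl_2$-sequence decomposition, invoke Lemma~\ref{lemma:morphisms-and-splittings} to kill the $b(m)$-contribution on the right, and apply Theorem~\ref{theorem:knu-main}. So far so good.

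The gap is exactly where you say ``the hard part'' is, and your sketch there does not close it. The difficulty is not merely continuity of $\hat Y$: the factor $e^{\Gamma(s(m))}$ sits \emph{inside} the action of the divergent element $e^{iN(y(m))}$ (equivalently, is conjugated by it), so a bounded perturbation of $F_\infty$ becomes, after applying $e^{i\sum v_i\tilde N_i}$, an a priori \emph{unbounded} perturbation of the filtration to which you want to apply $\hat Y$. No amount of qualitative continuity of the $\ssl_2$-splitting handles that; you need a mechanism that trades the divergence of $e^{iN(y)}$ against the smallness of $\Gamma(s)$. Your proposed induction on $d$ does not supply such a mechanism: after ``peeling off'' one $v_i$, the remaining object is not the period map of an admissible variation on a lower-dimensional polydisk, because $\Gamma$ still depends on \emph{all} the $s_j$, so the inductive hypothesis does not apply.

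The paper's device for this is quite different from what you outline. One introduces the KNU semisimple element $t(y)=\prod_j t_j^{\frac12\hat Y^j}$ and proves (Lemma~\ref{lemma:rel-compact}) that $\tilde F(z)=t^{-1}(y)e^{-N(x)}.F(z)$ has relatively compact image on the vertical strip. The key step controlling $\Gamma$ is Lemma~\ref{lemma:gamma-eigenvalues}: decomposing $\Gamma=\sum_b\Gamma^b$ into $\Ad t^{-1}(y)$-eigencomponents, one shows that any component with a negative weight $b_w<0$ must already vanish in the partial limit $\Gamma_w$, so $\Ad(t^{-1}(y))\Gamma(s)$ stays bounded along every $\ssl_2$-sequence. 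With relative compactness in hand, the actual induction is on $r$, not on $d$: one picks the smallest index $\iota$ at which the sequence has \emph{non-polynomial} growth (so $|s_\iota|$ beats any power of $y_j$ for $j\le\iota$), uses this to show (Theorem~\ref{theorem:first-reduction}) that $\hat Y_{(F(z(m)),W)}-\hat Y_{(F_\iota(z(m)),W)}\to 0$, and then analyses $F_\iota$, which is a genuine nilpotent orbit in the first $\iota$ variables coupled to an $(r-\iota)$-variable variation to which the inductive hypothesis applies. The separation of ``polynomially comparable'' variables from ``exponentially dominating'' ones via $\iota$ is precisely what makes the $\Gamma$-term controllable, and it has no analogue in your proposal.
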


\begin{remark} The statement of equation \eqref{eq:main-limit} implicitly
assumes that $y_j\to\infty$ for each $j$.  In the case
where only $y_1,\dots,y_{\ell}$ diverge the limit Hodge filtration
$F_{\infty}$ should be replaced by 
$$
        \lim_{m\to\infty}\, e^{\sum_{j\leq\ell}\, -z_j(m) N_j}.F(z(m))
$$
and the weight filtration $W^r$ should be replaced by $W^{\ell}$. In the
extreme case where $z(m)$ is bounded, $\ell=0$ in the previous
equation and Theorem~\eqref{theorem:conj-1} is just the continuity
of the $\ssl_2$-splitting.
\end{remark}

The proof of Theorem~\eqref{theorem:conj-1} will take up most of this
paper.  However, we would like to bring to the reader's attention the
obvious fact that, as the left side of the equation in the theorem
depends only on the choice of flag \eqref{e.Flag}  the right-hand-side 
must also depend only on the choice of this flag.  It is an elementary
exercise using Corollary~\eqref{corollary:composition-series} to show
that right-hand-side of \eqref{eq:main-limit} depends only on 
the flag $\Theta$

%
%

\begin{remark} Theorem~\eqref{theorem:conj-1}  has also been obtained 
independently by Kato, Nakayama and Usui \cite{KNU3} in their study of 
classifying spaces of degenerations of mixed Hodge structure.  In particular, 
as part of their study of log intermediate Jacobians \cite{KNU}, they 
obtain an independent proof of Conjecture \eqref{theorem:main}.
\end{remark}

\subsection*{\bf Finiteness.}
One immediate corollary of Theorem~\eqref{theorem:conj-1} is the
boundedness of the function $z\mapsto \hat Y_{(F(z),W)}$.  

\begin{corollary}
\label{c.Boundedness} Let $F(z)$ be the period map of an admissible
variation of mixed Hodge structure over $\Delta^{*r}$.
Let $I$ denote the standard vertical strip
\eqref{eq:vertical-strip} for $U^r$.  Then the function 
$z\mapsto \hat{Y}_{(F(z),W)}$ is bounded on $I$.
\end{corollary}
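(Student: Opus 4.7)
The plan is to argue by contradiction, using Theorem~\ref{theorem:conj-1} as the key input. Suppose that $z \mapsto \hat Y_{(F(z),W)}$ is unbounded on $I$; then there exists a sequence $z(m) = x(m) + iy(m) \in I$ such that $\hat Y_{(F(z(m)),W)}$ diverges in $\gl(V)$. Since $x(m)$ lies in the compact cube $[0,1]^r$, I first pass to a subsequence along which $x(m)$ converges. Extracting further, I can also arrange that every ratio $t_j(m) = y_{j+1}(m)/y_j(m)$ has a limit in $[0,\infty]$, so that the sequence is tame in the sense of Definition~\ref{defn:sl2-sequence}.

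Next, I apply Lemma~\ref{lemma:finite-translates}: after permuting the coordinates of $\Delta^{*r}$ by some $\sigma \in S_r$ and extracting yet another subsequence, the sequence $\sigma(z(m))$ becomes an $\ssl_2$-sequence. The permutation corresponds to pulling the variation back along a biholomorphism of $\Delta^{*r}$, which preserves admissibility and maps the strip $I$ to itself, so this reduction does not affect the question of boundedness of $\hat Y$.

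Now Theorem~\ref{theorem:conj-1}, together with the subsequent remark that handles the case in which only some $y_j(m)$ diverge (or, in the extreme case, none, which reduces to the continuity of the $\ssl_2$-splitting), yields a finite limit
$$
\lim_{m\to\infty} e^{-N(x(m))}.\hat Y_{(F(z(m)),W)} \in \gl(V).
$$
Since $x(m)$ ranges over the compact cube $[0,1]^r$ and the $N_j$ are fixed nilpotent operators, the family $e^{\pm N(x(m))}$ is bounded in $\GL(V_{\C})$, and conjugation by this boundedly-invertible family cannot convert a divergent sequence of gradings into a convergent one. Hence $\hat Y_{(F(z(m)),W)}$ must itself be bounded, contradicting the initial assumption.

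The only substantial obstacle is Theorem~\ref{theorem:conj-1} itself, whose proof occupies most of the rest of the paper. Once that theorem is in hand, the corollary follows formally; the key observation is simply that the compactness of the $x$-coordinates on $I$ renders the twist by $e^{-N(x(m))}$ harmless.
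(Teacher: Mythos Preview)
Your proof is correct and follows essentially the same approach as the paper: argue by contradiction, extract an $\ssl_2$-subsequence via Lemma~\ref{lemma:finite-translates}, and invoke Theorem~\ref{theorem:conj-1} to obtain a convergent limit. The paper's version is terser (it omits the explicit discussion of the permutation, the partially-bounded case, and the harmlessness of the $e^{-N(x(m))}$ twist), but the substance is identical.
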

\begin{proof} Otherwise, we can find a sequence of points $z(m)\in I$ 
on which $\hat{Y}_{(F(z(m),W)}$ is unbounded. Passing to an 
$\ssl_2$-subsequence, we then obtain a contradiction to 
Theorem~\eqref{theorem:conj-1}. 
\end{proof}

This boundedness gives rise to finiteness which will be important for
integral variations.

\begin{corollary}
\label{corollary:finiteness}
  Let $\V$ be an admissible variation of integral mixed Hodge
  structure over $\Delta^{*r}$ with unipotent monodromy.   Then, with the
  notation as in Theorem~\eqref{c.Boundedness}, the set $\mathcal Y$
  of integral gradings in the image of the map $z\mapsto
  Y_{(F(z),W)}$ as $z$ runs over the vertical strip $I$ is finite.
\end{corollary}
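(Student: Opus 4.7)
My plan is to combine Corollary~\ref{c.Boundedness} with two linear-algebraic observations: (i) the integral gradings of $W$ form a discrete subset of $\End(V_{\R})$, and (ii) whenever $Y_{(F,W)}$ is real---in particular when it is integral---it coincides with its $\ssl_2$-partner $\hat Y_{(F,W)}$. Granting these, the corollary follows by contradiction: if $\mathcal{Y}$ were infinite then by (i) it would be unbounded, yielding a sequence $z(m)\in I$ with $Y_m := Y_{(F(z(m)),W)}\in\gl(V_{\Z})$ and $\|Y_m\|\to\infty$; by (ii) the corresponding $\hat Y_m$'s would then also be unbounded, contradicting Corollary~\ref{c.Boundedness}.

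Observation (i) is straightforward: the integral gradings of $W$ form a torsor under the lattice $W_{-1}\End(V)\cap\gl(V_{\Z})$, which is discrete in $W_{-1}\End(V_{\R})$. For (ii), I would first show that a real $Y_{(F,W)}$ forces $(F,W)$ to be split over $\R$: the $k$-eigenspace $\bigoplus_{p+q=k}I^{p,q}$ of $Y_{(F,W)}$ is conjugation-stable, which when combined with the Deligne bigrading congruence $\bar{I}^{p,q}\equiv I^{q,p}$ modulo lower-weight pieces, upgrades to the exact equality $\bar{I}^{p,q}=I^{q,p}$. For such an $\R$-split MHS, Deligne's element $\delta$ vanishes by uniqueness (since the identity is already an $\R$-splitting); then by Theorem~\ref{theorem:SL2_Splitting} the $\ssl_2$-splitting parameter $\xi$---a universal Lie polynomial without constant term in the Hodge components of $\delta$---also vanishes, so $\hat Y_{(F,W)}=Y_{(F,W)}$.

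The subtlest point in this plan is the assertion that $\xi$ vanishes whenever $\delta$ does; this follows from the recursive construction of $\xi$ summarised after~\eqref{eq:sl2-splitting-op}, and is in any case forced by the uniqueness in Theorem~\ref{theorem:SL2_Splitting}, since applying the $\ssl_2$-splitting to an already $\R$-split MHS must return that MHS unchanged. Beyond this observation, the corollary is a clean consequence of Corollary~\ref{c.Boundedness} and poses no further obstacle.
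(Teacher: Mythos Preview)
Your proposal is correct and follows essentially the same approach as the paper: the paper likewise observes that an integral $Y_{(F(z),W)}$ is real, hence $\delta(F,W)=0$, hence $\epsilon(F,W)=\xi=0$, so $\hat Y_{(F(z),W)}=Y_{(F(z),W)}$; then boundedness of $\hat Y$ on $I$ together with discreteness gives finiteness. One small remark: your torsor description in (i) is not quite the right structure (gradings form a torsor under $\exp(W_{-1}\End V)$ acting by conjugation, not under the lattice by addition), but this is irrelevant since discreteness follows immediately from $\gl(V_{\Z})$ being discrete in $\gl(V_{\R})$.
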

\begin{proof}
  If $Y_{(F(z),W)}$ is an integral grading, then (clearly) it is a
  real
grading.  It follows that $\hat{Y}_{(F(z),W)}=Y_{(F(z),W)}$.  (To see
this note that, in the terminology of~\cite[\S 1.2]{KNU},
$\delta(F,W)=0$ and this implies that $\epsilon(F,W)=0$.)
Therefore, the set of integral gradings of the form  $Y_{(F(z),W)}$ 
as $z$ ranges over $I$ is bounded and discrete.  Thus it is finite.
\end{proof}

\section{Reductions}\label{s.Reduct}

Our next job is to reduce Theorem~\ref{theorem:main} to the case that
$\bar{S}$ is a polydisk and $S$ is a punctured polydisk.  We begin
with some review about germs.

\begin{ntt}
  Let $X$ be a topological space.  Write $\mathcal{S}$ for the
  presheaf (in fact, a sheaf) associating to every open $U\subset X$
  the set of all subsets of $U$.  (If $V\subset U$, the map
  $\mathcal{S}(U)\to \mathcal{S}(V)$ is given by $Z\mapsto Z\cap V$).
  The set of subset germs at a point $x\in X$ is the stalk
  $\mathcal{S}_x$.  If $Z\subset X$ then the germ of $Z$ at $x$ is
  the image of $Z$ in $\mathcal{S}_x$. 
\end{ntt}

\begin{ntt}
If $X$ is a complex analytic space we write $\mathcal{A}$ for the
presheaf (also a sheaf) associating to every open subset $U$ of $X$
the set of all complex analytic subspaces $Z$ of $U$.   The set of
germs of complex analytic subspaces at a point $x\in X$ is the stalk
$\mathcal{A}_x$.    We write $\mathcal{A}^r$ for the subsheaf of
reduced subspaces.   There is an obvious inclusion
$\mathcal{A}^r\to\mathcal{S}$.   We say that a germ
$Z\in\mathcal{S}_x$ is analytic if it is in the image of this inclusion.
\end{ntt}

\begin{ntt}
  Let $X$ be a reduced complex analytic space, let $x\in X$ and let
  $Z\in\mathcal{S}_x$ be a subset germ.  We say that
  $f\in\mathcal{O}_{X,x}$ vanishes on $Z$ if there is an open
  neighborhood $U$ of $x$ such that $f$ is regular on $U$ and vanishes
  on a subset $Z_U$ of $U$ whose germ is $Z$.  We write
  $\mathcal{I}_{Z,x}$ for the set of all $f\in\mathcal{O}_{X,x}$ which
  vanish on $Z$.  Clearly, $\mathcal{I}_{Z,x}$ is an ideal in
  $\mathcal{O}_{X,x}$.  By the Noetherian property of
  $\mathcal{O}_{X,x}$ it is, therefore, finitely generated.  We define
  \emph{the Zariski closure $\ClZar_xZ$ of $Z$ at $x$} to be the
  analytic subspace germ corresponding to $\mathcal{I}_{Z,x}$.  We say
  that $Z$ is \emph{Zariski dense} at $x\in X$ if $\ClZar_x Z$ is the
  germ associated to $X$.  Then $Z$ is
  Zariski dense at $x$ if, any $f\in\mathcal{O}_{X,x}$ regular on a
  neighborhood $U$ of $x$ and vanishing on $U\cap Z$, vanishes identically
on a neighborhood of $x$.   A subset $Z$ of $X$ is Zariski dense in $X$ if it is
Zariski dense at every point $x\in X$. 
\end{ntt}

\begin{ntt}
  Let $\bar{S}$ be a complex analytic space with Zariski dense regular
  locus $\bar{S}_{\reg}$ (for example, any reduced complex analytic
  space).  Let $S\subset\bar{S}_{\reg}$ be a Zariski open subset.  A
  variation of mixed Hodge structure $\V$ on $S$ is \emph{admissible
    relative to $\bar{S}$} if, for any resolution of
  singularities $\pi:\bar{T}\to \bar{S}$ with $T:=\pi^{-1}S$
  biholomorphic to $S$, $\V_T$ is admissible relative to $\bar{T}$.
  Note that if the above property holds for one resolution of
  singularities $\bar{T}\to\bar{S}$ it holds for all.    This defines
  a category $\VMHS(S)^{\ad}_{\bar{S}}$ which is, in fact, equivalent
  to the category $\VMHS(T)^{\ad}_{\bar{T}}$ for any resolution
  $\bar{T}\to \bar{S}$.    If $\H$ is a variation of Hodge
  structure of negative weight on $S$, we define
  $\NF(S,\H)^{\ad}_{\bar{S}}
    =\Ext^1_{\VMHS(S)^{\ad}_{\bar{S}}}(\Z,\H)=\NF(T,\H_T)^{\ad}_{\bar{T}}$
where $\pi:\bar{T}\to T$ is any resolution with $\pi:\pi^{-1}(S)\to S$
an isomorphism.
\end{ntt}

\begin{theorem}
\label{t.Reduction}
Let $r$ be a non-negative integer, then the following are equivalent.
\begin{enumerate}

\item[(a)] Let $S=\Delta^{*r}$, $\bar{S}=\Delta^{r}$; let $\H$ be a polarized
  variation of pure Hodge structure of negative weight, with
  $\H_{\Z}$-torsion-free and with unipotent monodromy on $S$; and
  let $\nu\in\NF(S,\H)^{\ad}_{\bar{S}}$.  Let $\bar{\ZZ}(\nu)$ denote
  the closure of the zero locus $\ZZ(\nu)$ in the analytic topology of
  $\bar{S}$.  Assume that the germ of
  $\ZZ(\nu)$ at $0$ is Zariski dense at $0$.  
Then, the germ of $\bar{\ZZ}(\nu)$ at $0$ coincides with the germ of
$\bar{S}$ at $0$. 

\item[(b)] Let $S,\bar{S}$ and $\H$ be as in (a), but drop the
  assumption that the germ of $\ZZ(\nu)$ at $0$ is Zariski dense.  Then
  the germ of $\bar{\ZZ}(\nu)$ at $0$ is analytic.

\item[(c)] The same statement as in (a) holds without the assumption that
$\H_{\Z}$ is torsion-free.

\item[(d)] The same statement as in (b) holds without the assumption 
that $\H$ has unipotent monodromy.

\item[(e)] Let $a$ and $b$ be non-negative integers with $a+b=r$,  let
  $S=\Delta^{*a}\times\Delta^{b}$ and $\bar{S}=\Delta^{a+b}$.  Let
  $\H$ be a variation of pure Hodge structure of negative weight on $S$ and let
  $\nu\in\NF(S,\H)^{\ad}_{\bar{S}}$.  Then the germ of $\bar{\ZZ}(\nu)$ at $0$ is 
analytic. 

\item[(f)] Theorem~\ref{theorem:main} holds in the case that $S$ has
  dimension $r$ and 
  $\bar{S}\setminus S$ is a normal crossing divisor.

\item[(g)] Let $\bar{S}$ be a complex analytic space of
  dimension $r$ and let $S$ be a Zariski open subset of
  $\bar{S}_{\reg}$.  Let $\H$ be a variation of Hodge structure of
  negative weight on $S$ and let $\nu\in\NF(S,\H)^{\ad}_{\bar{S}}$.
  Then the topological closure $\bar{\ZZ(\nu)}$ is the underlying
  space of a closed complex subspace of $\bar{S}$. 

\end{enumerate}
\end{theorem}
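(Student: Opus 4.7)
The plan is to prove the seven-way equivalence via a cycle of implications, carried out by induction on $r=\dim\bar{S}$; the base case $r=0$ is vacuous, and for the inductive step one may assume the entire Theorem~\ref{t.Reduction} in all strictly smaller dimensions.

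Most of the implications among (a)--(g) come from inclusions of hypotheses. The chain $(g)\Rightarrow(f)\Rightarrow(e)\Rightarrow(d)\Rightarrow(b)\Rightarrow(a)$ together with the side link $(e)\Rightarrow(c)\Rightarrow(a)$ is routine: each step is either a specialization of the pair $(\bar{S},S)$, the addition of a hypothesis such as torsion-freeness of $\H_{\Z}$ or unipotent monodromy, or --- as in $(b)\Rightarrow(a)$ --- the observation that under Zariski density of $\ZZ(\nu)$ at $0$, any closed analytic germ containing $\ZZ(\nu)$ must coincide with the germ of $\bar{S}$ at $0$.

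The cycle is closed via $(a)\Rightarrow(g)$, which I would establish in four main steps. First, $(a)\Rightarrow(b)$: let $A=\ClZar_0\ZZ(\nu)$ and decompose $A$ into irreducible components $A_j$ in which $\ZZ(\nu)$ is Zariski dense. For each such $A_j$, apply Hironaka to produce an embedded resolution $\pi_j:\tilde{A}_j\to A_j$ with normal-crossing exceptional and boundary divisor, pull $(\H,\nu)$ back to $\tilde{A}_j\cap\pi_j^{-1}(S)$, and apply the inductive hypothesis --- in the form of $(g)$ in dimension $\dim A_j<r$ --- to conclude that the pullback of $\nu$ vanishes identically on $\pi_j^{-1}(S)\cap\tilde{A}_j$, so that $\bar{\ZZ}(\nu)\supset A_j$ and hence $\bar{\ZZ}(\nu)=A$ is analytic. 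Second, $(b)\Rightarrow(d)$ and $(a)\Rightarrow(c)$: remove the unipotence and torsion-freeness hypotheses by composing with a Kummer cover $\Delta^r\to\Delta^r$, $s\mapsto s^N$, and with a finite étale cover of $J(\H)$ trivializing the torsion of $\H_{\Z}$, respectively, then descending using Remmert's proper mapping theorem. Third, $(d)\Rightarrow(e)$: an irreducible component of $\bar{\ZZ}(\nu)\subset\Delta^{a+b}$ either meets $\Delta^{*a}\times\Delta^{*b}$, in which case its closure is analytic by $(d)$ applied in dimension $a+b$ to $\nu|_{\Delta^{*(a+b)}}$, or is contained in some divisor $\{s_{a+i}=0\}\cong\Delta^{*a}\times\Delta^{b-1}$ and is analytic by the inductive hypothesis in dimension $r-1$. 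Finally, $(e)\Rightarrow(f)$ follows from local normal crossing coordinates, and $(f)\Rightarrow(g)$ from Hironaka's resolution of $\bar{S}$ (with normal-crossing boundary) combined with Remmert's proper mapping theorem applied to the pullback of $\nu$.

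The principal obstacle is $(a)\Rightarrow(b)$: when pulling $\nu$ back to a resolution of a proper analytic component of the Zariski closure, the resulting total space is a smooth variety but no longer a polydisk, so the inductive step must invoke $(g)$ rather than $(a)$ directly. This is precisely why the induction has to be run dimension by dimension, closing all seven statements simultaneously at each dimension, rather than statement by statement. One must also verify at each invocation of the inductive hypothesis that admissibility of the normal function is preserved under pullback along a resolution, which is a standard but essential point.
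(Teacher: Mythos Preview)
Your overall architecture is the same as the paper's: run an induction on $r$, and at each dimension close the cycle $(a)\Rightarrow(b)\Rightarrow\cdots\Rightarrow(g)\Rightarrow(a)$, using the already-proved lower-dimensional case of $(g)$ inside the step $(a)\Rightarrow(b)$. The paper does precisely this, with the same Kummer cover for $(c)\Rightarrow(d)$, the same local-coordinate argument for $(e)\Rightarrow(f)$, and Hironaka plus the proper mapping theorem for $(f)\Rightarrow(g)$.

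Two points deserve correction, however. First, your $(d)\Rightarrow(e)$ is circular as written: you speak of ``an irreducible component of $\bar{\ZZ}(\nu)$'' and then argue about where it sits, but $\bar{\ZZ}(\nu)$ has no components until you know it is analytic, which is exactly what you are trying to prove. The paper avoids this by stratifying $S$ rather than $\bar{\ZZ}(\nu)$: write $S=\Delta^{*r}\cup\bigcup_{i>a}\{s_i=0\}$, set $\nu_0=\nu|_{\Delta^{*r}}$ and $\nu_i=\nu|_{\{s_i=0\}\cap S}$, so that $\ZZ(\nu)=\bigcup_i\ZZ(\nu_i)$ and hence $\bar{\ZZ}(\nu)=\bigcup_i\bar{\ZZ}(\nu_i)$. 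Now $(d)$ gives analyticity of $\bar{\ZZ}(\nu_0)$ and the inductive hypothesis in dimension $r-1$ handles each $\bar{\ZZ}(\nu_i)$ with $i>a$. Second, in $(a)\Rightarrow(b)$ you resolve each component $A_j$ of the Zariski closure before applying the inductive $(g)$; this is harmless but unnecessary, since $(g)$ is stated for arbitrary complex analytic spaces and can be applied directly to the possibly singular $A_j$ (restricting $\nu$ to its regular locus). You should also say explicitly what happens when $\dim A=r$: then $\ZZ(\nu)$ is Zariski dense at $0$ and $(a)$ applies directly. Finally, your handling of torsion via ``a finite \'etale cover of $J(\H)$'' is opaque; the paper simply observes that $\Ext^1_{\MHS}(\Z,H)=0$ for torsion $H$, so $\ZZ(\nu)=\ZZ(\pi(\nu))$ where $\pi$ is the projection to $\H_{\mathrm{free}}$.
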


\begin{proof}
We prove the entire theorem by induction on $r$.  The equivalence 
is obvious for $r=0$ (since all of the individual statements hold
unconditionally).  

  (a) $\Rightarrow$ (b): Let $Z$ denote the Zariski closure of
  $\ZZ(\nu)$ at $0$.  Shrinking the polydisk $\bar{S}$ if necessary,
  we can assume that $Z$ is an analytic subspace of $\bar{S}$ and that
  $Z$ contains $\bar{\ZZ}(\nu)$.  We can also assume that $\dim Z<r$.
  Let $\nu_Z$ denote the restriction
  of $\nu$ to the regular locus of $Z$.   By induction (g) applies
  to show that  $\bar{\ZZ}(\nu_Z)$ is a closed complex analytic
  subspace of $Z$.  Since $\ZZ(\nu_Z)=\ZZ(\nu)\cap Z_{reg}$ is Zariski
  dense in $Z$, this implies that $\bar{\ZZ}(\nu)=Z$.

  (b) $\Rightarrow$ (c): Let $\H_{\mathrm{tors}}$ denote the torsion
  part of $\H$ and $\H_{\mathrm{free}}:=\H/\H_{\mathrm{tors}}$
  denote the torsion-free part with $\pi:\H\to\H_{\mathrm{free}}$
  the projection map.  Then, for $\nu\in\NF(S,\H)^{\ad}_{\bar{S}}$ we
  have $\ZZ(\nu)=\ZZ(\pi(\nu))$ (because $\Ext^1_{\MHS}(\Z,H)=0$ for $H$ a 
  torsion mixed Hodge structure).   

  (c) $\Rightarrow$ (d): By Borel's theorem, the monodromy of $\H$ is
  quasi-unipotent.  Therefore we can find a positive integer $d$ such
  that the pull-back of $\H$ to $\Delta^{*r}$ via the map $f:S\to S$
  given by $(z_1,\ldots, z_r)\mapsto (z_1^d,\ldots, z_r^d)$ has
  unipotent monodromy.  By assumption, the germ of
  $\bar{\ZZ}(f^*(\nu))$ at $0$ is analytic.  Since $f$ is proper,
  the proper mapping theorem implies that the germ of $\bar{\ZZ}(\nu)$
  at $0$ coincides with the germ of $f(\bar{\ZZ}(\nu))$ at $0$ and is
  analytic.

  (d)$\Rightarrow$ (e): We induct on $r:=a+b$ starting with $a=b=0$
  where the statement is obvious.  For $i\in\{1,\ldots r\}$ set
  $S_i:=\{z\in S:z_i=0\}$, and set $S_0=\Delta^{*r}$.  For
  $i\in\{0,\ldots, r\}$ let $\nu_i$ denote the restriction of $\nu$ to
  $S_i$.  Then $\ZZ(\nu)=\cup_{i=0}^r \ZZ(\nu_i)$, so
  $\bar{\ZZ}(\nu)=\cup_{i=0}^r \bar{\ZZ}(\nu_i)$.  By hypothesis, the
  germ of $\bar{\ZZ}(\nu_0)$ at $0$ is analytic and, by induction, for
  each $i>0$ the germ of $\bar{\ZZ}(\nu_i)$ at $0$ is analytic.  It
  follows that the germ of $\bar{\ZZ}(\nu)$ at $0$ is analytic.

  (e) $\Rightarrow$ (f): To prove that $\bar{\ZZ}(\nu)$ is analytic,
  it suffices to prove that its germ is analytic at each point $s\in
  \bar{S}$.  Since $\bar{S}\setminus S$ is a normal crossing divisor
  this follows from (e) and from the obvious fact that the germ 
  of $\bar{\ZZ}(\nu)$ is analytic at every point $s\in S$.  

  (f) $\Rightarrow$ (g): Set $C:=\bar{S}\setminus S$.  By
  Hironaka~\cite{Hironaka}, we can find a proper morphism
  $\pi:\bar{T}\to \bar{S}$ where $\bar{T}$ is smooth, $D=\pi^{-1} (C)$ is
  a normal crossing divisor and $\pi:\bar{T}\setminus D\to S$ is
  an isomorphism.  Then, setting $T:=\bar{T}\setminus D$, $\pi$
  induces an isomorphism $\pi^*\NF(S,\H)^{\ad}_{\bar{S}}\cong
  \NF(T,\H)^{\ad}_{\bar{T}}$~\cite[Remark 1.6 (i)]{SaitoANF}.  Let
  $\nu\in\NF(S,\H)^{\ad}_{\bar{S}}$ be a normal function, and let
  $\ZZ_T=\{s\in \bar{T}:\pi^*(\nu)=0\}$.  Suppose $\bar{\ZZ}_T\subset
  \bar{T}$ is complex analytic.  Since $\pi$ is proper, the proper
  mapping theorem shows that $\pi(\bar{\ZZ}_T)$ is a analytic and
  $\bar{\ZZ}=\pi(\bar{\ZZ}_T)$.

  (g) $\Rightarrow$ (a): obvious.
\end{proof}

\begin{lemma}
\label{l.SplitLemmaWeight}
  Let $H$ be a pure Hodge structure of weight $w<0$ and with
  $H_{\Z}$-torsion free.  Let $\nu\in\Ext^1_{\MHS}(\Z,H)$ be
  represented by the short exact sequence
\begin{equation}
\label{e.Pre1}
0\to H\to V\to \Z\to 0
\end{equation}
with $V=(V_{\Z}, F,W)$.  Then 
$\nu=0\Leftrightarrow Y_{(F,W)}\in
w\End(V_{\Z})$.
\end{lemma}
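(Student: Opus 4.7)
The plan is to exploit the fact that $V$ has weights only at $w$ and $0$, so the Deligne grading $Y:=Y_{(F,W)}$ has eigenvalues only $w$ and $0$ and satisfies $Y(Y-w)=0$. Since $w\neq 0$, the operator $\phi := w^{-1}Y\in\End(V_{\C})$ is an idempotent whose $1$-eigenspace equals the $w$-eigenspace of $Y$, namely $W_wV_{\C}=H_{\C}$, and whose $0$-eigenspace is the $1$-dimensional complement $\bigoplus_{p+q=0}I^{p,q}_{(F,W)}$. The condition $Y\in w\End(V_{\Z})$ is thus equivalent to $\phi\in\End(V_{\Z})$, and the lemma becomes a statement about when this idempotent is integral.

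For $(\Rightarrow)$, if $\nu=0$ then \eqref{e.Pre1} splits in $\MHS$, giving $V\cong H\oplus\Z(0)$ as mixed Hodge structures. Since the Deligne bigrading is compatible with direct sums, $Y$ acts as $w$ on the $H$-summand and as $0$ on the $\Z(0)$-summand, so $\phi$ is the integral projector onto $H_{\Z}$ along the integral complement.

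For $(\Leftarrow)$, assume $\phi\in\End(V_{\Z})$. Because $\phi$ is idempotent we have an integral decomposition $V_{\Z}=\phi(V_{\Z})\oplus(1-\phi)(V_{\Z})$. The first summand equals $H_{\Z}$: the inclusion $\phi(V_{\Z})\subset H_{\C}\cap V_{\Z}=H_{\Z}$ is clear, and the reverse inclusion follows from $\phi|_{H_{\C}}=\mathrm{id}$. For the second summand, note that $(1-\phi)(V_{\Z})$ lies in $\ker\phi\cap V_{\Z}$ and surjects onto $V_{\Z}/H_{\Z}\cong\Z$, since $(1-\phi)(v)\equiv v\pmod{H_{\Z}}$ for every $v\in V_{\Z}$. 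As $\ker\phi$ is one-dimensional over $\C$, this forces $(1-\phi)(V_{\Z})=\Z e$ for some $e$ lifting the generator of $\Z$. Thus $V_{\Z}=H_{\Z}\oplus\Z e$ splits the underlying lattice sequence; since $\C e$ is a sum of $I^{p,q}_{(F,W)}$'s of total weight $0$, the splitting is also compatible with $F$ and $W$, hence a splitting in $\MHS$.

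The argument is almost pure linear algebra; the only mildly delicate point is confirming that $(1-\phi)(V_{\Z})$ is free of rank one and maps onto the generator of $V_{\Z}/H_{\Z}$, which uses the fact that $V_{\Z}/H_{\Z}\cong\Z$ is torsion-free (automatic from \eqref{e.Pre1}). The hypothesis that $H_{\Z}$ is torsion-free ensures that $H_{\Z}$ is identified with its saturated image in $V_{\Z}$, so that $\phi(V_{\Z})=H_{\Z}$ recovers the correct integral lattice of $H$.
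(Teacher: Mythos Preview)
Your proof is correct and rests on the same observation as the paper's: the operator $\phi=w^{-1}Y_{(F,W)}$ is an idempotent morphism of mixed Hodge structure, integral precisely when $Y_{(F,W)}\in w\End(V_{\Z})$. The paper's version is simply more compressed---it notes that $\phi$ is then an integral retraction of $V$ onto $H$ and stops, whereas you work dually, building the section $1-\phi$ and verifying by hand that $(1-\phi)(V_{\Z})=\Z e$ with $\C e=I^{0,0}$.
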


\begin{proof}
 $\Rightarrow$:  If $\nu=0$, we have $V=\Z\oplus H$.  So, every
$v\in V_{\Z}$ can be written as $v=r+h$ with $r\in\Z$ and $h\in H$.  Clearly,
 $Y_{(F,W)}(v)=wh\in wV_{\Z}$.

 $\Leftarrow$: Suppose $Y_{(F,W)}\in w\End(V_{\Z})$.  Then the map
 $\frac{1}{w}Y_{(F,W)}$ is a morphism of mixed Hodge structure from
 $V$ to $H$ inducing a retraction of the sequence~\eqref{e.Pre1}.  
\end{proof}

\begin{corollary}\label{c.CorRed1}
Let $S,\bar{S},\H$ and $\nu$ be as in Theorem~\ref{t.Reduction} (b), and let 
$\nu$ be given by an extension
$$
0\to \H\to \V\to \Z\to 0
$$
of variations of mixed Hodge structures on $S=(\Delta^*)^n$.    Let $(F(z),W)$ be 
the local normal form of $\V$ on $U^n$ with $F(z)=e^{N(z)}e^{\Gamma(s)}.F_{\infty}$.
Then 
$$
\ZZ(\nu)=\{s\in S: s=e^{2\pi i z}, Y_{(F(z),W)}\in w\End V_{\Z} \}.
$$
\end{corollary}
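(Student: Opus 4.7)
The plan is to deduce the corollary from Lemma~\ref{l.SplitLemmaWeight} by applying it fiberwise. For each $s\in S$, the extension defining $\nu$ restricts to a short exact sequence of mixed Hodge structures $0\to\H_s\to\V_s\to\Z\to 0$ with $\H_s$ pure of weight $w<0$ and integral lattice torsion-free. Lemma~\ref{l.SplitLemmaWeight} then characterizes the vanishing of the extension class $\nu(s)$ by the condition $Y_{(F_s,W)}\in w\End V_{s,\Z}$, where $(F_s, W)$ denotes the mixed Hodge structure on the fiber $V_s$.

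The next step is to translate this condition into the local normal form. On pulling back to the universal cover $\pi:U^n\to\Delta^{*n}$, the integral local system trivializes and is identified with the constant lattice $V_\Z$. The Hodge filtration on the fiber at $z\in U^n$ is then $F(z)=e^{N(z)}e^{\Gamma(s)}.F_\infty$ with $s=\pi(z)$, and the weight filtration is the constant $W$. Thus, for each $s=e^{2\pi iz}$, the criterion from the previous paragraph becomes $Y_{(F(z),W)}\in w\End V_\Z$, which is precisely the claimed description of $\ZZ(\nu)$.

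The one thing that needs to be verified is that the condition $Y_{(F(z),W)}\in w\End V_\Z$ is independent of the choice of lift $z\in\pi^{-1}(s)$, so that it descends to a well-defined condition on $S$. Two lifts differ by a translation $z\mapsto z+m$ with $m\in\Z^n$, and the local normal form yields $F(z+m)=e^{N(m)}.F(z)$, where $e^{N(m)}\in\Aut V_\Z$ by unipotence of the monodromy and torsion-freeness of $V_\Z$. Since $e^{N(m)}$ preserves $W$, the Deligne grading transforms by conjugation, and conjugation by an element of $\Aut V_\Z$ preserves the subset $w\End V_\Z$. No substantive obstacle is expected here; the corollary is essentially a repackaging of Lemma~\ref{l.SplitLemmaWeight} in families, and the main point is to observe that the integral structure on the fibers matches up correctly with the trivialization chosen to define the local normal form.
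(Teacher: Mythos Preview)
Your proof is correct and matches the paper's intent: the paper states the corollary with no proof, treating it as an immediate consequence of Lemma~\ref{l.SplitLemmaWeight} applied fiberwise. Your verification that the condition descends along the covering map $\pi$ (via the conjugation action of the integral monodromy $e^{N(m)}=\prod T_j^{m_j}$ on the Deligne grading) is the only point requiring comment, and you handle it correctly.
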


\section{Analyticity of the zero locus}
\label{s.Analyticity}

We now prove Theorem~\eqref{theorem:main} 
assuming Theorem~\eqref{theorem:conj-1} and the results on the Deligne
systems and the $\ssl_2$-splittings stated in
section~\ref{s.Polydisks}.   In fact, we will deduce the theorem as a
corollary of a more general result concerning admissible variations on
punctured polydisks.    

\begin{ntt}  
  Set $S=\Delta^{*r}$, $\bar{S}=\Delta^r$ and $\pi:U^r\to S$ be as in 
the discussion preceding~\eqref{eq:lnf}.   Let
  $\V\in\VMHS(S)^{\ad}_{\bar{S}}$ with $V$ and the
  local normal form of $\V$
$$
F(z)=e^{N(z)}e^{\Gamma(s)}.F_{\infty}
$$
as in~\eqref{eq:lnf}.  To fix the notation, we
remind the reader that $N(z)=\sum z_i N_i$ with $N_i\in\End
V_{\mathbb{Q}}$ and that  $V_{\mathbb{Q}}$ comes equipped with the weight
filtration $W$.  By definition, the limit mixed Hodge structure is 
$(F_{\infty},M)$ where $M=W^r=M(N_1+\ldots +N_r, W)$. 

For $z\in U^r$, set $Y(z)=Y_{(F(z),W)}$.  Let $I$ denote the vertical
strip~\ref{eq:vertical-strip}.   Then, for each integral $Y_{\mathbb{Z}}\in \End
V_{\mathbb{Z}}$, set $B(Y_{\mathbb{Z}}):=\{z\in I: Y(z)=Y_{\mathbb{Z}}\}$ and 
$C(Y)=\pi(B(Y_{\mathbb{Z}}))$.  
\end{ntt}

\newcommand\YZ{Y_{\mathbb{Z}}}
\newcommand\txi{\tilde{\xi}}
\newcommand\Yinf{Y_{\infty}}

\begin{theorem}\label{t.two}
  Suppose $C(\YZ)$ is Zariski dense at the origin in $\bar{S}$
  for some $\YZ\in \End V$.   Then $C(\YZ)=S$.  Moreover,
  $[N_i,\YZ]=0$ for all $i$. 
\end{theorem}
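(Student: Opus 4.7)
\textit{Plan.} My plan is to apply Theorem~\ref{theorem:conj-1} to $\ssl_2$-sequences in $B(Y_\Z)$ that accumulate on the boundary divisor, then use the Zariski density hypothesis to produce enough such sequences to deduce both $[N_i,Y_\Z]=0$ and $C(Y_\Z)=S$.

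First observe that the hypothesis implies $C(Y_\Z)$ accumulates at $0$ in $\bar S$: otherwise a neighbourhood of $0$ would be disjoint from $C(Y_\Z)$, placing the constant function $1$ in $\mathcal I_{C(Y_\Z),0}$ and contradicting Zariski density. Pick a sequence $z(m)=x(m)+iy(m)\in I\cap B(Y_\Z)$ with $\pi(z(m))\to 0$, so $y(m)\to\infty$. By Lemma~\ref{lemma:finite-translates}, after a coordinate permutation and subsequencing I may assume $z(m)$ is an $\ssl_2$-sequence with associated flag $\Theta$, and after a further subsequence that $x(m)\to x_0\in[0,1]^r$. Since $Y(z(m))=Y_\Z$ is integral, hence real, the argument in the proof of Corollary~\ref{corollary:finiteness} shows $\hat Y(z(m))=Y_\Z$. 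Applying Theorem~\ref{theorem:conj-1} now produces the crucial identity
\[
e^{-N(x_0)}\,Y_\Z\,e^{N(x_0)} = Y(\N{\theta^1},Y(\N{\theta^2},\cdots,Y_{(\hat F_\infty,W^r)})) =: Y^0(\Theta).
\]

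To extract the commutation relations $[N_i,Y_\Z]=0$ I would combine this identity with the functorial relation $Y(z+n)=e^{N(n)}\,Y(z)\,e^{-N(n)}$ for $n\in\Z^r$ (which holds because $e^{N(n)}$ is an isomorphism of the mixed Hodge structures $(F(z),W)$ and $(F(z+n),W)$), with Lemma~\ref{l.FiltPreserveZero}, and with the $\ssl_2$-structure of Deligne systems. The key use of Zariski density is in producing enough sequences to realize many pairs $(x_0,\Theta)$: the density prevents $C(Y_\Z)$ from being confined to a proper analytic subset near $0$, which in turn allows $y(m)$ to approach infinity along many distinct $\ssl_2$-directions and $x_0$ to vary in $[0,1]^r$. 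Plugging the resulting family of identities back into the $\Z^r$-equivariance of $Y$ and the Deligne-system relations should force $[N_i,Y_\Z]=0$ for each $i$.

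Finally, once $[N_i,Y_\Z]=0$ is established, the functorial relation implies that $B(Y_\Z)$ is invariant under integer translations, so $B(Y_\Z)=\pi^{-1}(C(Y_\Z))$. To conclude $C(Y_\Z)=S$, equivalently $Y(z)\equiv Y_\Z$ on the connected open set $U^r$, I would combine the real-analyticity of $Y$, the fact that the identity above (now reading $Y_\Z=Y^0(\Theta)$) holds for every flag arising from sequences in $B(Y_\Z)$, and the Zariski density of $C(Y_\Z)$; together with the finiteness of integral gradings on vertical strips (Corollary~\ref{corollary:finiteness}), this should prevent $B(Y_\Z)$ from being a proper subset of $U^r$. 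The main obstacle is the middle step: rigorously extracting $[N_i,Y_\Z]=0$ from the Zariski density. The density is a complex-analytic condition on $\bar S$, whereas the flag $\Theta$ and the point $x_0$ are governed by the real asymptotic structure of $y(m)$ and $x(m)$; bridging these two perspectives requires careful analysis of how the holomorphic period map $F(z)=e^{N(z)}e^{\Gamma(s)}F_\infty$ interacts with the real $\ssl_2$-orbit data.
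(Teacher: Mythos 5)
Your opening moves agree with the paper: choose an $\ssl_2$-sequence in $B(Y_{\mathbb Z})$ accumulating at $0$, note $\hat Y(z(m))=Y(z(m))=Y_{\mathbb Z}$, and apply Theorem~\ref{theorem:conj-1} to obtain $e^{-N(x_0)}.Y_{\mathbb Z}= Y(\N{\theta^1},\dots,Y_{(\hat F_\infty,W^r)})$ (the paper then converts $\hat F_\infty$ to $F_\infty$ via Lemma~\ref{l.FiltPreserveZero}, giving $Y_{\mathbb Z}=e^{-\tilde\xi}.Y_\infty$ with $\tilde\xi=\xi-N(\mu)$). Beyond that, however, the middle of your argument is a genuine gap, and you say so yourself. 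The strategy of "producing enough pairs $(x_0,\Theta)$" from Zariski density and then massaging the resulting identities with $\Z^r$-equivariance is not how the paper proceeds and is unlikely to work as stated: Zariski density of $C(Y_{\mathbb Z})$ does not obviously produce a rich family of limiting flags $\Theta$ or limiting base points $x_0$, and even if it did, you would still need a concrete mechanism turning a family of \emph{asymptotic} identities into the \emph{exact} commutations $[N_i,Y_{\mathbb Z}]=0$.

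The mechanism the paper actually uses is different in kind and is the crux of the proof. First it establishes a \emph{pointwise} (not asymptotic) identity on $B(Y_{\mathbb Z})$ (Lemma~\ref{l.simple}): $e^{\Gamma(s)}.Y_\infty = e^{-N(z)}.Y_{\mathbb Z}$, proven by showing that $e^{-\Gamma(s)}e^{-N(z)}.Y(z)-Y_\infty$ lies in $\mathfrak g_{\mathbb C}^{F_\infty}\cap\mathfrak q=0$ — an argument that uses only the local normal form and the decomposition \eqref{eq:decomp}, not the $\SL_2$-orbit theorem. Second, it passes to the $(-1,-1)$ Deligne component with respect to $(F_\infty,M)$: this collapses the conjugation $e^{-N(z)}.Y_\infty$ to the linear expression $-[N(z),Y_\infty]$, so the holomorphic function $\gamma(s)=(e^{\tilde\xi}e^{\Gamma(s)}.Y_\infty)^{-1,-1}$ equals a $\mathbb Q$-linear form in the $z_i$ on $B(Y_{\mathbb Z})$. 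Third — and this is what makes Zariski density bite — exponentiating both sides turns the $z_i$-linear form into a monomial $\prod s_i^{b\beta_{ij}}$, giving an identity of holomorphic functions on $\bar S$ which, by density at $0$ and nonvanishing of the left side near $0$, forces every $\beta_{ij}=0$ and hence $[N_i,Y_\infty]=0$. Your proposal has none of this: it never isolates a holomorphic identity in $s$ nor an argument that kills the $N_i$-components. The final step (concluding $C(Y_{\mathbb Z})=S$) then falls out immediately from $[N_i,Y_\infty]=0$ and Lemma~\ref{l.simple}, again via a holomorphic identity $e^{\Gamma(s)}.Y_\infty=Y_{\mathbb Z}$ that is dense hence everywhere true; the real-analyticity and Corollary~\ref{corollary:finiteness} considerations you invoke are not needed for this part.

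In short: your starting point is correct, but you are missing the two key ideas — the pointwise Lemma~\ref{l.simple} (which replaces the limiting identity from Theorem~\ref{theorem:conj-1} as the working equation) and the $(-1,-1)$-component-plus-exponentiation trick that converts the problem into a statement about holomorphic functions on $\bar S$, where Zariski density can be applied directly.
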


\begin{proof}
  By assumption, $0$ is a limit point of $C(\YZ)$ in the usual
  topology.  Therefore, we can find (possibly after permuting the
  coordinates) an $\ssl_2$-sequence $z(m)\in I$ such that
  $Y(z(m))=\YZ$ for all $m$ (and $z_i(m)$ is unbounded for each $i$).  

Write $z(m)=x(m)+iy(m)$ with $x,y$ real and set $\mu=\lim_{m\to
  \infty} x(m)$.  
Write $\xi$ for the $\ssl_2$-splitting of $(F_{\infty},M)$.   (In the
notation of~\cite{KNU}, $\xi=\epsilon(F_{\infty},M)$.)  Then, by
Theorem~\ref{theorem:conj-1} and Lemma~\ref{l.FiltPreserveZero}, 
\begin{align*}
\YZ&=e^{N(\mu)}.Y(N(\theta^1),Y(N(\theta^2),\ldots, Y_{(\hat
  F_{\infty},M)}))\\
&= e^{N(\mu)}e^{-\xi}.Y(N(\theta^1),Y(N(\theta^2),\ldots, Y_{(F_{\infty},M)}))
\end{align*}
To simplify the notation, we write $\Yinf=
Y(N(\theta^1),Y(N(\theta^2),\ldots, Y_{(F_{\infty},M)}))$ and
$\txi=\xi-N(\mu)$. 
Then, since $\xi$ commutes with the $N_i$ we have 
$$
\YZ=e^{-\txi}.\Yinf.
$$

Now, for any operator $A$ on $V$, write $A^{p,q}$ for the component of
$A$ in $\gl^{p,q}(V)_{(F_{\infty},M)}$.  By Lemma~\ref{l.FiltPreserveZero},
$\Yinf=\Yinf^{0,0}$.  In other words, $\Yinf$ preserves the
$I^{p,q}_{(F_{\infty},M)}$.  Likewise, $\ad\Yinf$ preserves the subalgebra
$\mathfrak q$.

\begin{lemma}\label{l.simple}
Suppose $z\in B(\YZ)$.  Then
\begin{equation}\label{e.simple}
e^{\Gamma(s)}.Y_{\infty}=e^{-N(z)}.\YZ.
\end{equation}
\end{lemma}

\begin{proof}[Proof of Lemma~\ref{l.simple}]   Recall that by
equation~\eqref{eq:decomp} we have
$\mathfrak g_{\C} = \mathfrak g_{\C}^{F_{\infty}}\oplus\mathfrak q$ where 
$\mathfrak g_{\mathbb C}^{F_{\infty}}$ stabilizes the limit Hodge filtration.  
Therefore, since 
$Y(z)$ and $e^{N(z)}e^{\Gamma(s)}.Y_{\infty}$ both preserve $F(z)$
it follows that 
$
       Y(z) - e^{N(z)}e^{\Gamma(s)}.Y_{\infty}
$
preserves $F(z)$ and hence
$$
       f(z) = e^{-\Gamma(s)}e^{-N(z)}.Y(z) - Y_{\infty}
$$
takes values in $\mathfrak g_{\C}^{F_{\infty}}$.    On the other hand,
since $z\in B(Y_{\Z})$ and $Y_{\Z}=e^{-\tilde\xi}.Y_{\infty}$ we also have
$$
      f(z) = e^{-\Gamma(s)}e^{-N(z)}e^{-\tilde\xi}.Y_{\infty} - Y_{\infty}
$$
In particular, since $\Gamma(s)$, $N(z)$ and $\tilde\xi$ are elements
of $\mathfrak q$ and $\ad Y_{\infty}$ preserves $\mathfrak q$ it follows
that $f(z)$ takes values in 
$$
         \mathfrak g_{\C}^{F_{\infty}}\cap\mathfrak q = 0 
$$
\end{proof}

\newcommand\Linf{L_{\infty}}

\begin{lemma}\label{l.infz}
  The two linear maps $L_{\infty}:\mathbb{C}^r\to \End V_{\mathbb{C}}$
  given by $z\mapsto [N(z),\Yinf]$  
  and $L_{\mathbb{Z}}:\mathbb{C}^r\to \End V_{\mathbb{C}}$ given by
  $z\mapsto [N(z),\YZ]$ have the same kernels.  
\end{lemma}

\begin{proof}[Proof of Lemma~\ref{l.infz}]  This follows directly from
  the fact that $\txi$ commutes with $N(z)$.  
\end{proof}

Since $\YZ$ is integral, we can find a subset
$\Omega\subset\{1,\ldots, r\}$ such that the $[N_j,\YZ]$ form a basis
of $L_{\mathbb Z}(\mathbb{Q}^r)$ as $j$ runs through $\Omega$.   Thus there
exists rational numbers $\beta_{ij}$
($i\in\{1,.., r\}, j\in\Omega$) such that  
$$
[N_i,\YZ]=\sum_{j\in\Omega} \beta_{ij}[N_j,\YZ].
$$ 
Clearly $\beta_{jj}=1$ for $j\in\Omega$. Likewise, applying 
$\Ad(e^{\tilde\xi})$ to the previous equation, it follows that
$$
     [N_i,Y_{\infty}]=\sum_{j\in\Omega} \beta_{ij}[N_j,Y_{\infty}].
$$
So, since $\tilde{\xi}$ commutes with $N_1,\ldots, N_r$, setting 
$L_i=[N_i,Y_{\infty}]$, we have  
$$
   L_{\infty}(z)=\sum_{j\in\Omega}\sum_{i\geq j} \beta_{ij}z_i L_j.
$$

Multiplying equation~\eqref{e.simple} by $e^{\txi}$ and using the fact that 
$\txi$ commutes with $N(z)$, we see that 
\begin{equation}\label{e.four}
z\in B(Y)\Rightarrow
e^{\txi}e^{\Gamma(s)}.Y_{\infty}=e^{-N(z)}.Y_{\infty}.
\end{equation}

Since $\Yinf=\Yinf^{0,0}$ with respect to $(F_{\infty},M)$ while 
each $N_i$ is a morphism of type $(-1,-1)$, if follows that 
$$
          (e^{N(z)}.\Yinf)^{-1,-1}=[N(z),\Yinf].
$$  
Set $\gamma(s)=(e^{\txi}e^{\Gamma(s)}.\Yinf)^{-1,-1}$.  This is a
holomorphic function on $\bar{S}$ which by\eqref{e.four} must be
equal to 
$$
(e^{-N(z)}.\Yinf)^{-1,-1}=\Linf(-z)=-\sum_{j\in\Omega}\sum_{i=1}^r
\beta_{ij}z_i L_j
$$
for $z\in B(\YZ)$. 
Since $C(\YZ)$ is Zariski dense at the origin and $\gamma(s)$ lies in
$\Linf(\CC^r)$ for $s\in C(\YZ)$, $\gamma(s)$ must lie in
$\Linf(\CC^r)$ for all $s\in\bar{S}$. Since the $L_j, j\in\Omega$
form a basis of $\Linf(\CC^r)$, we can write
$\gamma(s)=\sum_{j\in\Omega} \gamma_j(s)L_j$ for $s\in\bar{S}$.  

Therefore
$$
z\in B(\YZ)\Rightarrow \gamma_j(s)=\sum_{i=1}^r \beta_{ij} z_i,
\forall j\in\Omega.
$$
We can find a positive integer $b$ such that $b\beta_{ij}\in\mathbb{Z}$ for all $i,j$.  So we have
$$
z\in 
B(\YZ)\Rightarrow  b\gamma_j(s)=\sum_{i=1}^r b\beta_{ij} z_i, \forall j\in\Omega.
$$
Exponentiating both sides we find that $z\in B(\YZ)\Rightarrow$
\begin{equation}
\exp(2\pi ib\gamma_j(s))=\prod_{i=1}^r
s_i^{b\beta_{ij}}, \forall j\in\Omega.\label{e.2.8}
\end{equation} By our assumption that $C(\YZ)$ is Zariski dense at $0$, \eqref{e.2.8} must
hold identically identically on $\bar{S}$.  The
left-hand-side of the equation is non-vanishing and holomorphic on a
neighborhood of the origin, this forces $\beta_{ij}=0$ for all $i,j$.
Thus, since $\beta_{jj}=1$ for $j\in\Omega$,  we have $\Omega=\emptyset$ and
$[N_i,Y_{\infty}]=0$ for all $i$.  Since $N(z)$ commutes with
$Y_{\infty}$ and with $\txi$, it commutes with $\YZ$.  Thus we have
$$
z\in B(\YZ)\Leftrightarrow e^{\Gamma(s)}.Y_{\infty}=\YZ.
$$
As the above equation is a holomorphic equation for $C(\YZ)$, it must hold identically.  
Thus we have $C(\YZ)=S$.  
Moreover, since $\Gamma(0)=0$, we have $Y_{\infty}=\YZ$.   

This completes the proof of Theorem~\ref{t.two}.
\end{proof}

\begin{corollary}\label{c.ZD}  Suppose $\nu\in\NF(S,\H)^{\ad}_{\bar S}$
and suppose that $Z(\nu)$ is Zariski dense at the origin in $S$ with
$\H$ and $S$ as in  Theorem~\ref{t.Reduction}.  Then $\ZZ(\nu)=S$.  
\end{corollary}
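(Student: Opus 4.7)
The plan is to combine Corollary~\ref{c.CorRed1}, which expresses $\ZZ(\nu)$ in terms of the integrality of the grading $Y_{(F(z),W)}$, with the finiteness Corollary~\ref{corollary:finiteness} and the rigidity of Theorem~\ref{t.two}. After possibly shrinking $\bar{S}$ so that every $s\in S$ is uniquely covered by a point in the interior of the standard vertical strip $I$, Corollary~\ref{c.CorRed1} yields the decomposition
$$
\ZZ(\nu)=\bigcup_{Y_{\Z}} C(Y_{\Z}),
$$
where the union runs over those $Y_{\Z}\in w\End V_{\Z}$ which actually arise as $Y_{(F(z),W)}$ for some $z\in I$. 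By Corollary~\ref{corollary:finiteness} this is a \emph{finite} union.

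Since $\bar{S}$ is smooth, the local ring $\mathcal{O}_{\bar{S},0}$ is an integral domain, and the ideal associated to the germ of a finite union of subsets is the intersection of the ideals of the individual germs. A finite intersection of nonzero ideals in an integral domain is nonzero, so the Zariski density of $\ZZ(\nu)$ at the origin forces $C(Y_{\Z})$ to be Zariski dense at the origin for at least one $Y_{\Z}$ in our finite collection.

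Applying Theorem~\ref{t.two} to this $Y_{\Z}$ yields $C(Y_{\Z})=S$. Since by construction $Y_{\Z}\in w\End V_{\Z}$, Corollary~\ref{c.CorRed1} then shows that every point of $S$ lies in $\ZZ(\nu)$, so $\nu$ vanishes on an open neighbourhood of $0$ in $\bar{S}$ intersected with $S$. Because $S=\Delta^{*r}$ is connected and $\nu$ is a holomorphic section of $J(\H)\to S$, the identity principle then gives $\nu\equiv 0$, and hence $\ZZ(\nu)=S$ globally.

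The argument is essentially an assembly of tools already in place: the only slightly delicate step is the passage from Zariski density of the union to Zariski density of one of its members, and this is where the finiteness supplied by Corollary~\ref{corollary:finiteness} is essential; without that input one would only know Zariski density of a countable union, from which one could not extract an individual dense stratum. All the serious work is done by Theorem~\ref{t.two}, which in turn rests on the asymptotic analysis of Theorem~\ref{theorem:conj-1}.
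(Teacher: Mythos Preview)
Your proof is correct and follows essentially the same approach as the paper's own proof, which simply invokes Corollary~\ref{corollary:finiteness} to extract a single $\YZ$ with $C(\YZ)$ Zariski dense and then applies Theorem~\ref{t.two}. Your version supplies the details the paper leaves implicit: the decomposition of $\ZZ(\nu)$ via Corollary~\ref{c.CorRed1}, the integral-domain argument for passing Zariski density to one member of the finite union, and the final identity-principle step to pass from the shrunk polydisk back to all of $S$.
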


\begin{proof}
We write 
$$
0\to \H\to \V\to \Z\to 0
$$ for the extension corresponding to $\nu$.  Since, by
Corollary~\ref{corollary:finiteness}, the density of $\ZZ(\nu)$ at the
origin implies that there is a $\YZ$ such that $C(\YZ)$ is also
Zariski dense.  Then use Theorem~\ref{t.two}.
\end{proof}

\begin{proof}[Proof of Theorem~\ref{theorem:main}]
Corollary~\ref{c.ZD} establishes (a) of Theorem~\ref{t.Reduction}.  Therefore
(g) holds as well.    This directly implies Theorem~\ref{theorem:main}.
\end{proof}

\section{Deligne systems I}
\par In the remainder of this paper we will work exclusively with
admissible variations of $\mathbb R$-mixed Hodge structure.   

\subsection*{} We now reduce the proof of Lemma~\ref{l.FiltPreserveZero} 
to a  corollary of the following sequence of lemmata:

\begin{lemma}\label{lemma:ds-1}\cite{D} If $(N; F,W)$ defines an admissible
nilpotent orbit with limit mixed Hodge structure split over $\R$ then, 
in the notation of \eqref{eq:Deligne-system},
the $\ssl_2$-splitting of the mixed Hodge structure
$(e^{iN}. F,W)$ is $(e^{i\hat N}.F,W)$.

\end{lemma}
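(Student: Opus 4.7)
The plan is to identify $(e^{i\hat N}.F,W)$ as the $\ssl_2$-splitting of $(e^{iN}.F,W)$ by matching it against the characterizations provided by Theorem~\ref{theorem:SL2_Splitting} and Lemma~\ref{lemma:ds-2}. Since $(F,M)$ is $\R$-split, Deligne's construction attached to $(N;F,W)$ produces an $\ssl_2$-triple $(\hat N,\hat H,\hat N^+)$ whose three elements are morphisms of $(F,M)$ of respective types $(-1,-1)$, $(0,0)$, $(1,1)$, with $\hat N$ the $\ad\hat Y^0$-eigenvalue-zero component of $N$ where $\hat Y^0 = Y(N,Y_{(F,M)})$. Since $(\hat N;F,W)$ is then a genuine one-variable $\SL_2$-orbit in Schmid's sense, the classical $\SL_2$-orbit theorem yields that $(e^{i\hat N}.F,W)$ is an $\R$-split mixed Hodge structure whose Deligne grading equals $Y(\hat N,Y_{(F,M)})$; the defining property of $\hat N$ then forces $Y(\hat N,Y_{(F,M)}) = Y(N,Y_{(F,M)}) = \hat Y^0$, and by Lemma~\ref{lemma:ds-2} this is exactly the Deligne grading of the true $\ssl_2$-splitting of $(e^{iN}.F,W)$.

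Next I would write $e^{iN}e^{-i\hat N} = e^{\eta}$ via the Baker--Campbell--Hausdorff formula, so that $e^{iN}.F = e^{\eta}.(e^{i\hat N}.F)$. Each nested commutator in the BCH expansion involves $\hat N$ and $\tilde N := N - \hat N$, both of $(F,M)$-type $(-1,-1)$; using the $\ssl_2$-relations between $\hat N$, $\hat H$, $\hat N^+$ and the fact that $\tilde N$ has nonzero $\ad\hat Y^0$-weight, iterated commutators land in strictly negative bidegrees of the Deligne bigrading of $(F,M)$. This shows $\eta \in \Lambda^{-1,-1}_{(e^{iN}.F,W)}$, and reality of $\eta$ follows from the fact that $\hat N$ and $N$ are real. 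Thus $(e^{i\hat N}.F,W)$ is an $\R$-split MHS of the form $(e^{-\eta}.e^{iN}.F,W)$ with $\eta$ real in $\Lambda^{-1,-1}_{(e^{iN}.F,W)}$ and with Deligne grading matching that of the $\ssl_2$-splitting.

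The main obstacle is the final identification of $\eta$ with the universal-Lie-polynomial formula defining the $\ssl_2$-splitting operator $\xi$ in the proof of Lemma~$6.60$ of~\cite{CKS}. Since $(F,M)$ is $\R$-split, Deligne's $\delta$-splitting of $(e^{iN}.F,W)$ can be computed explicitly from the $\ssl_2$-triple, and both $\eta$ and $\xi$ collapse to nested commutators in the data $(\hat N,\hat N^+,\tilde N)$. A careful bookkeeping of bidegrees under $\ad Y_{(F,M)}$ and of weights under $\ad\hat H$ should allow a term-by-term match of the two expressions. Once this matching is established, the uniqueness clause in Theorem~\ref{theorem:SL2_Splitting} forces $(e^{i\hat N}.F,W)$ to coincide with the $\ssl_2$-splitting of $(e^{iN}.F,W)$, completing the proof.
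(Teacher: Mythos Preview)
Your first paragraph is correct and is essentially the entire content of the paper's proof: cite the identity $Y_{(e^{i\hat N}.F,W)}=Y(N,Y_{(F,M)})$ from~\cite{KP}, combine with Lemma~\ref{lemma:ds-2} to get $Y_{(e^{i\hat N}.F,W)}=\hat Y_{(e^{iN}.F,W)}$, and conclude. The only step left implicit in the paper is the elementary observation that an $\R$-split mixed Hodge structure is determined by its Deligne grading together with the induced Hodge filtration on $\Gr^W$; since both $\xi$ and $N-\hat N$ lie in $W_{-1}\gl(V)$ (the latter because $\hat N$ is the $\ad\hat Y^0$-weight-zero component of $N\in W_0\gl(V)$), both candidate splittings agree with $e^{iN}.F$ on $\Gr^W$, and the equality of Deligne gradings finishes the argument.

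Your second and third paragraphs are an unnecessary detour and contain errors. First, $\eta$ is \emph{not} real: the BCH expansion gives $\eta=i(N-\hat N)+\tfrac12[N,\hat N]+\cdots$, which has both real and purely imaginary parts, so ``reality of $\eta$ follows from the fact that $\hat N$ and $N$ are real'' is false. Second, your commutator argument places $\eta$ in $\Lambda^{-1,-1}_{(F,M)}$, but you then assert $\eta\in\Lambda^{-1,-1}_{(e^{iN}.F,W)}$ without justification; these are bigradings for different mixed Hodge structures on different weight filtrations. Third, and most importantly, the ``main obstacle'' you describe---matching $\eta$ term-by-term with the universal Lie polynomial defining $\xi$---is both unnecessary and impossible as stated: since $\xi$ is real and $\eta$ is not, one cannot have $\eta=\xi$; only the \emph{filtrations} $e^{-\eta}.e^{iN}.F$ and $e^{-\xi}.e^{iN}.F$ coincide, and that is exactly what the simpler argument above establishes without any explicit computation.
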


\begin{proof}
We have $Y_{(e^{i\hat N}.F,W)}=Y(N,Y_{(F,M)})$ by the second
line in the proof of Theorem 2 of the appendix to~\cite{KP}.  
(In~\cite{KP}, the notation $N_0$ is used to denote the
$0$-eigencomponent of $N$ under the
operator $Y(N,Y_{(F,M)})$, which is denoted by  $\hat N$ in this paper.)
It follows from Lemma~\ref{lemma:ds-2} that $Y_{(e^{i\hat N}.F,W)}=\hat
Y_{(e^{iN}.F,M)}$.  Therefore the $\SL_2$-splitting of $(e^{iN}.F,W)$
is equal to $(e^{i\hat N}.F,W)$.  
\end{proof}

Suppose now that $(N_1,\dots,N_r;\hat F_r,W)$ defines an admissible 
nilpotent orbit with limit mixed Hodge structure split over $\R$.  Following 
the notation of \eqref{eq:Deligne-system}, let $W^0,\dots,W^r$ be the associated
system of weight filtrations.  Recall that, by \cite{CKS} and 
\cite{Kashiwara}, 
$$
       (z_1,\dots,z_{r-1})\mapsto (e^{\sum_{j\leq r-1}\, z_j N_j}e^{iN_r}.\hat F_r,W^0)
$$
is an admissible nilpotent orbit, and hence $(e^{iN_r}.\hat F_r,W^{r-1})$
is a mixed Hodge structure.  Accordingly,
$$
        (z_1,\dots,z_{r-1}) \mapsto 
                              (e^{\sum_{k\leq r-1}\,z_k N_k}.\hat F_{r-1},W^0)
$$
is an admissible nilpotent orbit with limit mixed Hodge structure split over 
$\R$, where
$
      (\hat F_{r-1},W^{r-1}) = (e^{i\hat N_r}.\hat F_r,W^{r-1})
$
is the $\ssl_2$-splitting of $(e^{iN_r}.\hat F_r,W^{r-1})$. Iterating this 
construction, we obtain a sequence of mixed Hodge structures
\beq
          (\hat F_{j-1},W^{j-1}) = (e^{i\hat N_j}.\hat F_j,W^{j-1})
          \label{eq:iter-1}
\eeq
and associated nilpotent orbits
\beq
          (z_1,\dots,z_j) \mapsto e^{\sum_{k\leq j}\, z_k N_k}.\hat F_j,
          \label{eq:iter-2}    
\eeq
where $\hat N_j$ is defined as in the paragraph after~\eqref{eq:Deligne-system}.


\par In particular, given the data $(N_1,\dots,N_r;\hat F_r,W)$ of an
admissible nilpotent orbit with limit mixed Hodge structure split
over $\R$, the sequence of gradings $\hat Y^j$ constructed in 
\eqref{eq:Deligne-system} is given by
$
             \hat Y^j = Y_{(\hat F_j,W^j)}
$.
Since $N_1,\dots,N_j$ are $(-1,-1)$-morphisms of 
$(\hat F_j,W^j)$, it follows that 
\beq
             [N_k,\hat H_j] = 0             \label{eq:ds-comm-1}
\eeq
for $j>k$ where as in \eqref{eq:Deligne-system}, 
$\hat H_j = \hat Y^j - \hat Y^{j-1}$.

\begin{lemma} Let $(N_1,\dots,N_r;\hat F_r,W)$ define an admissible
nilpotent orbit with limiting mixed Hodge structure $(\hat F,M)$ split 
over $\R$.  Then,
$$
     \hat Y^0 = Y(N_1,Y(N_2,\dots,Y(N_r,Y_{(\hat F,M)})))
$$
preserves $\hat F$.
\end{lemma}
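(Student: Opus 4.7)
\emph{Proof plan.}
The plan is to reduce the statement that $\hat Y^0$ preserves $\hat F = \hat F_r$ to the commutativity $[\hat Y^0,\hat N_k]=0$ for all $k$, which then comes from the commuting $\ssl_2$ structure on the triples $(\hat N_j,\hat H_j)$.

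First, I would exploit the iterative construction preceding the lemma. By definition, $\hat Y^0 = Y_{(\hat F_0,W^0)}$, so $\hat Y^0$ preserves $\hat F_0$.  Since the $\hat N_j$'s commute with one another, the iteration $\hat F_{j-1}=e^{i\hat N_j}.\hat F_j$ telescopes to $\hat F_0 = e^{i(\hat N_1+\cdots+\hat N_r)}.\hat F_r$.  Therefore $\hat Y^0$ preserves $\hat F_r$ if and only if $\hat Y^0$ commutes with $e^{i(\hat N_1+\cdots+\hat N_r)}$, for which it is enough to prove $[\hat Y^0,\hat N_k]=0$ for every $k$.

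To establish the commutativity, I would write $\hat Y^0 = \hat Y^r - \sum_{j=1}^{r}\hat H_j$ and bracket each side against $\hat N_k$.  Using the commuting $\ssl_2$-relations $[\hat H_j,\hat N_k] = -2\hat N_k\,\delta_{jk}$, the sum on the right collapses to $-2\hat N_k$.  So the whole question reduces to showing
\[
[\hat Y^r,\hat N_k] = -2\hat N_k \qquad (1\le k\le r).
\]
This is the main step.  I would argue it inductively on $j\ge k$: by definition $\hat N_k$ has eigenvalue zero under $\ad\hat Y^{k-1}$, so $[\hat Y^{k-1},\hat N_k]=0$; combined with the $\ssl_2$-identity $[\hat H_k,\hat N_k]=-2\hat N_k$ this yields $[\hat Y^k,\hat N_k]=-2\hat N_k$; and then $[\hat H_j,\hat N_k]=0$ for $j>k$ pushes the identity up to $\hat Y^r$.

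The anticipated obstacle is no more than bookkeeping: one has to be careful that the defining property of $\hat N_k$ (eigenvalue zero for $\ad\hat Y^{k-1}$) and the commutation of distinct $\ssl_2$-pairs are invoked in the correct range $j<k$ versus $j\ge k$.  Once these two ingredients are in place, the computation of $[\hat Y^0,\hat N_k]$ is immediate, and the preservation of $\hat F_r$ follows from the telescoping identity in the first paragraph.  In particular, the second assertion of Lemma~\ref{l.FiltPreserveZero}, which compares $\ssl_2$-splittings, will follow by applying this argument to $(F,W^r)$ and its $\ssl_2$-splitting $(\hat F,W^r)$ and tracking the operator $\xi$.
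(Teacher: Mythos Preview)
Your proposal is correct and follows essentially the same approach as the paper. Both arguments reduce to the commutator identity $[\hat Y^0,\hat N_k]=0$, derived from the commuting $\ssl_2$-structure on the pairs $(\hat N_j,\hat H_j)$ together with $[\hat Y^{k-1},\hat N_k]=0$, and then use this to transport ``$\hat Y^0$ preserves $\hat F_j$'' to ``$\hat Y^0$ preserves $\hat F_r$'' via the telescoping $\hat F_{j-1}=e^{i\hat N_j}.\hat F_j$.

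The only notable difference is the starting point: the paper invokes Lemma~\ref{lemma:ds-2} (that $Y(N,Y_{(F,M)})$ is a $(0,0)$-morphism) to see that $\hat Y^0$ preserves $\hat F_1$, whereas you go one step further down and use the elementary fact that the Deligne grading $\hat Y^0 = Y_{(\hat F_0,W^0)}$ preserves $\hat F_0$. Your route is slightly more self-contained. One small wording point: your ``if and only if'' is too strong---preservation of both $\hat F_0$ and $\hat F_r$ does not force $\hat Y^0$ to commute with $e^{i\sum\hat N_k}$---but you only need the ``if'' direction, which is exactly what you prove.
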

\begin{proof} To begin, we recall  that
$
    (\hat N_1,\hat H_1),\dots,(\hat N_r,\hat H_r)
$
form a commuting system of $\ssl_2$-representations~\cite{D,Schwarz}.  
Consequently, 
\beq
      [\hat Y^j,\hat N_k] = 0                     \label{eq:commuting}
\eeq
for $j<k$.  Indeed, this is true by definition for $j=k-1$.
Suppose that $j\leq k-2$.  Then,
\begin{eqnarray*}
      [\hat Y^j,\hat N_k] 
      &=& -[(\hat Y^{j+1}- \hat Y^j) + \cdots + (\hat Y^{k-1}-\hat Y^{k-2}),\hat N_k] \\
      &=& -[\hat H^{j+1} + \cdots + \hat H^{k-1},\hat N_k] = 0.
\end{eqnarray*}

\par By the prior paragraphs, 
$
       \theta(z) = (e^{zN_1}.\hat F_1,W)
$
is an admissible nilpotent orbit with limit mixed Hodge structure
split over $\R$, and hence by Lemma \eqref{lemma:ds-2}, 
$$
       \hat Y^0(\hat F_1^p)\subseteq \hat F_1^p.
$$
Using the identity $\hat F_r = e^{\sum_{j>1} i\hat N_j}.\hat F$ and the fact that
$\hat Y^0$ commutes with all $\hat N_j$, it then follows from the 
previous equation that $\hat Y^0$ preserves $\hat F$.
\end{proof}                 

\par To pass from admissible nilpotent orbits with limit mixed Hodge structure
split over $\R$ to the general case, we now use the following sequence of 
lemmata.

\begin{lemma}\label{lemma:weight-filt-preserve} Let $(N_1,\dots,N_r;F,W)$ 
generate an admissible nilpotent orbit with $\ssl_2$-splitting 
$(\hat F,W^r) = (e^{-\xi}.\hat F,W^r)$.  Then, $\xi$ preserves each associated 
weight filtration $W^j$.
\end{lemma}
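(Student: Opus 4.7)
The plan is to exhibit each weight-filtration subspace $W^j_k \subset V$ as a sub-mixed-Hodge-structure of the limit mixed Hodge structure $(F, W^r)$, and then conclude by the functoriality of the $\ssl_2$-splitting operation.

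First, I invoke the Deligne-system structure attached to $(N_1, \ldots, N_r; F, W)$. By the theorem of Deligne recalled in \eqref{eq:Deligne-system}, the data give rise to a sequence of mutually commuting gradings $Y^0, Y^1, \ldots, Y^r$ with $Y^j$ grading $W^j$ and with $Y^r = Y_{(F, W^r)}$. Since $Y^r$ commutes with $Y^j$, and since $W^j_k$ is a sum of $Y^j$-eigenspaces, the grading $Y_{(F, W^r)}$ preserves each subspace $W^j_k \subseteq V$.

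Second, I upgrade this to the statement that $W^j_k$ is a sub-mixed-Hodge-structure of $(F, W^r)$. Each $W^j_k$ is defined over $\R$, since $W^j$ is the relative weight filtration of real nilpotent operators starting from the real filtration $W = W^0$. Together with the stability under $Y_{(F, W^r)}$ just established, this implies that the Deligne bigrading of $(F, W^r)$ restricts to a bigrading of $W^j_k$: the subspaces $I^{p,q}_{(F,W^r)} \cap W^j_k$ direct-sum to $W^j_k$, they are compatible with the induced filtrations $F \cap W^j_k$ and $W^r \cap W^j_k$, and they satisfy Deligne's complex-conjugation property inherited from $V$ (using that intersection with the real subspace $W^j_k$ distributes over the ambient direct-sum decomposition). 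Hence the inclusion $W^j_k \hookrightarrow V$ is a morphism in the category of $\R$-MHS.

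Third, Theorem~\ref{theorem:SL2_Splitting} asserts that the $\ssl_2$-splitting is the unique functorial splitting of $\R$-MHS given by universal Lie polynomials in the Hodge components of Deligne's $\delta$-splitting. In particular, $\xi$ is functorial for morphisms of $\R$-MHS; applied to the inclusion $W^j_k \hookrightarrow (V, F, W^r)$, functoriality gives $\xi(W^j_k) \subseteq W^j_k$ for every $j$ and $k$, which is precisely the statement that $\xi$ preserves the filtration $W^j$. The main obstacle in this argument is the first step, the stability of each $W^j_k$ under $Y_{(F, W^r)}$, which rests on the deeper Deligne-system commutativity; the sub-MHS verification and the functoriality conclusion are essentially formal once that is in hand.
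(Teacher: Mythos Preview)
Your overall strategy---show each $W^j_k$ is a sub-MHS of the limit $(F,W^r)$ and then invoke functoriality of the $\ssl_2$-splitting---is exactly the paper's approach. The paper simply asserts the sub-MHS property; you attempt to justify it, and that justification is where the gap lies.

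In step~2 you claim that stability of $W^j_k$ under $Y_{(F,W^r)}$, together with $W^j_k$ being real, forces the Deligne bigrading of $(F,W^r)$ to restrict to $W^j_k$. This is false in general. The grading $Y_{(F,W^r)}$ records only the total weight $p+q$, so stability under it yields the decomposition
\[
   (W^j_k)_{\C} \;=\; \bigoplus_{\ell}\,
   \Bigl((W^j_k)_{\C}\cap \bigoplus_{p+q=\ell} I^{p,q}\Bigr),
\]
not the finer $(p,q)$-decomposition. For a concrete counterexample, take any pure weight-$0$ real Hodge structure with $I^{1,-1}\neq 0$: then $Y_{(F,W)}=0$ preserves every real subspace, yet a generic real line in $(I^{1,-1}\oplus I^{-1,1})_{\R}$ is not a sub-Hodge-structure. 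Your parenthetical ``intersection with the real subspace $W^j_k$ distributes over the ambient direct-sum decomposition'' is precisely the unjustified step.

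What is missing is the compatibility of the Hodge filtration $F$ with $W^j$; the Deligne-system axioms are purely linear-algebraic and do not see $F$ beyond its weight grading. The correct input is Hodge-theoretic: $W=W^0$ is a filtration of $(F,W^r)$ by sub-MHS (this is part of admissibility---it is the weight filtration of the variation), and each $N_i$ is a $(-1,-1)$-morphism of $(F,W^r)$ by Kashiwara's theorem. Since the relative weight filtration $M(N,W')$ of a morphism $N$ with respect to a filtration $W'$ by subobjects is itself built in the abelian category of $\R$-MHS, it is again a filtration by sub-MHS. Inducting on $j$ via $W^j=M(N_j,W^{j-1})$ then gives that every $W^j_k$ is a sub-MHS of $(F,W^r)$. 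With this established, your step~3 finishes the proof exactly as the paper does.
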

\begin{proof} The $\ssl_2$-splitting is functorial and each $W^j$ is a 
filtration of the limit mixed Hodge structure $(\hat F,W^r)$ by subobjects.  
It follows easily that $\xi$ preserves each $W^j$.
\end{proof}

We now prove the result stated in Lemma~\ref{l.FiltPreserveZero}.

\begin{lemma}\label{l.xiy} Let $(N_1,\dots,N_r;F,W)$ define an admissible 
nilpotent orbit with $\ssl_2$-splitting $(\hat F,W^r) = (e^{-\xi}.F,W^r)$.  
Then, 
$$
       Y(N_1,Y(N_2,\dots,Y(N_r,Y_{(\hat F,W^r)})))
       = e^{-\xi}.Y(N_1,Y(N_2,\dots,Y(N_r,Y_{(F,W^r})))
$$
\end{lemma}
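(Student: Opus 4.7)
The plan is to reduce the identity to a single independence statement about the iterated Deligne splitting, and then to establish that independence by induction on $r$ using the one-variable lemmata already in hand.

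Write $Y^r := Y_{(F,W^r)}$ and $\hat Y^r := Y_{(\hat F,W^r)}$. Since $\xi\in\Lambda^{-1,-1}_{(F,W^r)}$, the standard relation $I^{p,q}_{(e^{-\xi}.F,W^r)} = e^{-\xi}.I^{p,q}_{(F,W^r)}$ immediately gives $\hat Y^r = e^{-\xi}.Y^r$, while Lemma~\ref{lemma:weight-filt-preserve} assures that $\xi$ preserves each intermediate filtration $W^j$. The iterated $Y$-construction is purely linear-algebraic in the data $(N_1,\dots,N_r;W^0,Y^r)$, so it is equivariant under the adjoint action of any $g\in\GL(V)$ preserving $W^0$:
$$
g.Y(N_1,Y(N_2,\dots,Y(N_r,Y^r))) = Y(\Ad(g)N_1,Y(\Ad(g)N_2,\dots,Y(\Ad(g)N_r,g.Y^r))).
$$
Applying this with $g=e^{-\xi}$ rewrites the right-hand side of the lemma as $Y(\Ad(e^{-\xi})N_1,\dots,Y(\Ad(e^{-\xi})N_r,\hat Y^r))$, so the full claim reduces to the identity
$$
Y(N_1,\dots,Y(N_r,\hat Y^r)) = Y(\Ad(e^{-\xi})N_1,\dots,Y(\Ad(e^{-\xi})N_r,\hat Y^r)),
$$
i.e.\ that once the initial grading is the $\ssl_2$-split $\hat Y^r$, twisting each $N_i$ by $\Ad(e^{-\xi})$ is invisible to the iteration.

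I would prove this reduced identity by induction on $r$. For the base case $r=1$, $(\hat F,W^1)$ is split over $\R$, and Lemma~\ref{lemma:ds-1} together with Lemma~\ref{lemma:ds-2} imply that $Y(N_1,\hat Y^1)$ depends on $N_1$ only through its $\ad\hat Y^1$-weight-zero component $\hat N_1$. Since $\xi\in\Lambda^{-1,-1}_{(F,W^1)}$ and $N_1$ is a $(-1,-1)$-morphism of $(F,W^1)$, the correction $\Ad(e^{-\xi})N_1-N_1$ expands as an iterated bracket whose summands sit in $\bigoplus_{a,b\leq -2,\,(a,b)\neq(-1,-1)}\gl(V)^{a,b}_{(F,W^1)}$ and hence contribute nothing to the controlling weight-zero piece. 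For the inductive step I would peel off the outermost application $Y(N_1,\cdot)$ and invoke the induction hypothesis on the admissible nilpotent orbit $(N_2,\dots,N_r;\hat F,W^1)$, whose limit $(\hat F,W^r)$ is already split over $\R$, before closing with the one-variable argument at the $N_1$-step.

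The main obstacle is the bigrading bookkeeping in the inductive step: one must verify that the iterated $\xi$-correction terms remain supported away from the controlling $(-1,-1)$-locus with respect to each intermediate limit MHS $(\hat F_j,W^j)$ as one descends the Deligne tower, and that the weight-zero projections defining $Y^{j-1}$ from $Y^j$ annihilate these corrections at every stage. This bigrading calculation is precisely the content the authors defer to the companion paper~\cite{BPSplit}; the functoriality reduction above pinpoints it as the remaining analytic content of the lemma.
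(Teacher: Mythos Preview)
Your reduction via equivariance is correct, but you have missed the key fact that collapses the argument: $\xi$ \emph{commutes} with each $N_j$. This is how the paper proceeds. Since every morphism of $(F,W^r)$ commutes with Deligne's $\delta$ (stated in the theorem right before Theorem~\ref{theorem:SL2_Splitting}), and each $N_j$ is a $(-1,-1)$-morphism of $(F,W^r)$, one has $[\delta,N_j]=0$; because $N_j$ shifts the bigrading by $(-1,-1)$, it commutes with each Hodge component of $\delta$ separately. By Theorem~\ref{theorem:SL2_Splitting}, $\xi$ is a universal Lie polynomial in those Hodge components, so $[\xi,N_j]=0$. Hence $\Ad(e^{-\xi})N_j=N_j$, and your ``reduced identity'' is a tautology. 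Combined with Lemma~\ref{lemma:weight-filt-preserve} ($\xi$ preserves each $W^j$) and the equivariance you already noted, this finishes the proof in one stroke---the paper simply iterates $Y(N_j,e^{-\xi}.\,\cdot\,)=e^{-\xi}.Y(N_j,\cdot)$ down the tower.

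Your proposed induction is therefore unnecessary, and as written the base case is not right either. You claim $Y(N_1,\hat Y^1)$ depends on $N_1$ only through its $\ad\hat Y^1$-weight-zero component, but $N_1$ is a $(-1,-1)$-morphism of $(F,W^1)$ and so has pure $\ad\hat Y^1$-weight $-2$; its weight-zero component vanishes. The paper's $\hat N_j$ is the component of $N_j$ in $\ker(\ad\hat Y^{j-1})$, not $\ker(\ad\hat Y^j)$, and Lemmata~\ref{lemma:ds-1}--\ref{lemma:ds-2} do not assert that $Y(N,\cdot)$ depends only on any fixed eigencomponent of $N$. The ``bigrading bookkeeping'' you flag as the remaining obstacle is not deferred content---it simply evaporates once you invoke $[\xi,N_j]=0$.
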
 
\begin{proof}  The operator $\xi$ commutes with $N_1,\dots,N_r$ since 
$\xi$ is a universal Lie polynomial in the Hodge components of
Deligne's $\delta$-splitting $(e^{-i\delta}.F,W^r)$ of $(F,W^r)$
and $\delta$ commutes with all $(-1,-1)$-morphisms of $(F,W^r)$,
and hence in particular with $N_1,\dots,N_r$.  Furthermore,
since
$$
       Y_{(e^{-\xi}.F,W^r)} = e^{-\xi}.Y_{(F,W^r)}
$$
and $\xi$ preserves $W^{r-1}$ and commutes with $N_r$, we have
(by the properties of Deligne's construction~\cite{KP})
$$
        Y(N_r,Y_{(\hat F,W^r)}) = e^{-\xi}.Y(N_r,Y_{(F,W^r)}).
$$
Iterating this process, we obtain,
$$
     Y(N_1,Y(N_2,\dots,Y(N_r,Y_{(\hat F,W^r})))
      = e^{-\xi}.Y(N_1,Y(N_2,\dots,Y(N_r,Y_{(F,W^r)}))).
$$
\end{proof}

\begin{corollary} Let $(N_1,\dots,N_r;F,W)$ define an admissible nilpotent 
orbit.  Then,
$$
      Y=Y(N_1,Y(N_2,\dots,Y(N_r,Y_{(F,W^r)})))
$$
preserves the Deligne $I^{p,q}$'s of $(F,W^r)$.
\end{corollary}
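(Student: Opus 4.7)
The plan is to deduce the corollary by combining the preceding lemma (which handles the case of a split-over-$\R$ limit mixed Hodge structure) with Lemma~\ref{l.xiy}. Set $\hat Y := Y(N_1,Y(N_2,\dots,Y(N_r,Y_{(\hat F,W^r)})))$ and $Y := Y(N_1,Y(N_2,\dots,Y(N_r,Y_{(F,W^r)})))$, where $(\hat F,W^r) = (e^{-\xi}.F,W^r)$ is the $\ssl_2$-splitting. Lemma~\ref{l.xiy} then gives $Y = \Ad(e^\xi)(\hat Y) = e^{\xi}\hat Y e^{-\xi}$, so the task reduces to showing that $\hat Y$ preserves each Deligne piece $\hat I^{p,q} := I^{p,q}_{(\hat F,W^r)}$ and then conjugating.

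For the split case, the data $(N_1,\dots,N_r;\hat F,W)$ is again an admissible nilpotent orbit, now with limit mixed Hodge structure $(\hat F,M)$ split over $\R$, so the preceding lemma applies to show $\hat Y(\hat F^p) \subseteq \hat F^p$. Because $\hat Y$ is produced by iterating Deligne's purely algebraic construction $Y(\cdot,\cdot)$ starting from the real grading $Y_{(\hat F, W^r)}$ (real since $(\hat F,W^r)$ is $\R$-split) and the real nilpotent operators $N_j$, the grading $\hat Y$ is itself real, and hence also preserves each $\overline{\hat F^q}$. Moreover, $\hat Y$ preserves every $W^j$, in particular $W^r$, by the last axiom of a Deligne system. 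Since $(\hat F,W^r)$ is $\R$-split, the Deligne pieces satisfy $\hat I^{p,q} = \hat F^p \cap \overline{\hat F^q} \cap W^r_{p+q}$, and these three preservation properties together force $\hat Y(\hat I^{p,q}) \subseteq \hat I^{p,q}$.

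To transfer this back to $(F,W^r)$, recall from the \emph{Splittings} discussion that $\xi \in \Lambda^{-1,-1}_{(F,W^r)}$ and that $I^{p,q}_{(g.F,W)} = g.I^{p,q}_{(F,W)}$ for every $g \in \exp(\Lambda^{-1,-1}_{(F,W)})$. Applying this with $g = e^{-\xi}$ gives $\hat I^{p,q} = e^{-\xi}.I^{p,q}_{(F,W^r)}$, equivalently $I^{p,q}_{(F,W^r)} = e^{\xi}.\hat I^{p,q}$. Then
$$
Y\bigl(I^{p,q}_{(F,W^r)}\bigr) = e^{\xi}\hat Y e^{-\xi}\bigl(e^{\xi}.\hat I^{p,q}\bigr) = e^{\xi}\hat Y(\hat I^{p,q}) \subseteq e^{\xi}.\hat I^{p,q} = I^{p,q}_{(F,W^r)},
$$
which is exactly what is asserted. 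I do not anticipate any serious obstacle, since all of the substantive input has already been packaged into the preceding lemma and into Lemma~\ref{l.xiy}; the sole residual point requiring care is the confirmation that $\hat Y$ is real and preserves $W^r$ in the split case, both of which follow immediately from the definition of a Deligne system.
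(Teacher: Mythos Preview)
Your proof is correct and follows essentially the same approach as the paper: reduce to the split case via Lemma~\ref{l.xiy}, use the preceding lemma to get $\hat Y(\hat F^p)\subseteq\hat F^p$, combine with reality and preservation of $W^r$ to conclude $\hat Y$ preserves $\hat I^{p,q}=\hat F^p\cap\overline{\hat F^q}\cap W^r_{p+q}$, then conjugate by $e^{\xi}$. One small quibble: the reason $\hat Y=Y^0$ preserves $W^r$ is not literally ``the last axiom of a Deligne system'' (that axiom concerns $Y^r$), but rather that the gradings $Y^0,\dots,Y^r$ produced by Deligne's theorem mutually commute, so $Y^0$ preserves the eigenspaces of $Y^r$ and hence $W^r$.
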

\begin{proof} Let $\hat Y$ denote the analog of $Y$ obtained by 
replacing $(F,W^r)$ by the $\ssl_2$-splitting $(\hat F,W^r)$.  Then, 
$\hat Y$ is real and preserves both $\hat F$ and $W^r$.  Therefore,
$\hat Y$ preserves
$$
         I^{p,q}_{(\hat F,W^r)} = \hat F^p\cap\overline{\hat F^q}\cap W^r_{p+q}
$$
By the previous lemma, it then follows that $Y$ preserves $I^{p,q}_{(F,W^r)}$
since $F=e^{\xi}.\hat F$.
\end{proof}

\par For future reference, we now record the following three results:

\begin{lemma}\label{lemma:associated-filtration} Let $(N_1,\dots,N_r;F,W)$
be an admissible nilpotent orbit with limit mixed Hodge structure split
over $\mathbb R$.  Let $\sigma:\mathbb R^{r-i}\to\mathbb R^r$ denote the 
embedding $y\mapsto (0,y)$, and $\tilde z(m)$ be an $\ssl_2$-sequence in 
$\mathbb R^{r-i}$, with associated vectors $\theta^j$ as in \eqref{e.Flag}.
Then,
\beq
        (N_1,\dots,N_i,N(\sigma(\theta^1)),\dots,N(\sigma(\theta^d));F,W)
                                     \label{eq:iter-3}
\eeq
is an admissible nilpotent orbit with the same limit MHS as the original
nilpotent orbit.  In particular, if $\hat F_o,\hat F_1,\dots,$ are the 
associated sequence \eqref{eq:iter-1} of filtrations associated the nilpotent 
orbit \eqref{eq:iter-3} then
$$
      Y(N(\sigma(\theta^1)),\dots,Y(N(\sigma(\theta^d)),Y^r)) 
      = Y_{(\hat F_i,W^i)}
$$
\end{lemma}
\begin{proof} This is a minor variation on \eqref{eq:new-nilp} and the
previous remarks. 
\end{proof}

\begin{lemma}\label{corollary:composition-series} Let $(N',N'';F,W)$ 
generate an admissible nilpotent orbit with associated weight filtrations
$W' = M(N',W)$ and $W'' = M(N'',W')$.  Let $Y'' = Y_{(F,W'')}$.
Then, the pairs $(N'+N'';F,W')$ and $(N'';F,W')$ generate admissible 
nilpotent orbits which give the same associated gradings 
$Y(N'+N'',Y'')$ and $Y(N'',Y'')$ of $W'$.
\end{lemma}
\begin{proof} Since $(e^{z' N' + z'' N''}.F,W)$ is a nilpotent orbit it
follows that $N'$ is a $(-1,-1)$-morphism of $(e^{z'' N''}.F,W')$
whenever the later is a MHS, and hence both $(N'',F,W')$ and 
$(N' + N'',F,W')$ generate admissible nilpotent orbits with the same
limit mixed Hodge structure.  Moreover,
by Lemma~\eqref{l.FiltPreserveZero} we can without loss of generality
assume that $(F,W'')$ is split over $\mathbb R$.  As such, by 
Lemma~\eqref{lemma:ds-2} and Lemma~\eqref{lemma:morphisms-and-splittings}
$$   
      Y(N' + N'',Y'') = \hat Y_{(e^{iN' + iN''}.F,W')}
                       = \hat Y_{(e^{iN''}.F,W')} = Y(N'',Y'')
$$
since $N'$ is a $(-1,-1)$-morphism of $(e^{iN''},F,W')$.
\end{proof}

\begin{lemma}\label{lemma:commutator-lemma} Let $W$ be an increasing
filtration of a finite dimensional vector space $V$ over a field of 
characteristic zero.  Let $Y$ be a grading of $W$ and $N$ be a 
nilpotent endomorphism of $V$ such that $[Y,N] = -2N$.  Let 
$\beta\in W_{-1}(\gl(V))$ and suppose that 
$[e^{\beta}.Y,N] = -2N$. Then, $\beta\in\ker(\ad N)$.
\end{lemma}
\begin{proof} Observe that under the above hypothesis, $[e^{\beta}.Y - Y,N] = 0$.
Let $\beta = \sum_{j<0}\, \beta_{-j}$ with respect to the eigenvalues of $\ad Y$.
If $\beta\neq 0$ then there is a smallest integer $k>0$ such that 
$\beta_{-k}\neq 0$, and hence
$$
         e^{\beta}.Y - Y = k\beta_{-k} \mod W_{-k-1}(\gl(V)).
$$
Applying $\ad N$ to both sides it then follows from the fact that 
$\ad N$ lowers the eigenspaces of $\ad Y$ by 2 that $\beta_{-k} = 0$.
\end{proof}

\section{Deligne Systems II} In~\cite{KNU}, Kato, Nakayama and Usui 
attach to any admissible nilpotent orbit with data $(N_1,\dots,N_r;F,W)$ 
an associated semisimple endomorphism $t(y)$.  For later use, we now 
derive a formula for $t(y)$ in terms of the gradings $\hat Y^j$ constructed 
above.  To this end, let us assume for the moment that $(N_1,\dots,N_r;F,W)$ 
underlies a nilpotent orbit of pure Hodge structure of weight $k$.  Let 
$(\hat F_r,W^r)$ denote the $\ssl_2$-splitting of $(F,W^r)$, and recall
that $W^r$ in this case is the monodromy weight filtration 
$W(N)[-k]$ for any element $N$ in the cone of positive linear
combinations of $N_1,\dots,N_r$.  In particular, since any such
$N$ is a $(-1,-1)$-morphism of $(\hat F_r,W^r)$ it follows that
the pair $(N,\hat Y_{(r)})$ where 
$$
           \hat Y_{(r)} = \hat Y^r - k
$$
defines an $\ssl_2$-pair.  As above, we can iteratively define
$\hat Y_{(j)} = \hat Y^j - k$ using the nilpotent
orbit $(N_1,\dots,N_j;\hat F_j)$.  Define,
$$
      \tilde t(y) = \prod_{j=1}^r\, t_j^{\half\hat Y_{(j)}}
                  = (\prod_{j=1}^r\, t_j^{-\half k})
                     (\prod_{j=1}^r\, t_j^{\half \hat Y^j})
$$
where $t_j = y_{j+1}/y_j$, and hence $t_1 \dots t_r = y_{r+1}/y_1 = 1/y_1$.
Accordingly,
$$
      \tilde t(y) = y_1^{(\half k)}\prod_{j=1}^r\, t_j^{\half \hat Y^j}.
$$

\par By Theorem $(0.5)$ of \cite{KNU}, the mixed version of $t(y)$ is to be
constructed as follows:  If $(N_1,\dots,N_r;F,W)$ defines an admissible
nilpotent orbit then 
$$
       \hat Y_{(e^{\sum\, i y_j N_j}.F,W)} \to \hat Y^0
$$
provided that $t_j\to 0$ for all $j$.   Let $t_k(y)$ denote the semisimple 
endomorphism $\tilde t(y)$ attached by the previous paragraph to the induced 
nilpotent orbit of pure Hodge structure of weight $k$ on $Gr^W_k$.  Then, 
$t(y)$ is constructed by multiplying each $t_k(y)$ by $y_1^{-\half k}$ and 
then lifting the resulting semisimple element to the ambient vector space 
via the grading $\hat Y^0$.  Accordingly, since the gradings 
$\hat Y^0,\dots,\hat Y^r$ are mutually commuting, it follows that
\beq
             t(y) = \prod_{j=1}^r\, t_j^{\half \hat Y^j}
                                                      \label{eq:knu-t}
\eeq

\begin{remark} For a nilpotent orbit of pure Hodge structure, the elements
$\hat Y_{(j)}$ are infinitesimal isometries of the polarization. Consequently,
although $t(y)$ is not an element of $G_{\C}$ since it is the twist of an
automorphism of the graded-polarizations by $y_1^{-\half\hat Y^0}$, the action
of $\Ad(t^{-1}(y))$ preserves $G_{\C}$.
\end{remark}

\par The following result appears in Proposition $(10.4)$ of \cite{KNU} 
with slightly different notation:

\begin{lemma}\label{lemma:twist} Let $(N_1,\dots,N_r;F,W)$ define
an admissible nilpotent orbit.  Then,
$$
          \Ad(t^{-1}(y))e^{\sum_j\, iy_j N_j} = e^{P}
$$
where $P$ is a polynomial in non-negative half integral powers of 
$t_1,\dots,t_r$ with constant term 
$
       iN_1 + i\sum_{j>1}\, \hat N_j
$.
\end{lemma}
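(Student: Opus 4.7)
My approach is to pass to joint eigenspaces of the commuting semisimple gradings $\hat Y^1,\ldots,\hat Y^r$ appearing in the formula $t(y) = \prod_j t_j^{\frac{1}{2}\hat Y^j}$, and to track the resulting exponents of the $t_k$ explicitly. Since the $N_j$ mutually commute and $\Ad(t(y)^{-1})$ is an algebra automorphism, the conjugated operators $\Ad(t^{-1})N_j$ still commute, so
\[
\Ad(t(y)^{-1})\,e^{i\sum_j y_j N_j} = \exp\!\Bigl(i\sum_{j=1}^r y_j\, \Ad(t(y)^{-1})N_j\Bigr).
\]
It therefore suffices to show that each summand $iy_j\,\Ad(t^{-1})N_j$ is a polynomial in non-negative half-integer powers of $t_1,\ldots,t_r$ and to identify its constant term.

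Decompose $N_j = \sum_{\alpha\in\mathbb{Z}^r} N_{j,\alpha}$ into joint eigenspaces, with $\ad\hat Y^k(N_{j,\alpha})=\alpha_k N_{j,\alpha}$. Using the identity $y_j = \prod_{k\geq j} t_k^{-1}$ (which follows from $y_{r+1}=1$ and $t_k = y_{k+1}/y_k$), a direct computation gives
\[
iy_j\,\Ad(t(y)^{-1})N_{j,\alpha} = i\prod_{k<j} t_k^{-\alpha_k/2}\prod_{k\geq j} t_k^{-(\alpha_k+2)/2}\,N_{j,\alpha}.
\]
Property (3) of a Deligne system yields $N_j\in W^k_0\End V$ for $k<j$ and $N_j\in W^k_{-2}\End V$ for $k\geq j$, which, since $\hat Y^k$ grades $W^k$, forces $\alpha_k\leq 0$ for $k<j$ and $\alpha_k\leq -2$ for $k\geq j$. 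Hence every exponent above is a non-negative half-integer and the resulting sum is polynomial.

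The constant term is the contribution from $(\alpha_1,\ldots,\alpha_r)$ with $\alpha_k=0$ for $k<j$ and $\alpha_k=-2$ for $k\geq j$. To identify it I use the sharpening $[\hat Y^k,N_i]=-2N_i$ for all $i\leq k$: in the case where the limit is split over $\mathbb{R}$ this is because $\hat Y^k = Y_{(\hat F_k,W^k)}$ while $N_1,\ldots,N_k$ are $(-1,-1)$-morphisms of $(\hat F_k,W^k)$, and the general case reduces to the split one because $\xi$ commutes with each $N_i$ and preserves each $W^j$ (Lemma~\ref{l.xiy}). Consequently each component of $N_i$ already has $\alpha_k=-2$ for every $k\geq i$. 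For $j=1$ the condition $\alpha_k=-2$ for $k\geq 1$ is automatic, so the contribution is the whole operator $N_1$, producing $iN_1$. For $j>1$, the condition $\alpha_{j-1}=0$ carves out precisely $\hat N_j$ by definition, and the commuting $\ssl_2$-pair relations $[\hat H_m,\hat N_j]=0$ for $m\neq j$ (cf.\ \eqref{eq:ds-comm-1}) upgrade this to $\alpha_k=0$ for all $k<j$ while $\alpha_k=-2$ for $k\geq j$ is automatic from the $\ssl_2$-pair $(\hat N_j,\hat H_j)$ and $[\hat H_m,\hat N_j]=0$ for $m>j$. Summing yields the claimed constant term $iN_1 + i\sum_{j>1}\hat N_j$.

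The main obstacle is the sharpening $[\hat Y^k,N_i]=-2N_i$ for $i\leq k$ (rather than the merely filtered bound $\leq -2$) together with the identification of the joint $(\alpha_1,\ldots,\alpha_r)$-eigenspace components with $N_1$ (for $j=1$) and $\hat N_j$ (for $j>1$); both reduce to careful bookkeeping with the Deligne system structure and the commuting $\ssl_2$-pair relations.
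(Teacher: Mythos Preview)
Your proof is correct and follows essentially the same approach as the paper: both arguments decompose each $y_jN_j$ according to the joint eigenspaces of the commuting gradings $\hat Y^1,\ldots,\hat Y^r$, use the Deligne-system properties (equivalently, that $N_k$ is a $(-1,-1)$-morphism of $(\hat F_j,W^j)$ for $j\geq k$ and preserves $W^j$ for $j<k$) to show all exponents are non-negative, and then identify the constant term with $iN_1+i\sum_{j>1}\hat N_j$ via the commutation relations \eqref{eq:ds-comm-1} and \eqref{eq:commuting}. Your bookkeeping via the multi-index $\alpha$ is a bit more explicit than the paper's, but the content is identical.
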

\begin{proof} By \eqref{eq:knu-t}, 
$$
        \Ad(t^{-1}(y)) y_k N_k
        = (\prod_{j\leq k-1}\, t_j^{-\half\hat Y^j})
          (\prod_{j\geq k} t_j^{-\half\hat Y^j})
          y_k N_k
$$
where $N_k$ is $(-1,-1)$-morphism of $(\hat F_j,W^j)$ for $j=k,\dots,r$,
and hence $[N_k,\hat Y^j] = -2N_k$. Consequently,
$$
          (\prod_{j\geq k} t_j^{-\half\hat Y^j}) y_k N_k
          = t_k\dots t_r y_k N_k = N_k
$$
On the other hand, $N_k$ preserves $W^j$ for $j<k$.  Therefore,
$$
         (\prod_{j<k}\, t_j^{-\half\hat Y^j})N_k
$$
is a polynomial in non-negative, half-integral powers of $t_j$ for
$j<k$.  Taking the limit as $t_1,\dots,t_r\to 0$ it then follows
that the constant term of $P$ is 
$
          i\sum_k\, N_k^{\sharp}
$
where $N_k^{\sharp}$ is the projection of $N_k$ to 
$\cap_{0<j<k}\, \ker(\ad\hat Y^j)$ with respect to the mutually
commuting gradings $\hat Y^j$.  Accordingly, $N_1^{\sharp} = N_1$,
whereas for $k>1$, we can first project onto $\ker(\ad N_{k-1})$
to obtain $\hat N_k$.  By \eqref{eq:commuting}, $\hat N_k$
commutes with $\hat Y^j$ for $j<k$, and hence $N_k^{\sharp} = \hat N_k$.
\end{proof}

\begin{remark} For nilpotent orbits of pure Hodge structure, this
statement appears in Lemma $(4.5)$ of \cite{Luminy};
note however that in \cite{Luminy}, $t_j$ is defined to be $y_j/y_{j+1}$
which is reciprocal to our convention.
\end{remark}

\section{Relative Compactness}

\par Let $I$ and $I'$ be subsets of $U^r$ as in
\eqref{eq:vertical-strip} and \eqref{eq:finite-translates}.  Let
$F:U\to\mathcal M$ be the period map of an admissible variation of
mixed Hodge structure over $\Delta^{*r}$ with local normal form $F(z)
= e^{N(z)}e^{\Gamma(s)}.F_{\infty}$ as in \eqref{eq:lnf}.  Let $t(y)$
be the associated family of semisimple endomorphisms \eqref{eq:knu-t}.
In this section, we will prove the following result, which is due to
Cattani and Kaplan in the pure case~\cite[Theorem 4.7]{Luminy}.

\begin{lemma}\label{lemma:rel-compact} The image of the set $I'$ under
the map
$$
          \tilde F(z_1,\dots,z_r) 
           =  t^{-1}(y)e^{-\sum_j\, x_j N_j}.F(z_1,\dots,z_r)
$$
is a relatively compact subset of the classifying space $\mathcal M$.
\end{lemma}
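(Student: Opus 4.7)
The plan is to expand $\tilde F$ using the local normal form and Lemma~\ref{lemma:twist}, reducing relative compactness to boundedness of two manageable pieces. From $F(z) = e^{N(z)}e^{\Gamma(s)}.F_\infty$ and the identity $e^{-N(x)}e^{N(z)} = e^{iN(y)}$, we get
$$\tilde F(z) = t^{-1}(y)\, e^{iN(y)}\, e^{\Gamma(s)}.F_\infty.$$
Lemma~\ref{lemma:twist} lets us write $\Ad(t^{-1}(y))\,e^{iN(y)} = e^P$ for a $P$ polynomial in non-negative half-integral powers of $t_1,\dots,t_r$ with constant term $iN_1 + i\sum_{j>1}\hat N_j$, so
$$\tilde F(z) = e^P\cdot\Ad(t^{-1}(y))(e^{\Gamma(s)})\cdot t^{-1}(y).F_\infty.$$
On $I'$ the $t_j$ lie in $(0,1]$, hence $e^P$ lies in a compact subset of $\exp(\mathfrak g_{\mathbb C})$ that extends continuously to the closure $[0,1]^r$ of the $t$-cube.

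Next I would handle the factor $t^{-1}(y).F_\infty$. Writing $F_\infty = e^\xi.\hat F_\infty$ with $\xi$ the $\ssl_2$-splitting correction, Lemma~\ref{l.xiy} implies $\xi$ commutes with each $N_j$, and Lemma~\ref{lemma:weight-filt-preserve} implies $\xi$ preserves each $W^j$; consequently $\Ad(t^{-1}(y))\xi$ is a polynomial in $t_j^{\pm 1/2}$ whose growth is controlled by bounded $\hat Y^j$-eigenvalues of the components of $\xi$. The substantive input is that $t^{-1}(y).\hat F_\infty$ remains in a compact region of $\mathcal M$: on each graded piece $\Gr^M_k$ one obtains a pure $\SL_2$-orbit (using the remark after Theorem~\ref{theorem:SL2_Splitting}, together with \cite[Lemma 3.12]{CKS}), and the Cattani--Kaplan pure case result gives the desired boundedness there; the commuting gradings $\hat Y^j$ of the Deligne system reassemble this into boundedness in the mixed classifying space $\mathcal M$.

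Finally, one must bound $\Ad(t^{-1}(y))(e^{\Gamma(s)})$ on $I'$. Expand $\Gamma(s) = \sum_{|\alpha|\geq 1} \gamma_\alpha s^\alpha$ with $\gamma_\alpha\in\mathfrak q$; then $\Ad(t^{-1}(y))$ scales each joint $\hat Y^j$-eigencomponent of $\gamma_\alpha$ by a half-integral power of $t_j^{-1}$. The telescoping identity $\prod_{j=1}^r t_j = 1/y_1$ makes this blow-up at worst polynomial in $y_1$, whereas $|s^\alpha| = e^{-2\pi\,\alpha\cdot y}$ decays exponentially in the $y_j$. I expect this last step to be the technical heart of the proof: one must show the exponential decay dominates uniformly on $I'$, including the regions where only some of the $y_j$ are large while others are $O(1)$. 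The key input will be admissibility and Griffiths horizontality of the period map, which constrain $\gamma_\alpha$ to lie in a subspace of $\mathfrak q$ whose $\hat Y^j$-eigenvalues are bounded in terms of $\alpha_j$ — producing precisely the matching needed between polynomial blow-up of $\Ad(t^{-1}(y))$ and exponential decay of $s^\alpha$. Once that estimate is in place, the three factors combine to show $\tilde F(I')$ lies in a relatively compact subset of $\mathcal M$.
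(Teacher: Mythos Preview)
Your decomposition of $\tilde F$ is essentially the paper's, and you correctly isolate the $\Gamma$-term as the crux. Two small points first: since each $\hat Y^j$ preserves $\hat F_\infty$ (this is part of the Deligne-system formalism for the split orbit), $t^{-1}(y)$ actually \emph{fixes} $\hat F_\infty$, so that factor is constant rather than merely bounded; and because $\xi$ preserves every $W^j$, the eigenvalues of $\xi$ under each $\ad\hat Y^j$ are non-negative, so $\Ad(t^{-1}(y))\xi$ is a polynomial in non-negative half-integral powers of the $t_j$ (not $t_j^{\pm 1/2}$) and is bounded on $(0,1]^r$ outright.

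The real gap is in your treatment of $\Ad(t^{-1}(y))e^{\Gamma(s)}$. You hope that horizontality will force the $\hat Y^j$-eigenvalues of each Taylor coefficient $\gamma_\alpha$ to be bounded in terms of $\alpha_j$, so that the exponential decay of $s^\alpha$ beats the polynomial blow-up. The paper does not argue this way, and there is no such coefficient-wise constraint available. Instead the paper decomposes $\Gamma = \sum_b \Gamma^b$ by joint $\hat Y^j$-eigenvalue $b\in\Z^r$, and for each $b$ with some $b_j<0$ proves the structural vanishing $\Gamma^b_{w(b)} = 0$, where $w(b)$ is the smallest index with $b_{w(b)}<0$ and $\Gamma^b_{w} = \Gamma^b|_{s_1=\cdots=s_w=0}$. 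The input here is not horizontality but Corollary~(12.8) of \cite{KNU}, which already guarantees that $t^{-1}(y(m)).F(z(m))$ converges along every \emph{strict} $\ssl_2$-sequence; combined with a vanishing lemma for holomorphic functions on $\Delta^r$ (if $f$ vanishes along all strict $\ssl_2$-sequences then $f\equiv 0$), this forces $\Gamma^b_{w(b)}=0$. Once one knows $\Gamma^b = \sum_{k\le w(b)} s_k g_k$, the order structure $y_1\ge\cdots\ge y_r$ on $I'$ gives $|s_k|\le |s_{w(b)}|$, and the factor $s_{w(b)}$ is exactly what is needed to kill the blow-up coming from $t_{w(b)}^{b_{w(b)}/2}$. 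So the mechanism is: KNU's $\SL_2$-orbit theorem supplies convergence along strict sequences, from which one \emph{extracts} the eigencomponent vanishing that then yields convergence along arbitrary $\ssl_2$-sequences; relative compactness on $I'$ follows since every sequence in $I'$ has an $\ssl_2$-subsequence. Your proposed route via horizontality does not produce this vanishing.
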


\begin{remark} In Theorem (4.7)~\cite{Luminy}, Cattani and Kaplan define 
$t_j = y_j/y_{j+1}$, which is reciprocal to our convention.
\end{remark}

\par For each index $j=1,\dots,r$ let 
$$
        \Gamma_j(s) = \Gamma(0,\dots,0,s_{j+1},\dots,s_r)
$$
Then, for each $j$ we have an associated partial period map
\beq
       F_j(z_1,\dots,z_r) = e^{\sum_j\, z_j N_j}e^{\Gamma_j(s)}.F_{\infty}
                                       \label{eq:partial-period-map}
\eeq
which takes values in $\mathcal M$ for $\Im(z)$ sufficiently large.
Indeed, \eqref{eq:partial-period-map} is the nilpotent orbit obtained
from $F(z)$ by degenerating $z_1,\dots,z_j$. 

\begin{remark} As in Proposition $(2.6)$ of \cite{Luminy}, it follows via 
equation $(6.10)$ of \cite{P2} that 
\beq
             \Gamma_j\in\ker(\ad N_1)\cap\cdots\cap\ker(\ad N_j)
                      \label{eq:gamma-adN}
\eeq
\end{remark}

\par To compactify future notation, we define $F_0(z) = F(z)$ and 
set
\beq
        \tilde F_j(z_1,\dots,z_r) 
        = t^{-1}(y)e^{-\sum_j\, x_j N_j}.F_j(z_1,\dots,z_r)
        \label{eq:twisted-partial-period-map}
\eeq
for $j=0,\dots,r$.
  
\begin{defn} Let $z(m)\in U^r$ be an $\ssl_2$-sequence, and suppose that 
$t_j(m)\to 0$.  Then, we say that $z(m)$ has non-polynomial 
growth with respect to $y_j$ (or $z_j$) if there exists a subsequence $z(m')$ 
of $z(m)$ such that 
\beq
           \lim_{m'\to\infty}\,\frac{y_{j+1}^d(m')}{y_j(m')} = 0
           \label{eq:non-polynomial-growth}
\eeq
for every $d>0$.  In particular, unless a sequence of points $z(m)\in I'$ is 
bounded, there exists a smallest index $\iota$ such that $z(m)$ has 
non-polynomial growth with respect to $y_{\iota}$ (since we formally
define $y_{r+1}(m)=1$).  If $z(m)\in I'$ is 
bounded, we define $\iota=0$.
\end{defn}

\par To employ the notion of non-polynomial growth in aid of the proof
of Theorem~\eqref{theorem:conj-1} we recall the following elementary
observation about convergent sequences:

\begin{lemma}\label{lemma:sequences} Let $\Sigma$ be a topological space.
Then, a sequence $\sigma_m$ in $\Sigma$ converges to $\sigma$ if 
and only if for every subsequence $\sigma'_m$ of $\sigma_m$ there 
exists a subsequence $\sigma''_m$ of $\sigma'_m$ such that 
$\sigma''_m\to\sigma$.
\end{lemma}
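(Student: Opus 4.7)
The plan is to verify the two implications separately, both of which follow directly from the definition of topological convergence. The forward direction is immediate: if $\sigma_m\to\sigma$, then every subsequence of $\sigma_m$ also converges to $\sigma$, so given any subsequence $\sigma'_m$ one may simply take $\sigma''_m := \sigma'_m$.

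For the reverse direction I would argue by contrapositive. Assume $\sigma_m$ does not converge to $\sigma$. By the definition of convergence in a topological space, there exists an open neighbourhood $U$ of $\sigma$ such that the index set $\{m : \sigma_m \notin U\}$ is infinite. Enumerating these indices in increasing order produces a subsequence $\sigma'_m$ of $\sigma_m$ with the property that $\sigma'_m \in \Sigma\setminus U$ for every $m$. Then every further subsequence $\sigma''_m$ of $\sigma'_m$ also has all its terms outside $U$, and so cannot converge to $\sigma$ (convergence to $\sigma$ would force all but finitely many terms of $\sigma''_m$ to lie in the neighbourhood $U$). This contradicts the hypothesis and completes the proof.

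There is no genuine obstacle here; the lemma is a tautological consequence of the definition of convergence. The only point worth flagging is that the argument must be phrased in the topological (rather than metric or sequential) language, since $\Sigma$ is only assumed to be a topological space: the extraction of the subsequence $\sigma'_m$ uses nothing beyond the fact that $\sigma_m \not\to \sigma$ means some open neighbourhood of $\sigma$ is missed infinitely often, and this suffices without any first countability or metrizability assumption.
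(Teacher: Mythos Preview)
Your proof is correct. The paper does not actually prove this lemma at all; it is introduced only as an ``elementary observation about convergent sequences'' and stated without proof, so your argument supplies exactly what the paper omits.
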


Given an $\ssl_2$-sequence $z(m)$ and a grading 
$Y_{\lim}$ of $W$, in order to show
that 
$$
        \hat Y_{(F(z(m)),W)} \to Y_{\lim},
$$
it is sufficient to show that, for every subsequence $z'(m)$ of $z(m)$,
we can find a subsequence $z''(m)$ such that 
$$
        \hat Y_{(F(z''(m)),W)}\to Y_{\lim}
$$
In particular, since each $z'(m)$ is an $\ssl_2$-sequence, it has a
corresponding smallest index $\iota$ with respect to which it has
non-polynomial growth, and hence we can pass to a subsequence
$z''(m)$ of $z'(m)$ for which equation \eqref{eq:non-polynomial-growth}
holds for $y_{\iota}$.

\par As such, it is sufficient to prove Theorem~\eqref{theorem:conj-1}
for $\ssl_2$-sequences $z(m)$ satisfying~\eqref{eq:non-polynomial-growth} 
since the right hand side of \eqref{eq:main-limit} only depends on the 
original sequence $z(m)$ and the associated nilpotent orbit.  Moreover, 
we may pass to a subsequence of $z(m)$ as necessary.

\begin{theorem}\label{theorem:first-reduction}  Let 
$z(m) = x(m) + iy(m)\in I'$ be an $\ssl_2$-sequence.  Let $\iota$ be 
the smallest index with respect to which $z(m)$ has non-polynomial growth with 
respect to $y_{\iota}$.  Assume that \eqref{eq:non-polynomial-growth} holds
on $z(m)$.  Then,
\beq
     \lim_{m\to\infty}\, 
      \hat Y_{(F(z(m)),W)} - \hat Y_{(F_{\iota}(z(m)),W)}  \to 0 
      \label{eq:first-reduction}
\eeq
upon passage to a suitable subsequence.
\end{theorem}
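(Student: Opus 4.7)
The plan is to pass to a subsequence on which both $\tilde F(z(m))$ and the twisted partial period map $\tilde F_\iota(z(m))$ (see~\eqref{eq:twisted-partial-period-map}) converge in $\mathcal M$ to the same limit, and then to transfer this convergence into a statement about the $\ssl_2$-gradings via continuity and equivariance of the $\ssl_2$-splitting.

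\emph{Compactness and common limit.} Lemma~\ref{lemma:rel-compact} applies both to $F$ and to $F_\iota$: the latter is the period map of the admissible nilpotent orbit obtained by degenerating the first $\iota$ variables, and it shares the monodromy logarithms $N_1,\dots,N_r$ with $F$ while $\Gamma_\iota$ satisfies the same vanishing and $\mathfrak{q}$-valuedness as $\Gamma$. Extracting a subsequence, $\tilde F(z(m))\to \tilde F^\ast$ and $\tilde F_\iota(z(m))\to \tilde F_\iota^\ast$. Using $e^{-N(x)}e^{N(z)} = e^{iN(y)}$ and Lemma~\ref{lemma:twist} to write $t^{-1}(y)e^{iN(y)} = e^{P(y)}t^{-1}(y)$ with $e^{P(y)}$ bounded as the $t_j\to 0$, one obtains
\begin{equation*}
\tilde F(z(m)) - \tilde F_\iota(z(m)) = e^{P(y(m))}\cdot \Ad(t^{-1}(y(m)))\bigl[e^{\Gamma(s(m))} - e^{\Gamma_\iota(s(m))}\bigr]\cdot t^{-1}(y(m)).F_\infty.
\end{equation*}
Set $\Delta(s) := \Gamma(s) - \Gamma_\iota(s)\in \mathfrak q$. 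By definition $\Delta$ vanishes on $\{s_1=\cdots=s_\iota=0\}$, so holomorphy yields $\|\Delta(s(m))\| = O(e^{-2\pi y_\iota(m)})$. Meanwhile $\Ad(t^{-1}(y))$ scales each $\hat Y^j$-weight component of $\mathfrak q$ by a monomial in the $t_j$ with bounded exponents, contributing at most polynomial growth in $y_{\iota+1},\dots,y_r$. The hypothesis $y_{\iota+1}^d(m)/y_\iota(m)\to 0$ for every $d>0$ forces the exponential decay of $\Delta(s(m))$ to dominate, so $\tilde F(z(m)) - \tilde F_\iota(z(m)) \to 0$ and $\tilde F^\ast = \tilde F_\iota^\ast$.

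\emph{Passage to $\hat Y$ and equivariance.} By Theorem~\ref{theorem:SL2_Splitting}, $\xi$ is a universal Lie polynomial in the Hodge components of Deligne's $\delta$-splitting, so the $\ssl_2$-splitting is continuous on $\mathcal M$; hence $\hat Y_{(\tilde F(z(m)),W)} - \hat Y_{(\tilde F_\iota(z(m)),W)}\to 0$. Setting $g(y) = t(y)e^{N(x)}$, the element $g(y)$ preserves $W$, because $e^{N(x)}$ does and $t(y)$ is a product of powers of the commuting gradings $\hat Y^j$, each of which preserves every $W^i$ in the underlying Deligne system. Equivariance of the $\ssl_2$-splitting then gives
\begin{equation*}
\hat Y_{(F(z(m)),W)} - \hat Y_{(F_\iota(z(m)),W)} = \Ad(g(y(m)))\bigl[\hat Y_{(\tilde F(z(m)),W)} - \hat Y_{(\tilde F_\iota(z(m)),W)}\bigr].
\end{equation*}

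\emph{Main obstacle.} The subtlety is that $\Ad(g(y(m)))$ is unbounded, so the bare convergence in the twisted frame does not immediately yield vanishing of the difference in the original frame; what is needed is a refinement of the estimate that tracks $\hat Y^j$-bigraded types. Since $\Delta(s)$ lies in $\mathfrak q$ and the universal Lie polynomials producing $\xi$ preserve this bigraded structure, the induced error in $\hat Y_{(\tilde F,W)} - \hat Y_{(\tilde F_\iota,W)}$ sits in components of $\mathfrak q$ whose $\Ad(t(y(m)))$-scaling remains polynomial in the $y_j$'s. The same exponential-versus-polynomial comparison, powered by the non-polynomial growth of $y_\iota$, then shows that the $g(y(m))$-transported difference still tends to zero, completing the reduction.
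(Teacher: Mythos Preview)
Your overall architecture matches the paper's proof: write $\tilde F(z)=e^{B(z)}.\tilde F_\iota(z)$, bound $e^{B}-1$ by a quantity of the form $K\,y_\iota^d\,|s_\iota|$, and then transport back by $t(y)$. Two points, however, need correction.

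\textbf{The role of the hypotheses.} You write that $\Ad(t^{-1}(y))$ contributes ``polynomial growth in $y_{\iota+1},\dots,y_r$'' and then invoke $y_{\iota+1}^d/y_\iota\to 0$ to kill it. In fact $\Ad(t^{-1}(y))$ acts by monomials in all of $t_1,\dots,t_r$, and the potentially dangerous factors $t_j^{-1}=y_j/y_{j+1}$ for $j<\iota$ are controlled not by the non-polynomial growth at $\iota$ but by the \emph{minimality} of $\iota$: for $j<\iota$ the ratios $y_j/y_{j+1}$ are polynomially bounded (after a further subsequence), so everything is bounded by a power of $y_\iota$. The exponential $|s_\iota|=e^{-2\pi y_\iota}$ then dominates any power of $y_\iota$ by the elementary fact that exponentials beat polynomials; the hypothesis~\eqref{eq:non-polynomial-growth} is not used at this stage.

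\textbf{The genuine gap.} Your ``main obstacle'' paragraph does not close. Continuity of the $\ssl_2$-splitting gives only $\hat Y_{(\tilde F,W)}-\hat Y_{(\tilde F_\iota,W)}=o(1)$, and conjugating an $o(1)$ by the unbounded operator $\Ad(t(y(m)))$ need not yield $o(1)$. Your attempted fix, that the error ``sits in components of $\mathfrak q$ whose $\Ad(t(y))$-scaling remains polynomial,'' is not correct: the gradings $\hat Y$ do not lie in $\mathfrak q$, and the universal Lie polynomials defining $\xi$ mix $\hat Y^j$-weight components, so there is no reason the output error should be confined to any particular weight range. What is actually needed is a \emph{quantitative} transfer: since $\tilde F_\iota(z(m))$ converges in $\mathcal M$ (Lemma~\ref{lemma:rel-compact}), the real-analytic map $(F,W)\mapsto \hat Y_{(F,W)}$ is Lipschitz on a neighbourhood of the limit, so one can write $\hat Y_{(\tilde F,W)}=e^{C(z(m))}.\hat Y_{(\tilde F_\iota,W)}$ with $\|e^{C}-1\|$ bounded by a constant times $\|e^{B}-1\|$, hence again $O(y_\iota^d\,e^{-2\pi y_\iota})$. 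Conjugation by $t(y(m))$ contributes at most another power of $y_\iota$ (by the same argument as above), and the exponential still wins. This is exactly the mechanism in the paper; without it your argument does not conclude.
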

\begin{proof} Assume Lemma~\eqref{lemma:rel-compact}.  
If $\iota=0$, then \eqref{eq:first-reduction} is a tautology because
$F_{\iota}=F$.

\par Assume therefore that $\iota>0$ and observe that both $F(z)$ and 
$F_{\iota}(z)$ arise from period maps with the same nilpotent orbit, 
and hence the same  associated family \eqref{eq:knu-t} of semisimple 
endomorphisms $t(y)$.  Therefore,
\begin{eqnarray*}
        e^{-N(x)}.F(z) 
        &=& e^{i N(y)}e^{\Gamma(s)}.F_{\infty} \\
        &=& e^{i N(y)}e^{\Gamma(s)}e^{-\Gamma_{\iota}(s)}
            e^{\Gamma_{\iota}(s)}.F_{\infty}             \\
        &=& \Ad(e^{i N(y)})(e^{\Gamma(s)}e^{-\Gamma_{\iota}(s)})
            e^{i N(y)}e^{\Gamma_{\iota}(s)}.F_{\infty} \\
        &=& \Ad(e^{i N(y)})(e^{\Gamma(s)}e^{-\Gamma_{\iota}(s)})
            e^{-N(x)}.F_{\iota}(z_1,\dots,z_r)
\end{eqnarray*}
Consequently, 
\begin{eqnarray*}
         \tilde F(z) &=& t^{-1}(y) e^{-N(x)}.F(z)             \\ 
         &=& t^{-1}(y) 
            \Ad(e^{i N(y)})(e^{\Gamma(s)}e^{-\Gamma_{\iota}(s)}) 
             t(y)t^{-1}(y)e^{-N(x)}.F_{\iota}(z_1,\dots,z_r)   \\
         &=& \Ad(t^{-1}(y))(\Ad(e^{iN(y)})
                  (e^{\Gamma(s)}e^{-\Gamma_{\iota}(s)}))
                  .\tilde F_{\iota}(z)                        \\  
         &=& e^{B(z)}.\tilde F_{\iota}(z)
\end{eqnarray*}
where
\begin{eqnarray*}
         e^{B(z)} &=& \Ad(t^{-1}(y))(\Ad(e^{iN(y)})
                  (e^{\Gamma(s)}e^{-\Gamma_{\iota}(s)}))                \\ 
         &=& \Ad(\Ad(t^{-1}(y))e^{iN(y)})
              \Ad(t^{-1}(y))(e^{\Gamma(s)}e^{-\Gamma_{\iota}(s)})
\end{eqnarray*}
By Lemma \eqref{lemma:twist}, we know that
\beq
        \Ad(t^{-1}(y))(e^{\sum_j\, i y_j N_j}) = e^{P(t)} 
        \label{eq:poly-twist}
\eeq
where $P(t)$ is a polynomial in non-negative half-integral powers of 
$t_1,\dots,t_r$ (with constant term).  Accordingly,
$$
     e^{B(z)} = \Ad(e^{P(t)})\Ad(t^{-1}(y))(e^{\Gamma(s)}e^{-\Gamma_{\iota}(s)}).
$$

\par To analyze the asymptotic behavior of $e^{B(z)}$, define
$\tilde\Gamma(s)\in\mathfrak{q}$ to be the unique nilpotent operator satisfying
$$
     e^{\tilde\Gamma(s)} = e^{\Gamma(s)}e^{-\Gamma_{\iota}(s)}.   
$$
Then, since $\tilde\Gamma(0) = 0$ if $s_1,\dots,s_{\iota}= 0$,
it follows that there exist $\mathfrak q$-valued holomorphic
functions $f_1,\dots,f_{\iota}$ on $\Delta^r$ such that
$$
      \tilde\Gamma(s) = \sum_{j=1}^{\iota}\, s_j f_j.
$$
Moreover, the identity $|s_j| = e^{-2\pi y_j}$
coupled with the order structure 
\beq
    y_1\geq y_2 \geq\cdots\geq y_r\geq 1        \label{eq:order-structure}
\eeq
on $I'$ implies that $|s_j|\leq |s_{\iota}|$ for $j=1,\dots,\iota$.  
Shrinking $\Delta^r$ if necessary, we can then find a constant $K$ such that
$$
      |\tilde\Gamma(s)| < K|s_{\iota}|.
$$

Consider now a sequence $z(m)\in I'$ such that $\iota$ is the smallest
index such that $z(m)$ has non-polynomial growth with respect to 
$z_{\iota}(m)$.  Then, by construction, the quantities 
$y_1(m),\dots,y_{\iota}(m)$ must satisfy some set of mutually polynomial 
bounds, otherwise we contradict the definition  of $\iota$.
Therefore, since $t(y)$ acts semi-simply by multiplication by monomials in 
half-integral powers of $t_1,\dots,t_r$ on its eigenspaces, it follows that 
(after increasing $K$ if necessary) 
\beq
            |\Ad(t^{-1}(y))\tilde\Gamma(s(m))| 
            < K y^d_{\iota}(m)|s_{\iota}(m)|   \label{eq:first-bound}
\eeq
for some half-integer $d$. 

\par Combining the above remarks, it then follows that  
\beq
     || e^{B(z(m))} -1 ||
     < K y^d_{\iota}(m)|s_{\iota}(m)|          \label{eq:second-bound}
\eeq
for a suitable constant $K$.  
   
\par To continue, observe that the operator norm of 
$\Ad(e^{\sum_j\, x_j N_j})$ is bounded on $I'$ since $x\in [0,1]^r$ is 
compact.  Accordingly, (for any fixed norm)
$$
     || \hat Y_{(F(z),W)} - \hat Y_{(F_{\iota}(z),W)} ||  
     \leq K' || \hat Y_{(e^{-N(x)}.F(z),W)} 
                 - \hat Y_{(e^{-N(x)}.F_{\iota}(z),W)} ||
$$
for some suitable constant $K'$.  Accordingly, \eqref{eq:first-reduction}
is equivalent to 
$$
      \lim_{m\to\infty}\, \hat Y_{(e^{-N(x(m))}.F(z(m)),W)} 
                 - \hat Y_{(e^{-N(x(m))}.F_{\iota}(z(m)),W)} \to 0
$$
In particular, since $t(y)$ is a real automorphism which preserves $W$, 
\begin{eqnarray*}
       \hat Y_{(e^{-N(x(m))}.F(z(m)),W)} 
        &-& \hat Y_{(e^{-N(x(m))}.F_{\iota}(z(m)),W)}   \\
        &=& 
          t(y(m)).(\hat Y_{(\tilde F(z(m)),W)} 
           - \hat Y_{(\tilde F_{\iota}(z(m)),W)})  \\
        &=& 
          t(y(m)).(\hat Y_{(e^{B(z(m))}.\tilde F_{\iota}(z(m)),W)} - 
          \hat Y_{(\tilde F_{\iota}(z(m)),W)}) 
\end{eqnarray*}
Moreover, by Lemma \eqref{lemma:rel-compact}, after passage to a subsequence, 
we can assume that $\tilde F_{\iota}(z(m))$ converges to some point in 
$\mathcal M$.  By the real-analyticity of the map
$(F,W)\mapsto \hat Y_{(F,W)}$ it then follows that 
\beq
       \hat Y_{(e^{B(z(m))}.\tilde F_{\iota}(z(m)),W)}
       = e^{C(z(m))}.\hat Y_{(\tilde F_{\iota}(z(m)),W)}
                                               \label{eq:third-bound}
\eeq
where $e^{C(z(m))}-1$ satisfies a bound of the same form \eqref{eq:second-bound}
as $e^{B(z(m))}-1$.  Therefore,
\begin{multline*}
      \hat Y_{(e^{-N(x(m))}.F(z(m)),W)} 
                 - \hat Y_{(e^{-N(x(m))}.F_{\iota}(z(m)),W)}   \\
      =  t(y(m)).((\Ad(e^{C(z(m))})-1)
           \hat Y_{(\tilde F_{\iota}(z(m)),W)})\to 0
\end{multline*}
on account of the fact that $t(y)$ acts by half-integral powers of 
$t_1,\dots,t_r$ and $y^{\ell}_j(m)(e^{C(z(m))}-1)\to 0$ for $j=1,\dots,r$ 
and every half-integer $\ell$.
\end{proof}      

\par To continue, we now prove Theorem~\eqref{theorem:conj-1} in the
case where $F(z)$ is a nilpotent orbit:

\begin{lemma}\label{lemma:main-limit} 
Theorem~\eqref{theorem:conj-1} is true for admissible nilpotent orbits.
\end{lemma}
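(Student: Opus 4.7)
Since $\V$ is a nilpotent orbit we have $\Gamma \equiv 0$ and $F(z) = e^{N(z)}.F_\infty$, so $e^{-N(x(m))}.F(z(m)) = e^{iN(y(m))}.F_\infty$. Each $N_j$ preserves $W$ (condition (3) of a Deligne system with $j=0$), so $e^{N(x(m))}$ preserves $W$; combining this with the $\Aut(V,W)$-equivariance of the $\ssl_2$-splitting gives
$$
e^{-N(x(m))}.\hat{Y}_{(F(z(m)),W)} = \hat{Y}_{(e^{iN(y(m))}.F_\infty, W)},
$$
so the lemma reduces to proving
\begin{equation}\label{e.PlanTarget}
\lim_{m \to \infty} \hat{Y}_{(e^{iN(y(m))}.F_\infty, W)} = Y(N(\theta^1), Y(N(\theta^2), \ldots, Y_{(\hat{F}_\infty, W^r)})).
\end{equation}

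Next, decompose $y(m) = T(v(m)) + b(m)$ as in Definition~\ref{defn:sl2-sequence}, set $\tilde{N}_i := N(\theta^i)$, and use the commutativity of the $N_j$ to write
$$e^{iN(y(m))}.F_\infty = e^{i\sum_{i=1}^d v_i(m)\tilde{N}_i}\cdot \bigl(e^{iN(b(m))}.F_\infty\bigr),$$
where $e^{iN(b(m))}.F_\infty \to F_\infty' := e^{iN(b_\infty)}.F_\infty$, $b_\infty := \lim b(m)$. I would then verify via Kashiwara's theorem (parts (a), (b), (d)) that $(\tilde{N}_1,\ldots,\tilde{N}_d; F_\infty, W)$ is an admissible nilpotent orbit whose iterated relative weight filtration coincides with $W^r$; this uses that the $\theta^i$ lie in the closure of the open monodromy cone and, in the case every $y_j(m)$ diverges, that $\sum_i \theta^i$ lies in $C(\{1,\ldots,r\})$.

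Applying Theorem~\ref{theorem:knu-main} to this auxiliary orbit and the sequence $v(m)$ (together with $\hat{Y}_{(F_\infty,W^r)} = Y_{(\hat{F}_\infty,W^r)}$) yields
$$\lim_{m\to\infty} \hat{Y}_{(e^{i\sum_i v_i(m)\tilde{N}_i}.F_\infty, W)} = Y(\tilde{N}_1, Y(\tilde{N}_2, \ldots, Y_{(\hat{F}_\infty, W^r)})),$$
which is exactly the right-hand side of \eqref{e.PlanTarget}. The principal obstacle will be to absorb the bounded factor $e^{iN(b(m))}$. Because each $N_j$ is a $(-1,-1)$-morphism of $(F_\infty, M)$, Lemma~\ref{lemma:morphisms-and-splittings} shows $\widehat{F_\infty'} = \hat{F}_\infty$, so the candidate limit is unchanged under the perturbation; to make this rigorous one needs a form of Theorem~\ref{theorem:knu-main} that is uniform in a convergent family of initial Hodge filtrations, which I expect to follow from a continuity inspection of the proof in \cite{KNU}, since all estimates there depend continuously on the defining Hodge data.

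Finally, the residual case in which only some of the $y_j(m)$ diverge is handled by the remark immediately following Theorem~\ref{theorem:conj-1}: the bounded coordinates contribute only through the continuity of the $\ssl_2$-splitting, which plugs back into the divergent-coordinate argument above.
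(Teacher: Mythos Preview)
Your approach is essentially the same as the paper's: reduce to $\hat Y_{(e^{iN(y(m))}.F_\infty,W)}$, write $y(m)=T(v(m))+b(m)$, observe via Lemma~\ref{lemma:morphisms-and-splittings} that the $\ssl_2$-splitting of $(e^{iN(b)}.F_\infty,W^r)$ is $(\hat F_\infty,W^r)$ independently of $b$, and then apply Theorem~\ref{theorem:knu-main} to the nilpotent orbit $(N(\theta^1),\dots,N(\theta^d);e^{iN(b)}.F_\infty,W)$.

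The one place where you are vaguer than the paper is the absorption of the varying $b(m)$. You correctly identify this as the principal obstacle and say you ``expect'' a uniform version of Theorem~\ref{theorem:knu-main} to follow from a continuity inspection of \cite{KNU}. The paper does not leave this to inspection: it invokes Theorem~(10.8) of \cite{KNU}, which asserts that for $b$ ranging over a compact set there is a constant $c$ such that for $\tau_j=v_{j+1}/v_j<c$ one has
\[
\hat Y_{(e^{\sum_j iv_jN(\theta^j)}e^{iN(b)}.F_\infty,W)}=\exp(u(\tau;b)).\tilde Y
\]
with $u(\tau,b)$ real-analytic in $(\tau,b)$ and $u(0,b)=0$. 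This is precisely the uniform statement you need, and citing it closes the gap you flagged. Otherwise your argument is correct and matches the paper's.
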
 
\begin{proof} Let $z(m)=x(m)+iy(m)\in I'$ be an $\ssl_2$-sequence.  Then,
$$
          y(m) = T(v(m)) + b(m)
$$
as in definition \eqref{defn:sl2-sequence}.  Accordingly,
$$
     e^{-N(x(m))}.\hat Y_{(e^{N(z(m))}.F_{\infty},W)}
     = \hat Y_{(e^{\sum_j\, iv_j(m) N(\theta^j)}e^{iN(b(m))}.F_{\infty},W)}
$$
with $\theta^1,\dots,\theta^d$ as described in \eqref{e.Flag}.  

\par Now, for any fixed element $b\in\R^r$, the data
$(N(\theta^1),\dots,N(\theta^d),e^{iN(b)}.F_{\infty},W)$ defines an
admissible nilpotent orbit with limit mixed Hodge structure 
$(e^{iN(b)}.F_{\infty},W)$.
By Lemma~\eqref{lemma:morphisms-and-splittings},
$$
       (\widehat{e^{iN(b)}.F_{\infty}},W^r) = (\hat F_{\infty},W^r)
$$
and hence by \eqref{eq:knu-main} it follows that 
$\hat Y_{(e^{\sum_j\, iv_j(m) N(\theta^j)}e^{iN(b)}.F_{\infty},W)}$
converges to the grading
$$
      \tilde Y 
       = Y(N(\theta^1),Y(N(\theta^2),\dots,Y_{(\hat F_{\infty},W^r)}))
$$
independent of $b$.  By Theorem 10.8 of \cite{KNU} it then follows that
for variable $b$ confined to the interior of a compact set that there
is a constant $c$ such that if $\tau_j=v_{j+1}/v_j<c$ then
$$
      \hat Y_{(e^{\sum_j\, iv_j N(\theta^j)}e^{iN(b)}.F_{\infty},W)}
       = \exp(u(\tau;b)).\tilde Y
$$
where $u(\tau,b)$ is a real-analytic function of $\tau=(\tau_1,\dots,\tau_r)$ 
and $b$ with $u(0,b) = 0$.  Accordingly,
$$
       \hat Y_{(e^{\sum_j\, iv_j(m) N(\theta^j)}e^{iN(b(m))}.F_{\infty},W)}
       \to\tilde Y 
$$
\end{proof}

\begin{corollary}\label{corollary:extreme-iota} Let $z(m)$ be an 
$\ssl_2$-sequence for which equation \eqref{eq:non-polynomial-growth} holds 
for either $\iota=0$ or $\iota=r$.  Then, equation \eqref{eq:main-limit}
holds along a subsequence of $z(m)$.
\end{corollary}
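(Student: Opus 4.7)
The idea is that the corollary is essentially an immediate combination of Theorem~\ref{theorem:first-reduction} with Lemma~\ref{lemma:main-limit}, once we recognize that the two extreme values of $\iota$ reduce the situation either to the already-handled nilpotent orbit case or to pure continuity. I would dispose of the two cases separately.

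\textbf{Case $\iota = r$.} First I would observe that $\Gamma_r(s) = \Gamma(0,\ldots,0) = 0$ since $\Gamma$ vanishes at the origin, so the partial period map $F_r(z) = e^{N(z)}e^{\Gamma_r(s)}.F_{\infty} = e^{N(z)}.F_{\infty}$ is exactly the nilpotent orbit of $\V$. Applying Theorem~\ref{theorem:first-reduction}, after passage to a subsequence,
$$
\hat Y_{(F(z(m)),W)} - \hat Y_{(F_r(z(m)),W)} \longrightarrow 0.
$$
Since $x(m)\in [0,1]^r$, the adjoint action of $e^{-N(x(m))}$ has uniformly bounded operator norm on $\End(V)$, so applying it to the displayed difference preserves convergence to zero. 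By Lemma~\ref{lemma:main-limit}, Theorem~\ref{theorem:conj-1} is already known for the nilpotent orbit $F_r$, so (possibly after a further subsequence)
$$
e^{-N(x(m))}.\hat Y_{(F_r(z(m)),W)} \longrightarrow Y(\N{\theta^1},Y(\N{\theta^2},\ldots, Y_{(\hat F_{\infty},W^r)})).
$$
Adding the two convergences yields \eqref{eq:main-limit} for $F$ itself.

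\textbf{Case $\iota = 0$.} By definition of $\iota$ this is the situation in which the $\ssl_2$-sequence $z(m)$ is bounded in $I'$, so I would extract a convergent subsequence $z(m)\to z_{\infty} = x_\infty + iy_\infty$. The local normal form is holomorphic on $U^r$, hence $F(z(m))\to F(z_\infty)$; combined with the continuity of the $\ssl_2$-splitting operation $(F,W)\mapsto (\hat F,W)$ (and hence of $(F,W)\mapsto \hat Y_{(F,W)}$), this gives $\hat Y_{(F(z(m)),W)} \to \hat Y_{(F(z_\infty),W)}$, and then continuity of $x\mapsto e^{-N(x)}$ gives
$$
e^{-N(x(m))}.\hat Y_{(F(z(m)),W)} \longrightarrow e^{-N(x_\infty)}.\hat Y_{(F(z_\infty),W)}.
$$
This agrees with the right-hand-side of \eqref{eq:main-limit} once one invokes the reinterpretation given in the remark following Theorem~\ref{theorem:conj-1}: in the bounded case no $y_j$ diverges, so $F_\infty$ is replaced by $F(z_\infty)$ and $W^r$ by $W^0=W$, and the nested Deligne operators $Y(\N{\theta^j},\cdots)$ collapse (since all $v_j(m)\to 0$ forces the $\theta^j$-contributions to disappear in the limit).

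\textbf{Main obstacle.} Both cases are essentially bookkeeping built on previously established results; the real work was absorbed into Theorem~\ref{theorem:first-reduction} and Lemma~\ref{lemma:main-limit}. The only point that requires care is in the $\iota=0$ case, where one must verify that the stated limit in \eqref{eq:main-limit} really does collapse, via the remark after Theorem~\ref{theorem:conj-1}, to the continuity statement one actually proves; this is the closest thing to a subtlety in the argument.
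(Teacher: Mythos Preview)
Your proposal is correct and follows essentially the same approach as the paper: for $\iota=r$ you reduce via Theorem~\ref{theorem:first-reduction} to the nilpotent orbit and invoke Lemma~\ref{lemma:main-limit}, and for $\iota=0$ you use boundedness and continuity of the $\ssl_2$-splitting, exactly as the paper does (though the paper states both cases in a single sentence each). Your added detail about $\Gamma_r=0$ and the invocation of the remark after Theorem~\ref{theorem:conj-1} for the bounded case makes explicit what the paper leaves implicit.
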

\begin{proof} For $\iota=0$ the sequence is bounded, and the statement
follows from the continuity of the $\ssl_2$-splitting.  For
$\iota=r$ we first use Theorem \eqref{theorem:first-reduction} to reduce
the computation of the limit \eqref{eq:main-limit} to the corresponding 
nilpotent orbit and then use the previous Lemma.
\end{proof}

\begin{corollary}\label{corollary:induction-base}
Theorem~\eqref{theorem:conj-1} is true for variations over $\Delta^*$.
\end{corollary}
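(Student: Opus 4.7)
The plan is that this corollary should be a direct consequence of Corollary~\ref{corollary:extreme-iota}, since for $r=1$ the index $\iota$ defined before Lemma~\ref{lemma:sequences} can only take the two ``extreme'' values $\iota = 0$ or $\iota = r$ which are already handled there.

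In detail, I would first invoke Lemma~\ref{lemma:sequences} to reduce the problem to the following statement: every $\mathfrak{sl}_2$-sequence $z(m) = x(m) + iy(m) \in I'$ admits a subsequence along which the limit \eqref{eq:main-limit} holds. Fix such a sequence. Since $r = 1$, the smallest index of non-polynomial growth satisfies $\iota \in \{0, 1\}$. If $z(m)$ is bounded we are in the case $\iota = 0$ by definition. Otherwise, since $y_1(m) \geq 1$ on $I'$, we may pass to a subsequence along which $y_1(m) \to \infty$; using the convention $y_{r+1}(m) = y_2(m) = 1$, the non-polynomial growth condition $y_{r+1}(m)^d / y_1(m) = 1/y_1(m) \to 0$ for every $d > 0$ then holds automatically, so $\iota = 1 = r$. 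In either case Corollary~\ref{corollary:extreme-iota} produces a further subsequence along which \eqref{eq:main-limit} holds, completing the argument.

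There is no real obstacle in this corollary itself; the substantive work has already been carried out earlier. The case $\iota = 0$ rests on the continuity of the $\mathfrak{sl}_2$-splitting, while the case $\iota = r$ uses Theorem~\ref{theorem:first-reduction} (which via the relative-compactness Lemma~\ref{lemma:rel-compact} reduces the asymptotic computation to that of the associated nilpotent orbit) together with Lemma~\ref{lemma:main-limit} (which handles the nilpotent orbit case via the Kato--Nakayama--Usui limit formula \eqref{eq:knu-main}). The content of Corollary~\ref{corollary:induction-base} is simply the observation that in one variable there are no ``intermediate'' $\iota$ to worry about, so these two ingredients already suffice.
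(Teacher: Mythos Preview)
Your proposal is correct and follows exactly the paper's approach: the paper's proof is the single sentence that over $\Delta^*$ any $\ssl_2$-sequence must have $\iota=0$ or $\iota=1$, so Corollary~\ref{corollary:extreme-iota} applies. Your version simply unpacks this in more detail, including the subsequence reduction via Lemma~\ref{lemma:sequences} and the verification that $\iota\in\{0,1\}$ is forced when $r=1$.
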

\begin{proof} This follows from the previous Corollary since over
$\Delta^*$ any $\ssl_2$-sequence $z(m)$ must have either $\iota=0$
or $\iota=1$.
\end{proof}

\par It remains to verify Lemma \eqref{lemma:rel-compact} for
variations of mixed Hodge structure.  For this, we will modify 
Corollary $(12.8)$ of \cite{KNU} which asserts that if $z(m)$
is a strict $\ssl_2$-sequence then the limit
\beq
         F_{\flat} = \lim_{m\to\infty}\, t^{-1}(y(m)).F(z(m)) 
                   = \lim_{m\to\infty}\, t^{-1}(y(m)).F_r(z(m))
         \label{eq:knu-limit}
\eeq
exists, and belongs to the classifying space $\mathcal M$. 
We begin with the following result:

\subsection*{} A sequence of points $s(m)\in\Delta^{*r}$ is a strict 
$\ssl_2$-sequence if $s(m) = \pi(z(m))$ for some strict $\ssl_2$-sequence 
$z(m)\in U^r$ where $\pi:U^r\to\Delta^{*r}$ is the covering map defined by 
$s_j = e^{2\pi iz_j}$ for $j=1,\dots,r$.

\begin{lemma}\label{lemma:vanishing} If $f:\Delta^r\to\C$ be a holomorphic
function which vanishes along every strict $\ssl_2$-sequence 
$s(m)\in\Delta^{*r}$ then $f\equiv 0$.
\end{lemma}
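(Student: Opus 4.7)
The plan is to exploit an asymmetry in the definition of a strict $\mathfrak{sl}_2$-sequence: the condition is imposed purely on the imaginary parts $y(m)$, while the real parts $x(m) \in [0,1]^r$ may be chosen freely. Fixing a single strict $\mathfrak{sl}_2$-sequence of imaginary parts and then sweeping out the real part produces, for each fixed index $m$, an entire compact polytorus in $\Delta^{*r}$ on which $f$ is forced to vanish; that will be enough to conclude.

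Concretely, I would first choose a specific strict $\mathfrak{sl}_2$-sequence of imaginary parts, for instance $y_j(m) := m^{r+1-j}$, which satisfies $y_{j+1}(m)/y_j(m) = 1/m \to 0$ and $y_j(m) \geq 1$ for each $j$ and each $m \geq 1$. For any $\theta \in [0,1)^r$ the sequence $z(m) := \theta + iy(m)$ then lies in the vertical strip $I'$ and is still a strict $\mathfrak{sl}_2$-sequence, since strictness depends only on the imaginary part. Projecting under $\pi$ gives $s(m)_j = e^{2\pi i \theta_j}\, e^{-2\pi y_j(m)}$, and by hypothesis $f(s(m)) = 0$ for every $m$.

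Next I would fix a single index $m_0$ and let $\theta$ range over $[0,1)^r$. The resulting points $s(m_0)$ trace out the full polytorus
$$
T \ := \ \prod_{j=1}^{r}\bigl\{|s_j| = e^{-2\pi y_j(m_0)}\bigr\} \ \subset \ \Delta^r,
$$
and the discussion above shows that $f$ vanishes identically on $T$.

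The proof then finishes by a Fourier argument. Writing $f(s) = \sum_\alpha c_\alpha s^\alpha$ as its Taylor series at the origin, absolute convergence on the compact set $T$ allows the restriction $f|_T$, under the parametrization $s_j = e^{-2\pi y_j(m_0)} e^{2\pi i \theta_j}$, to be regarded as an absolutely convergent Fourier series in $\theta \in (\mathbb{R}/\mathbb{Z})^r$ with coefficient $c_\alpha \prod_j e^{-2\pi \alpha_j y_j(m_0)}$ at $\alpha$. The vanishing of $f$ on $T$ forces every such coefficient to be zero, whence $c_\alpha = 0$ for all $\alpha$ and $f \equiv 0$. The only subtle point to check is that pairing a constant real part with a strict $\mathfrak{sl}_2$ imaginary part still produces a strict $\mathfrak{sl}_2$-sequence, which is immediate from the definition; so no serious obstacle arises.
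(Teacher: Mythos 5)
Your argument is correct, and it takes a genuinely different route from the paper's proof. The paper recodes a strict $\ssl_2$-sequence as the equivalent data of a convergent $x(m)\in[0,1]^r$ together with $t(m)\in(0,1]^r$ tending to $0$, and then observes that the change of variables $t\mapsto y(t)$ with $y_j=(t_j\cdots t_r)^{-1}$ has invertible differential; perturbing \emph{both} $x$ and $t$ at a single index $m_0$ then shows that the set of points lying on some strict $\ssl_2$-sequence contains a full $2r$-real-dimensional open neighbourhood in $\Delta^{*r}$, and the identity theorem for holomorphic functions gives $f\equiv 0$. You instead freeze the imaginary parts at a single explicit admissible sequence $y_j(m)=m^{r+1-j}$ and vary only the real parts $\theta\in[0,1)^r$; for a fixed $m_0$ this traces out a compact polytorus, which is a real-$r$-dimensional set, not an open one, so the identity theorem is not directly applicable. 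You compensate by plugging the polytorus parametrization into the Taylor expansion of $f$ and reading off that each Fourier coefficient $c_\alpha\rho^\alpha$ (hence each $c_\alpha$) vanishes. Both arguments work: yours buys an elementary, self-contained conclusion using only the freedom in the real parts, whereas the paper's buys a one-line finish by invoking the identity theorem after the open-image observation. The only small point worth making explicit is that one should take $m_0$ large enough that the polytorus $\prod_j\{|s_j|=e^{-2\pi y_j(m_0)}\}$ actually lies inside the (possibly shrunken) polydisk on which $f$ is holomorphic; since $y_j(m_0)\to\infty$ this is automatic for $m_0\gg 0$.
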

\begin{proof} Strict $\ssl_2$-sequences $z(m)=x(m) + iy(m)\in U^r$ are 
equivalent to pairs of sequences $(x(m),t(m))$ such that $x(m)$ is a 
convergent sequence in $[0,1]^r$ and $t(m)$ is a sequence in $(0,1]^r$ 
which converges to zero.  Indeed, given a strict $\ssl_2$-sequence 
$z(m) = x(m) + iy(m)$ we define 
$t(m) = (t_1(m),\dots,t_r(m))$ via the usual rule $t_j(m) = y_{j+1}(m)/y_j(m)$. 
Conversely, given $(x(m),t(m))$ as above, the sequence $z(m) = x(m) + i y(m)$ 
obtained by setting $y_j(m) = (t_j(m)\cdots t_r(m))^{-1}$ is a strict 
$\ssl_2$-sequence.  In particular, since the differential of the 
map 
$$
      t=(t_1,\dots,t_r)\in (0,1]^r \mapsto (y_1,\dots,y_r)\in\R_{>0}^r
$$
defined by $y_j = (t_j\dots t_r)^{-1}$ is an isomorphism at each point
$t\in (0,1]^r$, it follows that given any point $s_o\in\Delta^{*r}$
on a strict $\ssl_2$-sequence $s(m)$, there exists strict $\ssl_2$-sequences
passing through every point on a neighborhood of $s_o$.  In particular,
if $f$ vanishes on every strict $\ssl_2$-sequence then $f\equiv 0$.
\end{proof}

\par To continue, we now prove Lemma~\eqref{lemma:rel-compact} in
the case where $F(z)$ is an admissible nilpotent orbit generated
by $(N_1,\dots,N_r;F,W)$.
\begin{proof}[Proof of Lemma~\eqref{lemma:rel-compact} for admissible nilpotent
orbits] Let $(\hat F,W^r) = (e^{-\xi}.F,W^r)$
denote the $\ssl_2$-splitting of $(F,W^r)$.  Let $t(y)$ be the
associated family of semisimple endomorphisms.   Then,
\beq
     t^{-1}(y)e^{\sum_j\, i y_j N_j}.F
     = e^{P(t)}\Ad(t^{-1}(y))(e^{\xi}).\hat F         \label{eq:nilp-limit-1}
\eeq
because $t^{-1}(y)$ fixes $\hat F$.

\par Given $A\in\End(V)$, let 
\beq
           A = \sum_{b\in\Z^r}\, A^b                  \label{eq:eigendecomp}
\eeq
where $A^b$ is the eigencomponent of $A$ on which 
$\Ad(t^{-1}(y))A^b = t_1^{\half b_1}\cdots t_r^{\half b_r} A^b$.
Then, by Lemma \eqref{lemma:weight-filt-preserve}
\beq
       \xi = \sum_{b\in \Z^r_{\geq 0}}\, \xi^b         \label{eq:nilp-limit-4}
\eeq
i.e. $\xi^b = 0$ unless $b$ is a vector with non-negative coordinates.
Accordingly, 
$$
      e^{\xi(t)} = \Ad(t^{-1}(y))\xi
$$
is a polynomial in non-negative, half-integral powers of $t_1,\dots,t_r$.
Therefore, the image of any sequence $z(m)$ in $I'$ under the map
\beq
        z\mapsto t^{-1}(y)e^{iN(y)}.F = e^{P(t)}e^{\xi(t)}.\hat F
                                                       \label{eq:twisted-nilp}
\eeq
has a convergent subsequence in the compact dual $\check\M$.  Now, a point 
in $\check\M$ belongs to $\M$ if and only if it induces polarized Hodge
structures on $Gr^W$.  By Lemma \eqref{lemma:rel-compact} for variations
of pure Hodge structure, the image of $I'$ in $Gr^W$ via the map 
\eqref{eq:twisted-nilp} is a relatively compact subset of the sum of the 
corresponding classifying spaces of pure Hodge structure.  Therefore, the 
image of $I'$ under the map \eqref{eq:twisted-nilp} is a relatively compact 
subset of $\M$.
\end{proof}  

\par Let $b=(b_1,\dots,b_r)\in\Z^r$.  Define a partial order on 
the group $\Z^r$ by declaring that $b\geq 0$ if $b_j\geq 0$ for
all $j$, and  $b<0$ otherwise.  If $b<0$ then $w(b) = \min\{\, j \mid b_j<0\,\}$.

\begin{lemma}\label{lemma:gamma-eigenvalues} Let 
\beq
           \Gamma(s) = \sum_{b\in\Z^r}\, \Gamma^b(s) \label{eq:gamma-decomp}
\eeq
be the decomposition of $\Gamma$ with respect to the action of 
$\Ad(t^{-1}(y))$ as above.  If $\Gamma^b\neq 0$ and $b<0$ then
$\Gamma^b_{w(b)} = 0$.
\end{lemma}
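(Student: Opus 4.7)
The aim is to show that the Taylor restriction $\Gamma^b(0,\ldots,0,s_{w+1},\ldots,s_r)$ (with $w=w(b)$) vanishes whenever $\Gamma^b\neq 0$ and $b<0$. I would begin by reducing the claim to a Taylor-coefficient statement: expanding $\Gamma(s)=\sum_\alpha c_\alpha s^\alpha$ with $c_\alpha\in\mathfrak q$ and decomposing each $c_\alpha=\sum_b c_\alpha^b$ into joint eigenspaces of $\ad\hat Y^1,\ldots,\ad\hat Y^r$ (where the $b$-component has $\ad\hat Y^j$-eigenvalue $-b_j$), the assertion is equivalent to $c_\alpha^b=0$ whenever $\alpha$ is supported in $\{w+1,\ldots,r\}$ and $b_w<0$.

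The plan is then to interpret $\Gamma_w(s):=\Gamma(0,\ldots,0,s_{w+1},\ldots,s_r)$ as the $\Gamma$-function of the admissible partial variation on the boundary stratum $\{s_1=\cdots=s_w=0\}$. By Kashiwara's theorem, $(N_1,\ldots,N_w;F_\infty,W)$ is an admissible nilpotent orbit whose partial LMHS is $(F_\infty,W^w)$; in particular $(F_\infty,W^w)$ is a MHS, and the boundary variation inherits admissibility with weight filtration $W^w$ at each point. Consequently $(e^{\Gamma_w(s)}.F_\infty,W^w)$ is a MHS for every $s$ near the origin.

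The main step exploits this structure via the grading $\hat Y^w$. Since $\hat Y^w$ splits $W^w$ and, by Lemma~\ref{l.FiltPreserveZero}, preserves the Deligne bigrading of $(F_\infty,W^r)$, one compares the two bigradings of $\mathfrak g$ coming from $(F_\infty,W^r)$ and $(F_\infty,W^w)$. Using $\Gamma_w\in\mathfrak q_{(F_\infty,W^r)}$ together with the MHS condition for $(e^{\Gamma_w(s)}.F_\infty,W^w)$, one shows that $\Gamma_w(s)$, modulo the stabilizer $\mathfrak g^{F_\infty}$, takes values in $\mathfrak q_{(F_\infty,W^w)}=\bigoplus_{p<0,q}\mathfrak g^{p,q}_{(F_\infty,W^w)}$. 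Since elements of this space have non-positive $\ad\hat Y^w$-weight, and since corrections in $\mathfrak g^{F_\infty}$ cannot contribute a positive $\ad\hat Y^w$-weight component to $\mathfrak q_{(F_\infty,W^r)}$, one deduces $c_\alpha^b=0$ whenever $\alpha$ is supported in $\{w+1,\ldots,r\}$ and $b_w<0$, yielding $\Gamma^b_w=0$.

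The principal obstacle is the explicit comparison of the two bigradings $\mathfrak g=\bigoplus\mathfrak g^{p,q}_{(F_\infty,W^r)}$ and $\mathfrak g=\bigoplus\mathfrak g^{p,q}_{(F_\infty,W^w)}$, together with the careful accounting of the $\mathfrak g^{F_\infty}$-contribution. I would anticipate handling this by first passing to the $\mathfrak{sl}_2$-splitting of $(F_\infty,W^r)$, where the Deligne system takes a particularly rigid form, and then applying Lemma~\ref{l.xiy} to transport the result back to the general case.
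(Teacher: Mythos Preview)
Your approach has a genuine gap at its core. The assertion that $(e^{\Gamma_w(s)}.F_\infty,W^w)$ is a mixed Hodge structure for each $s$ is not justified, and in fact Kashiwara's theorem does not give it. What Kashiwara (part~(c)) yields for $I=\{1,\dots,w\}$ is that $(e^{\sum_{j>w} z_jN_j}.F_\infty,\,W^w)$ is a MHS for $\Im(z_j)$ large --- not that $(F_\infty,W^w)$ itself is one, and certainly not that its perturbation by $e^{\Gamma_w(s)}$ is. Relatedly, $(N_1,\dots,N_w;F_\infty,W)$ is not in general an admissible nilpotent orbit: it corresponds to setting $z_{w+1}=\dots=z_r=0$ in the original orbit, which lies outside the region where the orbit takes values in $\mathcal M$. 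So the object you call the ``partial LMHS'' is not available.

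There is a second problem even if one grants the first step. Your claim that elements of $\mathfrak q_{(F_\infty,W^w)}=\bigoplus_{p<0,q}\mathfrak g^{p,q}$ have non-positive $\ad\hat Y^w$-weight is false: the weight of an element of $\mathfrak g^{p,q}$ is $p+q$, and the condition $p<0$ places no constraint on $q$. So membership in $\mathfrak q_{(F_\infty,W^w)}$ alone cannot force $b_w\ge 0$. The ``principal obstacle'' you flag --- comparing the two bigradings and controlling the $\mathfrak g^{F_\infty}$-correction --- is therefore not a mere technicality but the entire content of the lemma, and your sketch does not resolve it.

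The paper's proof takes a completely different, analytic route. It invokes the result of Kato--Nakayama--Usui that along any strict $\ssl_2$-sequence the twisted filtration $t^{-1}(y(m)).F(z(m))$ converges in $\mathcal M$. Writing $t^{-1}(y).F(z)=e^{Q(x,t)}e^{P(t)}\bigl(\Ad(t^{-1}(y))e^{\Gamma(s)}\bigr)e^{\xi(t)}.\hat F_\infty$ and noting that all factors except the middle one already converge, one deduces that $t_1^{b_1/2}\cdots t_r^{b_r/2}\,\Gamma^b(s)\to 0$ along every strict $\ssl_2$-sequence. If $\Gamma^b_w\neq 0$, then since $\Gamma^b_w$ depends only on $s_{w+1},\dots,s_r$, one uses Lemma~\ref{lemma:vanishing} to find a strict $\ssl_2$-sequence on which it does not vanish, and then exploits the freedom to choose $z_1,\dots,z_w$ so that $t_1^{b_1/2}\cdots t_r^{b_r/2}$ becomes unbounded (possible precisely because $b_w<0$ while $b_1,\dots,b_{w-1}\ge 0$). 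This contradicts the convergence, forcing $\Gamma^b_w=0$.
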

\begin{proof}  The proof of Lemma~\eqref{lemma:twist} shows that
$$
      \Ad(t^{-1}(y))(e^{N(x)}) = e^{Q(x,t)}
$$
where $Q(x,t)$ is a polynomial in half-integral powers of $t_1,\dots,t_r$
with constant term 0 and coefficients which are polynomials in $x_1,\dots,x_r$.
By the above conventions (with $P(t)$ and $\xi(t)$ constructed using the
nilpotent orbit attached to $F(z)$),
$$
       t^{-1}(y).F(z) = e^{Q(x,t)}e^{P(t)}
                        \Ad(t^{-1}(y))(e^{\Gamma(s)})
                        e^{\xi(t)}.\hat F_{\infty}
$$
Consequently, since $Q(x,t)$, $P(t)$, $\Gamma(s)$ and $\xi(t)$ all 
take values in $\mathfrak q$ (see \eqref{eq:decomp}) and converge
along strict $\ssl_2$-sequences, 
it follows that equation \eqref{eq:knu-limit} holds along 
strict $\ssl_2$-sequences if and only if 
$$
         \Ad(t^{-1}(y))(\Gamma^b(s))\to 0
$$ 
along strict $\ssl_2$-sequences for each index $b$.  


\par Suppose now that $b<0$ and $\Gamma^b(s)\neq 0$.  Let $w=w(b)$.
Then,
\beq
       t_1^{\half b_1}\dots t_r^{\half b_r}(\Gamma^b(s) - \Gamma^b_w(s))\to 0
                                     \label{eq:weight-b-convergence}
\eeq
along any strict $\ssl_2$-sequence since 
\beq
       \Gamma^b(s) - \Gamma^b_w(s)  = \sum_{k=1}^w\, s_k g_k
                                     \label{eq:weight-b}
\eeq
where $g_1,\dots,g_w$ are $\mathfrak q$-valued holomorphic functions
and $b_1,\dots,b_{w-1}\geq 0$.

\par Therefore, $\Ad(t^{-1}(y))(\Gamma^b(s))\to 0$ along strict 
$\ssl_2$-sequences if and only if 
$$
        t_1^{\half b_1}\dots t_r^{\half b_r}\Gamma^b_w(s)\to 0         
$$
along strict $\ssl_2$-sequences.  In particular, via a choice of basis,
$\Gamma^b_w(s)$ is represented by a matrix of holomorphic functions
in the variables $s_{w+1},\dots,s_r$.  Let $f$ denote a typical matrix
entry of $\Gamma^b_w(s)$.  Then, the previous equation holds if and only
if
\beq
        t_1^{\half b_1}\dots t_r^{\half b_r} f(s)\to 0    
                                      \label{eq:weight-b-convergence-2}     
\eeq
along strict $\ssl_2$-sequences.  If $f\neq 0$ then by 
Lemma~\eqref{lemma:vanishing} it follows that there exists a strict
$\ssl_2$-sequence $s(m)=\pi(z(m))$ on which $f$ is non-vanishing.
Furthermore, since $f$ depends only on $s_{w+1}(m),\dots,s_r(m)$, 
we have the freedom to select $z_1(m),\dots,z_w(m)$ in such a way that 
$t_1^{\half b_1}\dots t_r^{\half b_r}$ becomes unbounded 
(since $b_w<0$).  But this contradicts \eqref{eq:weight-b-convergence-2}
since $f$ does not vanish along $s(m)$.
\end{proof}

\begin{corollary}\label{corollary:gamma-eigenvalues} For any weight 
$b=(b_1,\dots,b_r)\in\Z^r$, the function $\Ad(t^{-1}(y))\Gamma^b(s)$ converges 
along any $\ssl_2$-sequence $z(m)\in I'$.
\end{corollary}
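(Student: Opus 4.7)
The plan is to split the argument according to the sign of the weight $b = (b_1,\dots,b_r) \in \Z^r$, using Lemma~\ref{lemma:gamma-eigenvalues} to extract a vanishing when $b$ has a negative coordinate. To set the stage, note that along any $\ssl_2$-sequence $z(m) = x(m) + iy(m)$ in $I'$, tameness gives convergence of the $x_j(m)$ and the $t_j(m)$, while the linear structure $y(m) = T(v(m)) + b(m)$ forces each $y_j(m)$ to have a (possibly infinite) limit in $[1,\infty]$. Hence $s(m) = \pi(z(m))$ converges to some $s^* \in \Delta^r$ (with some coordinates possibly $0$, since $|s_j(m)| = e^{-2\pi y_j(m)} \leq e^{-2\pi}$), and so the holomorphic function $\Gamma^b(s)$ satisfies $\Gamma^b(s(m)) \to \Gamma^b(s^*)$.

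When $b \geq 0$, the monomial $\prod_j t_j(m)^{b_j/2}$ is bounded by $1$ on $I'$ and converges by tameness, so $\Ad(t^{-1}(y(m)))\Gamma^b(s(m))$ converges immediately. The interesting case is when some $b_j < 0$. Letting $w = w(b)$, Lemma~\ref{lemma:gamma-eigenvalues} combined with \eqref{eq:weight-b} gives
\[
\Gamma^b(s) = \sum_{k=1}^w s_k\, g_k(s),
\]
where each $g_k$ is holomorphic on $\Delta^r$ and $b_1,\dots,b_{w-1} \geq 0$. It then suffices to show that each summand $s_k(m)\cdot \prod_j t_j(m)^{b_j/2} \cdot g_k(s(m))$ converges for $k \leq w$.

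The key estimate is a polynomial bound. On $I'$, $t_j \leq 1$, so $t_j^{b_j/2} \leq 1$ whenever $b_j \geq 0$. Whenever $b_j < 0$ --- which forces $j \geq w \geq k$ --- we use $t_j^{-1} = y_j/y_{j+1} \leq y_j$ (since $y_{j+1} \geq 1$) together with the ordering $y_j \leq y_k$ from $y_1 \geq \cdots \geq y_r$ to get $t_j^{b_j/2} \leq y_k^{|b_j|/2}$. Hence
\[
\left|\prod_{j=1}^r t_j^{b_j/2}\right| \leq y_k^C, \qquad C := \tfrac12 \sum_{j:\, b_j < 0} |b_j|.
\]
Combined with $|s_k(m)| = e^{-2\pi y_k(m)}$, this gives $\bigl|s_k \prod_j t_j^{b_j/2}\bigr| \leq e^{-2\pi y_k} y_k^C$, which is bounded on $y_k \geq 1$ and tends to $0$ as $y_k \to \infty$.

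To finish, I split on the behavior of $y_k(m)$. If $y_k(m) \to \infty$, the bound kills the $k$-th summand since $g_k(s(m))$ stays bounded. If $y_k(m)$ remains bounded, hence converges to some $y_k^* < \infty$, then the ordering on $I'$ forces $y_j(m)$ bounded (and hence convergent) for all $j \geq k$; in particular $t_j(m) \geq 1/y_k^* > 0$ is bounded below and converges to some $\tau_j > 0$ for $j \geq k$, while for $j < k$ the exponent $b_j \geq 0$ with $t_j(m) \to \tau_j \in [0,1]$ gives convergence of $t_j^{b_j/2}$. So $\prod_j t_j^{b_j/2}$ converges, $s_k(m) \to s_k^*$ and $g_k(s(m)) \to g_k(s^*)$ by continuity, and the summand converges. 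Summing over $k$ completes the argument. The main technical point is the polynomial bound above, which hinges on the interplay between the ordering of the $y_j$'s on $I'$ and the placement of the first negative-weight index $w = w(b)$.
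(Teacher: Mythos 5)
Your proof is correct and follows essentially the same route as the paper's: both reduce to the case $b\not\geq 0$, invoke Lemma~\ref{lemma:gamma-eigenvalues} together with \eqref{eq:weight-b} to write $\Gamma^b(s)=\sum_{k=1}^w s_k g_k$, and then play the exponential decay $|s_k|=e^{-2\pi y_k}$ against the polynomial growth of the $t$-factor, using the ordering $y_1\geq\cdots\geq y_r\geq 1$ on $I'$ to ensure that any negative exponent $b_j<0$ occurs only for $j\geq w\geq k$ and can therefore be dominated by a power of $y_k$. The one place where you are somewhat more careful than the source is the case split at the end: the paper, having reduced to ``$z(m)$ unbounded and $b<0$,'' asserts outright that $s_\ell\, t_w^{b_w/2}\cdots t_r^{b_r/2}\to 0$, which is literally true only when $y_w(m)\to\infty$; your summand-by-summand dichotomy (either $y_k(m)\to\infty$, in which case the bound $e^{-2\pi y_k}y_k^C$ forces the summand to zero, or $y_k(m)$ stays bounded, in which case everything in sight simply converges by continuity) handles the remaining possibility explicitly and yields the stated conclusion—convergence, not necessarily to zero—cleanly. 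The bookkeeping of the $t$-monomial is also organized a bit differently (you absorb the whole product $\prod_j t_j^{b_j/2}$ into the bound $y_k^C$, whereas the paper factors off the convergent piece $t_1^{b_1/2}\cdots t_{w-1}^{b_{w-1}/2}$ first), but this is cosmetic. Overall this is the paper's argument, written out with slightly more care on the bounded-coordinate case.
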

\begin{proof} Suppose that $z(m)$ is bounded.  Then, 
$\Ad(t^{-1}(y))\Gamma^b(s)$ converges since both $\Ad(t^{-1}(y))$ and 
$\Gamma^b(s)$ converge.  Likewise, if $b\geq 0$ then 
$\Ad(t^{-1}(y))\Gamma^b(s)$ converges along every $\ssl_2$-sequence since
both $\Gamma^b(s)$ and $t_1^{\half b_1}\cdots t_r^{\half b_r}$ converge. 

\par Assume therefore that $z(m)$ is unbounded and $b<0$.  Let $w=w(b)$.
Then, by the previous Lemma, $\Gamma^b_w(s) = 0$ and hence by equation
\eqref{eq:weight-b}
\beq
       \Gamma^b(s) = \Gamma^b(s) - \Gamma^b_w(s) = \sum_{k=1}^w\, s_k g_k
                                               \label{eq:weight-b-2}
\eeq
where $g_1,\dots,g_w$ are $\mathfrak q$-valued holomorphic functions.
By the order structure~\eqref{eq:order-structure} on $I'$ it follows
that $y_{\ell}\geq y_w$ for $\ell = 1,\dots,w$ and hence
\beq
      s_{\ell}t_w^{\half b_w}\cdots t_r^{\half b_r}\to 0 
                                                \label{eq:weight-b-3}
\eeq
along any $\ssl_2$-sequence since $|s_{\ell}| = e^{-2\pi y_{\ell}}$.
Combining equations~\eqref{eq:weight-b-2} and \eqref{eq:weight-b-3}, we
obtain the convergence of $\Ad(t^{-1}(y))\Gamma^b(s)$ along 
$\ssl_2$-sequences since the remaining factor 
$t_1^{\half b_1}\cdots t_{w-1}^{\half b_{w-1}}$ converges since
$b_1,\dots, b_{w-1}\geq 0$.
\end{proof}

\begin{corollary}\label{corollary:twisted-limit} Let $z(m)$ be an
$\ssl_2$-sequence.  Then, there exists $F_{\sharp}\in\mathcal{M}$ such that 
$$
         \tilde F(z(m))\to F_{\sharp}.
$$ 
Furthermore, the value of $F_{\sharp}$ depends only on the limiting
values of the sequences $t_j(m)$.
\end{corollary}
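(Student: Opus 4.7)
The plan is to establish convergence of $\tilde F(z(m))$ by decomposing the product and applying the results on $\Gamma$-eigencomponents and the twist formula for $t^{-1}(y)$. Begin with the local normal form, writing
\[
\tilde F(z) \;=\; t^{-1}(y)e^{-N(x)}.F(z) \;=\; t^{-1}(y)e^{iN(y)}e^{\Gamma(s)}.F_{\infty},
\]
and insert the $\ssl_2$-splitting $F_{\infty}=e^{\xi}.\hat F_{\infty}$. Commuting $t^{-1}(y)$ past each factor gives
\[
\tilde F(z) \;=\; e^{P(t)}\,\bigl(\Ad(t^{-1}(y))\,e^{\Gamma(s)}\bigr)\,\bigl(\Ad(t^{-1}(y))\,e^{\xi}\bigr).\hat F_{\infty},
\]
where $\Ad(t^{-1}(y))e^{iN(y)} = e^{P(t)}$ by Lemma~\ref{lemma:twist} and $t^{-1}(y)$ fixes $\hat F_{\infty}$ since the gradings $\hat Y^{j}$ preserve $\hat F_{\infty}$.

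Now I would handle each factor separately. The factor $e^{P(t(m))}$ converges to $e^{P(\tau)}$ since $t(m)\to\tau$ by tameness of the $\ssl_2$-sequence and $P$ is a polynomial in non-negative half-integral powers of $t_1,\dots,t_r$. The factor $\Ad(t^{-1}(y))\,e^{\xi}$ converges because, by Lemma~\ref{lemma:weight-filt-preserve} together with the expansion \eqref{eq:nilp-limit-4}, only eigencomponents $\xi^{b}$ with $b\in\Z^{r}_{\geq 0}$ are present, so each $\Ad(t^{-1}(y))\xi^{b}=t_1^{b_1/2}\cdots t_r^{b_r/2}\xi^{b}$ converges. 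The central factor $\Ad(t^{-1}(y))e^{\Gamma(s)}$ is the substantive one: decompose $\Gamma(s)=\sum_{b}\Gamma^{b}(s)$ as in \eqref{eq:gamma-decomp}; by Corollary~\ref{corollary:gamma-eigenvalues} each summand $\Ad(t^{-1}(y))\Gamma^{b}(s(m))$ converges along the given $\ssl_2$-sequence, whence so does their finite sum, and exponentiating preserves this. Multiplying the three convergent factors yields convergence of $\tilde F(z(m))$ to some point $F_{\sharp}$ in the compact dual $\check{\mathcal M}$.

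The next step is to show that $F_{\sharp}$ in fact lies in $\mathcal M$ rather than merely in $\check{\mathcal M}$; equivalently, that $F_{\sharp}$ induces polarized Hodge structures on each $\Gr^{W}$. Here I would pass to the graded-polarized pieces: the induced data on $\Gr^{W}$ is precisely a finite direct sum of period maps of admissible variations of pure Hodge structure, and for these the analogous relative compactness statement is Theorem~(4.7) of~\cite{Luminy} (the Cattani--Kaplan case). Since $\tilde F(z(m))$ pushed to $\Gr^{W}$ remains in a relatively compact subset of the pure classifying spaces, the limit restricted to $\Gr^{W}$ lies in those spaces, hence $F_{\sharp}\in\mathcal M$.

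Finally, to establish the dependence claim, track the source of each limit. The limits of $e^{P(t(m))}$ and $\Ad(t^{-1}(y))e^{\xi}$ manifestly depend only on $\tau=\lim t(m)$. For the $\Gamma$-piece, Lemma~\ref{lemma:gamma-eigenvalues} shows that for eigenweight $b<0$ the relevant matrix entries involve only $s_{w(b)+1},\dots,s_{r}$ with the compensating powers of $t$ forcing them to zero; for $b\geq 0$ the eigenvalue factors themselves tend to $\prod\tau_{j}^{b_{j}/2}$ and the boundary behaviour of $\Gamma^{b}$ is controlled by the rate at which $s(m)\to 0$, which in turn is determined by the limits $\tau_{j}$ through $y_{j}=(t_{j}\cdots t_{r})^{-1}$. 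The main obstacle I expect is precisely this last bookkeeping step — keeping careful track of which coordinates of $s(m)$ appear in each surviving $\Gamma^{b}_{*}$, and verifying that the bounded translations $x(m)\to\mu$ and $b(m)\to b_{0}$ enter only through quantities that are already absorbed into the $t_{j}$-data or that cancel out of the final expression for $F_{\sharp}$.
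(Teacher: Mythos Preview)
Your proposal is correct and follows essentially the same approach as the paper: the identical decomposition $\tilde F(z)=e^{P(t)}(\Ad(t^{-1}(y))e^{\Gamma(s)})e^{\xi(t)}.\hat F_{\infty}$, the same convergence arguments via Lemma~\ref{lemma:twist}, equation~\eqref{eq:nilp-limit-4}, and Corollary~\ref{corollary:gamma-eigenvalues}, and the same reduction to the pure case (Cattani--Kaplan) to land in $\mathcal M$. Your caution about the dependence bookkeeping in the last paragraph is fair---the paper's own proof simply asserts that the limits of $P(t)$, $\Ad(t^{-1}(y))\Gamma(s)$, and $\xi(t)$ depend only on the limiting $t_j$'s without spelling out the $\Gamma$-piece in more detail than Corollary~\ref{corollary:gamma-eigenvalues} already provides.
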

\begin{proof}  By the above remarks, 
$$
      \tilde F(z(m)) = e^{P(t)}(\Ad(t^{-1}(y))e^{\Gamma(s)})e^{\xi(t)}.
                      \hat F_{\infty}
$$
Moreover, along any $\ssl_2$-sequence, $P(t)$, $\Ad(t^{-1}(y))\Gamma(s)$
and $\xi(t)$ converge to limiting values in $\mathfrak g_{\C}$ which only 
depend on 
the limiting values of $t_j(m)$.  This forces 
$\tilde F(z(m))$ to converge to a point $F_{\sharp}$ of the 
\lq\lq compact dual\rq\rq{} $\check{\mathcal M}$ of $\mathcal M$.  In 
particular, since elements of $\mathfrak g_{\C}$ preserve $W$, it follows
from Lemma~\eqref{lemma:rel-compact} for variations of pure Hodge structure
that $F_{\sharp}$ is an element of $\M$.
\end{proof}

\begin{proof}[End of Proof of~\eqref{lemma:rel-compact}]
To complete the proof of Lemma~\eqref{lemma:rel-compact} for variations
of mixed Hodge structure, observe that every sequence $z(m)\in I'$ contains
an $\ssl_2$-sequence $z'(m)$.  Applying the previous corollary to
$F(z'(m))$ we see that the image of $I'$ by $\tilde F$ is a relatively
compact subset of $\mathcal M$.
\end{proof}

\section{Polarized Mixed Hodge Structures}
\label{s.PolarizedMixed}

\par In this section we prove two technical results about deformations
of admissible nilpotent orbits using the theory of polarized mixed
Hodge structure outlined in~\cite{Luminy}.

\begin{lemma}\label{lemma:pmhs-1} Let $(N_1,\dots,N_k;F,W)$ generate an
admissible nilpotent orbit.  Then, there exists a neighborhood $\mathcal A$ of
$0\in\mathfrak g_{\C}\cap\ker(\ad N_1)\cap\cdots\cap\ker(\ad N_k)$
such that for all $\a\in\mathcal A$, $(N_1,\dots,N_k;e^{\a}.F,W)$ generates
an admissible nilpotent orbit.
\end{lemma}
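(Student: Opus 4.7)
The approach is to exploit the characterization of admissible nilpotent orbits via polarized mixed Hodge structure (PMHS) conditions on $F$ together with the data $(N_1,\dots,N_k;W)$, as developed in~\cite{Luminy}. These PMHS conditions are open on the flag variety, so they persist under the perturbation $F \mapsto e^{\alpha}.F$ whenever $\alpha$ commutes with the $N_i$'s and hence preserves all the auxiliary filtrations.

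First, since $\alpha\in\ker(\ad N_1)\cap\cdots\cap\ker(\ad N_k)$, the exponential $e^{\alpha}$ commutes with $e^{N(z)}$ for every $z\in\mathbb{C}^k$, and the candidate nilpotent orbit for the perturbed data is
\[
e^{N(z)}.(e^{\alpha}.F) = e^{\alpha}.\bigl(e^{N(z)}.F\bigr).
\]
The relative weight filtrations $W^0,W^1,\ldots,W^k$ produced by Kashiwara's theorem depend only on $(N_1,\ldots,N_k;W)$ and not on $F$, so they are unchanged. Moreover, $\alpha$ preserves each $W^j$ inductively: $\alpha\in\mathfrak g_{\mathbb C}$ preserves $W^0=W$, and if $\alpha$ preserves $W^{j-1}$ then, since it commutes with $N_j$, it also preserves the relative weight filtration $W^j = M(N_j,W^{j-1})$ by the uniqueness clause in Kashiwara's construction.

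Second, the hypothesis that the original data generates an admissible nilpotent orbit implies, by Kashiwara and the CKS characterization, that $(F,W^k)$ is a polarized mixed Hodge structure and that analogous PMHS conditions hold at each intermediate step of the Deligne system. Each of these is a finite conjunction of open conditions on $F$: the transversality $F^p\oplus \overline{F^{q+1}}=\Gr^{W^k}_{p+q}$ defining a mixed Hodge structure, and the positivity of the polarization forms on the primitive subspaces of each graded quotient under $N$ in the monodromy cone. Since $\alpha$ preserves each $W^j$ and commutes with each $N_i$, all of the combinatorial data entering these conditions (weight filtrations, primitive decompositions, monodromy cone) is intrinsic to the perturbed data. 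Therefore one can choose a neighbourhood $\mathcal A$ of $0$ in $\mathfrak g_{\mathbb C}\cap\ker(\ad N_1)\cap\cdots\cap\ker(\ad N_k)$ so that $(e^{\alpha}.F,W^k)$ continues to satisfy all the PMHS conditions for every $\alpha\in\mathcal A$.

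Finally, I invoke the converse direction of the nilpotent orbit theorem (CKS in the pure case, extended by Kashiwara and Saito to the admissible mixed case): a PMHS satisfying the cone-compatibility conditions with commuting nilpotents in $\mathfrak g_{\mathbb R}$ generates an admissible nilpotent orbit. Applied to $(N_1,\ldots,N_k;e^{\alpha}.F,W)$ with $\alpha\in\mathcal A$, this produces the desired admissible nilpotent orbit. The main obstacle is ensuring that a single neighbourhood $\mathcal A$ works uniformly for the openness of all the PMHS conditions at every level of the Deligne system simultaneously; because there are only finitely many such conditions and each is open, a finite intersection suffices.
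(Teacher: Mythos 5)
Your proposal is correct and follows essentially the same route as the paper: use openness of the polarized-mixed-Hodge-structure conditions, together with the fact that $\alpha$ preserves the relevant weight filtrations (because it commutes with the $N_j$'s), and conclude via the converse direction of the nilpotent orbit theorem. The paper makes the "finitely many open conditions" step precise by reducing to the pure case weight-by-weight and then invoking Theorem~(2.3) of~\cite{Luminy}, which lets one check only the single nilpotent $N=\sum_j N_j$; your appeal to the CKS characterization is doing the same job, just less explicitly.
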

\begin{proof} The map
$$
         (z_1,\dots,z_k) \mapsto e^{\sum_j\, z_j N_j}e^{\a}.F
$$
is horizontal since $N_j(F^p)\subseteq F^{p-1}$.  Therefore, 
$N_j(e^{\a}.F^p)\subseteq e^{\a}.F^{p-1}$.  Likewise, since 
$(N_1,\dots,N_k;F,W)$ is admissible, all of the required
relative weight filtrations exist.  It remains therefore
to show that there exists a constant $c$ such that 
$$
    (e^{\sum_j\, z_j N_j}e^{\alpha}.F,W)
$$
is a graded-polarized mixed Hodge structure for $\Im(z_1),\dots,\Im(z_k)>c$.
\par Since $N_1,\dots,N_k$ and $\alpha$ preserve $W$, we can assume without
loss of generality that $W$ is pure of weight $\ell$.  Via a Tate-twist,
we can assume that $(e^{\sum_j\, z_j N_j}e^{\alpha}.F,W)$ is pure and
effective of weight $\ell$.  By Theorem $(2.3)$ of~\cite{Luminy}, it is 
therefore sufficient to show that $e^{zN}e^{\alpha}.F$ is a nilpotent orbit of 
pure Hodge structure of weight $\ell$, where $N =\sum_j\, N_j$.  

\par To continue, we note that since $\alpha$ commutes with $N_1,\dots,N_k$
it follows that $\alpha$ preserves the monodromy weight filtration $W(N)$.
Since we already know that $e^{zN}.F$ is a nilpotent orbit of
weight $\ell$ (polarized by some bilinear form $Q$), it then follows from 
the theory of polarized mixed Hodge structures (see:~\cite{Luminy}) that it 
is sufficient to show that $e^{\alpha}.F$ induces a pure Hodge structure of 
weight $j+\ell$ on the primitive part (with respect to $N$) of 
$Gr^{W(N)[-\ell]}_{j+\ell}$ which is polarized by $Q_{\ell}(*,*) =
Q(*,N^{\ell}*)$.  But, this is an open condition on $\alpha\in\mathcal A$ which
is true for $\alpha=0$ since $e^{zN}.F$ is a nilpotent orbit.
\end{proof}

\par Given a nilpotent orbit generated by $(N_1,\dots,N_r;F,W)$ let
$t(y)$ be the associated semisimple operator \eqref{eq:knu-t}.  Then,
the corresponding semisimple operator 
$$
        t_{\iota}(y) = \prod_{j>\iota}\, t_j^{\half\hat Y_j}
$$ 
attached to the nilpotent orbit generated by
$(N_{\iota+1},\dots,N_r;F,W^{\iota})$ is obtained from $t(y)$ by setting 
$t_1,\dots,t_{\iota}=1$.

\begin{lemma}\label{lemma:comm-with-t} If $k\leq \ell$ and 
$\a\in\ker(\ad N_k)$ then each eigencomponent of $\a$ with
respect to $\ad\hat Y^{\ell}$ belongs to $\ker(\ad N_k)$.
\end{lemma}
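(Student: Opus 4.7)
The plan is to exploit the fact that for $k\le \ell$, the operator $N_k$ is a $(-1,-1)$-morphism of the mixed Hodge structure $(\hat F_\ell, W^\ell)$ whose Deligne grading is $\hat Y^\ell$. This is recorded in the paper's discussion of the $\ssl_2$-splitting after equation \eqref{eq:Deligne-system} and is reiterated in the paragraph where the gradings $\hat Y^j = Y_{(\hat F_j, W^j)}$ are identified. In particular, one has
\[
    [\hat Y^\ell, N_k] = -2 N_k \qquad \text{for } k \le \ell.
\]

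First I would decompose $\alpha \in \ker(\ad N_k)$ into eigencomponents for the semisimple operator $\ad \hat Y^\ell$ on $\mathfrak g_{\mathbb C}$, writing $\alpha = \sum_m \alpha_m$ where $[\hat Y^\ell, \alpha_m] = m\alpha_m$. Next I would observe, via a one-line Jacobi identity computation, that $\ad N_k$ shifts $\ad \hat Y^\ell$-weight by $-2$:
\[
    [\hat Y^\ell, [N_k, \alpha_m]] = [[\hat Y^\ell, N_k], \alpha_m] + [N_k, [\hat Y^\ell, \alpha_m]] = (m-2)\,[N_k, \alpha_m].
\]
Hence $[N_k, \alpha_m]$ lies in the $(m-2)$-eigenspace of $\ad \hat Y^\ell$.

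Finally, since the eigenspaces of $\ad \hat Y^\ell$ are linearly independent, the equation $0 = [N_k, \alpha] = \sum_m [N_k, \alpha_m]$ forces $[N_k, \alpha_m] = 0$ for every $m$, which is exactly the desired conclusion. There is no real obstacle here; the only thing to verify is the commutator identity $[\hat Y^\ell, N_k] = -2 N_k$, which was already established when setting up the Deligne system and the $\ssl_2$-splitting earlier in this section.
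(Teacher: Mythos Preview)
Your proposal is correct and follows essentially the same approach as the paper: both use the Jacobi identity together with the relation $[\hat Y^{\ell},N_k]=-2N_k$ to see that $\ad N_k$ shifts $\ad\hat Y^{\ell}$-eigenvalues by $-2$, and then conclude by linear independence of the eigenspaces. The only cosmetic difference is that the paper applies Jacobi to all of $\alpha$ at once (showing $[N_k,[\hat Y^{\ell},\alpha]]=0$) before invoking the weight-shift, whereas you decompose $\alpha$ into eigencomponents first.
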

\begin{proof} By the Jacobi identity,
$$
     [N_k,[\hat Y^{\ell},\a]]
     = [[N_k,\hat Y^{\ell}],\a] + [\hat Y^{\ell},[N_k,\a]]         
     = [2N_k,\a] = 0
$$
since $[N_k,\hat Y^{\ell}] = 2N_k$.  Consequently, each
eigencomponent of $\a$ must also belong to $\ker(\ad N_k)$
since $\ad N_k$ decreases eigenvalues with respect to
$\ad\hat Y^{\ell}$ by 2.
\end{proof}

\begin{corollary}\label{corollary:comm-with-t} If $\a$ commutes with 
$N_1,\dots,N_{\iota}$ then so does $\Ad(t_{\iota}^{-1}(y))\a$.
\end{corollary}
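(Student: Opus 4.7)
The plan is to decompose $\alpha$ into joint eigencomponents of the commuting gradings $\operatorname{ad}\hat Y^{\iota+1},\ldots,\operatorname{ad}\hat Y^r$, show that each such eigencomponent still commutes with $N_1,\ldots,N_\iota$, and then observe that $\operatorname{Ad}(t_\iota^{-1}(y))$ acts diagonally with scalar coefficients on this joint decomposition.

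More concretely, I would first recall from the discussion following \eqref{eq:Deligne-system} that the gradings $\hat Y^{\iota+1},\ldots,\hat Y^r$ are mutually commuting. Since $t_\iota(y)=\prod_{j>\iota}t_j^{\frac12\hat Y^j}$, each joint eigenspace
$$
V^{(b_{\iota+1},\ldots,b_r)}=\bigcap_{j>\iota}\ker\bigl(\operatorname{ad}\hat Y^j-b_j\,\mathbbm{1}\bigr)
$$
in $\operatorname{End}(V)$ is preserved by $\operatorname{Ad}(t_\iota^{-1}(y))$, which acts on it by the scalar $\prod_{j>\iota}t_j^{b_j/2}$. Writing $\alpha=\sum_{\mathbf b}\alpha^{\mathbf b}$ with $\alpha^{\mathbf b}\in V^{\mathbf b}$, we then have
$$
\operatorname{Ad}(t_\iota^{-1}(y))\alpha=\sum_{\mathbf b}\Bigl(\prod_{j>\iota}t_j^{b_j/2}\Bigr)\alpha^{\mathbf b}.
$$

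Next I would verify that each $\alpha^{\mathbf b}$ lies in $\ker(\operatorname{ad} N_k)$ for every $k\le\iota$. Fix such a $k$. Since $k\le\iota<\iota+1\le j$ for every $j$ appearing in the decomposition, Lemma~\ref{lemma:comm-with-t} applies with $\ell=j$: the eigencomponents of $\alpha$ with respect to $\operatorname{ad}\hat Y^j$ all lie in $\ker(\operatorname{ad} N_k)$ whenever $\alpha$ does. Iterating this projection-by-projection decomposition across $j=\iota+1,\ldots,r$ (the intermediate partial sums continue to lie in $\ker(\operatorname{ad} N_k)$ at each stage, since kernels are preserved under splitting by a commuting semisimple action), we conclude that every joint eigencomponent $\alpha^{\mathbf b}$ lies in $\ker(\operatorname{ad} N_k)$.

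Combining the two steps, $\operatorname{Ad}(t_\iota^{-1}(y))\alpha$ is a scalar linear combination of elements of $\bigcap_{k\le\iota}\ker(\operatorname{ad} N_k)$, hence itself commutes with $N_1,\ldots,N_\iota$. I do not expect any real obstacle here: the whole argument is formal linear algebra once one has Lemma~\ref{lemma:comm-with-t} in hand. The only small bookkeeping point is the iteration, and the fact that commuting semisimple operators admit a simultaneous eigenspace decomposition, which lets Lemma~\ref{lemma:comm-with-t} be applied one index at a time without interference.
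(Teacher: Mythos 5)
Your argument is exactly the paper's proof, just spelled out in more detail: the paper likewise decomposes $\alpha$ into simultaneous eigencomponents of $\operatorname{ad}\hat Y^{\iota+1},\dots,\operatorname{ad}\hat Y^{r}$ and invokes Lemma~\ref{lemma:comm-with-t} to see that each component remains in $\bigcap_{k\le\iota}\ker(\operatorname{ad}N_k)$, while $\operatorname{Ad}(t_\iota^{-1}(y))$ rescales each component. Your iteration-by-index bookkeeping is a correct unpacking of the one-line proof in the paper.
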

\begin{proof} Decompose $\a$ with respect to 
$\hat Y^{\iota+1},\dots,\hat Y^r$ and apply the previous lemma.
\end{proof}

\begin{lemma}\label{lemma:another-twist} For $k\leq\iota$, 
$
    \Ad(t^{-1}_{\iota}(y)) y_k N_k = y_k/y_{\iota+1} N_k
$
and hence 
$$\Ad(t^{-1}_{\iota}(y))e^{i\sum_{k\leq\iota}\, y_k N_k} 
 = e^{i\sum_{k\leq\iota}\, (y_k/y_{\iota+1}) N_k}.$$
\end{lemma}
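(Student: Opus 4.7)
The plan is to unwind the definition of $t_\iota(y)$ and use the commuting $\ssl_2$ structure from the Deligne system. Recall from \eqref{eq:knu-t} that $t_\iota(y) = \prod_{j>\iota} t_j^{\tfrac{1}{2}\hat Y^j}$, where the factors commute because the gradings $\hat Y^{\iota+1},\dots,\hat Y^r$ mutually commute. So it suffices to compute $\Ad(t_j^{-\hat Y^j/2}) N_k$ for each $j>\iota$ individually and multiply the results.

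The key input is the eigenvalue computation. For $k\le\iota < j$, the operator $N_k$ is a $(-1,-1)$-morphism of the mixed Hodge structure $(\hat F_j, W^j)$ produced by iterating the $\ssl_2$-splitting (equivalently, $N_k$ is a $(-1,-1)$-morphism of $(F,W^j)$ by Kashiwara, and the $\ssl_2$-splitting preserves $(-1,-1)$-morphisms). Since $\hat Y^j$ acts on $I^{p,q}_{(\hat F_j,W^j)}$ by $p+q$, we get
\[
   [\hat Y^j, N_k] = -2 N_k \qquad (k\le\iota < j).
\]
Exponentiating, $\Ad(t_j^{-\hat Y^j/2}) N_k = t_j\, N_k$.

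Now I assemble the pieces. Since the $\hat Y^j$ for $j>\iota$ commute and each produces the scalar $t_j$ on $N_k$, I get
\[
   \Ad(t_\iota^{-1}(y))\, N_k = \Bigl(\prod_{j>\iota} t_j\Bigr) N_k.
\]
With $t_j = y_{j+1}/y_j$ and the convention $y_{r+1}=1$, the product telescopes: $\prod_{j=\iota+1}^{r} t_j = y_{r+1}/y_{\iota+1} = 1/y_{\iota+1}$. Multiplying by $y_k$ gives the first claim $\Ad(t_\iota^{-1}(y))\, y_k N_k = (y_k/y_{\iota+1})\, N_k$. The second claim follows because $\Ad(g)$ is an automorphism of the Lie algebra commuting with the exponential, applied term-by-term to the sum (all $N_k$ commute).

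There is essentially no obstacle here beyond correctly invoking that $N_k$ is a $(-1,-1)$-morphism of $(\hat F_j,W^j)$ for every $j\ge k$; this is exactly the statement embedded in the construction of the Deligne system (cf.\ the discussion following Theorem 2 of \cite{Schwarz} and Lemma~\ref{lemma:ds-1} iterated), so no new work is required.
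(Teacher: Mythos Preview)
Your proof is correct and rests on the same eigenvalue computation as the paper: for $k\le\iota<j$ one has $[\hat Y^j,N_k]=-2N_k$, so $\Ad(t_j^{-\hat Y^j/2})N_k=t_jN_k$ and the product $\prod_{j>\iota}t_j$ telescopes to $1/y_{\iota+1}$. The only cosmetic difference is that the paper first rewrites $t_\iota(y)=y_{\iota+1}^{-\hat Y^{\iota+1}/2}\prod_{j>\iota}y_{j+1}^{-\hat H_{j+1}/2}$ and then uses $[N_k,\hat H_j]=0$ for $j>k$ so that only the $y_{\iota+1}^{\hat Y^{\iota+1}/2}$ factor contributes; your direct telescoping avoids this rewriting and is in fact the same maneuver the paper already used in the proof of Lemma~\ref{lemma:twist}.
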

\begin{proof} By~\eqref{eq:knu-t},
\beq
      t_{\iota}(y) = \prod_{j>\iota}\, t_j^{\half\hat Y^j}
                   = y_{\iota+1}^{-\half\hat Y^{\iota+1}}
                     \prod_{j>\iota}\, y_{j+1}^{-\half\hat H_{j+1}}
      \label{eq:alt-knu-t}
\eeq
Accordingly, since $[\hat Y^{\iota+1},N_k] = -2N_k$ for $k\leq\iota+1$
whereas $[N_k,\hat H_j]=0$ for $j>k$ by \eqref{eq:ds-comm-1}
it follows by \eqref{eq:alt-knu-t} that (for $k\leq\iota$)
\begin{eqnarray*}
      \Ad(t^{-1}_{\iota}(y))y_k N_k
      &=& \Ad(y_{\iota+1}^{\half\hat Y^{\iota+1}})
                     \prod_{j>\iota}\, \Ad(y_{j+1}^{\half\hat H_{j+1}})y_k N_k   \\   
      &=&  \Ad(y_{\iota+1}^{\half\hat Y^{\iota+1}}) y_k N_k = y_k/y_{\iota+1} N_k 
\end{eqnarray*}
\end{proof}

\par Given a period map $F(z_1,\dots,z_r)$ with local normal form 
\eqref{eq:lnf} let 
\beq
        F_{\infty}(z_{\iota+1},\dots,z_r) = 
        e^{\sum_{j>\iota}\, z_j N_j}e^{\Gamma_{\iota}}.F_{\infty}
        \label{eq:partial-limit}
\eeq
be the limit mixed Hodge structure obtained by degenerating the variables
$z_1,\dots,z_{\iota}$ in $F(z)$.  Then, 
$(F_{\infty}(z_{\iota+1},\dots,z_r),W^{\iota})$ is an admissible variation of 
mixed Hodge structure.  Let 
\beq
      I'_{\iota} = \{ (z_{\iota+1},\dots,z_r)\in U^{r-\iota} \mid 
               x_{\iota+1},\dots,x_r\in [0,1],\quad 
               y_{\iota+1}\geq\cdots\geq y_r\geq 1\,\}
      \label{eq:strip-variant}
\eeq
Then, by Corollary~\eqref{corollary:twisted-limit}, for any $\ssl_2$-sequence
$z(m) = (z_{\iota+1}(m),\dots,z_r(m))\in I'_{\iota}$, 
the filtration 
\beq
   \tilde F_{\infty}(z_{\iota+1}(m),\dots,z_r(m)) 
    = t^{-1}_{\iota}(y) e^{\sum_{j>\iota} i y_j N_j}
      e^{\Gamma_{\iota}(s)}.F_{\infty}
    \label{eq:partial-limit-twist}
\eeq
converges to a filtration $F_{\natural}$ in the corresponding classifying space 
$\mathcal M_{\iota}$.  Furthermore, the filtration $F_{\natural}$ depends
only on the limiting values of $t_{\iota+1}(m),\dots,t_r(m)$.

\begin{lemma}\label{lemma:pmhs-2} Let $(z_{\iota+1}(m),\dots,z_r(m))\in 
I'_{\iota}$ be an $\ssl_2$-sequence.  Let $F_{\natural}\in\mathcal M_{\iota}$ be
the associated limiting filtration constructed above.  Then, the data
$(N_1,\dots,N_{\iota},F_{\natural},W)$ generates an admissible nilpotent
orbit.
\end{lemma}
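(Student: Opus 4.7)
The plan is to realise $F_\natural$ as the limit of a sequence of filtrations that each lie on an explicit admissible nilpotent orbit with the same generators $N_1,\dots,N_\iota$ and a \emph{uniform} threshold, and then pass to the limit using the closedness of the polarized mixed Hodge condition. The starting observation is that Kashiwara's theorem applied to the full admissible orbit $(N_1,\dots,N_r;F_\infty,W)$ makes the slice variation $(z_1,\dots,z_\iota)\mapsto F(z_1,\dots,z_\iota,z(m))$ on $\Delta^{*\iota}$ admissible for each $m$, so its associated nilpotent orbit $(N_1,\dots,N_\iota;F_\infty(z(m)),W)$ is admissible.

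Next I would introduce the operator $g_m:=t_\iota^{-1}(y(m))e^{-\sum_{j>\iota}x_j(m)N_j}$. It is real (since $t_j(y(m))>0$ and $\hat Y^j$ is real) and preserves each $W^k$ (since $\hat Y^j$ grades $W^j$ and preserves $W^i$ for $i\le j$, and the $N_j$ preserve $W$). A direct check gives $g_m.F_\infty(z(m))=\tilde F_\infty(z(m))$, and Lemma~\ref{lemma:another-twist} applied to the sub-orbit $(N_{\iota+1},\dots,N_r;F_\infty,W^{\iota})$ shows that $\Ad(g_m^{-1})(N_k)=y_{\iota+1}(y(m))N_k$ for $k\le\iota$, so
\[
e^{\sum_{k\le\iota}z_kN_k}\,g_m=g_m\,e^{y_{\iota+1}(y(m))\sum_{k\le\iota}z_kN_k}.
\]
Since $y_{\iota+1}(y(m))\ge 1$ on $I'_\iota$ and $g_m$ sends graded-polarized mixed Hodge structures to graded-polarized mixed Hodge structures, $(N_1,\dots,N_\iota;\tilde F_\infty(z(m)),W)$ itself generates an admissible nilpotent orbit with a positive threshold $K$ independent of $m$ (the relative weight filtrations depend only on $(N_k,W)$, so they exist by Kashiwara).

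Finally, for any $z$ with $\Im z_k>K$ for all $k\le\iota$, the sequence $\{e^{\sum_k z_kN_k}\tilde F_\infty(z(m))\}_m$ is a sequence of polarized mixed Hodge structures converging in $\check{\mathcal M}$ to $e^{\sum_k z_kN_k}F_\natural$ (by Corollary~\ref{corollary:twisted-limit}). I would then invoke the closedness of the PMHS condition, together with the fact that $F_\natural\in\mathcal M_\iota$ already fixes the Hodge type on the $W^\iota$-graded pieces, to conclude that the limit remains a graded-polarized mixed Hodge structure. This proves that $(N_1,\dots,N_\iota;F_\natural,W)$ is an admissible nilpotent orbit with threshold $K$. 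The hard part is this last closedness step — ensuring that no degeneration occurs in the ambient flag variety as the mixed Hodge structures converge; I would handle it using the uniform threshold just produced combined with the Cattani--Kaplan--Schmid theory of PMHS degenerations on the graded pieces of $W^\iota$.
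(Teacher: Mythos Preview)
Your setup through the uniform threshold is sound: the conjugation by $g_m$ really does turn the admissible orbits $(N_1,\dots,N_\iota;F_\infty(z(m)),W)$ into admissible orbits $(N_1,\dots,N_\iota;\tilde F_\infty(z(m)),W)$ with a threshold independent of $m$. One small point you glossed over is why $t_\iota^{-1}(y)$ sends $\mathcal M$ to $\mathcal M$, given that the paper explicitly remarks $t(y)\notin G_{\C}$; this works because on each $\Gr^W_k$ the operator $t_\iota(y)$ is a positive scalar times an isometry, and scalars act trivially on flags.

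The genuine gap is the final step. The condition of being a graded-polarized mixed Hodge structure is \emph{open} in $\check{\mathcal M}$, not closed, so ``closedness of the PMHS condition'' is not available and your proposed fix via CKS degeneration theory on $\Gr^{W^\iota}$ is too vague to close the argument. What you would actually need is: (i) horizontality $N_k(F_\natural^p)\subseteq F_\natural^{p-1}$, which you never check (it does pass to the limit, but you must say so); and (ii) on each $\Gr^W_k$, the equivalence of Cattani--Kaplan--Schmid between pure nilpotent orbits and polarized mixed Hodge structures, applied to $(F_\natural|_{\Gr^W_k},W^\iota|_{\Gr^W_k})$. Step (ii) requires identifying the graded polarizations implicit in $\mathcal M_\iota$ with the Lefschetz polarizations $Q(\cdot,N^j\cdot)$ on the $N_{\le\iota}$-primitive parts, which is not automatic from ``$F_\natural\in\mathcal M_\iota$ fixes the Hodge type.''

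The paper sidesteps all of this with a different idea. Rather than passing a sequence of points of $\mathcal M$ to a limit, it manufactures an auxiliary admissible variation $F_\dag$ with an extra monodromy operator $N_\dag=a(N_1+\cdots+N_\iota)$, and runs it along the $\ssl_2$-sequence $z_\dag(m)$ obtained by setting $z_\iota(m)=z_{\iota+1}(m)$ so that $t_\iota(m)=1$ and $t_\dag(y)=t_\iota(y)$. Corollary~\ref{corollary:twisted-limit} then guarantees \emph{a priori} that the limit $F_\sharp$ of $\tilde F_\dag(z_\dag(m))$ lands in $\mathcal M$, and a direct computation identifies $F_\sharp=e^{iN_\dag}.F_\natural$. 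Since $a>0$ is arbitrary, this gives the nilpotent orbit condition directly, with no appeal to closedness.
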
  
\begin{proof} Since $(N_1,\dots,N_r,F_{\infty},W)$ generates an admissible
nilpotent orbit, all of the required relative weight filtrations exist.
Accordingly, we can assume that $F(z_1,\dots,z_r)$ is an effective
variation of pure Hodge structure of weight $\ell$.

\par Let 
\beq
     P_{\iota}(t) = \Ad(t^{-1}_{\iota}(y))(\sum_{j>\iota}\, i y_j N_j)
     \label{eq:alt-poly-twist}.
\eeq
Then, Lemma~\eqref{lemma:twist} applied to the nilpotent orbit generated
by $(N_{\iota+1},\dots,N_r,F_{\infty},W^{\iota})$ asserts that $P_{\iota}(t)$ 
is a polynomial in non-negative, half-integral powers of 
$t_{\iota+1},\dots,t_r$.  
Let $(\hat F_{\infty},W^r) = (e^{-\xi}.F_{\infty},W^r)$ be the
$\ssl_2$-splitting of $(F_{\infty},W^r)$.  Then, by 
Lemma~\eqref{lemma:weight-filt-preserve}, 
\beq
     \xi_{\iota}(t) = \Ad(t^{-1}_{\iota}(y)\xi       \label{eq:twisted-xi}
\eeq
is a polynomial in non-negative, half-integral powers of 
$t_{\iota+1},\dots,t_r$.  Accordingly,
\beq
       \tilde F_{\infty}(z_{\iota+1},\dots,z_r)
        = e^{P_{\iota}(t)}
          (\Ad(t^{-1}_{\iota}(y))e^{\Gamma_{\iota}(s)})e^{\xi_{\iota}(t)}.
          \hat F_{\infty}   \label{eq:twisted-lnf}
\eeq
where $\Ad(t^{-1}_{\iota}(y))e^{\Gamma_{\iota}(s)}$ converges along any
$\ssl_2$-sequence [apply Corollary~\eqref{corollary:gamma-eigenvalues}
to the variation $(F_{\infty}(z_{\iota+1},\dots,z_r),W^{\iota})$].  
Consequently,
\beq
       F_{\natural} = e^{\gamma_{\natural}}.\hat F_{\infty}   
            \label{eq:twisted-lnf-limit-1}
\eeq
where 
\beq
       e^{\gamma_{\natural}} = \lim_{m\to\infty}\,e^{P_{\iota}(t)}
          (\Ad(t^{-1}_{\iota}(y))e^{\Gamma_{\iota}(s)})e^{\xi_{\iota}(t)} 
          \label{eq:twisted-lnf-limit-2}
\eeq
Moreover (see equation~\eqref{eq:gamma-adN}), since $N_{\iota+1},\dots,N_r$, 
$\Gamma_{\iota}$ and $\xi$ all belong to the
subalgebra $\mathfrak q\cap(\cap_{k=1}^{\iota}\, \ker(\ad N_k))$, 
it follows from Corollary~\eqref{corollary:comm-with-t} that
\beq
       \gamma_{\natural}\in
       \mathfrak q\cap\ker(\ad N_1)\cap\cdots\cap\ker(\ad N_{\iota})
        \label{eq:twisted-lnf-limit-3}
\eeq
In particular, by virtue of equations \eqref{eq:twisted-lnf-limit-1}
and \eqref{eq:twisted-lnf-limit-3} it follows that $N_1,\dots,N_{\iota}$
are horizontal with respect to $F_{\natural}$.

\par By Theorem $(2.3)$ of \cite{Luminy}, to complete the proof, it 
suffices to show that 
$$
    z\mapsto e^{z(N_1 + \cdots + N_{\iota})}.F_{\natural}
$$ 
is a nilpotent orbit of pure Hodge structure.  To this end, let $a$ be a 
positive real number and 
$N_{\dag} = a(N_1 + \cdots + N_{\iota})$.  Observe that since
$$
     F_{\iota}(z) = e^{\sum_{j\leq\iota}\, z_j N_j}.
                    F_{\infty}(z_{\iota+1},\dots,z_r)
$$
is an admissible variation of mixed Hodge structure so is
$$
     F_{\dag}(z_{\iota},\dots,z_r) = e^{z_{\iota} N_{\dag}}e^{\sum_{j>\iota} z_j N_j}
                   e^{\Gamma_{\iota}(s)}.F_{\infty}
$$
(here we can use~\cite{Kashiwara} to derive the existence of the required
relative weight filtrations since $N_{\dag}$ belongs to the interior of
the cone generated by $N_1,\dots,N_{\iota}$).  The associated nilpotent
orbit of $F_{\dag}$ is generated by 
$(N_{\dag},N_{\iota+1},\cdots,N_r,F_{\infty},W)$.  Let $t_{\dag}(y)$ be 
the associated semisimple operator.  Then,
$$
      t_{\dag}(y) = t_{\iota}^{\half \hat Y_{\dag}} t_{\iota}(y)
$$
Let 
$
       P_{\dag}(t) = \Ad(t_{\dag}^{-1}(y))
                (iy_{\iota} N_{\dag} + i\sum_{j>\iota}\, y_j N_j)
$
and $\xi_{\dag}(t) = \Ad(t_{\dag}^{-1}(y))\xi$.  Then,
\begin{eqnarray*}
      \tilde F_{\dag}(z_{\iota},\dots,z_r)
      &=& t^{-1}_{\dag}(y) e^{i y_{\iota}N_{\dag}}e^{i\sum_{j>\iota}\, y_j N_j}
          e^{\Gamma_{\iota}}.F_{\infty}                             \\
      &=& e^{P_{\dag}(t)}(\Ad(t^{-1}_{\dag}(y))e^{\Gamma_{\iota}})
          e^{\xi_{\dag}(t)}.\hat F_{\infty}
\end{eqnarray*}
Let $z_{\dag}(m) = (z_{\iota}(m),\dots,z_r(m))$ be the $\ssl_2$-sequence 
obtained from the sequence $(z_{\iota+1}(m),\dots,z_r(m))$ by 
setting $z_{\iota}(m) = z_{\iota+1}(m)$.  Applying
Corollary~\eqref{corollary:twisted-limit} to 
$\tilde F_{\dag}(z_{\dag}(m))$, we then obtain an associated
limit filtration $F_{\sharp}\in\mathcal M$.

\par Regarding the filtration $F_{\sharp}$, we note that since 
$t_{\iota}(m) = y_{\iota+1}(m)/y_{\iota}(m)=1$ along the sequence
$z_{\dag}(m)$ it follows that $t_{\dag}(y) = t_{\iota}(y)$ along
$z_{\dag}(m)$.  Therefore, the limits of
$\Ad(t^{-1}_{\dag}(y))e^{\Gamma_{\iota}}$ and $\xi_{\dag}(t)$ along 
$z_{\dag}(m)$ coincide with the limits of 
$\Ad(t^{-1}_{\iota}(y))e^{\Gamma_{\iota}}$ and $\xi_{\iota}(t)$ along
the original sequence $(z_{\iota+1}(m),\dots,z_r(m))$.  Likewise,
along $z_{\dag}(m)$,
$$
         P_{\dag}(t) = \Ad(t^{-1}_{\iota}(y))
                          (i y_{\iota}N_{\dag} + i\sum_{j>\iota}\, y_j N_j) 
                     = i N_{\dag} + P_{\iota}(t)
$$
since $\Ad(t^{-1}_{\iota}(y)) y_{\iota} N_{\dag} = N_{\dag}$ by 
Lemma~\eqref{lemma:another-twist}.  Therefore, 
$$
           F_{\sharp} = e^{iN_{\dag}}.F_{\natural}\in\mathcal M
$$
In particular, since $N_{\dag} = a(N_1+\cdots+N_{\iota})$ with $a>0$
arbitrary, it follows that $e^{z(N_1+\cdots+N_{\iota})}.F_{\natural}$
is a nilpotent orbit of pure Hodge structure.
\end{proof}

\section{Proof of Theorem \eqref{theorem:conj-1}}
\label{s.proof} In this section we 
prove Theorem \eqref{theorem:conj-1} by induction on dimension $r$ of the base
$\Delta^{*r}$.  For $r=1$, Theorem~\eqref{theorem:conj-1} follows from 
Corollary~\eqref{corollary:induction-base}.  

Accordingly, assume that $r>1$ and let $z(m)$ be an 
$\ssl_2$-sequence.  Let $\iota$ be the smallest index such that $y(m)$ 
has non-polynomial growth with respect to $\iota$.  If $\iota = 0$ or 
$\iota=r$, Theorem~\eqref{theorem:conj-1} along $z(m)$ follows from 
Corollary~\eqref{corollary:extreme-iota}.  Therefore, we can assume that 
$r>1$ and $0<\iota<r$.  Therefore, by Theorem~\eqref{theorem:first-reduction}
it follows that
$$
       \hat Y_{(F(z(m),W)} - \hat Y_{(F_{\iota}(z(m)),W)} \to 0
$$
and hence the proof of Theorem~\eqref{theorem:conj-1} is reduced to the case
of period maps of the special form $F_{\iota}(z)$.

\par To complete the induction, we construct an associated flag 
in the spirit of \eqref{e.Flag} and verify that if 
\beq
       Y(\sum_{j\leq d}\, v_j(m)N(\theta^j),
         \hat Y_{(e^{\sum_{j>\iota}\, i y_j N_j}
                         e^{\Gamma_{\iota}(s)}.F_{\infty},W^{\iota})})\to Y_o
        \label{eq:reduction-5}
\eeq
then $e^{-N(x(m))}.\hat Y_{(F_{\iota}(z(m)),W)}\to Y_o$.  Having done this,
we then compute the limit \eqref{eq:reduction-5} using the induction
hypothesis and the properties of Deligne systems.

\subsection*{} As in equations \eqref{eq:partial-limit} and 
\eqref{eq:partial-limit-twist}, let us define
\begin{eqnarray*}
F_{\infty}(z_{\iota+1},\dots,z_r)
&=& e^{\sum_{j>\iota}\, z_j N_j}e^{\Gamma_{\iota}}.F_{\infty}             \\
\tilde F_{\infty}(z_{\iota+1},\dots,z_r)
&=& t^{-1}_{\iota}(y) e^{\sum_{j>\iota} i y_j N_j}
      e^{\Gamma_{\iota}(s)}.F_{\infty}
\end{eqnarray*}
Then, 
\begin{eqnarray*}
      F_{\iota}(z_1,\dots,z_r)
      &=& e^{\sum_j\, z_j N_j}e^{\Gamma_{\iota}}.F_{\infty} \\
      &=& e^{N(x)} e^{\sum_j\, iy_j N_j}e^{\Gamma_{\iota}}.F_{\infty}
                                                            \\
      &=& e^{N(x)}e^{\sum_{j\leq\iota} \, iy_j N_j}
          e^{\sum_{j>\iota}\, iy_j N_j}e^{\Gamma_{\iota}}.F_{\infty} \\
      &=& e^{N(x)}e^{\sum_{j\leq\iota} \, iy_j N_j}t_{\iota}(y).
          \tilde F_{\infty}(z_{\iota+1},\dots,z_r)           \\
      &=&  e^{N(x)}t_{\iota}(y)e^{i\sum_{k\leq\iota}\, (y_k/y_{\iota+1}) N_k}.
      \tilde F_{\infty}(z_{\iota+1},\dots,z_r) 
\end{eqnarray*}
where the last step is justified by Lemma~\eqref{lemma:another-twist}.

\par By equation~\eqref{eq:twisted-lnf}
$$
      \tilde F_{\infty}(z_{\iota+1},\dots,z_r)
        = e^{P_{\iota}(t)}
          (\Ad(t^{-1}_{\iota}(y))e^{\Gamma_{\iota}(s)})e^{\xi_{\iota}(t)}.
          \hat F_{\infty} 
$$
Moreover, by \eqref{eq:twisted-lnf-limit-1}, along the sequence 
$(z_{\iota+1}(m),\dots,z_r(m))$, 
$$
        \tilde F_{\infty}(z_{\iota+1}(m),\dots,z_r(m))\to F_{\natural}
               = e^{\gamma_{\natural}}.\hat F_{\infty}
$$

\par Define
$$
      \mathfrak v = \mathfrak q\cap\ker(\ad N_1)\cap\cdots\cap
                                            \ker(\ad N_{\iota})
$$
Then, by Corollary~\eqref{corollary:comm-with-t} it follows that
the function
$$
    e^{P_{\iota}(t)} (\Ad(t^{-1}_{\iota}(y))e^{\Gamma_{\iota}(s)})
    e^{\xi_{\iota}(t)}
$$
takes valued in $\mathfrak v$, and hence so does its limiting value
$\gamma_{\natural}$ along $(z_{\iota+1}(m),\dots,z_r(m))$.  Let 
\beq
      e^{\gamma(z_{\iota+1},\dots,z_r)}
      = e^{P_{\iota}(t)} (\Ad(t^{-1}_{\iota}(y))e^{\Gamma_{\iota}(s)})
        e^{\xi_{\iota}(t)}e^{-\gamma_{\natural}}   \label{eq:twisted-gamma}
\eeq
Then, $\tilde F_{\infty}(z_{\iota+1},\dots,z_r) 
= e^{\gamma(z_{\iota+1},\dots,z_r)}.F_{\natural}$, with
\beq
      \gamma(z_{\iota+1}(m),\dots,z_r(m))\to 0  \label{eq:twisted-gamma-limit}
\eeq

\par By Lemma~\eqref{lemma:pmhs-2}, the data $(N_1,\dots,N_{\iota},F_{\natural},W)$
defines an admissible nilpotent orbit, and hence by Lemma~\eqref{lemma:pmhs-1} 
there exists a neighborhood $\mathfrak v_o$ of zero in $\mathfrak v$ such
that for every $\nu\in\mathfrak v_o$ the data 
$(N_1,\dots,N_{\iota},e^{\nu}.F_{\natural},W)$ defines an admissible nilpotent 
orbit.  Shrinking $\mathfrak v_o$ as necessary, we can further assume that 
there exists a common constant $c$ such that if 
$\Im(z_1),\dots, \Im(z_{\iota})>c$ then
\beq
      e^{\sum_{j\leq\iota}\, z_j N_j}e^{\nu}.F_{\natural}\in\mathcal M
      \label{eq:nu-nilp-orbit}
\eeq
In particular, by \eqref{eq:twisted-gamma-limit}, there exists index
$m_o$ such that 
$$
      \gamma(z_{\iota+1}(m),\dots,z_r(m))\in\mathfrak v_o
$$
whenever $m>m_o$.

\begin{corollary}
\par Combining the above equations, it follows that along the given
$\ssl_2$-sequence $z(m)$, we can write
\beq
        F_{\iota}(z) =  e^{N(x)}t_{\iota}(y)
                        e^{i\sum_{k\leq\iota}\, (y_k/y_{\iota+1}) N_k}
                        e^{\gamma(z_{\iota+1},\dots,z_r)}.F_{\natural}
                        \label{eq:reduced-form-1}                       
\eeq
with $\gamma(z_{\iota+1}(m),\dots,z_r(m))$ taking values $\mathfrak v_o$ 
for $m>m_o$. 
\end{corollary}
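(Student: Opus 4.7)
The plan is to assemble the stated identity for $F_{\iota}(z)$ by directly substituting the factorization of $\tilde F_{\infty}(z_{\iota+1},\dots,z_r)$ into the already-computed expression, and then to invoke the limit $\gamma(z_{\iota+1}(m),\dots,z_r(m)) \to 0$ to control the membership in $\mathfrak v_o$. This is really a bookkeeping step, so the proof should be short.

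First, I would recall the chain of equalities developed just above the corollary, which culminates in
$$
F_{\iota}(z) = e^{N(x)} t_{\iota}(y) e^{i\sum_{k\leq\iota}(y_k/y_{\iota+1})N_k}\,.\tilde F_{\infty}(z_{\iota+1},\dots,z_r),
$$
the crucial point being Lemma~\ref{lemma:another-twist}, which allows one to rewrite $e^{\sum_{j\leq\iota}iy_j N_j}\,t_{\iota}(y)$ as $t_{\iota}(y)\,e^{i\sum_{k\leq\iota}(y_k/y_{\iota+1})N_k}$ via $\Ad(t_{\iota}^{-1}(y))$.

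Second, I would substitute the factorization
$$
\tilde F_{\infty}(z_{\iota+1},\dots,z_r) = e^{\gamma(z_{\iota+1},\dots,z_r)}.F_{\natural}
$$
from equation~\eqref{eq:twisted-gamma}, where $\gamma$ is defined so that $e^{\gamma} = e^{P_{\iota}(t)}\,(\Ad(t_{\iota}^{-1}(y))e^{\Gamma_{\iota}(s)})\,e^{\xi_{\iota}(t)}\,e^{-\gamma_{\natural}}$. This immediately yields equation \eqref{eq:reduced-form-1}.

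Third, for the statement about $\gamma$ taking values in $\mathfrak v_o$: by Corollary~\ref{corollary:comm-with-t} and the containment~\eqref{eq:twisted-lnf-limit-3}, each of the factors $P_{\iota}(t)$, $\Ad(t_{\iota}^{-1}(y))\Gamma_{\iota}(s)$, $\xi_{\iota}(t)$, and $\gamma_{\natural}$ lies in $\mathfrak v$, so $\gamma$ itself takes values in $\mathfrak v$. By equation~\eqref{eq:twisted-gamma-limit}, $\gamma(z_{\iota+1}(m),\dots,z_r(m)) \to 0$ along the given $\ssl_2$-sequence; since $\mathfrak v_o$ is an open neighbourhood of $0$ in $\mathfrak v$, there exists $m_o$ such that $\gamma$ lies in $\mathfrak v_o$ for all $m>m_o$, as required.

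There is no real obstacle here since all of the inputs have already been established. The only mild subtlety is making sure that $\gamma$ does indeed lie in $\mathfrak v$ rather than merely in $\mathfrak q$, which is exactly the content of Corollary~\ref{corollary:comm-with-t} applied to the fact that $N_{\iota+1},\dots,N_r$, $\Gamma_{\iota}$, $\xi$ all commute with $N_1,\dots,N_{\iota}$ (the latter via \eqref{eq:gamma-adN} and the commutativity of $\xi$ with the $N_j$).
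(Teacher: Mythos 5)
Your proposal is correct and follows the paper's own reasoning essentially verbatim: the chain of equalities using Lemma~\ref{lemma:another-twist}, the substitution $\tilde F_{\infty} = e^{\gamma}.F_{\natural}$ from~\eqref{eq:twisted-gamma}, and then the membership in $\mathfrak v_o$ for $m>m_o$ via $\gamma\to 0$ and the openness of $\mathfrak v_o$ in $\mathfrak v$. Your explicit appeal to Corollary~\ref{corollary:comm-with-t} for the containment in $\mathfrak v$ is the content that the paper packages as ``tracing through the proof of Lemma~\ref{lemma:pmhs-2},'' so the two arguments coincide.
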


\par Consider now the sequence 
$\tilde y(m) = (\tilde y_1(m),\dots,\tilde y_{\iota}(m))$
obtained by setting 
\beq
      \tilde y_j(m) = y_j(m)/y_{\iota+1}(m), 
                      \label{eq:new-divided-sequence}
\eeq
Then, since $y(m) = (y_1(m),\dots,y_r(m))$ is an $\ssl_2$-sequence 
it follows that $\tilde y(m)$ is also an $\ssl_2$-sequence in 
$\iota$-variables.  Therefore (see definition \eqref{defn:sl2-sequence})
we have
$$
         \tilde y(m) = T(\tilde v(m)) + \tilde b(m)                 
$$
for some linear map $T:\R^d\to\R^{\iota}$, some strict $\ssl_2$-sequence
$\tilde v(m)\in\R^d$ and a convergent sequence $b(m)\in\R^{\iota}$.  
Accordingly,
$$
        N(\tilde y(m)) = \sum_{j=1}^d\, N(\theta^j)\tilde v_j(m) + N(\tilde b(m))
$$
where $N(\theta^1),\dots,N(\theta^d)$ and $N(\tilde b(m))$ belong to 
the real subalgebra $\mathfrak b_{\R}$ of $\mathfrak v$ generated by 
$N_1,\dots,N_{\iota}$.  

\begin{warning}\label{warning:diff-theta} The system of $\theta$'s constructed 
in this way may differ from the sequence appearing in 
Theorem~\eqref{theorem:conj-1}.  Nonetheless, the result limit gradings will 
be the same.  This will addressed in the final paragraph of the paper.
\end{warning}



\par Let $N(\tilde b(m))\to N(\tilde b_o)$ and $\mathfrak b_o$ be a 
neighborhood of $N(\tilde b_o)$ in $\mathfrak b_{\R}$.  Then, for any 
$\a\in\mathfrak b_o$ and any $\nu\in\mathfrak v_o$, the data 
$$
       (N(\theta^1),\dots,N(\theta^d);e^{i\a}e^{\nu}.F_{\natural},W)
$$ 
generates an admissible nilpotent orbit (with slightly larger $c$ to 
compensate for $\alpha$, see~\eqref{eq:nu-nilp-orbit}).  We therefore define
$$
    F(v_1,\dots,v_d;\alpha,\nu)
       = e^{\sum_{j\leq d}\, i v_j N(\theta^j)}e^{i\alpha}e^{\nu}.F_{\natural}
$$
As in the proof of Corollary~\eqref{corollary:extreme-iota}, it then
follows via Lemma~\eqref{lemma:morphisms-and-splittings} that for fixed 
$\alpha$ and $\nu$ as above, 
$$
     \hat Y_{(F(v_1,\dots,v_d;\alpha,\nu),W)}
     \to \hat Y(\nu) := Y(N(\theta^1),\dots,Y(N(\theta^d),
           \hat Y_{(e^{\nu}.F_{\natural},W^{\iota})}))
$$
along any strict $\ssl_2$-sequence in the variables $v_1,\dots,v_d$.

\par By Theorem $(0.5)$ of \cite{KNU}, there exists a constant $b$ such
that if  $\tau_1=v_2/v_1,\dots,\tau_d=1/v_d\in (0,b)$ then
\beq
       \hat Y_{(F(v_1,\dots,v_d;\a,\nu),W)}
       = \exp(u(\tau;\a,\nu)).\hat Y(\nu)          \label{eq:def-u}
\eeq
where $u(\tau;\a,\nu)$ has a convergent series expansion
\beq
      u(\tau;\a,\nu) = \sum_m\,  u_m(\a,\nu) \prod_{j=1}^r \tau_j^{m(j)}
                                                   \label{eq:u-series}
\eeq
with constant term 0.  Furthermore, by Theorem $(10.8)$ of \cite{KNU}, 
the coefficients $u_m(\a,\nu)$ are analytic functions of $\a\in\mathfrak b_o$ 
and $v\in\mathfrak v_o$. 

\begin{corollary}\label{corollary:reduction-4} Let 
$\nu(m) = \gamma(z_{\iota+1}(m),\dots,z_r(m))$ and $\a(m) = N(\tilde b(m))$.  
Then, 
\beq
  \hat Y_{(F_{\iota}(z(m)),W)}
    = e^{N(x(m))}t_{\iota}(y(m))
      e^{u(\tau(m);\a(m),\nu(m))}.\hat Y(\nu(m))
      \label{eq:reduction-4}
\eeq
where $\tau_j(m) = v_{j+1}(m)/v_j(m)$ for $v_j(m) = \tilde v_j(m)$. 
\end{corollary}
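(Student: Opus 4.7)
The plan is to match equation \eqref{eq:reduced-form-1} with formula \eqref{eq:def-u} via the $\ssl_2$-decomposition of $\tilde y(m)$, and then to use the functoriality of the $\ssl_2$-splitting to conclude.

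First, I would rewrite the middle exponential in \eqref{eq:reduced-form-1}. Since $y_k(m)/y_{\iota+1}(m)$ is the $k$-th component of $\tilde y(m)$ and $N_1,\dots,N_\iota$ mutually commute, we have
\[
e^{i\sum_{k\leq\iota}(y_k/y_{\iota+1})N_k} \;=\; e^{iN(\tilde y(m))} \;=\; e^{i\sum_{j=1}^d \tilde v_j(m) N(\theta^j)}\,e^{iN(\tilde b(m))},
\]
using $\tilde y(m)=T(\tilde v(m))+\tilde b(m)$ and the linearity of $N(\cdot)$. Substituting into \eqref{eq:reduced-form-1} and absorbing the factor $e^{iN(\tilde b(m))}$ into the $e^{\a}$ slot in the definition of $F(v_1,\dots,v_d;\a,\nu)$ gives the identification
\[
F_\iota(z(m)) \;=\; e^{N(x(m))}\,t_\iota(y(m))\,.\,F\bigl(v_1(m),\dots,v_d(m);\a(m),\nu(m)\bigr).
\]

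Second, apply the functoriality of the $\ssl_2$-splitting: for any $g\in\GL(V_{\R})$ preserving $W$, $\hat Y_{(g.F,W)}=g.\hat Y_{(F,W)}$. Both $e^{N(x(m))}$ and $t_\iota(y(m))$ preserve $W$ --- the former because each $N_j$ does, the latter because the commuting gradings $\hat Y^j$ appearing in the factorization \eqref{eq:knu-t} of $t_\iota(y)$ each preserve $W$. Hence
\[
\hat Y_{(F_\iota(z(m)),W)} \;=\; e^{N(x(m))}\,t_\iota(y(m))\,.\,\hat Y_{\bigl(F(v_1(m),\dots,v_d(m);\a(m),\nu(m)),W\bigr)}.
\]

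Finally, I would invoke \eqref{eq:def-u}. Because $\tilde v(m)$ is a strict $\ssl_2$-sequence in $\R^d$, the ratios $\tau_j(m)=v_{j+1}(m)/v_j(m)$ tend to $0$; together with $\a(m)\to N(\tilde b_o)\in\mathfrak b_o$ and $\nu(m)\in\mathfrak v_o$ for $m>m_o$ (both arranged in the discussion preceding the Corollary), this ensures that for all sufficiently large $m$ the point $(\tau(m);\a(m),\nu(m))$ lies in the domain of convergence of the series \eqref{eq:u-series}. Then \eqref{eq:def-u} gives
\[
\hat Y_{\bigl(F(v(m);\a(m),\nu(m)),W\bigr)} \;=\; \exp\bigl(u(\tau(m);\a(m),\nu(m))\bigr).\hat Y(\nu(m)),
\]
and substituting into the previous display yields \eqref{eq:reduction-4}. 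The argument is essentially bookkeeping, combining \eqref{eq:reduced-form-1}, Lemma~\eqref{lemma:another-twist}, and the $\ssl_2$-decomposition of $\tilde y(m)$; the only point requiring any care is the verification that the parameters enter the domain of convergence of \eqref{eq:u-series}, which is immediate from the asymptotic data already collected.
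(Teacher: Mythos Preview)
Your proposal is correct and follows exactly the approach indicated by the paper's own proof, which simply says ``Combine equations~\eqref{eq:reduced-form-1}, \eqref{eq:new-divided-sequence}, and \eqref{eq:def-u}.'' You have spelled out the bookkeeping the paper leaves implicit, including the functoriality of the $\ssl_2$-splitting under real $W$-preserving automorphisms and the check that the parameters eventually lie in the domain of \eqref{eq:u-series}.
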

\begin{proof} Combine equations~\eqref{eq:reduced-form-1}, 
\eqref{eq:new-divided-sequence}, and \eqref{eq:def-u}.
\end{proof}

\par The remainder of the proof of Theorem~\eqref{theorem:conj-1} now divides 
into two parts depending on $d$:  

\begin{remark} The notation introduced in 
Corollary~\eqref{corollary:reduction-4} will remain in effect for the
remainder of this section.
\end{remark}

\subsection*{d=1}

\par In this case, it follows from the definition of non-polynomial growth that 
$$
     \tau_1(m)y_{\iota+1}^e(m)\to 0 
$$
for any half-integral power $e$, and hence the sequence
\beq
       e^{\eta(m)} := \Ad(t_{\iota}(y(m)))e^{u(\tau_1(m);\a(m),\nu(m))} 
       \to 1 \label{eq:eta-converge-1}
\eeq
since the action of $\Ad(t_{\iota}(y))$ on $\gg_{\C}$ is bounded 
by a polynomial in $y_{\iota+1}^{\half}$.  

\par On the other hand, by Lemma~\eqref{lemma:another-twist} and the properties
of Deligne systems it follows that 
\begin{eqnarray*}
      t_{\iota}(y).\hat Y(\nu)  
      &=& Y(\Ad(t_{\iota}(y))N(\theta^1),
            \hat Y_{(t_{\iota}(y)e^{\nu}.F_{\natural},W^{\iota})}))    \\ 
      &=& Y(y_{\iota+1}N(\theta^1),\hat Y_{(t_{\iota}(y)e^{\nu}.F_{\natural},W^{\iota})}))
\end{eqnarray*}

\par Now, if $(N,Y_M,W)$ is a Deligne system then so is 
$(\l N,Y_M,W)$ for any non-zero scalar. Furthermore,
\beq
       Y(\l N,Y_M) = Y(N,Y_M)              \label{eq:freedom-to-rescale}
\eeq
In particular, by the previous paragraph
\beq
     t_{\iota}(y).\hat Y(\nu)  
      = Y(N(\theta^1),
          \hat Y_{(t_{\iota}(y)e^{\nu}.F_{\natural},W^{\iota})})) 
     \label{eq:rescaled}
\eeq
Therefore, by equations \eqref{eq:reduction-4}, 
\eqref{eq:eta-converge-1} and \eqref{eq:rescaled} it follows that
\begin{eqnarray*}
     e^{-N(x(m))}.\hat Y_{(F_{\iota}(z(m)),W)}
      &=& e^{\eta(m)}. Y(N(\theta^1),
          \hat Y_{(t_{\iota}(y(m))e^{\nu(m)}.F_{\natural},W^{\iota})}))     \\ 
      &=& e^{\eta(m)}. Y(N(\theta^1),\hat Y_{(e^{\sum_{j>\iota}\, i y_j(m) N_j}
                   e^{\Gamma_i(s(m))}.F_{\infty},W^{\iota})}))
\end{eqnarray*}
and hence \eqref{eq:reduction-5} hold for $d=1$.  

\par By induction, 
$$
     \hat Y_{(e^{\sum_{j>\iota}\, i y_j(m) N_j}
              e^{\Gamma_{\iota}(s(m))}.F_{\infty},W^{\iota})})
          \to Y(N(\theta^2),\dots,Y(N(\theta^{d'}),\hat Y_{(F_{\infty},W^r)}))
$$
and hence combining the above, we have
$$
      e^{-N(x(m))}.\hat Y_{(F_{\iota}(z(m)),W)}
      \to Y(N(\theta^1),\dots,Y(N(\theta^{d'}),\hat Y_{(F_{\infty},W^r)}))
$$

\begin{remark} To complete the proof for $d=1$ we still need to 
resolve the discrepancy introduced by the different system of 
$\theta$'s as remarked in \eqref{warning:diff-theta}.
\end{remark}

\subsection*{$d>1$}

\begin{lemma}\label{lemma:degen-1} For fixed $\a$ and $\nu$ as 
in~\eqref{eq:def-u} and $\tau_1,\dots,\tau_{d-1}\in (0,b)$
\beq
     \exp(u(\tau_1,\dots,\tau_{d-1},0;\a,\nu).\hat Y(\nu)
     = Y(\sum_{j\leq d}\, \omega_j N(\theta^j),
         \hat Y_{(e^{\nu}.F_{\natural},W^{\iota})})  \label{eq:degeneration-1}
\eeq
for any vector $(\omega_1,\dots,\omega_d)\in\R^d_{>0}$ 
such that $\tau_j=\omega_{j+1}/\omega_j$ for $j\leq d-1$.
\end{lemma}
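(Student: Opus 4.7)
The strategy is to identify both sides of~\eqref{eq:degeneration-1} as the limit of $\hat Y_{(F(v(m);\alpha,\nu),W)}$ along a carefully chosen non-strict $\ssl_2$-sequence in $(v_1,\dots,v_d)$, and to evaluate that limit in two different ways. First I would fix any sequence $s(m)\to\infty$ of positive reals and set $v_j(m):=s(m)\omega_j$; then $\tau_j(m)\equiv\omega_{j+1}/\omega_j=\tau_j$ for $j\leq d-1$, while $\tau_d(m)=1/(s(m)\omega_d)\to 0$, and for $m$ large the vector $\tau(m)$ remains inside the convergence polydisk of the series~\eqref{eq:u-series}.

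The first evaluation is purely formal: since $u(\tau;\alpha,\nu)$ has a convergent power series expansion in $\tau$ with zero constant term, the formula~\eqref{eq:def-u} and continuity give
\[
\lim_{m\to\infty}\hat Y_{(F(v(m);\alpha,\nu),W)}\;=\;\exp(u(\tau_1,\dots,\tau_{d-1},0;\alpha,\nu)).\hat Y(\nu),
\]
which is the left-hand side of~\eqref{eq:degeneration-1}. The second evaluation applies the one-variable SL$_2$-orbit theorem. Writing $N_\omega:=\sum_{j\leq d}\omega_j N(\theta^j)$, we have $F(v(m);\alpha,\nu)=e^{is(m)N_\omega}e^{\alpha}e^{\nu}.F_\natural$. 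By Lemma~\ref{lemma:pmhs-2} the data $(N_1,\dots,N_\iota;F_\natural,W)$ generates an admissible nilpotent orbit, so Lemma~\ref{lemma:pmhs-1}, together with the facts that $N_\omega$ lies in the $\mathbb R_{>0}$-cone generated by $N(\theta^1),\dots,N(\theta^d)\subset\mathfrak b_{\R}$ and that $\alpha,\nu\in\mathfrak v$ commute with every $N_k$ for $k\leq\iota$, ensures that $(N_\omega;e^\alpha e^\nu.F_\natural,W)$ is itself an admissible one-variable nilpotent orbit. Invoking Theorem~\ref{theorem:knu-main} for $r=1$ and Kashiwara's constancy property~(b) (to identify $M(N_\omega,W)=W^\iota$), one gets
\[
\lim_{m\to\infty}\hat Y_{(e^{is(m)N_\omega}e^{\alpha}e^{\nu}.F_\natural,W)}\;=\;Y\bigl(N_\omega,\,\hat Y_{(e^{\alpha}e^{\nu}.F_\natural,\,W^\iota)}\bigr).
\]

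To remove the $\alpha$-dependence I would argue as in the proof of Corollary~\ref{corollary:extreme-iota}. Tracing through the derivation of~\eqref{eq:reduced-form-1}, the element $\alpha$ plays the role of $iN(\tilde b)$ with $\tilde b\in\mathbb R^\iota$, so $-i\alpha$ is a real $(-1,-1)$-morphism of $(F_\natural,W^\iota)$. Since $\mathfrak v\cap\mathfrak b_{\R}$ lies in $\cap_{k\leq\iota}\ker\ad N_k$ and $\nu\in\mathfrak v$, the elements $\nu$ and $N(\tilde b)$ commute; hence $-i\alpha$ remains a $(-1,-1)$-morphism of $(e^{\nu}.F_\natural,W^\iota)$ after transporting bigradings via $e^{\nu}$. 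Lemma~\ref{lemma:morphisms-and-splittings} then gives $\hat Y_{(e^{\alpha}e^{\nu}.F_\natural,W^\iota)}=\hat Y_{(e^{\nu}.F_\natural,W^\iota)}$. Equating the two evaluations of the limit produces exactly~\eqref{eq:degeneration-1}.

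The main obstacle will be the last step: justifying that the $\ssl_2$-splitting is genuinely insensitive to the factor $e^{\alpha}$. One must check that the Deligne bigradings of $(F_\natural,W^\iota)$ and $(e^\nu.F_\natural,W^\iota)$ are related via $e^\nu$ in a way that intertwines correctly with the abelian algebra $\mathfrak b_{\C}$, since $\nu$ is a priori only controlled through the bigrading of the \emph{original} limit MHS $(F_\infty,M)$ that defines $\mathfrak q$ and $\mathfrak v$, not through the bigrading of $(F_\natural,W^\iota)$ to which Lemma~\ref{lemma:morphisms-and-splittings} is applied. A subsidiary point requiring care is verifying that $N_\omega$ truly lies in the open cone $C(\{1,\dots,\iota\})$ (so that Kashiwara's constancy applies) rather than on a proper face; this follows from the positivity of the $\omega_j$ and the fact that each $\theta^j$ arose as a nonnegative-coordinate limit direction of $y(m)\in I'$.
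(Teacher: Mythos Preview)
Your proposal is correct and follows essentially the same route as the paper: set $v_j=y\omega_j$ for a real parameter $y\to\infty$, evaluate $\hat Y_{(F(v;\alpha,\nu),W)}$ in two ways --- via~\eqref{eq:def-u} for the left-hand side and via the one-variable case of~\eqref{eq:knu-main} for the right-hand side --- and equate the limits. The paper's proof is terser (it simply says ``using~\eqref{eq:knu-main}''), but the elimination of the $\alpha$-dependence is already recorded in the paragraph immediately preceding the lemma, by exactly the mechanism you describe: Lemma~\ref{lemma:morphisms-and-splittings} as in the proof of Corollary~\ref{corollary:extreme-iota}.

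Your anticipated obstacle dissolves more easily than you fear. No compatibility between the bigrading of $(F_\infty,M)$ defining $\mathfrak q$ and the bigrading of $(e^\nu.F_\natural,W^\iota)$ is needed. Since by Lemmas~\ref{lemma:pmhs-1} and~\ref{lemma:pmhs-2} the data $(N_1,\dots,N_\iota;e^\nu.F_\natural,W)$ already generates an admissible nilpotent orbit, Kashiwara's property~(c) gives directly that each $N_k$ with $k\leq\iota$ is a $(-1,-1)$-morphism of the limit mixed Hodge structure $(e^\nu.F_\natural,W^\iota)$; hence so is any real linear combination such as $N(\tilde b)=-i\alpha$, and Lemma~\ref{lemma:morphisms-and-splittings} applies without any transport of bigradings through $e^\nu$.
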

\begin{proof} Let $v_j = y \omega_j$ for $j=1,\dots,d$.  Then, 
by~\eqref{eq:def-u},
$$ 
 \hat Y_{(F(v_1,\dots,v_d;\a,\nu),W)}
       = \exp(u(\tau_1,\dots,\tau_{d-1},1/(y\omega_d);\a,\nu))
         .\hat Y(\nu)    
$$
On the other hand,
\begin{eqnarray*}
      \hat Y_{(F(v_1,\dots,v_d;\a,\nu),W)}
      &=& \hat Y_{(e^{\sum_{j\leq d}\, i v_j N(\theta^j)}
                 e^{i\a}e^{\nu}.F_{\natural},W)}                \\
      &=& \hat Y_{(e^{iy\sum_{j\leq d}\, \omega_j N(\theta^j)}
                   e^{i\a}e^{\nu}.F_{\natural},W)} 
\end{eqnarray*}
Comparing these two equations, it follows that
$$
    \exp(u(\tau_1,\dots,\tau_{d-1},1/(y\omega_d);\a,\nu))
         .\hat Y(\nu)
    = \hat Y_{(e^{iy\sum_{j\leq d}\, \omega_j N(\theta^j)}
              e^{i\a}e^{\nu}.F_{\natural},W)}
$$
Taking the limit as $y\to\infty$, we then obtain 
equation~\eqref{eq:degeneration-1} using \eqref{eq:knu-main}.
\end{proof}

\begin{remark} \emph{A priori}, $u(\tau;\a,\nu)$ is only defined for 
$\tau_1,\dots,\tau_d\in (0,b)$.  However, via the series expansion
\eqref{eq:u-series}, we can extend $u$ to a real-analytic
function on a neighborhood of $\tau=0$.  By continuity, the
formula for $u(\tau_1,\dots,\tau_{d-1},0)$ given above agrees
with the value of $u(\tau_1,\dots,\tau_{d-1},0)$ determined
by \eqref{eq:u-series}.
\end{remark}

\par For $\a$ and $\nu$ as in~\eqref{eq:def-u}, let 
\begin{eqnarray*}
        u_1(\tau;\a,\nu) &=& u(\tau_1,\dots,\tau_d;\a,\nu)
                           -u(\tau_1,\dots,\tau_{d-1},0;\a,\nu) \\
        u_2(\tau;\a,\nu) &=& u(\tau_1,\dots,\tau_{d-1},0;\a,\nu)
\end{eqnarray*}
Then, $\exp(u(\tau;\a,\nu)) = \exp(u_1 + u_2)$ where $u_1$ is 
divisible by $\tau_d$ in the ring of real-analytic functions of
$\tau_1,\dots,\tau_d$.  Therefore,
\beq
      \Ad(t_{\iota}(y))\exp(u(\tau;\a,\nu))
       = \Ad(t_{\iota}(y))(e^{u_1+u_2}e^{-u_2})
         \Ad(t_{\iota}(y))e^{u_2}   \label{eq:sep-var-2}
\eeq
where $e^{u_1+u_2}e^{-u_2} = e^{u_3}$ with $u_3$ again divisible by 
$\tau_d$ in the ring of real-analytic functions in 
$\tau_1,\dots,\tau_d$.  Consequently, as in~\eqref{eq:eta-converge-1}, 
it follows that if $\eta(m)$ is the sequence defined by the equation
\beq
        e^{\eta} = \Ad(t_{\iota}(y))(e^{u_3})
                                      \label{eq:sep-var-3}
\eeq         
along $z(m)$ then $\eta(m)\to 0$.  

\begin{lemma} 
$$
     e^{u_2(\tau(m);\a(m),\nu(m))}
        .\hat Y(\nu(m)) 
     = Y(\sum_{j\leq d}\, v_j(m)N(\theta^j),
         \hat Y_{(e^{\nu(m)}.F_{\natural},W^{\iota})})
$$
\end{lemma}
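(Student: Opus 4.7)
The plan is that this lemma is essentially a direct specialization of Lemma~\ref{lemma:degen-1} to the particular data extracted from the $\ssl_2$-sequence $z(m)$ in Corollary~\ref{corollary:reduction-4}. I would proceed in three short steps.

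First, for sufficiently large $m$ I would verify that the admissibility hypotheses needed to invoke Lemma~\ref{lemma:degen-1} at the parameter $(\alpha(m),\nu(m))$ are satisfied. Since $\tilde v(m)$ is strict $\ssl_2$-sequence in $\mathbb{R}^d_{>0}$ (see Definition~\ref{defn:sl2-sequence}), the ratios $\tau_j(m)=v_{j+1}(m)/v_j(m)$ tend to $0$ and thus lie in $(0,b)$ for $m$ large. By the decomposition $\tilde y(m) = T(\tilde v(m)) + \tilde b(m)$ with $\tilde b(m) \to \tilde b_o$, the sequence $\alpha(m) = N(\tilde b(m))$ converges to $N(\tilde b_o)$, and so $\alpha(m) \in \mathfrak b_o$ for large $m$. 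Finally, by~\eqref{eq:twisted-gamma-limit}, $\nu(m) \to 0$, so $\nu(m) \in \mathfrak v_o$ for large $m$. Each of these conditions was precisely what was demanded in the preceding discussion in order to make sense of $u(\tau;\alpha,\nu)$ and of Lemma~\ref{lemma:degen-1}.

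Second, I would apply Lemma~\ref{lemma:degen-1} with the choices $\omega_j := v_j(m)$, $\alpha := \alpha(m)$, $\nu := \nu(m)$. The lemma's hypothesis $\tau_j = \omega_{j+1}/\omega_j$ for $j \le d-1$ coincides verbatim with the definition $\tau_j(m) = v_{j+1}(m)/v_j(m)$ introduced in Corollary~\ref{corollary:reduction-4}. Note that, as the right-hand side of~\eqref{eq:degeneration-1} does not involve $\alpha$, we are free to feed in the specific sequence $\alpha(m)$ without causing any incompatibility, and the choice of overall scale of $\omega$ (equivalently, the value of $\omega_1 = v_1(m)$) enters the right-hand side only through the rescaling freedom~\eqref{eq:freedom-to-rescale}.

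Finally, by the very definition of $u_2$, we have
\[
u(\tau_1(m),\dots,\tau_{d-1}(m),0;\alpha(m),\nu(m)) \;=\; u_2(\tau(m);\alpha(m),\nu(m)),
\]
so the conclusion of Lemma~\ref{lemma:degen-1} becomes
\[
e^{u_2(\tau(m);\alpha(m),\nu(m))}.\hat Y(\nu(m)) \;=\; Y\!\Big(\sum_{j \le d} v_j(m)\,N(\theta^j),\;\hat Y_{(e^{\nu(m)}.F_{\natural},W^{\iota})}\Big),
\]
which is the desired identity. There is no genuine obstacle here: the lemma is just the specialization of Lemma~\ref{lemma:degen-1} to the sequence parameters provided by the $\ssl_2$-decomposition~\eqref{eq:new-divided-sequence}. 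The real work was done in Lemma~\ref{lemma:degen-1}, where the degeneration $\tau_d \to 0$ was identified via Theorem~\ref{theorem:knu-main} with a Deligne grading.
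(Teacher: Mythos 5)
Your proposal is correct and matches the paper's own (one-line) proof, which is precisely ``Use Lemma~\ref{lemma:degen-1} with $\omega_j(m)=v_j(m)$''; the definition $u_2(\tau;\a,\nu)=u(\tau_1,\dots,\tau_{d-1},0;\a,\nu)$ then gives the stated identity verbatim. Your extra checks that $\tau_j(m)\in(0,b)$, $\a(m)\in\mathfrak b_o$, and $\nu(m)\in\mathfrak v_o$ for large $m$ are the right hypotheses to verify and are implicit in the paper's setup.
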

\begin{proof} Use Lemma~\eqref{lemma:degen-1} with $\omega_j(m) = v_j(m)$.
\end{proof}

\par Accordingly, by the previous Lemma and 
equations~\eqref{eq:reduction-4}, \eqref{eq:sep-var-2}, 
\eqref{eq:sep-var-3} it follows that  
\begin{eqnarray*}
     e^{-N(x(m))}.\hat Y_{(F_{\iota}(z(m)),W)}
     &=&  e^{\eta(m)}t_{\iota}(y(m))e^{u_2(\tau(m);\a(m),\nu(m))}
         .\hat Y(\nu(m)) \\
     &=&  e^{\eta(m)}t_{\iota}(y(m)).Y(\sum_{j\leq d}\, v_j(m)N(\theta^j),
         \hat Y_{(e^{\nu(m)}.F_{\natural},W^{\iota})})
\end{eqnarray*}
Therefore, using Lemma~\eqref{lemma:another-twist} and our freedom
to rescale $N(\theta^j)\to \l N(\theta^j)$ it follows that 
\begin{multline}
     t_{\iota}(y(m)).Y(\sum_{j\leq d}\, v_j(m)N(\theta^j),
         \hat Y_{(e^{\nu(m)}.F_{\natural},W^{\iota})})  \\
     = Y(\sum_{j\leq d}\, v_j(m)N(\theta^j),
         \hat Y_{(e^{\sum_{j>\iota}\, i y_j N_j}
                         e^{\Gamma_{\iota}(s)}.F_{\infty},W^{\iota})})
     \label{eq:new-deligne-form}
\end{multline}
and hence 
\begin{multline}
    e^{-N(x(m))}.\hat Y_{(F_{\iota}(z(m)),W)} 
    = e^{\eta(m)}.Y(\sum_{j\leq d}\, v_j(m)N(\theta^j),
                 \hat Y_{(e^{\sum_{j>\iota}\, i y_j N_j}
                         e^{\Gamma_{\iota}(s)}.F_{\infty},W^{\iota})})
         \label{eq:factoring-1}
\end{multline}
In particular, since $\eta(m)\to 0$, it follows that \eqref{eq:reduction-5}
holds for $d>1$.


\par Thus, to complete the proof it remains to compute the limit
\eqref{eq:reduction-5}.
To this end, observe that by induction, 
\beq
        \hat Y_{(e^{\sum_{j>\iota}\, i y_j N_j}
                         e^{\Gamma_{\iota}(s)}.F_{\infty},W^{\iota})}
        \to  Y^{\dag} = Y(N(\theta^{d+1}),\dots,Y(N(\theta^{d'}),
                    \hat Y_{(F_{\infty},W^r)}))
        \label{eq:induction-hyp-1}
\eeq
Consequently, there exists a unique $W^{\iota}_{-1}{\rm gl}(V)$-valued 
sequence $\b(m)$ which converges to zero such that 
$$
           \hat Y_{(e^{\sum_{j>\iota}\, i y_j N_j}
                         e^{\Gamma_{\iota}(s)}.F_{\infty},W^{\iota})}
            = e^{\b(m)}.Y^{\dag}
$$
along $z(m)$.

\par To continue, note that since
$\hat Y_{(e^{\sum_{j>\iota}\, i y_j N_j}
                         e^{\Gamma_{\iota}(s)}.F_{\infty},W^{\iota})}$ 
arises from the $\ssl_2$-splitting of the limit mixed Hodge structure of the 
nilpotent orbit
$$
    (z_1,\dots,z_{\iota}) \mapsto
    e^{\sum_j\, z_j N_j}.(e^{\sum_{j>\iota}\, i y_j N_j}
                          e^{\Gamma_{\iota}(s)}.F_{\infty})
$$
it follows that 
\begin{itemize}
\item[{(a)}] $[N_j,e^{\beta}.Y^{\dag}] = -2 N_j$ for $j\leq\iota$;
\item[{(b)}] $e^{\beta}.Y^{\dag}$ preserves $W^0,\dots,W^{\iota}$.
\end{itemize}

\par On the other hand, by Lemma~\eqref{lemma:associated-filtration} we know
that $Y^{\dag}= Y_{(\hat F_{\iota},W^{\iota})}$ arises via the limit mixed
Hodge structure of a nilpotent orbit
$$
          (e^{\sum_{j\leq\iota}\, z_j N_j}.\hat F_{\iota},W^0)
$$
(with limit MHS split over $\mathbb R$) we also have 
\begin{itemize}
\item[{(a')}] $[N_j,Y^{\dag}] = -2 N_j$ for $j\leq\iota$;
\item[{(b')}] $Y^{\dag}$ preserves $W^0,\dots,W^{\iota}$.
\end{itemize}


\begin{warning} For the remainder of this paper, $\hat F_{\iota}$ is the 
filtration of the previous paragraph and not the filtration obtained from 
equation
\eqref{eq:iter-1} and the nilpotent orbit $(N_1,\dots,N_r;\hat F_{\infty},W)$.
\end{warning}

\begin{remark} As in the remark to Theorem~\eqref{theorem:conj-1}, we
are implicitly assuming that $y_j(m)\to\infty$ for all $j$.  The case
where some $y_j(m)$ remain bounded is handled by absorbing these factors
into $F_{\infty}$.  The details are left to the reader.
\end{remark}

\par In particular, comparing $(a)$ and $(a')$ it follows from
Lemma~\eqref{lemma:commutator-lemma} that
\beq
          [\b(m),N_j] = 0,\qquad j\leq\iota            \label{eq:commutator-2}
\eeq
Likewise, it follows from properties $(b)$ and $(b')$ that
\beq
          \b(m)\hphantom{a}\mbox{preserves}\hphantom{a} W^j,\qquad
          j\leq\iota
                                     \label{eq:preserve-w}
\eeq
By the functoriality of Deligne systems, it follows from
\eqref{eq:commutator-2} and \eqref{eq:preserve-w} that
\begin{multline}
  Y(\sum_{j\leq d}\, v_j(m)N(\theta^j),
         \hat Y_{(e^{\sum_{j>\iota}\, i y_j N_j}
                         e^{\Gamma_{\iota}(s)}.F_{\infty},W^{\iota})})   \\
     = Y(\sum_{j\leq d}\, v_j(m)N(\theta^j),e^{\beta(m)}.Y^{\dag})
     = e^{\b(m)}.Y(\sum_{j\leq d}\, v_j(m)N(\theta^j),Y^{\dag})
     \label{eq:reduction-6}
\end{multline}
Moreover, by the properties of Deligne systems,
\beq
  Y(\sum_{j\leq d}\, v_j(m)N(\theta^j),Y^{\dag}) 
  = \hat Y_{(e^{i\sum_{j\leq d}\, v_j(m)N(\theta^j)}.\hat F_{\iota},W^0)}
  \label{eq:deligne-form-3}
\eeq
Letting $m\to\infty$ and using \eqref{eq:knu-main} to compute the right
hand side shows that
$$
     Y(\sum_{j\leq d}\, v_j(m)N(\theta^j),Y^{\dag})
     \to \hat Y(N(\theta^1),\dots,Y(N(\theta^d), Y_{(\hat F_{\iota},W^{\iota})}))
$$
Therefore, since $\beta(m)\to 0$ and  
$$
     Y_{(\hat F_{\iota},W^{\iota})} = Y^{\dag} 
     = Y(N(\theta^{d+1}),\dots,Y(N(\theta^{d'}),
                    \hat Y_{(F_{\infty},W^r)})).
$$
it follows that from equation~\eqref{eq:reduction-6} that
$$
 Y(\sum_{j\leq d}\, v_j(m)N(\theta^j),
         \hat Y_{(e^{\sum_{j>\iota}\, i y_j N_j}
                         e^{\Gamma_{\iota}(s)}.F_{\infty},W^{\iota})}) 
 \to  Y(N(\theta^1),\dots,Y(N(\theta^{d'}),\hat Y_{(F_{\infty},W^r)}))
$$
Returning to equation~\eqref{eq:reduction-5} it then follows
that 
\beq 
      e^{-N(x(m))}.\hat Y_{(F(z(m)),W)} \to 
       Y(N(\theta^1),\dots,Y(N(\theta^{d'}),\hat Y_{(F_{\infty},W^r)}))
       \label{eq:new-limit}
\eeq
as required.

\par To complete the proof, we note that the elements 
$\theta^1,\dots,\theta^{d'}$ constructed above depend 
on the sequence $z(m)$ and not the period map.  Consequently, it follows from 
\eqref{eq:new-limit} that 
\beq
       e^{-N(x(m))}.\hat Y_{(F(z(m)),W)} - 
       e^{-N(x(m))}.\hat Y_{(e^{N(z(m))}.F_{\infty},W)} = 0
       \label{eq:invariant-limit-form}
\eeq
for every $\ssl_2$-sequence since both $F(z)$ and $e^{N(z)}.F_{\infty}$ have
the same limit Hodge filtration.  On the other hand, by 
Lemma~\eqref{lemma:main-limit}, we know that 
\eqref{eq:main-limit} holds for nilpotent orbits.  Thus, by virtue of
equation~\eqref{eq:invariant-limit-form}, equation~\eqref{eq:main-limit} 
is also true for period maps.

\part{Tannakian Categories of Nilpotent Orbits}\label{p.2}

\section{Central filtrations on Tannakian Categories}

The main purpose of this appendix is to prove
Theorem~\ref{theorem:SL2_Splitting} and Lemma ~\ref{lemma:ds-2} from
the body of the paper.  These results characterize the
$\ssl_2$-splitting as the unique splitting $\epsilon$ given as a
universal Lie polynomial in the $\delta_{p,q}$ such that, if $(N, F, W)$
is a one-variable nilpotent orbit with limit split over $\mathbb{R}$,
and $\xi=\epsilon(e^{iN}.F,W)$, then $Y(e^{\xi}e^{iN}.F,W)$ is a morphism
of the limit mixed Hodge structure $(F,M(W,N))$.
That this
is so was stated first by Deligne in an unpublished letter to Cattani
and Kaplan.   In this appendix, we phrase Deligne's results in the 
language of Tannakian categories in the hope that this will lead to 
clarity by making explicit the relationships between various categories
of Hodge structures and nilpotent orbits.  

\subsection{}\label{s.TannakaFiltered} Let $\mathbf{C}$ be a Tannakian
category over a field $k$
and let $\omega:\mathbf{C}\to\Vect_k$ be a fiber functor (where $\Vect_k$ denotes the category of finite dimensional vector spaces over $k$).  Recall from 
Saavedra-Rivano~\cite[p.~213]{Rivano}, that an \emph{exact filtration} of $\omega$ 
consists of an exhaustive, increasing filtration $W_k$ of the functor $\omega$
satisfying
\begin{enumerate}
\item the associated graded $\Gr^W\omega$ is exact;
\item for every $n\in\mathbb{Z}$ and every pair of objects $X,Y$ in $\mathbf{C}$, we have
$$
W_n\omega(X\otimes Y)=\sum_{p+q=n} W_p\omega X \otimes W_q \omega Y.$$
\end{enumerate}
See \cite{Rivano} for a definition where the field $k$ is replace with
a ring $A$.

Saavedra-Rivano calls a filtration $W$ \emph{central} if it arises by applying
$\omega$ to a filtration (also denoted $W_k$) of the identity functor
on $\mathbf{C}$. (So $W_k\omega X=\omega W_k X$.)  Suppose $W$ is an exact
central filtration.  Then we say that an object $X$ is pure of weight
$k$, if $\Gr^W_j X=0$ for all $j\neq k$.  An object is \emph{split} if
it is a direct sum of pure subobjects.  It follows from the exactness
of $W$, that the full subcategory $S_W\mathbf{C}$ consisting of all
split objects is a Tannakian subcategory of $\mathbf{C}$~\cite[1.7]{Milne2007}.  It also
follows that $\Hom_{\mathbf{C}}(X,Y)=0$ if $X$ and $Y$ are two pure
objects of $\mathbf{C}$ of different weights.

\subsection{} Suppose $\omega:\mathbf{C}\to\Vect_k$ is a neutral
Tannakian category with Galois group $G=\Aut^{\otimes} \omega$.  A \emph{grading}
(or, to be precise, a  
$\mathbb{Z}$-\emph{grading})   of $\omega$ is a functorial decomposition
$\omega=\oplus_{k\in\mathbb{Z}} \omega_k$.  A grading of $\omega$
amounts to the same thing as a group homomorphism $i:\mathbb{G}_m\to
G$ where $\mathbb{G}_m$ denotes the multiplicative group of $k$.  Any
grading, or equivalently a group homomorphism $i:\mathbb{G}_m\to G$,
gives rise to a filtration $W=W(i)$ on $\omega$ via the rule
$W_n\omega X=\oplus_{k\leq n} \omega_k X$.  
A filtration $W$ is said 
to be \emph{splittable} if $W=W(i)$ for some grading $i$.  In this case,
$i$ is said to be a \emph{splitting} of $W$.   

\subsection{}\label{s.PandU}
In~\cite[p.217]{Rivano}, Saavedra-Rivano considers affine group schemes
$P=\Aut_W^{\otimes} (\omega)$ and $U=\Aut^{\otimes !}(\omega)$
associated to a neutral Tannakian category
$\omega:\mathbf{C}\to\Vect_k$ and an exact filtration $W$ of $\omega$.
The group $P$ consists of automorphisms preserving $W\omega X$,
for any $X\in\mathbb{C}$,  while the
group $U$ consists of automorphisms inducing the identity on
$\Gr^W\omega$.  More generally, Saavedra-Rivano considers filters $P$ by the subgroups
$U_{\alpha}$ consisting of elements $g\in P$ acting trivially on 
$(W_p/W_{p+\alpha})\omega X$.  In this notation, $U=U_{-1}$ and we set $P=U_0$.  
To express the the fact that the filtration $U_{i}$ depends on $W$, we write $W_iU:=U_i$.

If $W$ is central, then $P=G=\Aut^{\otimes}(\omega)$.

\begin{proposition}\label{p.ExactSequence}
  Suppose $\omega:\mathbf{C}\to\Vect_k$ is a neutral Tannakian
  category equipped with a central filtration $W$.  Let
  $Q=\Aut^{\otimes}(\omega|S_W\mathbf{C})$.  Then the map $\pi:G\to Q$
  induced by the inclusion of $S_W\mathbf{C}$ in $\mathbf{C}$ is
  faithfully flat with kernel $U$.  Thus we have a short exact
  sequence
$$
1\to U\to G\to Q \to 1
$$
of $k$-groups.   Thus, $G/U$ is isomorphic to $Q$.  
\end{proposition}

\begin{proof}
  By~\cite[Proposition 2.21]{DeligneMilne}, $G\to Q$ is faithfully
  flat, because the inclusion $S_W\mathbf{C}\to \mathbf{C}$ is fully
  faithful and every subobject in $\mathbf{C}$ of a split object is
  split.  It is clear that $U$ is in the kernel of $\pi$.  On the other
hand, ever object in $\mathbb{C}$ is a successive extension of pure objects.
From this observation, it follows that $U$ contains the kernel of $\pi$. 
\end{proof}

\subsection{} Suppose the filtration $W$ in Proposition~\ref{p.ExactSequence}
is splittable by a homomorphism $i:\mathbb{G}_m\to G$.   Let $\Cent(i)$
denote the centralizer of $i$ in $G$.   

\begin{proposition}[Saavedra-Rivano]
  If $W$ is splittable, then $G=U\rtimes\Cent(i)$.   In particular, $\Cent(i)$ 
is isomorphic to $Q$.  
\end{proposition}

\subsection{}  In the context of the Proposition, the 
set of splittings $i:\mathbb{G}_m\to G$ is a pseudo-torsor under $U(k)$ (acting
via conjugation).     Fortunately, all the filtrations which come up 
in Hodge theory are, in fact, splittable.  One way to see this is to use
the following result, a corollary of a recent theorem of Ziegler.

\begin{theorem}\label{t.Splittable}
 Suppose $\omega:\mathbb{C}\to\Vect_k$ is a neutral Tannakian category 
over a field $k$ of characteristic $0$ and let $W$ be a filtration on $\omega$.
Then $W$ is splittable 
\end{theorem}

\begin{proof}
  This follows directly from Theorem 1.3 of~\cite{Ziegler} and the fact 
that, in characteristic $0$, algebraic groups are smooth.
\end{proof}

\section{Mixed Hodge Structures}

\subsection{} The category $\MHS$ of mixed Hodge structures over
$\mathbb{R}$ equipped with the forgetful functor $\omega:\MHS\to
\Vect_{\mathbb{R}}$ sending a Hodge structure to its underlying real
vector space is a neutral Tannakian category.  The weight filtration
$W$ induces a central filtration on $\MHS$ and the category $S_W\MHS$
is simply the category $\HS$ of split mixed Hodge structures.  

By a classical observation of Deligne, the group
$\mathbb{S}:=\Aut^{\otimes}(\omega| \HS)$ is the Weil restriction
of scalars from $\mathbb{C}$ to $\mathbb{R}$ of the group
$\mathbb{G}_m$.    This group sits in exact sequences
$$\xymatrix{
  1 \ar[r] &  S^1\ar[r]^{s} & \mathbb{S}\ar[r]^{t} & \mathbb{G}_m \ar[r] & 1\\
  1 \ar[r] & \mathbb{G}_m\ar[r]^{w} & \mathbb{S} \ar[r] & S^1 \ar[r] &
  1 }$$ where $S^1$ is the unique (up to isomorphism) non-split real
form of $\mathbb{G}_m$ and the action of $\mathbb{G}_m$ via $w$ on a
split mixed Hodge structure induces the splitting of the weight
filtration.  The homomorphism $t:\mathbb{S}\to\mathbb{G}_m$ induces
a fully faithful functor from the category of graded vector spaces
to the category of split mixed Hodge structures whose essential image consists
of the Tate mixed Hodge structures.  The map $s\times w: S^1\times\mathbb{G}_m\to \mathbb{S}$ presents $\mathbb{S}$ as the quotient of $S^1\times\mathbb{G}_m$ by 
the diagonally embedded copy of $\mathbb{Z}/2$.

\subsection{} By Theorem~\ref{t.Splittable}, the central filtration $W$ on $\MHS$ is
splittable. So $\mathfrak{M}:=\Aut^{\otimes}(\omega)$ is isomorphic to
a semi-direct product $\mathfrak{U}\rtimes \mathbb{S}$ where
$\mathfrak{U}=\Aut^{\otimes!}(\omega)$.  In fact, Deligne  determined the
structure of $\mathfrak{U}$, which, for the convenience of the reader,
we explain Deligne's language.

Let $\mathcal{L}_{\mathbb{C}}$
denote the free Lie algebra over $\mathbb{C}$ on generators $D^{i,j}$
where $i$ and $j$ are negative integers.   The $\mathbb{C}$-vector
space underlying $\mathcal{L}_{\mathbb{C}}$ carries a unique bigrading 
$\mathfrak{L}_{\mathbb{C}}=\oplus\mathfrak{L}_{\mathbb{C}}(i,j)$ for which 
$D^{i,j}$ is in bidegree $(i,j)$.   Let 
$$W_n\mathcal{L}_{\mathbb{C}}=\oplus_{i+j\leq n} \mathcal{L}_{\mathbb{C}}(i,j).$$
Then $W_n\mathcal{L}_{\mathbb{C}}$ is an ideal in $\mathcal{L}_{\mathbb{C}}$, and it is easy to see that the Lie
algebra $\mathcal{L}_{\mathbb{C}}/W_n\mathcal{L}_{\mathbb{C}}$ is nilpotent and finite
dimensional as a $\mathbb{C}$ vector space.  There is a unique real
structure on the Lie algebra $\mathcal{L}_{\mathbb{C}}$ for which
$\bar{D^{i,j}}=-D^{j,i}$.  Let $\mathcal{L}$ denote the corresponding
real Lie algebra.   Since the real structure respects the filtration $W$ on
$\mathcal{L}_{\mathbb{C}}$, $W$ descends to a filtration on $\mathcal{L}$.
Set $\mathfrak{U}(n):=\exp(\mathcal{L}/W_n\mathcal{L})$, a real algebraic
unipotent group.   

The bigrading on $\mathcal{L}_{\mathbb{C}}$ induces an action of
$\mathbb{G}_m^2$ on $\mathcal{L}_{\mathbb{C}}$ and, thus, an action on
$\mathfrak{U}(n)\otimes\mathbb{C}$.  Under this action
$(s,t)D^{i,j}=s^it^jD^{i,j}$.  This action descends to an action of
$\mathbb{S}$ on $\mathcal{L}$ for which
$tD^{i,j}=z^i(t)\bar{z}^j(t)D^{i,j}$ where $z,\bar{z}$ denote
conjugate generators of the character group of $\mathbb{S}$.  The
action of $\mathbb{S}$ on $\mathcal{L}$ then induces an action on the
real algebraic group $\mathfrak{U}(n)$.  Let $\varprojlim \mathfrak{U}(n)$ denote the
inverse limit of the real algebraic groups $\mathfrak{U}(n)$, a pro-unipotent real
affine group scheme, and write $W_k\varprojlim\mathfrak{U}(n)$ for the kernel
of the canonical homomorphism $\varprojlim\mathfrak{U}(n)\to \mathfrak{U}(k)$. 

Suppose $V=(V,F,W)$ is a real mixed Hodge structure.  Let
$\delta=\delta_{F, W}\in \End(V)$ and write
$\delta=\sum_{i,j<0}\delta_{i,j}\in \End(V)_{\mathbb{C}}$.  Since $\delta$ is
real $\bar{\delta}_{i,j}=\delta_{j,i}$  This induces an action of $\varprojlim
\mathfrak{U}(n)$ on $V$ by letting $D^{i,j}$ act via $\sqrt{-1}\delta_{i,j}$.
This induces a homomorphism $\varprojlim \mathfrak{U}(n)\to \mathfrak{M}$, and, 
since $\mathfrak{U}(n)$ is unipotent for each $n$, the homomorphism must
factor through $\mathfrak{U}$.   So we have a map $\varprojlim\mathfrak{U}(n)\to \mathfrak{U}$.

\begin{theorem}[Deligne]\cite{DeligneSeattle}
We have $\mathfrak{U}=\varprojlim \mathfrak{U}(n)$.   
\end{theorem}

\begin{remark}
If $(V,F, W)$ is a real mixed Hodge structure, then $(V,e^{-i\delta}.F, W)$
is split over $\mathbb{R}$.   The splitting $Y_{(e^{-i\delta}.F,W)}$ gives a 
canonical $\mathbb{R}$ grading of $W$, and thus a homomorphism $i:\mathbb{G}_m\to \mathfrak{M}$, which induces a splitting of the sequence 
$$
1\to \mathfrak{U}\to \mathfrak{M}\to \mathbb{S}\to 1.
$$
If we set $\mathfrak{M}(n):=\mathfrak{M}/W_n\mathfrak{U}$, then $\mathfrak{M}(n)$ is a real algebraic group and $\mathfrak{M}=\varprojlim\mathfrak{M}(n)$.      
\end{remark}

\section{Nilpotent Orbits}

\subsection{} Suppose $V$ is a finite dimensional real vector space.  Recall,
from the body of the paper or from~\cite[p.~405]{KNU}, that the data of 
an admissible nilpotent orbit (or mixed nilpotent orbit in the terminology
of~\cite{KNU}) consists of a quadruple $(V,N,F,W)$ where 
\begin{enumerate}
\item $N$ is a nilpotent endomorphism of $V$,
\item $F$ is a decreasing filtration of $V_{\mathbb{C}}$,
\item $W$ is an increasing filtration of $V$.
\end{enumerate}
These data are assumed to satisfy several conditions spelled out
in~\cite{KNU}.  The class of all admissible nilpotent orbits
$(V,N,F,W)$ forms a category in an obvious way (morphisms are vector
space homomorphisms preserving $N$, $F$ and $W$).  Moreover, by a
result of Kashiwara~\cite[Proposition 5.2.6]{Kashiwara} this category,
which we will call $\Nil_1$, is abelian.  In fact, $\Nil_1$ is a
neutral Tannakian category: the tensor products are defined as
in~\cite[4.4.3]{Kashiwara} in an obvious way, the functor $\omega_1$
from $\Nil_1$ to the category $\Vect_{\mathbb{R}}$ of finite
dimensional real vector spaces is the one that forgets everything but
$V$. From this, it is easy to check that $\Nil_1$ is a neutral
Tannakian category using, for example, ~\cite[Proposition
1.20]{DeligneMilne}.  The filtration $W$ equips $\omega_1$ with a central
filtration.  

\subsection{} Unfortunately, we do not have a simple description of
$\Nil_1$.  However, $\Nil_1$ has a Tannakian subcategory whose
fundamental group is more tractable: the full subcategory 
$\Split_1$ consisting
of all object $(V,N,F, W)$ with limit $(V,F, M(N,W))$ split
over $\mathbb{R}$.  Let $\mathfrak{M}_1$ denote the affine group
scheme $\Aut^{\otimes}(\omega_1|\Split_1)$.   The filtration $W$  
induces a central filtration on the neutral Tannakian category
$\omega_1:\Split_1\to\Vect_{\mathbb{R}}$.  We write $\mathbf{S}_1$ for the
category $S_W\Split_1$ of all split objects in $\Split_1$.   Objects
in $\mathbf{S}_1$ are called \emph{$\SL_2$-orbits}.   Write $\mathbb{S}_1=
\Aut^{\otimes}(\omega_1|\mathbf{S}_1)$.    Then we have a (splittable) 
faithfully flat homomorphism $\pi_1:\mathfrak{M}_1\to \mathbb{S}_1$, and, if 
we write $\mathfrak{U}_1$ for the kernel of $\pi_1$, we obtain a (splittable)
exact sequence
$$
1\to \mathfrak{U}_1\to \mathfrak{M}_1\to \mathbb{S}_1\to 1.
$$
In fact, $\mathfrak{M}_1$ inherits a filtration $W_k\mathfrak{M}_1$ from $W$
as in~\eqref{s.PandU}, and $\mathfrak{U}_1=W_1\mathfrak{M}_1$.
 
For each non-negative integer $n$, set
$\mathfrak{M}_1(n):= \mathfrak{M}_1/W_n\mathfrak{U}_1$.  Then
$\mathfrak{M}_1(n)$ is a real algebraic group with unipotent radical
$\mathfrak{U}_1(n)$.  The homomorphism $\pi_1(n):\mathfrak{M}_1(n)\to
\mathbb{S}_1$ induced by $\pi_1$ sends $\mathfrak{M}_1(n)$ onto its
largest reductive quotient.  We have $\mathfrak{M}=\varprojlim
\mathfrak{M}_1(n)$.

Since the limit $(V,F,M(N,W))$ of an object $(V,N,F,W)$ in $\Split_1$ is, by
definition, split over $\mathbb{R}$, we obtain morphism 
$i_M:\mathbb{G}_m\to \mathfrak{M}_1$ inducing a grading of $M$.  
We write $\bar i_M:=\pi_1\circ i_M:\mathbb{G}_m\to\mathbb{S}_1$.   
Moreover, write $H:=i_M(\mathbb{G}_m)$ and $\bar H=\bar i_M(\mathbb{G}_m)$.

Note that, any split real mixed Hodge structure, $(V,F,W)$ gives rise
to a split nilpotent orbit $(V,0,F,W)$ in a trivial way.  In other
words, each split real mixed Hodge structure gives rise to a constant
split nilpotent orbit.  This association gives rise to a tensor
functor $\Const:\HS\to\mathbb{S}_1$ and, thus, to a homomorphism
$\const:\mathbb{S}_1\to\mathbb{S}$.  

The following proposition is proved in~\cite[Lemma 3.12]{CKS}.

\begin{proposition}\label{p.FunctorFromMixedToSmooth}
  Suppose $(V,N,F,W)$ is an object in $\Split_1$ and $z$ is a complex
  number in the upper-half plane.  Then $(V,e^{zN}.F,W)$ is a real
  mixed Hodge structure.
\end{proposition}

For every $z$ in the upper-half plane, 
the proposition gives a functor $\spec_z:\Split_1\to \MHS$ compatible
with $\otimes$, the filtration $W$, and the forgetful functors to
$\Vect_{\mathbb{R}}$.  (We call it $\spec_z$ for ``specialization''
because it corresponds to specializing the nilpotent orbit to the
point $i$ in the upper half plane.) 
The functor $\spec_z$ induces, in
turn, a group homomorphism $\spec_z:\mathfrak{M}\to\mathfrak{M}_1$
compatible with the $W$ filtration of both groups.  
For most purposes, it suffices to consider the case $z=i$. 

The following is
the main theorem of the appendix.

\begin{theorem}\label{t.AppMain}  Concerning $\mathfrak{M}_1$ and its relationship to $\mathfrak{M}$, we have the following results.
\begin{enumerate}
\item The group $\mathbb{S}_1$ is isomorphic to $(\mathbb{G}_m\times S^1\times \SL_2)/\mu_2$ where $\mu_2$ is embedded in diagonally in the product and $S^1$
denotes the unique non-split real form of $\mathbb{G}_m$.
\item  The homomorphism $\spec_i:\mathfrak{M}\to \mathfrak{M}_1$ is
injective.
\item The restriction of $\spec_i$ to $\mathfrak{U}$ induces an
  isomorphism of $\mathfrak{U}$ with $\mathfrak{U}_1$, thus a
  commutative diagram
$$\xymatrix{
1\ar[r]& \mathfrak{U}\ar[r]\ar[d]^{\cong}& \mathfrak{M}\ar[r]^{\pi}\ar[d]^{\spec_i} &
\mathbb{S}\ar[r]\ar[d]^{\overline{\spec}_i} & 1\\
1\ar[r] & \mathfrak{U}_1\ar[r] & \mathfrak{M}_1\ar[r]^{\pi_1} &
\mathbb{S}_1\ar[r] & 1.\\
}$$ 
\item 
$\Cent(H)\cap\mathfrak{U}_1=\{1\}$.  On the other hand,
$\overline{\spec}_i(w(\mathbb{G}_m))$ is central in $\mathbb{S}_1$.  
\end{enumerate}  
\end{theorem}

\begin{corollary}\label{c.Why}
There is a 
unique splitting $\sigma:\mathbb{S}\to\mathfrak{M}$ such that 
$\spec_i\circ\sigma(w(\mathbb{G}_m))$ commutes with $H$.
\end{corollary}

\begin{proof}
Since $\Cent(H)\cap\mathfrak{U}_1=\{1\}$, the restriction of $\pi_1$
to $\Cent(H)$ gives an injection $\Cent(H)\to\mathbb{S}_1$.
Let $\bar{H}$ denote the image of $H$ in $\mathbb{S}_1$.
We then
obtain a commutative diagram of group homomorphisms
$$\xymatrix{
\Cent(H)\cap\mathfrak{M}\ar[r]^{\pi}\ar[d] & \Cent{\bar{H}}\cap\mathbb{S}\ar[d]\\
\Cent(H)\ar[r]^{\pi_1}                       & \Cent{\bar{H}}
}$$
where all arrows are injective and we regard $\mathfrak{M}$ as being 
contained $\mathfrak{M}_1$ via $\spec_i$.

It follows from~\cite[Exp. XV Lemma 7.2]{SGA3.2} that the horizontal
arrows are surjective (and, therefore, isomorphisms).  Thus, since
$w(\mathbb{G}_m)$ is contained in $\Cent(\bar{H})\cap\mathbb{S}$,
there exists a unique section of $\pi$ over $w(\mathbb{G}_m)$
commuting with $H$.  This gives a grading 
$i:\mathbb{G}_m\to\mathfrak{M}$ of $W$.  The centralizer of $i$ in $\mathfrak{M}$ gives a
splitting $\sigma:\mathbb{S}\to\mathfrak{M}$ of $\pi$.  It is clearly
the unique splitting $\sigma$ such that $\sigma(w(\mathbb{G}_m)$
commutes with $H$.

\end{proof}




\begin{lemma}\label{l.KNU-Epsilon}
Suppose $(V,N,F,W)$ is a one-variable admissible nilpotent orbit with 
limit split over $\mathbb{R}$.   Set $F_{(0)}=e^{iN}.F$.   Then 
$Y(\hat F_{(0)},W)$ is a morphism of the limit mixed Hodge structure
$(V,F,M(N,W))$.  
\end{lemma}

\begin{proof}
This follows from~\cite[10.1.3]{KNU}.
\end{proof}

The following is a restatement of Theorem~\ref{theorem:SL2_Splitting}.

\begin{corollary}\label{c.SL2_Splitting}
The $\ssl_2$-splitting is the unique functorial
splitting on the category of real mixed Hodge structures given by  a
universal Lie polynomials $\epsilon$ in the $\delta_{p,q}$ satisfying 
the following property.   If $(N,F,W)$ is a one-variable nilpotent
orbit with limit split over $\mathbb{R}$ and we set 
$\xi=\epsilon(e^{iN}.F,W)$, then $Y(e^{-\xi}e^{iN}.F,W)$ is a morphism
of $(F,M(N,W))$. 
\end{corollary}

\begin{proof}
  Universal Lie polynomials in the the $\delta_{p,q}$ are in one-one
  correspondence with elements of the completed Lie algebra
  $\hat{\mathcal{L}}:= \varprojlim\mathcal{L}/W_n\mathcal{L}$.  By the
  Baker-Campbell-Hausdorff theorem, the map $\zeta\mapsto e^{\zeta}$ gives an
  isomorphism from $\hat{\mathcal{L}}$ to $\mathfrak{U}_1(\mathbb{R})$.
  
  Gradings $i:\mathbb{G}_m\to\mathfrak{M}$ of the weight filtration
  form a torsor under $\mathfrak{U}(\mathbb{R})$.  If we let
  $i_{\delta}$ denote the grading $Y(e^{-i\delta}.F,W)$, then any other
  grading $i_{\zeta}:\mathbb{G}_m\to\mathfrak{M}$ of the weight filtration
  is given by $Y(e^{\zeta}e^{-i\delta}.F,W)$ where $\zeta\in\hat{\mathfrak{L}}$
  is a universal Lie polynomial in the $\delta_{p,q}$.  The grading is
  defined over $\mathbb{R}$ iff the universal Lie polynomial $\zeta$ is.

  Note that there is a one-one correspondence between splittings
  $\sigma:\mathbb{S}\to\mathfrak{M}$ and gradings
  $i:\mathbb{G}_m\to\mathfrak{M}$.  This correspondence sends the
  splitting $\sigma$ to its restriction to
  $w(\mathbb{G}_m)\subset\mathbb{S}$.  The inverse of the
  correspondence sends the grading $i$ to its centralizer (in
  $\mathfrak{M}$), which is isomorphic to $\mathbb{S}$.  Under this
  correspondence, the $\mathbb{S}$ representation given by the split
  mixed Hodge structure $(e^{\zeta}e^{-i\delta}.F,W)$ is sent to the
    splitting $Y(e^{\zeta}e^{-i\delta}.F,W)$.

    Corollary~\ref{c.Why} shows that there is a unique splitting
    $\sigma:\mathbb{G}_m\to\mathfrak{M}$ such that $\spec_i(\sigma)$
    commutes with $H$.  Thus, there can be only real one universal Lie
    polynomial $\zeta$ in the $\delta_{p,q}$ such that, when
    $\delta=\delta(e^{iN}.F,W)$, $Y(e^{\zeta}e^{-i\delta}e^{iN}.F,W)$
    commutes with the grading $Y(F,M)$ of the limit mixed Hodge
    structure.  Now, any $\zeta$ such that
    $Y(e^{\zeta}e^{-i\delta}e^{iN}.F,W)$ is a morphism of the limit
    mixed Hodge structure commutes with the limit grading $Y(F,M)$.
    The result now follows immediately from Lemma~\ref{l.KNU-Epsilon}.

\end{proof}

\section{Deligne's Splittings}

\subsection{} Suppose $V$ is a vector space over a field $F$ of
characteristic $0$, $N$ is a nilpotent operator on $V$ and $W$ is an
exhaustive, separated increasing filtration of $V$ (indexed by
integers) such that $NW_i\subset W_{i-1}$.  The triple $(V,N,W)$ is
called \emph{admissible} if the relative weight filtration $M=M(N,W)$
exists.  (By~\cite{Weil2}, $M$ is unique if it does exist.)

A grading $Y_M$ of $M$ is said to be \emph{compatible} with $N$ and $W$
if 
\begin{enumerate}
\item $[Y_M,N]=-2N$;
\item for all $i$, $Y_M(W_i)\subset W_i$.
\end{enumerate}

If $Y_M$ is such a grading, let $G(W,Y_M)$ denote the set of gradings
of $W$ which commute with $Y_M$.  It is not hard to see that $G(W,Y_M)$ is
non-empty.  In fact, if we let $P$ denote the subgroup of $\GL(V)$
consisting of $g$ such that $gY_Mg^{-1}=Y_M$ and $(g-1)W_i\subset
W_{i-1}$, then $G(W,Y_M)$ is a principle homogeneous space for $P$.

Suppose $Y_W\in G(W,Y_M)$.  Then set $H=H(Y_W)=Y_M-Y_W$.  Let $N_i$
denote the $i$-th eigencomponent for $N$ under the action of $\ad
Y_W$.  Since $N$ preserves $W$, $N=\sum_{i\in\mathbb{Z}_{\leq 0}}
N_i$.  It follows from the definition of the relative weight
filtration, that the pair $(N_0,H)$ is an $\ssl_2$-pair.  Let
$N_+=N_+(Y_W)$ denote the unique element of $\End V$ such that
$(N_0,H,N^+)$ is an $\ssl_2$-triple.

\begin{theorem}[Deligne]\label{t:DeligneSplittings}
  Suppose that $(V,N,W)$ is an admissible triple and $Y_M$ is grading of $M$
  compatible with $N$ and $W$.  Then there exists a unique grading
  $Y_W=Y(N,Y_M)\in G(W,Y_M)$ with respect to which,  for each $i<0$, $[N^+,N_i]=0$.
\end{theorem}

\begin{proof} The result was proved originally in a letter from
  Deligne to Cattani and Kaplan.  For a published proof,
  see~\cite{PearlsteinDegenerationsDuke}.
\end{proof}

\subsection{}   Suppose $(V,N,F,W)$ is an object in $\Nil_1$. 
Set $Y_M=Y_{(F,M)}$.  Then $Y_M$ is a morphism of $(V,F,M)$ and 
the weight filtration $W$ is a filtration of $(V,F, M)$ by 
subobjects.   Therefore, $Y_M$ preserves $W$, and, as $N$ is a $(-1,-1)$ 
morphism of the limit mixed Hodge structure $(V,F,M)$, $[Y_M,N]=-2N$.   So, by 
Theorem~\ref{t:DeligneSplittings} $(V,N,F, M)$, 
gives rise to grading $Y_W=Y(N,Y_{(F,M)})$ 
of $W$ and an $\ssl_2$-triple $(N_0,H,N_+)$.

Because of the uniqueness of $Y_W$, the construction of the $\ssl_2$-triple is
functorial.  In other words, Theorem~\ref{t:DeligneSplittings} gives rise to a 
functor $\Nil_1\leadsto \Rep\, \ssl_2$ associating to every admissible
nilpotent orbit $(V,N,F,W)$ the representation $\rho_V:\ssl_2\to \End
V$ determined by the $\ssl_2$-triple $(N_0,H,N_+)$.  It follows that
there is a homomorphism $\SL_2\to \Aut^{\otimes}(\omega_1)$ where
$\omega_1:\Nil_1\to\Vect_{\mathbb{R}}$ is the forgetful functor.  By
restriction, we obtain a homomorphism
$h_{\SL_2}:\SL_2\to\mathfrak{M}_1=\Aut^{\otimes}(\omega_1|\Split_1)$.

On the other hand, if $(V,N,F,W)$ is in $\Split_1$, then, by
definition, the limit mixed Hodge structure $(V,F, M(N,W))$ is split.
Thus there is an action of Deligne's group $\mathbb{S}$ on $V$.  From
this, it follows that there is a a group homomorphism
$h_{\lim}:\mathbb{S}\to\mathfrak{M}_1$.

\subsection{}\label{s.MHS-T}
Let $T$ denote the split mixed Hodge structure $\mathbb{R}\oplus\mathbb{R}(1)$.
So $T_{\mathbb{R}}=\mathbb{C}$ with real basis $e:=1$ in $T_{\mathbb{C}}^{(0,0)}$ 
and $f:=2\pi i\in T_{\mathbb{C}}^{(-1,-1)}$.  
The action of
$\mathbb{S}$ on $T$ induces an embedding $i_{T}:\mathbb{S}\to\GL(T)$.
This in turn induces an action of $\mathbb{S}$ on $\SL(T)$ given by
$$
a(s)(\gamma):=i_T(s)\gamma i_T(s)^{-1}.
$$

With respect to the ordered basis $(e,f)$, the Lie algebra $\ssl(T)$ is, of course,
identified with $\ssl_2$.    It has real basis
$$
n_0:=\begin{pmatrix}
0 & 0\\
1 & 0\\
\end{pmatrix},
h:=\begin{pmatrix}
1 & 0\\
0 & -1\\
\end{pmatrix},
n_{+}:=\begin{pmatrix}
0 & 1\\
0 & 0\\
\end{pmatrix}.
$$
Moreover, from the Hodge structure on $T$, $\ssl(T)$ inherits a split mixed Hodge structure, 
and, thus, an action of $\mathbb{S}$.  It is easy to see that this action on $\ssl_2(T)$
is simply the Lie derivative on the above action on $a$ on $\SL_2(T)$.

\begin{lemma}\label{l.YWtypes}
  Let $(V,N,F,W)$ be an object in $\Split_1$.  Then
  \begin{enumerate}
  \item $Y_W=Y(N,Y_{(F,M)})$ is a morphism of the limit mixed Hodge structure
$V_{(F,M)}$;
\item $N_i\in \End V^{(-1,-1)}_{(F,M)}$ for all $i$ and each $N_i$ is real;
\item $N^{+}$ is a real element of $\End V^{(1,1)}_{(F,M)}$.  
  \end{enumerate}
\end{lemma}

\begin{proof}
  (i) Consider the action $\rho_{\lim}:\mathbb{S}\to \Aut V$ of
  $\mathbb{S}$ on $V$ induced by the split mixed Hodge structure
  $(F,M)$.  The map $Y_M$ is a morphism of the Hodge structure
  $(V,F,M)$.  Therefore $\mathbb{S}$ fixes $Y_M$.  On the other hand,
  $N$ induces a morphism $V\to V(1)$ relative the the Hodge structure
  $(V,F,M)$.  Therefore, $N$ is real of type $(-1,-1)$ relative to the
  mixed Hodge structure $(V,F,M)$.  Consequently, $N$ is fixed by
  $s(S^1)\subset \mathbb{S}$.  Now, it follows from the uniqueness of
  $M=M(N,W)$ and of $Y(N,Y_{(F,M)})$ that, if $g$ is any endomorphism
  of $V_{\mathbb{C}}$ fixing $W$, $Y_{(F,M)}$ and $N$, we have
  $gY_Wg^{-1}=Y(gNg^{-1},gY_{(F,M)}g^{-1})=Y_W$.  The filtration $W$
  on $V$ is by sub-mixed-Hodge structures of $(V,F,M)$.  Therefore,
  $W$ is fixed by $\mathbb{S}$.  So $Y_W$ is fixed by all $g\in
  s(S^1)$.  Consequently, $Y_W\in\oplus \End V^{(p,p)}_{(F,M)}$.
  Moreover, since $N$ and $Y_{(F,M)}$ are real, the uniqueness of
  $Y(N,Y_{(F,M)})$ shows that $Y_W$ is also real.  Since $Y_W$
  commutes with $Y_M$, we have $Y_W\in \End V^{(0,0)}_{(F,M)}$ as
  desired.   It follows that $Y_W$ is a morphism of $(V,F,M)$.

(ii) Let $\rho_W:\mathbb{G}_m\to\Aut V$ denote the action of $\mathbb{G}_m$ induced by the grading $Y_W$.
Then, by (i), we see that $\rho_W$ commutes with the action of $\mathbb{S}$ induced by the 
Hodge structure $(F,M)$.   Therefore, the group $\mathbb{G}_m\times\mathbb{S}$ acts on $V$ 
via $\rho_W\times\rho_{\lim}$.  We have already noted that $N\in\End V^{(-1,-1)}_{(F,M)}$ in (i).  It follows
from the fact that $\rho_W$ commutes with $\rho_{\lim}$ that $N_i\in\End V^{(-1,-1)}_{(F,M)}$ as well.  The uniqueness
of the $N_i$ shows that they are real.

(iii) Since $Y_M$ and $Y_W$ are both morphisms of the split mixed Hodge structure $(V,F,M)$, $H=Y_M-Y_W$ 
is as well.  Since $N_+$ is uniquely determined by the pair $(N_0, H)$, and $N_0$ and $H$ are both fixed
by the action $\rho_{\lim}\circ s:S^1\to\Aut V$, $N_+$ is also fixed by this $S^1$ action.  Moreover, since $N_0$ and 
$H$ are real $N_+$ is also real.   Therefore $N_+$ is a real element of $\oplus\End V^{(p,p)}_{(F,M)}$.  
On the other hand, since $[H,Y_W]=[N_0,Y_W]=0$, it follows that $[N_+,Y_W]=0$.  Therefore
$[Y_M,N_+]=[H+Y_W,N_+]=[H,N_+]+[Y_W,N_+]=[H,N_+]=2N_+$.  This shows that $N_+\in \End V^{(1,1)}_{(F,M)}$.  
\end{proof}

\begin{remark}\label{r.ds-2}
  Part
  (i) of Lemma~\ref{l.YWtypes} together with
  Corollary~\ref{c.SL2_Splitting} implies that $Y(N,Y_{(F,M)})$ and
  $\hat Y(e^{iN}.F,W)$ coincide (because both splittings are morphisms
  of the limit mixed Hodge structure).  This proves 
  Lemma~\ref{lemma:ds-2} (once Theorem~\ref{t.AppMain} is established). 
\end{remark}

\begin{corollary}\label{c.PreSemiDirect}
In $\mathfrak{M}_1$, we have 
\begin{equation}\label{e.HowSActs}
h_{\lim}(s)h_{\SL_2}(\gamma)h_{\lim}(s)^{-1}=
h_{\lim}(a(s)(\gamma)).
\end{equation}
\end{corollary}

\begin{proof}
  If $(V,N,F,W)$ is in $\Split_1$, we obtain a group
  homomorphism $\rho_V:\mathfrak{M}_1\to \Aut(V)$.  To prove the
  proposition, it suffices to show that, for any such $V$, the
  equation \eqref{e.HowSActs} holds when $\rho_V$ is applied to both
  sides.  This follows from Lemma~\ref{l.YWtypes} because the action $a$ of 
$\mathbb{S}$ on $\SL_2$ is exactly the one which give $n_0,h$ and $n_+$ the Hodge types
of the Lemma.
\end{proof}

\begin{corollary} Let $\mathbb{S}$ act on $\SL_2$ via $a$.  Then
there is a 
unique morphism 
$$
\rho_1:\SL_2\rtimes\mathbb{S}\to \mathfrak{M}_1
$$ 
such that  $h_{\SL_2}(\gamma)=\rho_1(\gamma,1)$ for $\gamma\in\SL_2$ 
and $h_{\lim}(\alpha)=\rho_1(1,\alpha)$ for $\alpha\in\mathbb{S}$.  
\end{corollary}

\begin{proof}
  This follows from Corollary~\ref{c.PreSemiDirect} and the definition
of the semi-direct product.
\end{proof}

\begin{proposition}\label{p.iW}
Consider the homomorphism $i_W:\mathbb{G}_m\to\SL_2\rtimes\mathbb{S}$
given by 
$$
i_W(\alpha)=(
\begin{pmatrix}
  \alpha^{-1} & 0\\
  0 & \alpha
\end{pmatrix}, w(\alpha)).
$$
Set $P_W:=i_W(\mathbb{G}_m)$.
Then $P_W$ is central in  
$\SL_2\rtimes\mathbb{S}$.
\end{proposition}

\begin{proof} 
We first show that $P_W$ centralizes $\SL_2$.  This follows
from the fact that the action of $w(\alpha)$ on $\SL_2$ coincides with the
adjoint action of the matrix
$$
\begin{pmatrix}
  \alpha & 0\\
  0 & \alpha^{-1}
\end{pmatrix}.
$$
On the other hand, $\mathbb{S}$ centralizes the subgroup $D$ of diagonal
matrices in $\SL_2$.   From this is follows that $P_W$ centralizes $\mathbb{S}$.
Therefore $P_W$ centralizes $\SL_2\rtimes\mathbb{S}$. 
\end{proof}

\begin{corollary}
Let $S$ denote the subgroup of $\SL_2\rtimes\mathbb{S}$ consisting
of elements of the form $(1,s(\beta))$ for $\beta\in S^1$.    Then $S$
is isomorphic to $S^1$ and central in $\SL_2\rtimes\mathbb{S}$.  Moreover the center
of $\SL_2\rtimes\mathbb{S}$ is the product $P_WS=P_W\times S$.   
\end{corollary}
\begin{proof}
Clearly $S$ is isomorphic to $S^1$, and 
$S$ is central in $\SL_2\rtimes\mathbb{S}$ because
$S$ acts trivially on $\SL_2$.   It is also clear that $S\cap P_W=\{1\}$.
Therefore $SP_W=S\times P_W$ is a subgroup of $\SL_2\rtimes\mathbb{S}$.  
To see that $S\times P_W$ is the connected component of the center, note
that $\SL_2\rtimes\mathbb{S}$ is reductive (as it is an extension of 
reductive groups).  It is of complex rank $3$ since ranks are additive
in extensions, and it has semi-simple rank $1$ since $\SL_2$ is its 
derived subgroup.  Therefore the connected component of the center must 
be a rank $2$ torus.   So it must
be $P_WS$. 
\end{proof}

\begin{corollary}
  Consider the morphism $\pi:\SL_2\times P_W\times S\to
  \SL_2\rtimes\mathbb{S}$ given by $(\gamma,\alpha,\tau)\mapsto
  \gamma\alpha\tau$.  It is a surjective homomorphism of
  algebraic groups with kernel a diagonally embedded copy of $\mu_2$.
  In other words, the homomorphism $\pi$ sets up an isomorphism
$$
\SL_2\rtimes\mathbb{S}\cong (\SL_2\times \mathbb{G}_m\times S^1)/\mu_2.
$$
\end{corollary}

\begin{proof}
  Suppose $G$ is a (connected) reductive group with derived subgroup
  $G^{\der}$ and with $Z(G)^0$ the connected component of the center.
  Then it is well-known from the theory of reductive groups that the
  morphism $G^{\der}\times Z(G)^0\to G$ given by $(g,z)\mapsto gz$ is
  a surjective and faithfully flat homomorphism with kernel isomorphic
  to $Z(G)^0\cap Z(G^{\der})$.   (The kernel consists of pairs $(g,g^{-1})$ 
where $g\in Z(G)^0\cap Z(G^{\der})$.)   

So, set $G=\SL_2\rtimes\mathbb{S}$.  Then $G^{\der}=\SL_2$, $Z(G)^0=P_WS$ and 
the intersection $Z(G)^0\cap Z(G^{\der})$ is simply $Z(G^{\der})\cong\mu_2$.  It is
embedded diagonally in the product $\SL_2\times P_W\times S$.  
\end{proof}

\subsection{} 
Set $G=\SL_2\rtimes\mathbb{S}$, and let $\tilde G=\SL_2\times
\mathbb{G}_m\times S^1$ so that $G=\tilde G/\mu_2$ where $\mu_2$ is
acting diagonally.  If $B_{\SL_2}$ denotes the upper triangular
matrices in $\SL_2$, then $\tilde B:=B_{\SL_2}\times
\mathbb{G}_m\times S^1$ is a Borel subgroup of $\tilde G$.  Let
$\tilde T$ denote the maximal torus in $\tilde G$ generated by the
diagonal matrices in the $\SL_2$ factor and the connected component of
the center $Z(\tilde G)^0=\mathbb{G}_m\times S^1$.  Write 
$\diag:\mathbb{G}_m\to\SL_2$ for the map 
$
z\mapsto 
\begin{pmatrix}
  z & 0\\
  0 & z^{-1}
\end{pmatrix}.
$
If we fix an isomorphism
$\spl_{S^1}:\mathbb{G}_m\otimes\mathbb{C}\to S^1_{\mathbb{C}}$, then we get an isomorphism
$
\spl_{\tilde T}=\diag\times \id\times \spl_{S^1}:\mathbb{G}_{m\mathbb{C}}^3 \to \tilde T_{\mathbb{C}}.
$
This gives an identification of $\mathbb{Z}^3$ with $X^*(\tilde T_{\mathbb{C}})$
such that, for $(a,b,c)\in\mathbb{Z}^3$, we have $(a,b,c)(r(z_1,z_2,z_3)=
z_1^az_2^bz_3^b$.  

Let $T$ denotes the image of $\tilde T$ in $G$, then the canonical
map $X^*(T)\to X^*(\tilde T)$ identifies $X^*(T)$ with the triples
$(a,b,c):2|a+b+c$.  Let $B$ denote the image of $\tilde B$ in $G$. The
action of $\Gal(\mathbb{C}/\mathbb{R})$ on $X^*(\tilde T)$ sends
$(a,b,c)$ to $(a,b,-c)$.  It follows easily from the theory of
representation of reductive groups that irreducible representations of
$G_{\mathbb{C}}$ are classified by weights which are positive with
respect to $B$ : that is complex representations of $G$ are classified
by triples $(a,b,c)$ such that $a\geq 0$ and $2|a+b+c$.   Write $V(a,b,c)$ 
for the representation associated to the triple $(a,b,c)$.   If $c=0$, then
$V(a,b,c)$ is defined over $\mathbb{R}$.  Otherwise, it follows from the 
theory of representations of real reductive groups (see~\cite{Tits}) that
there is a real representation $E(a,b,c)$ of $G$ such that $E(a,b,c)\otimes\mathbb{C}=V(a,b,c)\oplus V(a,b,-c)$.   Thus, if we set $E(a,b,0)=V(a,b,0)$, we 
find that the representations of $G$ are classified by triples $(a,b,c)$ 
such that $2|a+b+c$ and $a,c\geq 0$.   To sum up, we obtain the following:

\begin{theorem}
For each triple $(a,b,c)\in\mathbb{Z}^3$ with $2|a+b+c$ and $a,c\geq 0$,
there is a unique irreducible representation $E(a,b,c)$ of $G$ such that
$E(a,b,c)\otimes\mathbb{C}$ has a component of highest weight $(a,b,c)$ under
the above identification of $\mathbb{Z}^3$ with $X^*(\tilde T)$.  
\end{theorem}

\subsection{} Composing $\rho_1:G\to\mathfrak{M}_1$ with
$\pi_1:\mathfrak{M}_1\to \mathbb{S}_1$, we obtain a morphism
$\bar{\rho_1}:G\to\mathbb{S}_1$.  Thus we obtain a functor
$\bar{\rho_1}^*:\mathbf{S}_1\to\Rep G$.  We want to describe the image
of certain $\SL_2$ orbits under this functor.   To do this, consider
first the standard $\SL_2$-orbit
$\Standard:=(V,N,F,W)$ where
\begin{align*}
  V&=\mathbb{C}^2\, \text{with basis $e=(1,0)$, $f=(0,1)$};\\
  N&=
  \begin{pmatrix}
    0 & 0\\
    1 & 0\\
  \end{pmatrix};\\
   W_iV&=
   \begin{cases}
     0, & i<1,\\
     V, & i\geq 1;
   \end{cases}\\
    F^pV_{\mathbb{C}}&=
    \begin{cases}
      V_{\mathbb{C}}, & i<1,\\
        \mathbb{C}e, & i=1,\\
        0, & i>1.        
    \end{cases}
\end{align*}
Then the limit mixed Hodge structure associated to $\Standard$ is $\mathbb{R}\oplus \mathbb{R}(-1)$ and the $\ssl_2$ action is the standard representation. 
It follows that the action of $S\subset G$ on $V$ is trivial, and the 
relative weight filtration $M$ is given by 
$$
M_iV=\begin{cases}
0,  & i<0,\\
\mathbb{R}f, & i\in [0,1],\\
V,  &  i\geq 2.
\end{cases}
$$
Therefore, $Y_M(e)=2, Y_M(f)=0$.  So, $i_W(\alpha)\in G$ acts on $V$ by
$$\begin{pmatrix}
  \alpha^{-1} & 0\\
           0  & \alpha
\end{pmatrix}
\begin{pmatrix}
  \alpha^2 & 0\\
         0 & 1\\
\end{pmatrix}
=
\begin{pmatrix}
  \alpha & 0 \\
       0 & \alpha
\end{pmatrix}.
$$
It follows that $\bar\rho_1^*\Standard$ is isomorphic to $E(1,1,0)$.

For each non-negative integer $a$, let $S(a)$ denote the symmetric 
product $\Sym^a\Standard$.   Then $\bar\rho_1^*S(a)=E(a,a,0)$. 

The representations of $G$ associated to the constant variations are
very easy to describe.  The homomorphism $G\to\mathbb{S}$ obtained by
composition $\bar\rho_1$ with $\const:\mathbb{S}_1\to\mathbb{S}$ is
simply the canonical homomorphism
$G=\SL_1\rtimes\mathbb{S}\to\mathbb{S}$.  It follows that the Tate
Hodge structure $\mathbb{R}(k)$ corresponds to the constant
$\SL_2$-orbit $E(0,2k,0)$. 

 Suppose $p,q$ are integers with $p>q$.
Let $E(p,q)$ denote an irreducible
 real Hodge structure of weight $p+q$ with 
underlying vector space $H$ and with $\dim H^{p,q}=1$.  Then $S\subset G$ 
acts on $H_{\mathbb{C}}$ with weights $\pm p-q$ and $P_W\subset G$ acts
on $V$ via the character $z\mapsto z^{p+q}$.    Thus $\bar\rho_1^*E(p,q)\cong E(0,p+q,p-q)$.

\begin{corollary}\label{c.EssentiallySurjective}
  The tensor functor $\mathbf{S}_1\to \Rep G$ is essentially surjective.
\end{corollary}

\begin{proof}
Since $G$ is a real reductive group, all representations of $G$ are decomposable
into irreducibles.   Thus, it suffices to show that every irreducible representation of $G$ is in the essential image of $\mathbf{S}_1\to \Rep G$.

To do this, it suffices to note that, when $2|a+b+c$ and $a,c\geq 0$ then, 
\begin{equation}
  E(a,b,c)=
 \begin{dcases}
  \bar\rho_1^*((S(a)\otimes \mathbb{R}(\frac{b-a}{2})), & c=0,\\
   \bar\rho_1^*(S(a)\otimes E(\frac{b+c-a}{2},\frac{b-c-a}{2})), & c>0.
  \end{dcases}
\end{equation}
\end{proof}

\begin{theorem}
  The homomorphism $G\to\mathbb{S}_1$ is an isomorphism.
\end{theorem}

\begin{proof}
  It is equivalent to show that the tensor functor
  $\bar\rho_1^*:\mathbf{S}_1\to\Rep G$ is an equivalence of
  categories.  To do this, we need to show that $\bar\rho_1^*$ is
  fully faithful and essentially surjective.  That $\bar\rho_1^*$ is
  faithful is clear (because the functor
  $\mathbf{S}_1\to\Vect_{\mathbb{R}}$ if faithful.    That $\bar\rho_1^*$
is essentially surjective is the content of Corollary~\ref{c.EssentiallySurjective}.   

To show that $\bar\rho_1^*$ is full, it suffices to show that 
$\Hom_{\mathbb{S}_1}(V_1,V_2)\to\Hom_{G}(V_1,V_2)$ is surjective
when $\mathbf{V}_i=(V_i,N_i,W_i,F_i)$ are two pure objects in $\mathbf{S}_1$
of the same weight.   But this is clear because the $G=\SL_2\rtimes\mathbb{S}$ action 
on $V_i$ determines both $N_i$ and $F_i$: the action of $\SL_2$ 
determines $N_i$ and the action of $\mathbb{S}$ determines $F$. 
\end{proof}

\subsection{} From now on we use the theorem to identify
$\mathbb{S}_1$ with $G=\SL_2\rtimes\mathbb{S}$.  We want to explain
explicitly how to get an $\SL_2$-orbit from a representation
$\rho:\mathbb{S}_1\to \Aut(V)$.  As we said above, $N$ is determined
by the action of $\SL_2$.  It is the image of $n_0\in\ssl_2$ in $\End
V$.  The Hodge filtration $F$ is determined by the $\mathbb{S}$ action
as is the relative weight filtration $M$.  The weight filtration $W$
is the filtration of $V$ corresponding to the central co-character
$i_W$ of Proposition~\ref{p.iW}.  To check this, it suffices check
that it is true on the constant variations and on the standard
$\SL_2$-orbit $\Standard$.   If $V$ is an $\SL_2$-orbit, both $M$ and 
$W$ are canonically split by $Y_M$ and $Y_W=Y(N,Y_M)$.  Moreover $Y_W$ is a 
morphism of $\SL_2$-orbits.   We say that an element $v\in V$ is 
pure of weight $m$ for $M$ and $w$ for $W$ if $Y_M(v)=mv$ and $Y_W(v)=wv$.

\section{The Main Theorem}

\subsection{}\label{s.FMSGen} For each $z$ in the upper half-plane, $\mathfrak{h}$, Proposition~\ref{p.FunctorFromMixedToSmooth}
gives us a homomorphism $\spec_z:\mathfrak{M}\to\mathfrak{M}_1$.  On
the other hand, if $(V,N,F,W)$ is an object in $\mathbf{S}_1$ then
$(V,e^{zN}F,W)$ is split.  Thus we have a homomorphism
$\bar\spec_z:\mathbb{S}\to\mathbb{S}_1$ making the diagram
$$
\xymatrix{
\mathfrak{M}\ar[r]^{\pi}\ar[d]^{\spec_z}  & \mathbb{S}\ar[d]^{\bar\spec_z} \\
\mathfrak{M}_1\ar[r]^{\pi_1}      & \mathbb{S}_1.
}   
$$
commute.   To describe $\bar\spec_z$ explicitly, consider for each $z\in\mathfrak{h}$, let $s_i:S^1\to\SL_2$ denote the homomorphism
of real algebraic groups given by 
$$
(x,y)\mapsto 
\begin{pmatrix}
  x & -y\\
  -x & y
\end{pmatrix}$$
where we regard $S^1$ as $\Spec \mathbb{R}[x,y]/(x^2+y^2-1)$.  
For $z=u+iv\in\mathfrak{h}$, set 
$$
A_z=
\begin{pmatrix}
  1 & 0\\
  u & v\\
\end{pmatrix}, s_z(\sigma)=A_z\sigma A_z^{-1}.$$
We then have a homomorphism $S_z:S^1\to\mathbb{S}_1=\SL_2\rtimes\mathbb{S}$
given by $S_z(\sigma)=(s_z(\sigma), s(\sigma))$.
If we represent $\mathbb{S}$ via the quotient map
$w\times s:\mathbb{G}_m\times S^1\to \mathbb{S}$ with kernel $\mu_2$, 
we have
$\bar\spec_z(\lambda,\sigma)=i_W(\lambda)S_z(\sigma)$.
Note that $\bar\spec_z(-1,-1)=1$, so the map factors through to $\mathbb{S}$.

\subsection{}\label{s.E} For each integer $k\leq -2$, let $E_k$ denote the
$\mathbb{S}_1$ representation $E(-k-2,k,0)$.  So $E_{-2}$ corresponds
to the constant nilpotent orbit $\mathbb{R}(1)$ and $E_{-3}$ is a
$2$-dimensional non-constant nilpotent orbit of weight $-3$.  In
general, $E_k=(\Sym^{-k-2} E(1,-1,0))\otimes\mathbb{R}(1)$.  Set $E=\oplus
E_k$.  Since $E_k$ corresponds to a pure nilpotent orbit of weight
$k$, $E$ is a graded vector space.  In fact, the action of
$\mathbb{G}_m$ on $E$ via $i_W:\mathbb{G}_m\to\mathbb{S}_1$ induces the
grading.  

\subsection{}\label{s.E2} Here is one way to view $E$ as a representation of
$\mathbb{S}_1$.  Recall from \eqref{s.MHS-T} the mixed Hodge structure
$T\cong \mathbb{R}\oplus \mathbb{R}(1)$ with basis $e$ in the
$\mathbb{R}$ factor and $f=(2\pi i)$ in the $\mathbb{R}(1)$ factor.
Then $\Sym^*T$ can be viewed as the polynomial ring
$\mathbb{C}[e,f]$.  It inherits an $\ssl_2$ action described in \eqref{s.MHS-T},
which, together with the $\mathbb{S}$ action, induces a representation of
$\mathbb{S}_1=\SL_2\rtimes\mathbb{S}$.  It is isomorphic to the $\SL_2$-orbit
$E(1,-1,0)$.  

Then, if we let $\mathbb{R}(1)$ denote the constant $\SL_2$-orbit,
$E=(\Sym^*T)\otimes\mathbb{R}(1)$.  For each integer, $k\leq -2$,
write $n_k$ for $e^{-k+2}\otimes (2\pi i)\in E$.  Since $n_+ e=0$,
$n_+n_k=0$ for all $k\leq -2$.  The element $e\in T$ is in
$T_{(F,M)}^{(0,0)}$, so $n_k\in E_{(F,M)}^{(-1,-1)}$.  Similarly $f\in T_{(F,M)}^{(-1,-1)}$, so 
$E_k=M_{-1}E_k$ for all $k$.   Thus $E=M_{-1}E$.  Similarly, since $e$ has
weight $-1$ for $W$, $n_k$ has weight $-k+2-2=-k$ for $W$.

\subsection{}  Let $\mathfrak{L}_1$ denote the free Lie algebra on the graded
vector space $E$.  From $E$, $\mathfrak{L}_1$ inherits a grading:
$\mathfrak{L}_1=\oplus \mathfrak{L}_1(i)$.   In fact, the group
$\mathbb{S}_1$ acts on $\mathfrak{L}_1$.  
For each integer $m$, we
let $W_m\mathfrak{L}_1:=\oplus_{i\leq m} \mathfrak{L}_1(i)$.  Then it is 
rather easy to see that $W_n\mathfrak{L}_1$ is an ideal in $\mathfrak{L}_1$
stable under the action of $\mathbb{S}_1$. 
Moreover, the Lie algebras $\mathfrak{L}_1/W_m\mathfrak{L}_1$ are finite
dimensional, nilpotent Lie algebras equipped with compatible actions of 
$\mathbb{S}_1$.  
Let $K_1(n)$ denote the unipotent Lie group associated with 
$\mathfrak{L}_1/W_n\mathfrak{L}_1$.   Then $\mathbb{S}$ acts on 
$K_1(n)$ and, thus, on the affine group scheme $K_1
:=\varprojlim K_1(n)$.   
Set $G_1=K_1\rtimes\mathbb{S}_1$.

\begin{lemma}\label{l.KReps}
Let $\mathbf{U}_1$ denote the category of finite dimensional real
vector spaces $V$ equipped with a linear map $\rho_E:E\to\End V$ such that 
\begin{enumerate}
\item $\rho_E(u)$ is nilpotent for all $u\in E$;
\item $\rho_E(W_k E)=0$ for $k\ll 0$.
\end{enumerate}
Then the functor $\rho\mapsto \rho_{|E}$ is an equivalence from the 
category $\Rep K_1$ to the category $\mathbf{U}_1$.
\end{lemma}

\begin{proof}
We leave this exercise in unraveling the definition of $K_1$ to the 
reader.  
\end{proof}

\subsection{}\label{s.NsFromG1} Suppose $\rho:G_1\to \Aut V$ is a representation of $G_1$
on a finite dimensional vector space $V$.  The restriction of $\rho$
to $K_1$ induces a representation of the Lie algebra $\mathfrak{L}_1$
which is trivial on $W_n\mathfrak{L}_1$ for some $n$.  For $k\leq -2$,
let $N_k\in\End V$ denote the image of $n_k\in E\subset
\mathfrak{L}_1$.  The restriction of $\rho$ to the semi-direct factor
$\mathbb{S}_1$ of $G_1$ induces a representation of the Lie algebra
$\ssl_2$.  Let $N_0, H$ and $N_+$ denote the images of the elements
$n_0, h$ and $n_+$ respectively.  The $\mathbb{S}_1$-action on $V$
gives $V$ the structure of an $\SL_2$-orbit $V=(V,N_0,F,W)$.  Setting
$M=M(N_0,W)$ and using the definition of $G_1$ as a semi-direct
product, we find that $N_k\in\End V_{(F,M)}^{(-1,-1)}$ and
$Y_W(N_k)=-kN_k$.  Since $N_+(n_k)=0$, $[N_+,N_k]=0$ as well.

\subsection{}\label{s.CDef}
Consider the category $\mathbf{C}_1$ consisting of $\SL_2$-orbits
$V$ equipped with a family of nilpotent operators $N_k$ ($k\leq -2)$ 
such that $N_k\in \End V_{(F,M)}^{(-1,-1)}$, $Y_W(N_k)=-2N_k$ and $[N_+,N_k]=0$.
The morphisms in $\mathbf{C}_1$ are simply morphisms of $\SL_2$-orbits
respecting the $N_k$.  The category $\mathbf{C}_1$ has the structure
of a neutral Tannakian category in an obvious way.  
 By \eqref{s.NsFromG1}, we have a functor 
$\Res:\Rep G_1\to \mathbf{C}_1$
of Tannakian categories.

\begin{proposition}\label{p.GtoC}
  The functor $\Res:\Rep G_1\to\mathbf{C}_1$ is an equivalence.
\end{proposition}

\begin{proof}
  Since $G_1=K_1\rtimes\mathbb{S}_1$, to  give a representation $R$ of
  $G_1$ is the  same thing as to give  representations $R_K$ and $R_S$
  of   $K_1$   and   $\mathbb{S}_1$   respectively  such   that,   for
  $s\in\mathbb{S}_1$             and            $k\in            K_1$,
  $R_S(s)R_K(k)R_S(s)^{-1}=R_K(s.k)$  (where $s.k$ denotes  the effect
  of $s$ acting on $k$).   Since representations of $K$ are determined
  by  their restrictions to  $E$ (by  Lemma~\ref{l.KReps}), to  give a
  representation of $G_1$ on a  finite dimensional vector space $V$ is
  actually the same thing as giving representations $R_E:E\to \End V$
satisfying the conditions of the lemma along with the condition that
\begin{equation}\label{e.SemiOut}
R_S(s)R_E(e)R_S(s)^{-1}=R_E(s.e)\text{ for all $e\in E,
  s\in\mathbb{S}_1$.}
\end{equation}   

Suppose then that $V$ in $\mathbf{C}_1$ is given.  Then by definition
we have a representation $R_S:\mathbb{S}_1\to \Aut V$ which determines
operators $N_0$ and $N_+$ on $V$ along with nilpotent operators $N_k$
($k\leq -2$) satisfying $[N_+,N_k]=0$ for all $k$.  Using the fact that
$n_+.n_k$ and $[N_+,N_k]$ are both $0$ for $k\leq -2$, it is not hard
to see that $R_E(n_0^i.n_k):=(\ad N_0)^iN_k$ for $k\leq -2$
unambiguously defines a map $R_E:E\to \End V$
satisfying~\ref{e.SemiOut}.  Thus, from an object $V$ in
$\mathbf{C}_1$, we have a representation $R$ of $G_1$.  It is now
simple to check that $\Res (R)=V$.  Thus, we have an equivalence of
categories.
\end{proof}

\subsection{}  By Deligne's Theorem (Theorem~\ref{t:DeligneSplittings})
along with Lemma~\ref{l.YWtypes}), we have a Tannakian functor
$h_1^*:\Split_1\to \mathbf{C}_1$ sending a nilpotent orbit $(V,N,F,W)$ with 
limit split over $\mathbb{R}$ to the associated $\SL_2$-orbit 
$(V,N_0,F,W)$ along with the data of the $N_k$ for $k\leq -2$.  
By Proposition~\ref{p.GtoC}, this produces a homomorphism 
$h_1:G_1\to\mathfrak{M}_1$ of affine group schemes.

\begin{theorem}\label{t.h1}
  The homomorphism $h_1:G_1\to\mathfrak{M}_1$ is an isomorphism.
\end{theorem}

\begin{proof}
We need to show that the functor $\Split_1\to\mathbf{C}_1$ is fully
faithful and essentially surjective.   It is obvious that the functor
is faithful, and full is also easy.   So suppose $V$ is an object in 
$\mathbf{C}_1$.  Explicitly, $V$ consists of the data $(V,N_0,F,W)$
of an $\SL_2$-orbit together with operators $N_k$ for $k\leq -2$ 
satisfying the conditions in~\eqref{s.CDef}.  Set 
$N=N_0+\sum_{k\leq -2} N_k$.  Then we claim that $(V,N,F,W)$ is an
object in $\Split_1$.   

It follows directly from the definition of the relative weight
filtration, that the relative weight filtration $M(N,W)$ exists and is
equal to $M(N_0,W)$.  Since $\Gr^W N=\Gr^W N_0$, $(\Gr^W,
e^{izN}.F)=(\Gr^W,e^{izN_0}.F)$ is a split mixed Hodge structure for
all $z$ in the upper-half plane.  Thus, $(V,e^{izN}.F,W)$ is a mixed
Hodge structure for all such $z$.

To see that $(V,N,F,W)$ is an admissible nilpotent orbit, it remains
to check that $N(F^P)\subset F^{p-1}$ for all $p$.   This follows from the
fact that all the $N_i$ (including $N_0$) are in $\End V_{(F,M)}^{(-1,-1)}$.    

Since $M(N,W)=M(N_0,W)$, the object $(V,N,F,W)$ has limit split over
$\mathbb{R}$ and is, thus, in $\Split_1$.   It is  easy to see 
that $h_1^*(V,N,F,W)$ is the original object $V$ in $\mathbf{C}_1$ that 
we started out with.   So, $h_1^*:\Split_1\to G_1$ is an equivalence.
\end{proof}

\subsection{} Suppose $z$ is a complex number in the upper half plane.
Then we have $\spec_z:\mathfrak{M}\to\mathfrak{M}_1$ and,
by~\eqref{s.FMSGen}, $\spec_z(\mathfrak{U})\subset \mathfrak{U}_1$.  We
thus have a commutative diagram
$$\xymatrix{
  1\ar[r]& \mathfrak{U}\ar[r]\ar[d]^{\spec_z}&
  \mathfrak{M}\ar[r]^{\pi}\ar[d]^{\spec_z} &
  \mathbb{S}\ar[r]\ar[d]^{\overline{\spec}_z} & 1\\
  1\ar[r] & \mathfrak{U}_1\ar[r] & \mathfrak{M}_1\ar[r]^{\pi_1} &
  \mathbb{S}_1\ar[r] & 1.\\
}$$ 

We want to prove that the restriction of $\spec_z$ to $\mathfrak{U}$ is
an isomorphism onto $\mathfrak{U}_1$.   To begin, note the following.

\begin{lemma}
The affine group schemes $\mathfrak{U}$ and $\mathfrak{U}_1$ are abstractly
isomorphic by an isomorphism preserving the filtration $W$.
\end{lemma}

\begin{proof}
  It suffices to see that the Lie algebras $\mathfrak{L}$ and
  $\mathfrak{L}_1$ are isomorphic by an isomorphism preserving the
  filtration.  Now, $\mathfrak{L}_1$ is the free Lie algebra on the
  graded vector space $E$ while $\mathfrak{L}_{\mathbb{C}}$ is the
  free Lie algebra on the symbols $D^{i,j}$ with $i,j<0$.  It is
  easily seen that the real form $\mathfrak{L}$ of
  $\mathfrak{L}_{\mathbb{C}}$ is nothing but the free Lie algebra on
  the graded vector space $V=\oplus V_k$ where $\dim V_k=\{(i,j):
  i+j=k, i<0, j<0\}$.   Moreover, the grading on $\mathfrak{L}$ is the
one induced from the grading on $V$.   But $V$ is isomorphic to $E$ 
as a graded vector space, since $\dim E_k=\dim \Sym^{-k-2}\mathbb{R}^2=
-k-1=\dim V_k$. 
\end{proof}

Since $\mathfrak{U}$ and $\mathfrak{U}_1$ are pro-unipotent, it is
plausible to show that $\spec_z$ induces an isomorphism by showing
that the map on the abelianizations induced by $\spec_z$ is an
isomorphism.  

\begin{lemma}\label{l.JacobsonEx}
  Suppose $\mathfrak{g}$ is a nilpotent Lie algebra over a field $k$
  and $\mathfrak{h}$ is a subalgebra.  Suppose $\mathfrak{h}$ surjects
  onto $\mathfrak{g}/[\mathfrak{g},\mathfrak{g}]$.  Then
  $\mathfrak{h}=\mathfrak{g}$.
\end{lemma}

\begin{proof}
This is essentially Exercise 11 on page 29 of~\cite{JacobsonLieAlg}.
\end{proof}

Now, since $\mathfrak{U}/W_k\mathfrak{U}$ and
$\mathfrak{U}_1/W_k\mathfrak{U}_1$ are both nilpotent 
algebraic groups over $\mathbb{R}$, to show that $\spec_z$
induces an isomorphism, it suffices to show that $\spec_z$ is surjective.
By Lemma~\ref{l.JacobsonEx}, it suffices to show that $\spec_z$ 
is surjective on the abelianizations.   This is, in fact, what we
are going to do.

\begin{lemma}
  Suppose $\mathbf{H}$ is an $\SL_2$-orbit, and let 
$\mathbb{R}$ denote the constant $\SL_2$ orbit of weight $0$. Then 
$
\Ext^1_{\Split_1}(\mathbb{R},\mathbf{H})=\Hom(E,\mathbf{H})^{\mathbb{S}_1}$.
If $\mathbf{H}$ is irreducible and pure of weight $-k-2$, then we have
$$
\Ext^1_{\Split_1}(\mathbb{R},\mathbf{H})=
\begin{cases}
  \mathbb{R}, & \mathbf{H}=E_k; \\
           0, & \text{else}.
\end{cases}
$$
\end{lemma}
\begin{proof}
First note that, we can assume that $\mathbf{H}$ is irreducible of weight
$n$ for some integer $n$.   If $V$ is an extension of $\mathbb{R}$
by $\mathbf{H}$, then $W_{n-1}\mathfrak{M}_1$ acts trivially on 
$V$.    So, 
$\Ext^1_{\Split_1}(\mathbb{R},\mathbf{H})
=\Ext^1_{\mathfrak{M}_1(n-1)}(\mathbb{R},\mathbf{H})$.    We can compute
the later extension group by means of the inflation-restriction
sequence for the short exact sequence of algebraic groups
$$
1\to \mathfrak{U}_1(n-1)\to \mathfrak{M}_1(n-1)\to \mathbb{S}_1\to 1.
$$ 
We see that the extension group is $H^1(\mathfrak{M}_1(n-1),\mathbf{H})
=H^1(\mathfrak{U}_1(n-1),\mathbf{H})^{\mathbb{S}_1}=
\Hom(\mathfrak{U}_1(n-1)^{\mathrm{ab}},\mathbf{H})^{\mathbb{S}_1}
=\Hom(E,\mathbf{H})^{\mathbb{S}_1}$.  Here we use the fact that 
the abelianization of $\mathfrak{U}_1(n)$ is simply $\oplus_{k\geq n} E_k$.

The $E_k$ are all irreducible as $\mathbb{S}_1$ representations
even when base-changed to $\mathbb{C}$.   So $\Hom(E_k,E_k)^{\mathbb{S}_1}=\mathbb{R}$.   Since 
$E_k$ has weight $-k-2$, the formula for the extension group 
in the case that $\mathbf{H}$ is irreducible of weight $-k-2$ follows.  
\end{proof}

\begin{proposition}\label{p.dpq}
  Let $\mathbb{R}$ denote the constant $\SL_2$-orbit of weight $0$,
  and suppose $z$ is a number in the upper half-plane.  Let $V$ be a
  non-zero element in the extension group
  $\Ext^1_{\Split_1}(\mathbb{R},E_k)$.  Then, on $V$, 
each $\delta_{p,q}$ with $p+q=-k$, $p,q<0$ is non-zero.
\end{proposition}

\begin{proof}
  We have $N=N_0+N_k$ and $F(z)=e^{zN}.F$.  Since $V$ is split over
  $\mathbb{R}$, there is a unique element $u$ of $V_{(F,M)}^{(0,0)}$
  projecting onto $1$ in $\mathbb{R}$.  By multiplying the class of
  $V$ in the extension group
  $\Ext^1_{\Split_1}(\mathbb{R},E_k)\cong\mathbb{R}$, we can assume
  that $N_k(u)=e^{-k-2}\otimes (2\pi i)\in E_k$.

  We have $e\in T_{(F,M)}^{(0,0)}$ and $f\in T_{(F,M)}^{-1,-1}$.  
It follows that
$$
\mathbb{C}(e+zf)^a(e+\bar{z}f)^b\otimes (2\pi i)=
(E_k)_{F(z)}^{(-b-1,-a-1)}.
$$
Now, we have
$$
e=\frac{z(e+\bar{z}f)-\bar{z}(e+zf)}{z-\bar{z}}.
$$
So $$e^{-k-2}\otimes (2\pi i)=(z-\bar{z})^{-k-2}\sum_{a+b=-k-2} 
\binom{-k-2}{a}z^a(-\bar{z})^b
(e+\bar{z}f)^a(e+zf)^b.$$

We could use this to compute the $\delta_{p,q}$ directly.  
However, we only need to show that each $\delta_{p,q}$ is non-zero.   For this,
pick $p,q$ with $p,q<0$ and $p+q=-k$.    Let $\mathbf{H}$ denote the
irreducible Hodge substructure of $(E_k)_{F(z)}$ with a $\mathbf{H}_{F(z)}^{(p,q)}\neq 0$. 
Thus $\mathbf{H}$ is generated as a $\mathbb{C}$ vector space by $(e+\bar{z}f)^{-q-1}(e+zf)^{-p-1}\otimes (2\pi i)$
and $(e+\bar{z}f)^{-p-1}(e+zf)^{-q-1}\otimes (2\pi i)$.  
Note that $\Ext^1_{\MHS}(\mathbb{R},\mathbf{H})=\mathbf{H}_{\mathbb{C}}/(F^0(z)\mathbf{H}+\mathbf{H}_{\mathbb{R}})$.  
But $F^0(z)\mathbf{H}=0$, so $\Ext^1_{\MHS}(\mathbb{R},\mathbf{H})=\mathbf{H}_{\mathbb{C}}/\mathbf{H}_{\mathbb{R}}$.
The image of the extension class of $V$ in $\Ext^1_{\MHS}(\mathbb{R},\mathbf{H})$ is 
then the projection of $zN_k(u)=ze^{-k-2}\otimes (2\pi i)$ onto $\mathbf{H}_{\mathbb{C}}/\mathbf{H}_{\mathbb{R}}$. 
But, since each component in the above expression for $e^{-k-2}\otimes (2\pi i)$ 
is non-zero and $e^{-k-2}\otimes (2\pi i)$ is real, the 
image of $V$ in $\Ext^1_{\MHS}(\mathbb{R},\mathbf{H})$ is easily seen to be non-zero.
Therefore $\delta_{p,q}\neq 0$.
\end{proof}

\begin{corollary}
The map $\spec_z:\mathfrak{U}\to\mathfrak{U}_1$ is an isomorphism.
\end{corollary}

\begin{proof}
It suffices to show that, for each integer $n\leq 0$, the map of unipotent
real algebraic groups 
$\spec_z:\mathfrak{U}(n)\to \mathfrak{U}_1(n)$ is an isomorphism.   
For this it suffices to show that the maps  on the Lie algebras
are isomorphisms.  Since the Lie algebras are nilpotent and of the
same dimension over $\mathbb{R}$, it suffices to show that the
map induced on the abelianizations is surjective.  So, set $\mathfrak{L}(n):=\mathfrak{L}/W_n\mathfrak{L}$
and $\mathfrak{L}_1(n):=\mathfrak{L}_1/W_n\mathfrak{L}_1$.   It suffices to show that 
the map
$
s_z:\mathfrak{L}^{\mathrm{ab}}(n)\to \mathfrak{L}_1^{\mathrm{ab}}(n)$
induced by $\spec_z$ is an isomorphism from each $n$.   This map is $\mathbb{S}$-equivariant, 
where $\mathbb{S}$ is acting on $\mathfrak{L}_1(n)$ via conjugation.
For $p\leq q<0$ write $H(p,q)$ for the unique irreducible real pure Hodge structure
with $H(p,q)^{(p,q)}\neq 0$. Then, with the given $\mathbb{S}$-action,
$\mathfrak{L}^{\mathrm{ab}}(n)$ is a direct sum of 
the $H(p,q)$ such that $p+q>-n$ with each irreducible factor appearing 
exactly once.  Suppose the map $s_z$ has a kernel.  Then 
there is some $(p,q)$ with $H(p,q)$ in the kernel.  Set $p+q=k>n$. 
Then, if $V$ den totes a non-zero extension in $\Ext^1_{\Split_1}(\mathbb{R},E_k)$,
we will have $\delta_{p,q}=0$ for the Hodge structure $V_{(F(z),W}$.  This contradicts
Proposition~\ref{p.dpq}.
\end{proof}

\begin{proof}[Proof of Theorem~\ref{t.AppMain}] The only thing left to
  prove is part (iv) of the theorem.  Recall that
  $i_M:\mathbb{G}_m\to\mathfrak{M}_1$ denotes the homomorphism
inducing the splitting of the relative weight filtration $M$ and $H$ denotes
the image of $i_M$.  
Now all elements of $E_k$ are in $M_{-1}$ (see~\S\ref{s.E2}).   So the grading induced by $M$
on the free Lie algebra $\mathfrak{L}_1$ on $E$ puts all elements of $\mathfrak{L}_1$
in negative weight. 
Therefore, for each $n$, the intersection of $H$ with $\mathfrak{U}_1$ 
trivial.  

The fact that $\overline{\spec}_i(w(\mathbb{G}_m))$ is central in $\mathbb{S}_1$
was proved in Proposition~\ref{p.iW}. 
\end{proof}

\bibliographystyle{alpha}

\end{document}